\definecolor{trueblue}{rgb}{0.0, 0.45, 0.81}
\definecolor{truegreen}{rgb}{0.13, 0.55, 0.13}
\newcommand{\NNN}{\color{black}} 
\newcommand{\MMM}{\color{black}} 
\newcommand{\BBB}{\color{black}} 
\newcommand{\RRR}{\color{black}} 
\newcommand{\EEE}{\color{black}} 
\newcommand{\OOO}{\color{black}} 
\newcommand{\AAA}{\color{black}}
\newcommand{\KKK}{\color{black}}
\newcommand{\QQQ}{\color{black}}
\newcommand{\ZZZ}{\color{black}}
\newcommand{\eps}{\varepsilon} 
\newcommand{\dx}{\, {\rm d}x}
\newcommand{\ggamma}{\bm{\gamma}}
\theoremstyle{plain}
\newtheorem{theorem}{Theorem}[section]
\newtheorem{lemma}[theorem]{Lemma}
\newtheorem{example}[theorem]{Example}
\newtheorem{remark}[theorem]{Remark}
\newtheorem{proposition}[theorem]{Proposition}
\newtheorem{corollary}[theorem]{Corollary}
\newenvironment{step}[1]{\underline{Step #1}.}{}
\theoremstyle{definition}
\newtheorem{definition}[theorem]{Definition}
\renewcommand{\tilde}{\widetilde}
\renewcommand{\d}{ \mathrm{d}}
\DeclareMathOperator{\dist}{dist}
\numberwithin{equation}{section}
\newcommand{\N}{\mathbb{N}}
\newcommand{\R}{\mathbb{R}}
\renewcommand{\L}{\mathcal{L}}
\newcommand{\x}{{\times}}
\newcommand{\M}{\mathfrak{M}}
\begin{document}

\title[Geometric rigidity in variable domains and derivation of linearized models]{Geometric rigidity in variable domains and derivation of linearized models for elastic materials with free surfaces}

\author[M. Friedrich]{Manuel Friedrich} 
\address[Manuel Friedrich]{Department of Mathematics, Friedrich-Alexander Universit\"at Erlangen-N\"urnberg. Cauerstr.~11,
D-91058 Erlangen, Germany, \& Mathematics M\"{u}nster,  
University of M\"{u}nster, Einsteinstr.~62, D-48149 M\"{u}nster, Germany}
\email{manuel.friedrich@fau.de}
%\urladdr{https://www.math.fau.de/angewandte-mathematik-1/mitarbeiter/prof-dr-manuel-friedrich/}

\author[L. Kreutz]{Leonard Kreutz}
\address[Leonard Kreutz]{Department of Mathematical Sciences, Carnegie Mellon University, Pittsburgh, PA 15213, USA}
\email{lkreutz@andrew.cmu.edu}

\author{Konstantinos Zemas}
\address[Konstantinos Zemas]{Applied Mathematics M\"unster, University of M\"unster\\
Einsteinstrasse 62, 48149 M\"unster, Germany}
\email{konstantinos.zemas@uni-muenster.de}

%\date{\today}  

\begin{abstract}
We present a quantitative geometric rigidity estimate \BBB in dimensions $d=2,3$ \EEE generalizing the celebrated result by
{\sc
Friesecke, James, and M\"uller} \cite{friesecke2002theorem} to the setting of variable domains. Loosely speaking, we show that for each $y \in H^1(U;\R^d)$ and for each connected component of an open, bounded set $U\subset \mathbb{R}^d$,  the $L^2$-distance of $\nabla y$  from a single rotation can be  controlled up to a constant  \EEE by its   \EEE  $L^2$-distance from the group $SO(d)$, with the  \EEE  constant not depending on the precise shape of  $U$, but only on \BBB an integral curvature functional related to  \EEE$\partial U$. We further show that for linear strains the estimate can be refined, leading to a uniform control independent of the set $U$. The estimate can be used to establish compactness in \BBB the space of generalized special functions of bounded deformation \EEE ($GSBD$) for sequences of  displacements related to deformations  with uniformly bounded elastic energy. As an application, we rigorously  derive linearized models for nonlinearly elastic materials with free surfaces \BBB  by \EEE means of $\Gamma$-convergence. In particular, we study energies  related to  epitaxially strained crystalline films and \BBB to \EEE the  formation of material voids inside elastically stressed solids.
\end{abstract}

\subjclass[2010]{26A45, 49J45, 49Q20, 70G75, 74G65}
\keywords{Geometric rigidity estimates,  curvature regularization, Willmore energy,   free discontinuity problems, $\Gamma$-convergence,  functions of bounded  deformation, epitaxial growth, material voids}

\maketitle

\section{Introduction}\label{section:introduction}
%\RRR {M: Kostas, why do you introduce gaps? } 

%\RRR(K: The modified set $E_{\eta,\gamma}$ should be changed from Lipschitz to (piecewise) smooth, I will call it ``good''` here but has to be replaced)\EEE

\EEE Rigidity estimates   have a long history dating back to {\sc
Liouville}'s fundamental result  which states that \BBB smooth mappings are necessarily  affine if their gradient is a rotation everywhere. After various %\OOO(qualitative as well as quantitative)
 \EEE generalizations of this classical   theorem   over the last decades \cite{John:1961,   kohn-rig, Reshetnyak:1961},  \OOO a fundamental  \EEE breakthrough was achieved by {\sc
Friesecke, James, and M\"uller} \cite{friesecke2002theorem} with their celebrated quantitative geometric rigidity result in  nonlinear elasticity theory. In its basic form, the estimate states that in any dimension \AAA $d\geq 2$\EEE, for a mapping $y \in H^1(\Omega;\R^d)$ there exists  a  \OOO  corresponding  \EEE rotation $R \in SO(d)$ such that
\begin{align}\label{eq: rigidity}
\int_\Omega |\nabla y - R |^2 \, {\rm d}x \le C \int_\Omega\dist^2\big(\nabla y,SO(d)\big) \, {\rm d} x
\end{align}
for a constant $C>0$ only depending on the (sufficiently regular) \AAA bounded \EEE domain $\Omega$.  This result is fundamental in the analysis of variational models in nonlinear elasticity, as it provides compactness for sequences of deformations and corresponding  displacements with uniformly bounded elastic energy in a sharp quantitative fashion. In fact, it has proved to be the cornerstone for rigorous derivations of lower dimensional theories for plates, shells, and rods in various scaling regimes  \cite{Mora3,  friesecke2002theorem, hierarchy,  Mora4, Mora, Mora2}, and for providing  relations between geometrically nonlinear and linear models in elasticity \cite{DalMasoNegriPercivale:02}. The estimate  \eqref{eq: rigidity} was generalized in various directions to analyze variational models for materials with elastic  and plastic behavior. Among others, we mention results for mixed growth conditions \cite{Conti-Dolzmann-Muller:14}, incompatible fields \cite{conti.garroni, lauteri.luckhaus,  Muller-Scardia-Zeppieri:14}, and settings involving multiple energy wells \cite{Chaudhuri, Chermisi-Conti,  conti.schweizer, davoli.friedrich,  De Lellis, Jerrard-Lorent, Lorent}.

\textbf{Background and motivation:} In this paper, we are interested in rigidity estimates for nonlinearly elastic energies  involving free surfaces.  Our motivation lies in studying models in the framework of \emph{stress driven rearrangement instabilities} (SDRI),  i.e.,  morphological instabilities of interfaces between elastic phases generated by the
competition between elastic \BBB bulk \EEE and surface energies, including many different  phenomena such as brittle fracture, formation of material voids inside elastically stressed solids, or  hetero-epitaxial growth of elastic thin films. We refer to  \cite{Bourdin-Francfort-Marigo:2008, GaoNix99, Grin86, Grin93, KhoPio19, SieMikVoo04, Spe99} for an overview of some mathematical and physical literature. From a variational viewpoint, the  common feature of functionals describing SDRI is  the presence of both stored elastic \BBB energies \OOO in the \EEE bulk  and surface energies. \BBB This \EEE  can be formulated in the language of \emph{free discontinuity problems} \cite{DeGiorgi-Ambrosio:1988}, where the set of discontinuities  is not preassigned, but determined from an energy minimization principle.

In this context, a major challenge  \OOO in obtaining \EEE rigidity results lies in the fact that the functional setting goes beyond Sobolev spaces and requires functions allowing for jump discontinuities, more precisely  \emph{\OOO(special) \EEE functions of bounded variation} ($SBV$), see \cite[Section 4]{Ambrosio-Fusco-Pallara:2000},  or  \emph{\OOO(special) \EEE functions of bounded deformation} ($SBD$), see \cite{Ambrosio-Coscia-Dal Maso:1997, DalMaso:13}. Moreover, the formulation is genuinely more involved compared to \eqref{eq: rigidity},  as the domain may be disconnected by the  jump set into various components, and therefore at most \emph{piecewise rigidity} results can be expected, i.e., on each \OOO  connected  \EEE component of the domain without the jump set \EEE the deformation is close to a possibly different rigid motion. 

The last years have witnessed a tremendous progress for rigidity results in the linearly elastic setting \cite{FinalKorn,Chambolle-Conti-Francfort:2014, Iu3, Conti-Focardi-Iurlano:15, Friedrich:15-3, Friedrich:15-4}, generalizing suitably the classical Korn's inequality to $SBD$, and controlling also the surface contributions of the energy. The situation in the geometrically nonlinear setting, however, is by far less well understood. A first key step in this direction was achieved by {\sc
Chambolle, Giacomini, and Ponsiglione} \cite{Chambolle-Giacomini-Ponsiglione:2007} showing a Liouville-type result for brittle materials  storing no elastic energy.   To the best of our knowledge, to date counterparts of the quantitative estimate \eqref{eq: rigidity} are limited to dimension two \cite{Friedrich-Schmidt:15} or, in general dimensions, \EEE to a model for \emph{nonsimple materials} \cite{higherordergriffith} where the elastic energy  depends additionally on the second gradient of the deformation, cf.\  \cite{Toupin:62}. The latter results have been employed successfully to identify  linearized models in the small-strain limit \cite{Friedrich:15-2, higherordergriffith}, and to perform dimension reduction \cite{Schmidt17}.

In this paper, we prove a novel \emph{quantitative geometric rigidity result for variable domains} in dimensions $d=2,3$, see Theorem~\ref{prop:rigidity}. \OOO While our proof strategy \BBB in principle allows \EEE to establish the result also in higher dimensions, \BBB there is a single missing point, namely a specific geometric \EEE estimate of \BBB possible independent interest, \EEE see Remark~\ref{obstacle_higher_dimensions_generalization}. \OOO In the physically relevant dimensions $d=2,3$, \EEE we believe that  \OOO our result \EEE may be applicable in a variety of different contexts, in particular to study problems on dimension reduction. In the present paper, as a first application, we employ the estimate to rigorously derive linearized models for elastic materials with free surfaces.

\textbf{The rigidity estimate:} Loosely speaking, given a fixed open, bounded set $\Omega \subset \R^d$, $d=2,3$, our main result states the following: for every  regular \OOO open  \EEE set $E\OOO\subset \Omega\EEE$,  we can \EEE find a \emph{thickened set}   \OOO $E\subset E^*\subset \Omega$  \EEE such that 
\begin{align}\label{eq: setsetset}
{\rm (i)} \ \ \mathcal{L}^d(E^* \setminus E) \ll 1, \quad \quad \quad {\rm (ii)} \ \  \big|\mathcal{H}^{d-1}( \partial E^* \cap \Omega) -  \mathcal{H}^{d-1}( \partial E \cap \Omega)   \big| \ll 1\,,
\end{align}
where $\mathcal{L}^d$ and $\mathcal{H}^{d-1}$ denote the $d$-dimensional Lebesgue and $(d-1)$-dimensional Hausdorff measure, respectively,  and for each   $y \in H^1(\OOO\Omega\setminus \overline{E}\EEE;\R^d)$ with \textit{elastic energy} $\eps:= \int_{\OOO\Omega\setminus \overline{E}\EEE}\dist^2(\nabla y,SO(d)) \, {\rm d} x  $  \OOO there exists \EEE  a proper rotation $R \in SO(d)$ such that
\begin{align}\label{eq: rigidity-new}
\begin{split}
{\rm (i)} & \ \ \int_{\Omega \setminus \overline{E^*}} \Big|\OOO\mathrm{sym}(R^T\nabla y)  - {\rm Id} \Big|^2 \, {\rm d}x \le C \big(1+ C_{\partial E}^{\rm curv}\eps\big)\OOO\varepsilon\,,\\
{\rm (ii)} & \ \ 
\int_{\Omega \setminus \overline{E^*}} |\nabla y - R |^2 \, {\rm d}x \le C_{\partial E}^{\rm curv} \eps\,,
\end{split}
\end{align}
where \OOO $\mathrm{sym}(F)\EEE:=\frac{1}{2}(F+F^T)$ for $F\in \mathbb{R}^{d\times d}$, \EEE ${\rm Id} \in \R^{d\times d}$ denotes the identity matrix and \EEE  $C>0$ is a constant depending on $\Omega$ \OOO but not on $E$. Eventually,  \EEE  $C_{\partial E}^{\rm curv}>0$ is a  constant depending on  \OOO a suitable integral curvature functional \EEE of $\partial E$ and \EEE  can \BBB possibly \EEE become large \EEE as the \OOO  curvature of $\partial E$ becomes large. More precisely, if $\Omega \setminus \overline{E^*}$ consists of different connected components, the rotation $R$ may be different for each connected component,   \BBB cf.\ also  \EEE the piecewise estimate \cite[\BBB Theorem 1.1]{Chambolle-Giacomini-Ponsiglione:2007}.

Here, the role played by the unknown (i.e., variable) set $E$ depends on the  application, e.g., it may model  material voids inside an elastic material with reference domain $\Omega$. As $E$ is regular, an estimate of the form \eqref{eq: rigidity-new} would in general follow directly from \eqref{eq: rigidity} for a \emph{constant depending on $E$}. \EEE We therefore emphasize that the essential point \OOO of our estimate  \EEE is that the constant $C$ is \emph{independent} of $E$ and $C_{\partial E}^{\rm curv}$ does not depend  on the precise shape of $E$, but   only on 
\begin{align}\label{eq: curvatureeee}
\int_{\partial E\cap \Omega} |\bm{A}|^q\, {\rm d}\mathcal{H}^{d-1}
\end{align} 
for some fixed $q \geq d-1$, where  $\bm{A}$ denotes the second fundamental form of $\partial E$. (The choice $q \ge d-1$ is essential for the proof, see Lemma \ref{lemma:slicing} and Example \ref{ex: q!!!}.) %\RRR(K: Isn't it also necessary for our application since otherwise (a lot of) small balls would pay negligible energy?) \EEE

Given a uniform control on the \OOO above  \EEE curvature \OOO term, \EEE  \eqref{eq: rigidity-new}(ii) yields the exact counterpart of the estimate \eqref{eq: rigidity}, generalized to \OOO the setting of  \EEE variable domains. Moreover, \eqref{eq: rigidity-new}(i), \OOO say for simplicity \EEE for $R={\rm Id}$, shows that the $L^2$-norm of the \AAA symmetric part of $\nabla y-\rm{Id}$ \EEE can be controlled by the nonlinear elastic energy independently of $C_{\partial E}^{\rm curv}$, provided that $\eps$ is small compared to the inverse of $C_{\partial E}^{\rm curv}$. The latter property will allow us to obtain a uniform control on linear strains $e(u):= \frac{1}{2} (\nabla u + \nabla u^{T})$ for displacements $u = y-{\rm id}$, where ${\rm id}$ denotes the identity mapping. \EEE This naturally leads to effective descriptions in the realm of $SBD$  functions \cite{DalMaso:13}, for which only symmetrized gradients are controlled.  

\textbf{Proof strategy and discussion:} \BBB The core of the proof consists in \EEE a geometric construction to  \OOO modify  \EEE the set $E$, along with the proof strategy \OOO  for \eqref{eq: rigidity} \EEE  devised in \cite{friesecke2002theorem}. More specifically, we find a thickened set $E^* \supset E$ consisting essentially of a union of cubes of a specific sidelength $\rho>0$, which depends only on the size of the  curvature term in \eqref{eq: curvatureeee}. As already observed  in \cite{friesecke2002theorem}, the rigidity constant \OOO of $\Omega\setminus \overline{E^*}$  \EEE only depends on $\Omega$ and $\rho$, which implies \OOO\eqref{eq: rigidity-new}\EEE(ii). To derive \OOO\eqref{eq: rigidity-new}\EEE(i), we use  \OOO\eqref{eq: rigidity-new}\EEE(ii) and the fact that the tangent space of the smooth manifold $SO(d)$ at the identity \OOO matrix  \EEE is given by the linear space of all skew-symmetric matrices, which in particular implies  that  
$$\big|(F^T + F)/2- {\rm Id}\big| =   {\rm dist}(F, SO(d)) + {\rm O}(|F- {\rm Id}|^2)\,.$$
Here, as in \cite{friesecke2002theorem}\AAA, \EEE we also reduce the problem to harmonic  \OOO mappings \EEE in order to control higher order terms  through an $L^2-L^{\infty}$ estimate obtained  by the mean value property.  After controlling the symmetric \OOO part of the  \EEE gradient, the last step in the proof of  \eqref{eq: rigidity} in    \cite{friesecke2002theorem} consists  in    applying Korn's inequality to obtain \eqref{eq: rigidity}. This, however, is not possible in our setting as the constant in Korn's inequality again depends on the shape of the domain $\Omega \setminus \overline{E^*}$  \BBB which would only give back an estimate of the form \eqref{eq: rigidity-new}(ii). \EEE  In conclusion,  \OOO even in the regime where the elastic energy is sufficiently small with respect to the curvature energy term in \eqref{eq: curvatureeee}, \EEE uniform bounds independent of $E$ can only be obtained for symmetrized gradients but not for full gradients. Simple examples show that  estimate \eqref{eq: rigidity-new}(ii) is indeed sharp, see Example~\ref{ex: sharp}. 

Whereas \eqref{eq: rigidity-new} can be derived by adapting the original strategy devised in \cite{friesecke2002theorem}, the real novelty of our work lies in the construction of the \emph{thickened set} $E^* \supset E$. In the application to variational models for SDRI presented below, estimate \eqref{eq: setsetset} is essential to ensure that  the thickening of the set does \OOO not affect \EEE  asymptotically $E$ in volume and surface measure.  In a first auxiliary step, \OOO in order \EEE to ensure that \BBB$\Omega \setminus \overline{E^*}$ is essentially a union of cubes with equal sidelength, we tessellate $\R^d$ with cubes of sidelength $\rho\OOO >0\EEE$ and add to $E$ all cubes intersecting $\partial E$, \OOO the  \EEE so-called \emph{boundary cubes}. In order to verify \eqref{eq: setsetset}(i), one needs to control the number of boundary cubes. This is highly nontrivial as the boundary $\partial E$ might become extremely complex, exhibiting  \emph{thin spikes} or  microscopically small components  with small surface measure on different length scales, see Figure~\ref{fig:thinspikes}. The key ingredient is Lemma~\ref{lemma:slicing} which, in rough terms, states that for a specific choice of the sidelength $\rho$, in each boundary cube $Q_\rho$ we get that $\mathcal{H}^{d-1}(\partial E \cap Q_\rho)$ or $\int_{\partial E \cap Q_\rho} |\bm{A}|^q\, {\rm d}\mathcal{H}^{d-1}$ is \BBB at least of order  \EEE $\rho^{d-1}$.
\begin{figure}
\begin{tikzpicture}

\draw(2.25,2.25) node{$\Omega$};
\draw(1,1.5) node{$E$};

\draw[clip] plot [smooth cycle] coordinates {(0,0)(2,0)(2,2)(0,2)};

\begin{scope}[shift={(1,1.75)},scale=.25]
\draw[fill=gray, domain=-.5:.5, smooth, variable=\x, xscale=.5,yscale=1] plot ({\x}, {40*\x*\x*\x*\x});

\end{scope}

\begin{scope}[shift={(.6,1.7)},scale=.2]
\draw[fill=gray, domain=-.8:.8, smooth, variable=\x, xscale=.5,yscale=1] plot ({\x}, {40*\x*\x*\x*\x});

\end{scope}

\begin{scope}[shift={(1.6,1.4)},scale=.1]
\draw[fill=gray, domain=-1.8:1.8, smooth, variable=\x, xscale=.5,yscale=1] plot ({\x}, {40*\x*\x*\x*\x});

\end{scope}

\begin{scope}[shift={(1.3,1.6)},scale=.1]
\draw[fill=gray, domain=-1.8:1.8, smooth, variable=\x, xscale=.5,yscale=1] plot ({\x}, {40*\x*\x*\x*\x});

\end{scope}

\begin{scope}[shift={(1.45,1.6)},scale=.14]
\draw[fill=gray, domain=-1.8:1.8, smooth, variable=\x, xscale=.5,yscale=1] plot ({\x}, {40*\x*\x*\x*\x});

\end{scope}

\draw plot [smooth cycle] coordinates {(0,0)(2,0)(2,2)(0,2)};

\draw[fill=gray](1,.5) circle(.05);
\draw[fill=gray](1.2,.3) circle(.04);
\draw[fill=gray](1.1,.45) circle(.035);
\draw[fill=gray](1.3,.55) circle(.03);
\draw[fill=gray](1.25,.43) circle(.02);
\draw[fill=gray](1.15,.63) circle(.02);
\draw[fill=gray](1.12,.53) circle(.01);
\draw[fill=gray](1.2,.58) circle(.01);
\draw[fill=gray](1.22,.48) circle(.015);

\end{tikzpicture}
\caption{A possible void set $E$, depicted in gray, that contains thin spikes or small components that may prevent rigidity for deformations defined on the set $\Omega \setminus \overline{E}$.}
\label{fig:thinspikes}
\end{figure}
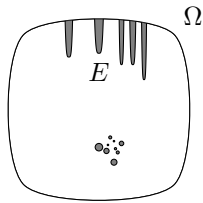 

Loosely speaking, this means that spikes or microscopic components of $\partial E$ accumulating on scales smaller than $\rho$ induce too high curvature \OOO energy, \EEE and can therefore be excluded. \OOO Let us emphasize here that establishing the higher dimensional version of the last assertion for closed hypersurfaces is exactly the missing ingredient to generalize our \BBB result \OOO  to any space dimension. \EEE

Subsequently, the construction of $E^*$ needs to be refined in order to satisfy also  \eqref{eq: setsetset}(ii). To this end, we use the property that under a specific \OOO area and  \EEE curvature bound in a boundary cube  $Q_\rho$, the surface  $\partial E \cap Q_\rho$  \BBB inside a smaller cube \EEE  is essentially a finite union of graphs of Lipschitz functions \OOO with appropriate a priori estimates. \EEE Based on this,  a direct geometric construction can be performed to thicken the sets. Whereas this \OOO  local  \EEE graphical approximation of $\partial E$ is elementary in dimension  \OOO $d=2$  \EEE (see Lemma~\ref{lemma: curve graph}), in dimension \OOO $d=3$ and for $q=2$, \EEE it is a deep \OOO $\varepsilon$-regularity  \EEE result  in   \OOO geometric analysis  due to {\sc
Simon} \cite{Simon1993Willmore}, see Lemma~\ref{Simons_lemma} \BBB and also Remark~\ref{obstacle_higher_dimensions_generalization}. \EEE %Roughly speaking, it certifies that under the correct local area and curvature bounds, $\partial E$ is a union of Lipschitz graphs together with an \textit{small exceptional set}, called \textit{pimples} in Simon's terminology.
%(Actually in any dimension $d\geq 3$ and for \textit{supercritical exponents} $q>d-1$ there is an analogous graphical representation (rather than approximation) result due to {%sc
%Hutchinson \cite{allard1986geometric}}, that can be used in the treatment of the higher dimensional case of our problem. Another interesting emerging question would also be to provide suitable generalizations of Simon's result for any $d\geq 3$ at the critical exponent $q=d-1$ as well. \RRR(K: shall we mention this result by Hutchinson which is in a much weaker topology, and for which the proof is missing to be found?)%for which we expect a statement analogous to Lemma~\ref{Simons_lemma} is a work in progress. Having obtained such a result, the rest of our arguments could then be employed essentially unchanged to recover our main theorem in that case as well. \EEE    %\RRR more dimensions possible? \EEE

 \EEE We note that the passage to a thickened set $E^*$ is not due to our specific proof strategy, but is indeed necessary for a uniform rigidity estimate. Simple examples, where $\Omega \setminus \overline{E}$ is connected but only through a \emph{thin tunnel}, show that \eqref{eq: rigidity} \OOO (with a uniform constant) \EEE  can be violated for deformations concentrating elastic energy in the tunnel,  see Example~\ref{ex: tunnel}.

Our result appears to address \EEE an immediate situation between the result in the Sobolev setting \cite{friesecke2002theorem} and the abovementioned results \cite{FinalKorn, Chambolle-Giacomini-Ponsiglione:2007,  Friedrich:15-4,  Friedrich-Schmidt:15} in \OOO the  \EEE function spaces $SBV$ and $SBD$, where  additional difficulties are present due to \OOO the \EEE lack of regularity of deformations. Indeed, in our setting\AAA, \EEE deformations are still Sobolev, yet defined on sets with free boundary. By approximation results in $SBV$ and $SBD$ \cite{Crismale2, Cortesani-Toader:1999} however, jump sets can be regularized and can be \EEE covered by regular sets $E$.  In this sense,  our estimate is in  spirit  closer to results in $SBV$ and $SBD$, and along the proof we  encounter many intricacies present in these function spaces concerning the topology and geometry of jump sets.

As a final comment on the rigidity result, let us emphasize that the idea of deriving uniform estimates for variable domains under certain assumptions on the sets $E$ (or assumptions on the geometry of the jump set) is not new but has been used in a variety of free discontinuity problems, see e.g.\ \cite{Lazzaroni, NegriToader:2013, Rondi}. These models, however, are based on considering very specific classes of discontinuity sets with certain geometric features such as well-separateness. Our approach instead, readily relies on a curvature control of the form \eqref{eq: curvatureeee} which  can be implemented easily in a variational model. Indeed,  curvature regularizations are widely used in the mathematical and physical literature \BBB of \EEE SDRI models, including the description of (the evolution of) elastically stressed thin films or  material voids, see \cite{AnGurt, burger, Carlo,  FonFusLeoMor14, FonFusLeoMor15, GurtJabb, Herr,   Pio14, voigt,  SieMikVoo04}.

\textbf{Applications to linearization of variational SDRI models:}  We employ the rigidity result to derive a rigorous connection between models for hyperelastic materials in nonlinear (finite) elasticity and their linear (infinitesimal) counterparts. Although being a classical topic in elasticity theory, this relation has been derived rigorously via \EEE $\Gamma$-convergence \cite{Braides:02, DalMaso:93} only comparatively recently by  {\sc 
Dal Maso, Negri, and Percivale} \cite{DalMasoNegriPercivale:02}. The authors performed a   nonlinear-to-linear analysis in terms of suitably rescaled  displacement fields and proved the convergence of minimizers for corresponding boundary value problems. Their study has been extended into various directions, ranging from  models for \EEE incompressible materials \cite{Jesenko-Schmidt:20, edo}, \BBB from \EEE atomistic models \cite{Braides-Solci-Vitali:07, Schmidt:2009}, to multiwell energies \cite{alicandro.dalmaso.lazzaroni.palombaro, Schmidt:08}, plasticity \cite{Ulisse},  viscoelasticity \cite{MFMK}, \BBB or fracture \cite{Friedrich:15-2, higherordergriffith}. \EEE In all of these results, the rigidity estimate  \eqref{eq: rigidity} or one of its variants plays a key role  \OOO in  \EEE establishing    compactness.    

Despite the huge body of literature on variational  SDRI models, in particular on epitaxially   strained  elastic thin films  (see e.g.\ \cite{BonCha02, Bon12, ChaSol07, Crismale, DavPio17, FonFusLeoMor07}) and material voids  \cite{BraChaSol07, Crismale, FonFusLeoMil11,  santilli}, results on rigorous relations between nonlinear and linear theories are scarce. To the best of our knowledge, the only available result is the recent work  \cite{KrePio19} on two-dimensional elastic thin films. In this setting, one can resort to the Hausdorff topology for sets,  which in turn allows to apply the rigidity estimate \eqref{eq: rigidity}. Yet, the situation in higher dimensions and in the case of \OOO a \EEE possibly unbounded number of surface components (as in \OOO the \EEE case of material voids) is much more intricate, and a more general rigidity result of the form \eqref{eq: rigidity-new} is indispensable.

We consider functionals defined on pairs of function-set featuring nonlinear elastic \BBB bulk \EEE  and surface contributions of the form  
\begin{equation*}
F_{\OOO \delta}(y,E)\EEE:= \frac{1}{\delta^2} \int_{\Omega \setminus \OOO \overline{E}\EEE}  W(\nabla y) \, {\rm d}x + \int_{\partial E\cap\Omega}  \varphi(\nu_{E}) \, \d\mathcal{H}^{d-1} 
+ \gamma_\delta\int_{\partial E \cap \Omega} |\bm{A}|^q \, {\rm d}\mathcal{H}^{d-1}\,, 
\end{equation*}
where $E \subset \Omega$ is \BBB open and  regular, \EEE  $q \ge d-1$ %\OOO(actually $q>d-1$ for $d>3$)\EEE
, $y \in H^1(\Omega \setminus \OOO \overline{E}\EEE;\R^d)$, and $\gamma_\delta \to 0$ as $\delta \to 0$.  The first part of the functional represents the elastic energy, where  $W$ is a  frame-indifferent stored energy density and $\delta >0$ represents the scaling of the strain.   The surface energy consists of a perimeter term  depending on a (possibly anisotropic) density $\varphi$   evaluated at the outer unit normal $\nu_{E}$ to $\OOO\partial \EEE E$, and a curvature regularization \OOO term. \EEE \OOO In the case $d=3$, $q=2$, \EEE we will also discuss variants where  $|\bm{A}|^{\OOO 2\EEE}$ is replaced by a mean curvature regularization corresponding to the \emph{Willmore energy}.    The setting is complemented with prescribed Dirichlet boundary conditions which \BBB induce \EEE a stress in the solid. 

This energy and its relaxation were studied in   \cite{BraChaSol07, ChaSol07}  without the \OOO curvature \EEE regularization term, where, depending on  the application, $E$  describes material voids in elastically stressed solids or the complement of an elastic thin film.  In this paper,  we are interested in deriving an  effective description in the small-strain limit $\delta \to 0$, in terms of displacement fields $u = \frac{1}{\delta}(y - {\rm id})$. We prove that the $\Gamma$-limit \OOO of the functionals $(F_{\delta})_{\delta >0}$ is of the form 
\begin{equation*}
\mathcal{F}_0(u,E) :=   \frac{1}{2}\int_{\Omega\setminus E} \mathcal{Q}(e(u)) \,\mathrm{d}x  + \int_{\partial^* E\cap\Omega}  \varphi  (\nu_{E}) \,{\rm d}\mathcal{H}^{d-1}  + \int_{J_u \setminus \partial^* E}2\, \varphi(\nu_u)  \,{\rm d}\mathcal{H}^{d-1},
\end{equation*}
i.e., coincides with the relaxation of the models studied in \cite{Crismale}. Here, the map $u$ lies in  $ GSBD^2(\Omega)$ (see Appendix \ref{sec: GSBD}), where  $e(u)$ denotes the approximate symmetrized gradient and $J_u$ is the jump set   with corresponding  measure-theoretical unit normal $\nu_u$. Moreover, $E$ is a set of finite perimeter with essential boundary $\partial^* E$ \OOO and outward pointing measure-theoretical unit normal $\nu_{E}$. \EEE  The elastic energy depends on the linear strain $e(u)$ in terms of the \EEE quadratic form \BBB $\mathcal{Q}=D^2W({\rm Id})$. \EEE Besides the linearization of the elastic term, a further relaxation occurs in the surface energy: parts of the set $E$ may collapse into a discontinuity $J_u$ of the displacement $u$, and are  counted twice in the energy.
 Eventually, our assumption  $\gamma_\delta \to 0$ as $\delta\to 0$ implies that  the curvature regularization of the nonlinear energy does not affect the linearized limit.

%
%
%
%
%and use the estimate to rigorously derive linearized models for elastic materials with free surfaces in the realm of SDRI.  

\textbf{Organization of the paper and notation:}  
The paper is organized as follows: Section \ref{sec: rigidi} is devoted to the rigidity estimate. We give an exact statement of our result along with several extensions in Subsection \ref{sec: state-rig}. The proof is contained in Subsections \ref{sec: main proof}--\ref{sec: thickening2}. In Section \ref{sec: applications} we present our applications to the linearization of SDRI models. Subsections \ref{sec: results1}--\ref{sec: results2} address the case of material voids in elastically stressed solids and epitaxially strained thin  films, respectively. The proofs are given in Subsections \ref{sec: compre}--\ref{sec: compre2}. Finally, in Appendix \ref{sec: appi} we prove some elementary lemmata used in the proofs of our main results, \EEE and collect \BBB basic \EEE properties of the space $GSBD^2$.

We close the Introduction with some basic notation.   Given   $\Omega \subset \R^d$ open, $d=2,3$, we denote by $\mathfrak{M}(\Omega)$ the \AAA collection of all \EEE measurable subsets of $\Omega$. By $\mathcal{A}_{\rm reg}(\Omega)$ we indicate the \AAA collection of all \EEE open subsets $E \subset \Omega$ such that $\partial E \cap \Omega $ is a  $(d-1)$-dimensional \BBB $C^2$-submanifold \EEE of $\mathbb{R}^d$. \BBB Manifolds and functions of $C^2$-regularity will be called \emph{regular} in the following. Given $A \in \mathfrak{M}(\Omega)$, we denote by ${\rm int}(A)$  its   interior and by $A^c= \R^d \setminus A$   its  complement. The diameter of $A$  is denoted by ${\rm diam}(A)$.  Moreover, for $r>0$ we let
\begin{align}\label{eq: thick-def}
(A)_r := \lbrace x\in \R^d\colon \, \dist(x,A) < r\rbrace.
\end{align}  
Given $A, B \in \mathfrak{M}(\Omega)$, we write $A \subset \subset B$ if $\overline{A} \subset B$. The Hausdorff distance of $A$ and $B$ is denoted by ${\rm dist}_\mathcal{H}(A,B)$ and we write $A\triangle B = (A\setminus B) \cup (B\setminus A)$ for the symmetric difference.    By  ${\rm id}$ we denote the identity mapping on $\R^d$ and by ${\rm Id} \in \R^{d\times d}$   the identity matrix.  For each $F \in \R^{d \times d}$ we let ${\rm sym}(F) = \frac{1}{2}\left(F+F^T\right)$, and we define  $SO(d) := \lbrace F\in\R^{d \times d}\colon F^TF = {\rm Id}, \, \det F = 1\rbrace$. Moreover, we denote by $\R^{d \times d}_{\rm sym}$ and $\R^{d \times d}_{\rm skew}$ the set of symmetric and skew\AAA-\EEE symmetric matrices, respectively. We further write $\mathbb{S}^{d-1} \AAA:\EEE= \lbrace \nu \in \R^d\colon \, |\nu|=1\rbrace$. \EEE

By $Q_r(x)$ we denote the half open cube $Q_r(x) := x + r[-\frac{1}{2},\frac{1}{2})^d$  of sidelength $r>0$ centered at  $x \in \R^d$. We introduce a tessellation of $\R^d$ by 
\begin{align}\label{eq: tess}
\mathcal{Q}_r := \{Q_r(x) \colon x \in r\mathbb{Z}^d\}\,.
\end{align}
In the following, we often  omit the center  $(x)$ and simply write $Q_r \in \mathcal{Q}_r$ if no confusion arises. In a similar fashion,  by $Q_{\mu r}$ we indicate the cube with the same center, but sidelength $\mu r$ for $\mu >0$. \EEE   We will use the following elementary fact several times: for each  $Q_r \in \mathcal{Q}_r$  and each $k \in \N$  it holds that 
\begin{align}\label{ineq:Qintersectbound}
\#\{Q'_r \in \mathcal{Q}_r \colon Q_{kr} \cap Q_{kr}' \neq \emptyset \} \leq  (2k-1)^d\,, \EEE
\end{align} 
 where $\#$ indicates the cardinality of a set. Finally, by $B_\rho\subset \R^d$  we denote the  open  ball with radius $\rho$ centered in $0$.

\section{A geometric rigidity result in variable domains}\label{sec: rigidi}

In this section we present a geometric rigidity result generalizing \EEE  the celebrated result in \cite[Theorem~3.1]{friesecke2002theorem} to the setting of variable domains with $C^2$-boundary\AAA. \EEE Here, with \emph{variable domains} we intend sets of the form $\Omega \setminus \overline{E}$, where $\Omega \subset \R^d$, $d=2,3$, is a fixed bounded, open set \EEE and $E \in \mathcal{A}_{\rm reg}(\Omega)$ is arbitrary. The main feature of the result lies in the fact that the rigidity constant is \emph{independent} of the choice of $E$, provided that  a certain curvature regularization  for $\partial E$   is assumed. In Subsection \ref{sec: state-rig} we state our main  result and present  the proof in  Subsections \ref{sec: main proof}--\ref{sec: thickening2}.

\subsection{Statement of the rigidity result}\label{sec: state-rig}

Given $E \in \mathcal{A}_{\rm reg}(\Omega)$, we denote by $\bm{A}$ the second fundamental form  of $\partial E\AAA\cap\Omega\EEE$. In particular, for $d=3$, we have $|\bm{A}| = \sqrt{\kappa_1^2 + \kappa_2^2}$, where $\kappa_1$ and $\kappa_2$  are the principal curvatures of $\partial E\AAA\cap\Omega\EEE$. For $d=2$, we simply have $|\bm{A}| = \kappa$, where $\kappa$ denotes the curvature of the boundary, which is one-dimensional in this case. Given $q\in [d-1,+\infty)\EEE$ and $\gamma \in (0,1)$, we will assume a  curvature regularization for $\partial E$ of the form $\gamma\int_{\partial E \cap \Omega} |\bm{A}|^q \, {\rm d}\mathcal{H}^{d-1} $. Given \EEE also a norm  $\varphi$ on $\R^d$, we introduce the \emph{\AAA local \EEE surface energy}, consisting of a  perimeter  term  with respect to $\varphi$ and the curvature regularization, \AAA defined for every $K\in \mathfrak{M}(\Omega)$, \EEE by
\begin{align}\label{eq: surface energy}
\mathcal{F}_{\rm surf}^{\varphi,\gamma,q}(E\AAA;K)\EEE :=  \int_{\partial E \cap \AAA K\EEE} \varphi(\nu_E) \, {\rm d}\mathcal{H}^{d-1} + \gamma\int_{\partial E \cap \AAA K\EEE} |\bm{A}|^q \, {\rm d}\mathcal{H}^{d-1}\,,
\end{align}
where $\nu_E$ denotes the unit outer normal  to  $\partial E\AAA\cap\Omega\EEE$. \AAA When $K=\Omega$, we omit the dependence of the surface energy on the second argument. \EEE We now formulate the main result of this paper. %\OOO(Recall the definition of the interval $I_d$ given at the Notation section.) \EEE %\RRR all $\gamma_\delta$ are $\gamma$ in this section. 
\EEE

%\RRR(K: check if the finitely many connected components is implied by the good set $E_{\eta,\gamma}$)\EEE 
%\RRR(K: Shall we instead of $\Omega$  open and bounded write $\Omega$ bounded Lipschitz domain, $\tilde \Omega$ is an open regular subdomain right? Check all the theorems were this is stated)\EEE

\begin{theorem}[Geometric rigidity in variable domains]\label{prop:rigidity}  
Let $d=2,3$, $q\in [d-1,+\infty)$, $\gamma \in (0,1)$, and  $\varphi$ be a norm on $\R^d$. Let $\Omega \subset \R^d$ be open and bounded and let $\tilde \Omega \subset \subset \Omega$ be an open subset. Then, there exist constants $C_0=C_0(\varphi)>0$, $\eta_0 = \eta_0(\QQQ\mathrm{dist}(\partial\Omega,\tilde \Omega)\EEE,\varphi)  \in (0,1)$ and for each $\eta\in (0,\eta_0]$ there exists  $C_\eta = C_\eta(\eta,\Omega,\tilde\Omega)>0$ such that the following holds:\\
\noindent For every $E \in \mathcal{A}_{\rm reg}(\Omega)$ there exists an open set $E_{\eta,\gamma}$ such that $E  \subset  E_{\eta,\gamma}  \subset \Omega$, $\partial E_{\eta,\gamma}\cap \Omega$ is a union of finitely many regular submanifolds, and  
\begin{align}\label{eq: partition}
\begin{split}
{\rm (i)} & \ \    \mathcal{L}^d(E_{\eta,\gamma}\setminus E) \le\eta\gamma^{1/q}  \mathcal{F}_{\rm surf}^{\varphi,\gamma,q}(E),  \quad \quad \quad {\rm dist}_\mathcal{H}(E, E_{\eta,\gamma}) \le \eta\gamma^{1/q},%\notag
\\
{\rm (ii)} & \ \          \int_{\partial E_{\eta,\gamma} \cap \Omega} \varphi(\nu_{E_{\eta,\gamma}}) \, {\rm d}\mathcal{H}^{d-1}  \leq (1+C_0\eta\EEE) \, \mathcal{F}_{\rm surf}^{\varphi,\gamma,q}(E)\,,
\end{split}
\end{align}
such that for the connected components $(\tilde \Omega^{\eta,\gamma}_j)_j$ of $\tilde \Omega  \setminus \overline{{E_{\eta,\gamma}}}$ and for every $y  \in  H^1(\Omega \setminus \overline{E};\R^d)$ there exist  corresponding rotations $(R^{\eta,\gamma}_j)_j \subset SO(d)$ \QQQ and vectors $(b^{\eta,\gamma}_{j})_j\subset \R^d$ \EEE  such that 
\begin{align}\label{eq: main rigitity}
\begin{split}
{\rm (i)} & \ \  \sum\nolimits_j \int_{%P^{\eta,\gamma}_j
\OOO\tilde \Omega^{\eta,\gamma}_j\EEE}\big|{\rm sym}\big((R^{\eta,\gamma}_j)^T \nabla y-\mathrm{Id}\big)\big|^2%\,\mathrm{d}x 
\leq C_0 \big(1 +  C_\eta \gamma^{-5d/q}\AAA\varepsilon\EEE    \big)  \EEE \int_{\Omega\setminus  \overline{E}} \mathrm{dist}^2(\nabla y,SO(d))%\,\mathrm{d}x
\,,
\\
{\rm (ii)} & \ \   \sum\nolimits_{j}\int_{%P^{\eta,\gamma}_j
\OOO\tilde \Omega^{\eta,\gamma}_j} \big|(R^{\eta,\gamma}_j)^T \nabla y-\mathrm{Id}\big|^2%\,\mathrm{d}x 
\leq C_\eta  \gamma^{-2d/q} \int_{\Omega\setminus  \overline{E} \EEE} \mathrm{dist}^2(\nabla y,SO(d))%\,\mathrm{d}x
\,,\\
\ZZZ {\rm (iii)} &\ \  \ZZZ \sum\nolimits_{j} \int_{\tilde \Omega^{\eta,\gamma}_j} \big|y-(R^{\eta,\gamma}_jx+b^{\eta,\gamma}_j)\big|^2%\,\mathrm{d}x 
\leq C_\eta  \gamma^{\QQQ (2-4d)/q\EEE} \int_{\Omega\setminus  \overline{E} \EEE} \mathrm{dist}^2(\nabla y,SO(d))%\,\mathrm{d}x
\,,
\end{split}
\end{align}
where for brevity $\eps := \int_{\Omega \setminus \overline{E}} \dist^2(\nabla y(x), SO(d)) %\, {\rm d}x
$. \EEE
\end{theorem}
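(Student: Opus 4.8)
\emph{Strategy.} The plan is to first construct the thickened set $E_{\eta,\gamma}$ by a cube‑based geometric procedure, and then to run on the resulting domain an adaptation of the argument of \cite{friesecke2002theorem}. Fix a target scale $\rho_0 = c\,\eta\,\gamma^{1/q}$ with $c = c(\Omega,\tilde\Omega,\varphi)$ so small that $\rho_0 \ll \dist(\partial\Omega,\tilde\Omega)$; all smallness requirements on $\eta$ are absorbed into $\eta_0$. By Lemma~\ref{lemma:slicing} --- which is precisely where $q\ge d-1$ enters, cf.\ Example~\ref{ex: q!!!} --- there is a scale $\rho\in[\rho_0/2,\rho_0]$ such that every \emph{boundary cube} $Q\in\mathcal Q_\rho$ (i.e.\ $\overline Q\cap\partial E\neq\emptyset$) satisfies $\mathcal H^{d-1}(\partial E\cap Q)\ge c_d\rho^{d-1}$ or $\int_{\partial E\cap Q}|\bm A|^q\,\d\mathcal H^{d-1}\ge c_d\rho^{d-1}$, for a dimensional constant $c_d>0$. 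Summing this dichotomy over the boundary cubes meeting a neighbourhood of $\tilde\Omega$, using the finite overlap \eqref{ineq:Qintersectbound} and $\varphi(\nu)\ge c(\varphi)>0$ on $\mathbb{S}^{d-1}$, the number $N$ of such cubes satisfies $N\rho^{d-1}\le C(\varphi)\gamma^{-1}\mathcal F_{\rm surf}^{\varphi,\gamma,q}(E)$.

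\emph{Construction of $E_{\eta,\gamma}$ and proof of \eqref{eq: partition}.} Call a boundary cube \emph{regular} if both its area and curvature contributions lie below fixed thresholds --- small enough for the $\varepsilon$‑regularity statements below, and with the area threshold $\theta_0$ large relative to $\sup_{\mathbb S^{d-1}}\varphi/\inf_{\mathbb S^{d-1}}\varphi$ --- and \emph{irregular} otherwise (so a regular cube obeys the area lower bound). On a regular cube, Lemma~\ref{lemma: curve graph} ($d=2$) and Simon's $\varepsilon$‑regularity theorem, Lemma~\ref{Simons_lemma} ($d=3$, $q=2$; the case $q>2$ by Hölder), show that inside the concentric cube $Q_{\rho/2}$ the surface $\partial E$ is a finite union of graphs of small Lipschitz norm over a common hyperplane, with quantitative $W^{2,q}$‑bounds (in particular $\sup|\bm A|\le C\rho^{-1}$ there). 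I then set $E_{\eta,\gamma}$ equal to: $E$ with (a) every irregular cube filled in, and (b) in each regular cube, the portion of $E$ replaced by the region on the $E$‑side of these graphs, dilated \emph{outward} (away from $E$) by $\lambda\rho$ for a small constant $\lambda=\lambda(\eta)$, after (c) smoothing the construction on a $\lambda\rho$‑collar of the cube skeleton so that $\partial E_{\eta,\gamma}\cap\Omega$ becomes a finite union of $C^2$‑submanifolds. Then $E\subset E_{\eta,\gamma}\subset\Omega$; the inclusions together with $\mathrm{dist}\le\sqrt d\,\rho$ inside irregular cubes give $\dist_{\mathcal H}(E,E_{\eta,\gamma})\le\sqrt d\,\rho$, and $\mathcal L^d(E_{\eta,\gamma}\setminus E)\le C\lambda\rho\,N\rho^{d-1}+\rho\,(\#\text{irregular cubes})\,\rho^{d-1}$; choosing $c$ and $\lambda$ appropriately and inserting the count of $N$, both are $\le\eta\gamma^{1/q}\mathcal F_{\rm surf}^{\varphi,\gamma,q}(E)$, which is \eqref{eq: partition}(i). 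For \eqref{eq: partition}(ii): in a regular cube the new boundary is an outward push of a graph, so its $\varphi$‑weighted area is at most $(1+C\lambda)$ times that of $\partial E$ (using $\sup|\bm A|\le C\rho^{-1}$), and the outward direction pinches thin necks of $\Omega\setminus\overline E$ shut rather than enlarging them; in an irregular cube of large area the created perimeter $\le 2d\rho^{d-1}$ is, in $\varphi$‑weight, dominated by the $\partial E$ that is deleted, since $\theta_0$ is large relative to the anisotropy ratio; in an irregular cube of large curvature, the created perimeter is paid for by $\gamma\int_{\partial E\cap Q}|\bm A|^q$, for which the scale choice $\rho\sim\eta\gamma^{1/q}$ leaves enough room. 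Summation gives \eqref{eq: partition}(ii) with constant $1+C_0(\varphi)\eta$.

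\emph{The rigidity estimates \eqref{eq: main rigitity}.} Up to a negligible modification near $\partial\tilde\Omega$, each connected component $\tilde\Omega^{\eta,\gamma}_j$ of $\tilde\Omega\setminus\overline{E_{\eta,\gamma}}$ is a connected union of cubes of $\mathcal Q_\rho$ inside $\Omega$. As in \cite{friesecke2002theorem}, covering such a set by overlapping cubes of a \emph{fixed} sidelength, applying \eqref{eq: rigidity} on each, and chaining the resulting rotations, one obtains a rotation $R^{\eta,\gamma}_j$ with $\int_{\tilde\Omega^{\eta,\gamma}_j}|\nabla y - R^{\eta,\gamma}_j|^2\le C\rho^{-k}\int_{\tilde\Omega^{\eta,\gamma}_j}\dist^2(\nabla y,SO(d))$, the constant depending only on $\Omega$, $\tilde\Omega$ and polynomially on $\rho^{-1}$; summing over $j$ and using $\rho^{-1}\sim(\eta\gamma^{1/q})^{-1}$ gives \eqref{eq: main rigitity}(ii). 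For \eqref{eq: main rigitity}(i) I would follow the second half of \cite{friesecke2002theorem}: replace $y$ on each $\tilde\Omega^{\eta,\gamma}_j$ by its harmonic extension $w_j$ with the same boundary values, use the mean value property to bound $(R^{\eta,\gamma}_j)^T\nabla w_j - \mathrm{Id}$ in $L^\infty$ of the interior by an $L^2$‑average, and combine this with the pointwise expansion $|\mathrm{sym}(F)-\mathrm{Id}|\le\dist(F,SO(d))+C|F-\mathrm{Id}|^2$ valid near $SO(d)$; the leading term then carries a constant $C_0$ independent of the rigidity constant, while the quadratic remainder absorbs the $\rho$‑dependent factor, producing the bound of the form $C_0(1+C_\eta\gamma^{-5d/q}\varepsilon)\varepsilon$. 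Finally \eqref{eq: main rigitity}(iii) follows from (ii) by the Poincaré inequality on each $\tilde\Omega^{\eta,\gamma}_j$ with $b^{\eta,\gamma}_j$ the mean of $y - R^{\eta,\gamma}_jx$, the Poincaré constant of a union of $\rho$‑cubes in $\Omega$ being again polynomial in $\rho^{-1}$.

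\emph{Main obstacle.} The crux is Lemma~\ref{lemma:slicing}: producing a scale at which the area/curvature dichotomy holds in \emph{every} boundary cube, so that thin spikes and microscopic components of $\partial E$ --- which would otherwise wreck the count of boundary cubes --- are ruled out by an excess of curvature energy. Its proof in $d=3$ rests on $\varepsilon$‑regularity for surfaces with small $L^2$‑curvature, and for closed hypersurfaces in $d\ge4$ the analogue is open (cf.\ Remark~\ref{obstacle_higher_dimensions_generalization}). A secondary delicate point is step (c): reconciling the graphical thickening of regular cubes with the filling of irregular cubes across their interfaces, while preserving both the $C^2$‑regularity of $\partial E_{\eta,\gamma}\cap\Omega$ and the sharp constant $1+C_0\eta$ in \eqref{eq: partition}(ii).
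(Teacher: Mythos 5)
Your high-level strategy matches the paper's: tessellate $\R^d$ by cubes of sidelength $\rho \sim \mathrm{poly}(\eta)\,\gamma^{1/q}$, invoke Lemma~\ref{lemma:slicing} to control the number of boundary cubes, locally represent $\partial E$ as graphs via Lemma~\ref{lemma: curve graph} (resp.\ Lemma~\ref{Simons_lemma}) to thicken outward, and finally run the FJM scheme (harmonic replacement, interior $L^2$--$L^\infty$ estimate, pointwise expansion near $SO(d)$) for the rigidity estimates. There is, however, a quantitative gap in your boundary-cube count that breaks \eqref{eq: partition}(i). You state the dichotomy in the unweighted form
$\mathcal{H}^{d-1}(\partial E\cap Q)\ge c_d\rho^{d-1}$ or $\int_{\partial E\cap Q}|\bm{A}|^q\,\mathrm{d}\mathcal{H}^{d-1}\ge c_d\rho^{d-1}$,
and correctly deduce from it only $N\rho^{d-1}\le C(\varphi)\gamma^{-1}\mathcal{F}_{\rm surf}^{\varphi,\gamma,q}(E)$, since the second alternative contributes to $\mathcal{F}_{\rm surf}^{\varphi,\gamma,q}$ only after multiplication by $\gamma$. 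Lemma~\ref{lemma:slicing} in fact gives the \emph{$\gamma$-weighted} alternative: when the area is small, $\gamma\int_{\partial E\cap Q_{8\rho}}|\bm{A}|^q\,\mathrm{d}\mathcal{H}^{d-1}>\Lambda\rho^{d-1}$. Consequently every boundary cube satisfies $\mathcal{F}_{\rm surf}^{\varphi,\gamma,q}(E;Q_{8\rho})\ge\min\{\Lambda,\,c_0\min_{\mathbb{S}^{d-1}}\varphi\}\rho^{d-1}$ and the count improves to $N\rho^{d-1}\le C\,\mathcal{F}_{\rm surf}^{\varphi,\gamma,q}(E)$, with no $\gamma^{-1}$. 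This is not a cosmetic improvement: with your weaker count, your own estimate gives $\mathcal{L}^d(E_{\eta,\gamma}\setminus E)\lesssim \rho\,N\rho^{d-1}\lesssim \rho\,\gamma^{-1}\mathcal{F}_{\rm surf}^{\varphi,\gamma,q}(E)\sim \eta\,\gamma^{1/q-1}\mathcal{F}_{\rm surf}^{\varphi,\gamma,q}(E)$, which exceeds the target $\eta\gamma^{1/q}\mathcal{F}_{\rm surf}^{\varphi,\gamma,q}(E)$ by a factor $\gamma^{-1}\to\infty$, and no admissible choice of $c,\lambda$ (which may depend on $\eta$ and $\varphi$ but not on $\gamma$) can recover it.

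A secondary but nontrivial concern is your step (c). Reconciling the graph-thickening in a regular cube with the filling of an adjacent irregular cube, while keeping $\partial E_{\eta,\gamma}\cap\Omega$ a finite union of $C^2$-submanifolds and preserving the sharp $(1+C_0\eta)$ in \eqref{eq: partition}(ii), is precisely where the paper invests most of its effort: it fills the much larger cube $\overline{Q_{12\rho}}$ around each accumulation cube and leaves neighbouring cubes untouched so no regular/irregular face has to be matched, and it still needs the good/bad-plane analysis of Definition~\ref{theta_good_definition} together with Lemmata~\ref{lemma:thetagood}--\ref{lemma: bad plane} (and the $\mathcal{I}_{\rm bad}$ bookkeeping in Lemma~\ref{lemma: localthick}) to handle graphs that exit a cube nearly parallel to a face. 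Your construction would need an argument of comparable strength here; as stated, the $(1+C_0\eta)$ constant is asserted rather than earned. The rigidity part of your proof (chaining on cubic sets, harmonic replacement, mean value property, linearization formula, and Poincar\'e for \eqref{eq: main rigitity}(iii)) is essentially the paper's and is sound.
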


We note that Theorem \ref{prop:rigidity}, in particular \eqref{eq: main rigitity},  provides a \emph{piecewise geometric rigidity} result in the spirit of \cite{Chambolle-Giacomini-Ponsiglione:2007,  Friedrich:15-4, Friedrich-Schmidt:15}. In fact, global rigidity may fail if the domain $\Omega$ \AAA(or more precisely $\tilde \Omega$) \EEE is disconnected by $E$ into several parts on each of which $y$ is close to a different rigid motion. A separation of the domain into the sets $\OOO(\tilde \Omega^{\eta,\gamma}_j)_j$ might still be necessary \emph{even if  $\Omega \setminus \overline{E}$ is connected}. In fact, this is indispensable if the domain is connected only through a \emph{thin tunnel}, as explained in Example~\ref{ex: tunnel}. Such phenomena are accounted for in our result  by defining the components $\OOO(\tilde \Omega^{\eta,\gamma}_j)_j$ with respect to an appropriate \emph{thickened set} $E_{\eta,\gamma}$ containing $E$.  Note that  \eqref{eq: partition}(i) ensures that we obtain a rigidity result \BBB outside of the \emph{small set} $E_{\eta,\gamma}\setminus E$, which vanishes for $\AAA\eta\EEE,\gamma \to 0$. In addition,  \eqref{eq: partition}(ii) provides a sharp control on the (anisotropic) perimeter of  $E_{\eta,\gamma}$ \BBB as $\eta,\gamma \to 0$, \EEE  which  will be essential for our applications to models involving surface energies, see Section \ref{sec: applications}. 
 
When comparing our result to  \cite{friesecke2002theorem},  the constant  in \eqref{eq: main rigitity} depends on the small parameter $\eta$ and the curvature regularization parameter $\gamma$, \BBB with $C_\eta \to +\infty$ as $\eta \to 0$. \EEE We emphasize, however, that for configurations with \BBB gradient close to the set of rotations, in the sense of \EEE
 \begin{align}\label{eq:CdeltaRate}
\int_{\Omega \setminus \overline{E}} \dist^2(\nabla y, SO(d)) \, {\rm d}x  \le  C_\eta^{-1} \gamma^{5d/q} \,, 
\end{align}
we obtain a uniform control on symmetrized gradients, see \eqref{eq: main rigitity}(i). (The subspace $\R^{d\times d}_{\rm sym}$ \EEE corresponds to the orthogonal space to $SO(d)$ at the identity matrix. Since different rotations appear in our statement, $\mathbb{R}^{d\times d}_{\mathrm{sym}}$ has to be replaced accordingly.) %\RRR (K: Still don't understand the last sentence). \EEE 
In our applications, this uniform control will be essential to obtain compactness for rescaled displacement fields, see \eqref{eq: rescali1} and Propositions \ref{prop: compi1} and \ref{prop: compi2} below. Eventually, \eqref{eq: main rigitity}(ii)  is needed to control higher order terms in the passage to linearized elastic energies, see  Lemma \ref{prop:liminfel}. Note that  \OOO even  \EEE  under the assumption \eqref{eq:CdeltaRate}, a uniform control on the gradients  \OOO independently of the set $E$  \EEE cannot be expected, as in Example \ref{ex: sharp} we show that the estimate is actually sharp. This is related to the fact that the constant in  Korn's inequality  (see e.g.\ \cite{Nitsche}) is not uniform for variable domains $\Omega \setminus \overline{E}$. In the proof, we will first establish  \eqref{eq: main rigitity}(ii) and then derive \eqref{eq: main rigitity}(i) from \eqref{eq: main rigitity}(ii). \EEE

\EEE

\BBB We also emphasize that the choice $q \ge d-1$ for the curvature regularization is essential for the proof, see Lemma \ref{lemma:slicing} and Example \ref{ex: q!!!}. \EEE 
We proceed with several slightly modified  versions of the statement which will be convenient for our applications.\EEE

\begin{corollary}[Version with Dirichlet conditions]\label{cor: rig-cor}
Suppose that $\Omega = U \cup U_D$ for two \BBB  bounded \EEE sets $U,U_D \subset \R^d$ with Lipschitz boundary. Then, for every $E \in \mathcal{A}_{\rm reg}(\Omega)$ and every $y  \in  H^1(\Omega \setminus \overline{E};\R^d)$ with $y = {\rm id}$ on $U_D$, the statement of Theorem \ref{prop:rigidity} holds with the \BBB additional  \EEE property that:
\begin{align*}
\OOO\text{if for some \EEE $\OOO j$  \BBB it holds that $ \mathcal{L}^d\big(\OOO\tilde \Omega^{\eta,\gamma}_j \EEE \cap U_D\big)>0$,\ then \OOO we can take \EEE \ $R^{\eta,\gamma}_j ={\rm Id}$\,,}    
\end{align*}
\BBB where  \EEE the constant $C_\eta$ additionally depends on $U_D$. 
\end{corollary}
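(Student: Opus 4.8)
The plan is to run Theorem~\ref{prop:rigidity} unchanged and then to upgrade the rotations on those components that ``see'' the Dirichlet datum. First I would apply Theorem~\ref{prop:rigidity} to $\Omega$ and $\tilde\Omega$ to obtain the thickened set $E_{\eta,\gamma}$, its connected components $(\tilde\Omega^{\eta,\gamma}_j)_j$, rotations $(\bar R_j)_j\subset SO(d)$ and vectors $(\bar b_j)_j\subset\R^d$ satisfying \eqref{eq: partition} and \eqref{eq: main rigitity}. I keep $E_{\eta,\gamma}$, the components $\tilde\Omega^{\eta,\gamma}_j$ and the vectors $b^{\eta,\gamma}_j:=\bar b_j$ fixed, and I only modify the rotations: set $R^{\eta,\gamma}_j:={\rm Id}$ whenever $\mathcal{L}^d(\tilde\Omega^{\eta,\gamma}_j\cap U_D)>0$, and $R^{\eta,\gamma}_j:=\bar R_j$ otherwise. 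Since nothing changes for the remaining indices, only the summands in \eqref{eq: main rigitity} corresponding to components meeting $U_D$ in positive measure need to be re-examined.

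The quantitative input is the following. If $\mathcal{L}^d(\tilde\Omega^{\eta,\gamma}_j\cap U_D)>0$, then, as $\tilde\Omega^{\eta,\gamma}_j\subset\tilde\Omega\setminus\overline{E_{\eta,\gamma}}\subset\Omega\setminus\overline{E}$ and $y={\rm id}$ on $U_D$, we have $\nabla y={\rm Id}$ $\mathcal{L}^d$-a.e.\ on $\tilde\Omega^{\eta,\gamma}_j\cap U_D$, so that $\mathcal{L}^d(\tilde\Omega^{\eta,\gamma}_j\cap U_D)\,|\bar R_j-{\rm Id}|^2\le\int_{\tilde\Omega^{\eta,\gamma}_j}|\bar R_j^T\nabla y-{\rm Id}|^2$; summing and using \eqref{eq: main rigitity}(ii) yields $\mathcal{L}^d(\tilde\Omega^{\eta,\gamma}_j\cap U_D)\,|\bar R_j-{\rm Id}|^2\le C_\eta\gamma^{-2d/q}\eps$. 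The crucial step is then a uniform lower volume bound: there is $c_0>0$, depending only on $\Omega,\tilde\Omega,U_D$ and on $\eta,\gamma$, such that $\mathcal{L}^d(\tilde\Omega^{\eta,\gamma}_j\cap U_D)\ge c_0$ for every $E\in\mathcal{A}_{\rm reg}(\Omega)$ and every $j$ with positive intersection. I would extract this from the construction underlying Theorem~\ref{prop:rigidity}: $E_{\eta,\gamma}$ is, up to regular modifications, a union of cubes of the fixed tessellation $\mathcal{Q}_\rho$ with $\rho=\rho(\eta,\gamma)$, and the thickening absorbs into $E_{\eta,\gamma}$ every part of $\Omega\setminus\overline{E}$ of thickness below a fixed multiple of $\rho$; hence, after choosing $\eta,\gamma$ (so $\rho$) below a threshold dictated by the Lipschitz geometry of $\partial U_D$, every surviving component still meeting $U_D$ must contain a cube portion inside $U_D$ of definite volume $c_0\sim\rho^{d}$. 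An equivalent route is an excision argument: the finitely many components with $0<\mathcal{L}^d(\tilde\Omega^{\eta,\gamma}_j\cap U_D)<c_0$ can be removed by adjoining $\tilde\Omega^{\eta,\gamma}_j\cap U_D$ to $E_{\eta,\gamma}$, which renders those intersections $\mathcal{L}^d$-null; since producing such thin components forces a comparably large contribution to $\mathcal{F}_{\rm surf}^{\varphi,\gamma,q}(E)$ by Lemma~\ref{lemma:slicing}, this perturbs \eqref{eq: partition} only within the available slack. Granting the bound, $|\bar R_j-{\rm Id}|^2\le c_0^{-1}C_\eta\gamma^{-2d/q}\eps$ on all components meeting $U_D$ in positive measure.

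It then remains to verify that \eqref{eq: main rigitity}(i)--(iii) persist after the substitution $\bar R_j\mapsto{\rm Id}$ on these components. Writing $e_j:=\int_{\tilde\Omega^{\eta,\gamma}_j}|\bar R_j^T\nabla y-{\rm Id}|^2$ (so $\sum_j e_j\le C_\eta\gamma^{-2d/q}\eps$ and $|\bar R_j-{\rm Id}|^2\le c_0^{-1}e_j$), the extra error terms are estimated by the triangle inequality: for (ii) one adds $2\sum_j\mathcal{L}^d(\tilde\Omega^{\eta,\gamma}_j)\,|\bar R_j-{\rm Id}|^2\le 2c_0^{-1}\mathcal{L}^d(\Omega)\,C_\eta\gamma^{-2d/q}\eps$; for (iii) one adds $2\sum_j|\bar R_j-{\rm Id}|^2\int_{\tilde\Omega^{\eta,\gamma}_j}|x|^2\le C(\Omega)\,c_0^{-1}C_\eta\gamma^{-2d/q}\eps$, which is harmless since $\gamma^{(2-4d)/q}\ge\gamma^{-2d/q}$; for (i), using $|{\rm sym}(\nabla y)-{\rm sym}(\bar R_j^T\nabla y)|\le|\bar R_j-{\rm Id}|\,|\nabla y|$ and $\int_{\tilde\Omega^{\eta,\gamma}_j}|\nabla y|^2\le 2e_j+2d\,\mathcal{L}^d(\tilde\Omega^{\eta,\gamma}_j)$, the extra contribution is bounded by $2c_0^{-1}\big((\sum_j e_j)^2+d\,\mathcal{L}^d(\Omega)\sum_j e_j\big)$, i.e.\ by $C\,c_0^{-1}C_\eta\gamma^{-2d/q}\eps\,(1+C_\eta\gamma^{-2d/q}\eps)$, which matches the form of the right-hand side of \eqref{eq: main rigitity}(i) (recall $\gamma^{-2d/q}\le\gamma^{-5d/q}$) after redefining the constants. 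All new constants depend on $U_D$ only through $c_0$ and $\rho$, so the statement follows with the constants (in particular $C_\eta$) enlarged accordingly. I expect the only genuine difficulty to be the uniform lower volume bound of the second paragraph — the geometric fact that the thickening cannot carve $U_D$ into pieces of arbitrarily small volume without a proportionate increase of $\mathcal{F}_{\rm surf}^{\varphi,\gamma,q}(E)$; the rest is a routine perturbation of Theorem~\ref{prop:rigidity}.
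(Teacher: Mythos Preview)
Your approach treats Theorem~\ref{prop:rigidity} as a black box and perturbs the rotations afterwards. This differs from the paper, which goes inside the proof and uses the additional clause of Proposition~\ref{lemma:chain}: once a component $\tilde\Omega^{\eta,\gamma}_j$ meets $U_D$ in positive measure, one finds a cube $Q'_{2r/3}\subset\hat\Omega^{\eta,\gamma}_j$ with $\mathcal{L}^d(Q'_{2r/3}\cap U_D)\ge cr^d$ (by Lipschitz regularity of $\partial U_D$, after shrinking $c_\eta$ depending on $U_D$), and then Proposition~\ref{lemma:chain} directly yields \eqref{ineq:yglobalbad-new} with $R={\rm Id}$. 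All downstream estimates follow unchanged, so the $\gamma$-exponents in \eqref{eq: main rigitity} are preserved and only $C_\eta$ picks up the $U_D$-dependence via $c_\eta$.

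Your perturbation route has a genuine gap: the constants you produce depend on $\gamma$, which the corollary does not allow. Your lower volume bound is $c_0\sim\rho^d\sim c_\eta^d\gamma^{d/q}$, so $c_0^{-1}\sim\gamma^{-d/q}$. In your treatment of \eqref{eq: main rigitity}(ii) the extra term is $\sim c_0^{-1}\mathcal{L}^d(\Omega)\,C_\eta\gamma^{-2d/q}\eps\sim C_\eta'\gamma^{-3d/q}\eps$, strictly worse than the target $C_\eta\gamma^{-2d/q}\eps$; absorbing this forces $C_\eta$ to depend on $\gamma$. The same issue is more severe for \eqref{eq: main rigitity}(i): your linear-in-$\eps$ error is $\sim c_0^{-1}C_\eta\gamma^{-2d/q}\eps\sim\gamma^{-3d/q}\eps$, which cannot be absorbed into $C_0\eps$ with $C_0$ independent of $\gamma$. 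The point the paper exploits and you miss is that Proposition~\ref{lemma:chain} only needs the \emph{ratio} $\mathcal{L}^d(Q\cap\{\nabla y={\rm Id}\})/r^d\ge c$, not an absolute volume; this keeps the rigidity constant on the cubic set $\gamma$-free. A secondary issue: the uniform lower bound $\mathcal{L}^d(\tilde\Omega^{\eta,\gamma}_j\cap U_D)\ge c_0$ is itself not justified, since $E_{\eta,\gamma}$ is not a union of $\rho$-cubes (it involves the stripes $S_L$ of Lemma~\ref{lemma: localthick}), and nothing in the construction prevents $\partial E_{\eta,\gamma}$ from slicing off an arbitrarily thin sliver of $U_D$. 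The paper sidesteps this entirely because it works in the enlarged set $\hat\Omega^{\eta,\gamma}_j$ of \eqref{eq: tiliP}, where a suitable cube with the required volume ratio always exists.
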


In the applications, Dirichlet conditions will indeed be imposed on a set of positive $\mathcal{L}^d$-measure, as it is customary in free discontinuity problems. \EEE

\begin{corollary}[Version for graphs]\label{cor: graphi}
Consider $\Omega = \omega \times (-1,M+1)$ for some \BBB open and bounded  \EEE $\omega \subset \R^{d-1}$ and $M>0$. Suppose that $E = \lbrace (x',x_d) \in \Omega\colon \, x' \in \omega,  x_d > h(x')\rbrace$ for a \BBB regular \EEE function $h\colon \omega \to [0,M]$, i.e., $\partial E \cap \Omega$ is the graph of the function $h$.   Then, in Theorem~\ref{prop:rigidity} we find another set  \BBB $E'_{\eta,\gamma} \supset E_{\eta,\gamma} $, \EEE which is the supergraph of a smooth function $h_{\eta,\gamma}\colon \omega \to [0,M]$ \OOO with  \EEE $h_{\eta,\gamma} \OOO\leq  \EEE h$, i.e., we have $E'_{\eta,\gamma} = \lbrace (x',x_d) \in \Omega\colon \, x' \in \omega,  x_d > h_{\eta,\gamma}(x')\rbrace$ such that 
\begin{equation}\label{eq: graphiii}
{\rm(i)} \ \ \mathcal{L}^d(E'_{\eta,\gamma}\setminus E) \le \eta\gamma^{1/q} \mathcal{F}_{\rm surf}^{\varphi,\gamma,q}(E), \quad  \quad {\rm(ii)}  \
 \int_{\partial E'_{\eta,\gamma} \cap  \OOO\Omega\EEE} \varphi(\nu_{E'_{\eta,\gamma}}) \, {\rm d}\mathcal{H}^{d-1}  \leq C_0\mathcal{F}_{\rm surf}^{\varphi,\gamma,q}(E)\,.
\end{equation}
\end{corollary}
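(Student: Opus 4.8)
The plan is to deduce the statement from Theorem~\ref{prop:rigidity} by a purely geometric post-processing of the thickened set that turns it into a supergraph while affecting volume and (anisotropic) perimeter only in a controlled way. First I would fix a dimensional constant $\Lambda\geq1$ (to be chosen at the very end) and apply Theorem~\ref{prop:rigidity} with $\eta/\Lambda$ in place of $\eta$; denote the resulting thickened set again by $E_{\eta,\gamma}$, so that \eqref{eq: partition} and \eqref{eq: main rigitity} hold with $\eta/\Lambda$, in particular $\mathcal{L}^d(E_{\eta,\gamma}\setminus E)\leq(\eta/\Lambda)\gamma^{1/q}\mathcal{F}_{\rm surf}^{\varphi,\gamma,q}(E)$, the bound ${\rm dist}_{\mathcal H}(E,E_{\eta,\gamma})\le(\eta/\Lambda)\gamma^{1/q}$, and $\int_{\partial E_{\eta,\gamma}\cap\Omega}\varphi(\nu_{E_{\eta,\gamma}})\,{\rm d}\mathcal{H}^{d-1}\leq(1+C_0\eta/\Lambda)\mathcal{F}_{\rm surf}^{\varphi,\gamma,q}(E)$. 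I would then define $\hat E_{\eta,\gamma}$ as the smallest open subset of $\Omega$ that contains $E_{\eta,\gamma}$ and has supergraph form, i.e.\ the supergraph of $\tilde h(x'):=\inf\{x_d\colon(x',x_d)\in E_{\eta,\gamma}\}$. Since $E\subset E_{\eta,\gamma}$ and $E$ is itself the supergraph of $h$, one has $\tilde h\le h$; since $E_{\eta,\gamma}$ is open, every point of $E_{\eta,\gamma}$ lies strictly above $\mathrm{graph}(\tilde h)$, whence $E_{\eta,\gamma}\subset\hat E_{\eta,\gamma}$; and $E_{\eta,\gamma}\subset\Omega$ forces $\tilde h\geq-1$, so $\hat E_{\eta,\gamma}\subset\Omega$. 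Finally, $\tilde h$ is upper semicontinuous, of bounded variation, and — $\partial E_{\eta,\gamma}\cap\Omega$ being a finite union of $C^2$-submanifolds — piecewise $C^2$.

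Next I would prove the quantitative bounds. The key elementary observation is that, because $\Omega\setminus\overline E$ is a supergraph and $h$ is continuous, the set $\hat E_{\eta,\gamma}\setminus E_{\eta,\gamma}$ is contained in a union of entire cells $Q_\rho\in\mathcal{Q}_\rho$, each of which lies within $C\rho$ of $\partial E=\mathrm{graph}(h)$: if such a cell did not meet $\mathrm{graph}(h)$, the intermediate value theorem applied to $h$ over the corresponding footprint would still produce a point of $\mathrm{graph}(h)$ at the cell's height and within its footprint. Hence the number of these cells is comparable to the number of boundary cubes (those $Q_\rho\in\mathcal{Q}_\rho$ with $Q_\rho\cap\partial E\neq\emptyset$) added in the construction of $E_{\eta,\gamma}$, which by the slicing Lemma~\ref{lemma:slicing} — forcing $\mathcal{H}^{d-1}(\partial E\cap Q_\rho)$ or $\int_{\partial E\cap Q_\rho}|\bm A|^q\,{\rm d}\mathcal{H}^{d-1}$ to be of order $\rho^{d-1}$ in every boundary cube, together with the bounded overlap of cubes — is at most $C\,\mathcal{F}_{\rm surf}^{\varphi,\gamma,q}(E)\,\rho^{1-d}$. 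For \eqref{eq: graphiii}(i) this gives $\mathcal{L}^d(\hat E_{\eta,\gamma}\setminus E_{\eta,\gamma})\leq C\rho\,\mathcal{F}_{\rm surf}^{\varphi,\gamma,q}(E)$; since the sidelength $\rho$ used in the proof of Theorem~\ref{prop:rigidity} satisfies $\rho\leq C(\eta/\Lambda)\gamma^{1/q}$ (as forced by the Hausdorff bound in \eqref{eq: partition}(i)), combining with $\mathcal{L}^d(E_{\eta,\gamma}\setminus E)\leq(\eta/\Lambda)\gamma^{1/q}\mathcal{F}_{\rm surf}^{\varphi,\gamma,q}(E)$ and taking $\Lambda$ large enough yields $\mathcal{L}^d(\hat E_{\eta,\gamma}\setminus E)\leq\eta\gamma^{1/q}\mathcal{F}_{\rm surf}^{\varphi,\gamma,q}(E)$. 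For \eqref{eq: graphiii}(ii), using $\partial\hat E_{\eta,\gamma}\subseteq\partial E_{\eta,\gamma}\cup\bigcup(\text{boundaries of the filled cells})$ I would estimate $\int_{\partial\hat E_{\eta,\gamma}\cap\Omega}\varphi(\nu_{\hat E_{\eta,\gamma}})\,{\rm d}\mathcal{H}^{d-1}\leq C(\varphi)\big(\mathcal{H}^{d-1}(\partial E_{\eta,\gamma}\cap\Omega)+C\,\#\{\text{filled cells}\}\,\rho^{d-1}\big)$ and conclude with \eqref{eq: partition}(ii), the cell count above, and the equivalence of $\varphi$ with the Euclidean norm (enlarging $C_0$ if necessary).

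It then remains to replace the merely piecewise-$C^2$ function $\tilde h$ by a $C^2$ (indeed $C^\infty$) one without spoiling these bounds or the inclusion $E_{\eta,\gamma}\subset\hat E_{\eta,\gamma}$. I would set $h_{\eta,\gamma}:=\phi_{\sigma/2}*\tilde h_\sigma$, where $\tilde h_\sigma(x'):=\inf_{|z'-x'|\le\sigma}\tilde h(z')$ is a small downward erosion of $\tilde h$ and $\phi_{\sigma/2}$ is a standard mollifier; then $h_{\eta,\gamma}$ is smooth and, for every $x'$, $h_{\eta,\gamma}(x')\leq\sup_{|y'-x'|\le\sigma/2}\tilde h_\sigma(y')\leq\tilde h(x')\leq h(x')$, so the supergraph $E'_{\eta,\gamma}$ of $h_{\eta,\gamma}$ satisfies $E'_{\eta,\gamma}\supset\{x_d>\tilde h\}=\hat E_{\eta,\gamma}\supset E_{\eta,\gamma}$ and $h_{\eta,\gamma}\leq h$, while its range lies in $[0,M]$ (upper bound from $h\leq M$; the at most $O(\rho)$-wide layer of boundary cubes below height $0$ leaves the required room inside $\Omega=\omega\times(-1,M+1)$, and a harmless adjustment of the construction restores the range $[0,M]$ if desired). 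As $\sigma\to0$ we have $h_{\eta,\gamma}\to\tilde h$ in $L^1(\omega)$ with convergence of graph areas, so \eqref{eq: graphiii} persists for $\sigma$ small. Finally, each connected component of $\tilde\Omega\setminus\overline{E'_{\eta,\gamma}}$ is contained in a unique component of $\tilde\Omega\setminus\overline{E_{\eta,\gamma}}$; assigning to it the rotation and translation of that parent component and noting that the sums in \eqref{eq: main rigitity} can only decrease under this refinement, we see that the full conclusion of Theorem~\ref{prop:rigidity} transfers.

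The hard part will be the pair of estimates in the second paragraph: a priori the boundary cubes could stack sparsely above a single point of $\omega$, so that filling the gaps between them might add disproportionate volume or create long vertical ``cliffs'' in $\tilde h$, i.e.\ a large perimeter. Excluding this is exactly the purpose of the slicing Lemma~\ref{lemma:slicing}, which charges every boundary cube an amount $\gtrsim\rho^{d-1}$ of surface or curvature energy; the remaining ingredients — the canonical minimal supergraph of a set in a cylinder, the intermediate value argument bounding the downward displacement, the bounded overlap of cubes, and the final mollification — are all elementary. (Note that the control \eqref{eq: partition}(ii) on the perimeter of $E_{\eta,\gamma}$ itself, which we use, already rests on the deeper local graphical description of $\partial E$ in good boundary cubes: Lemma~\ref{lemma: curve graph} for $d=2$ and Lemma~\ref{Simons_lemma}, i.e.\ Simon's $\varepsilon$-regularity theorem, for $d=3$.)
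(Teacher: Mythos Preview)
Your approach and the paper's are essentially the same: both produce the supergraph by coarsening near $\partial E$ at scale $\rho$, count the relevant cubes via Lemma~\ref{lemma:slicing}, and then smooth. The paper is slightly more direct: instead of taking the vertical completion of $E_{\eta,\gamma}$, it introduces the coarser set
\[
E^*_{\eta,\gamma}:=\mathrm{int}\Bigl(E\cup\bigcup_{Q_\rho\in\mathcal{Q}^\partial_\rho}\overline{Q_{12\rho}}\Bigr)\supset E_{\eta,\gamma},
\]
for which the volume and perimeter bounds $\mathcal{L}^d(E^*_{\eta,\gamma}\setminus E)\le C_0\rho\,\mathcal{F}_{\rm surf}^{\varphi,\gamma,q}(E)$ and $\mathcal{H}^{d-1}(\partial E^*_{\eta,\gamma}\cap\Omega)\le C_0\,\mathcal{F}_{\rm surf}^{\varphi,\gamma,q}(E)$ are immediate from the boundary-cube count, observes that $\Omega\setminus E^*_{\eta,\gamma}$ is a subgraph of a $BV$-function, and invokes \cite[Lemma~6.3]{Crismale} for the smooth approximation from below.

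One imprecision in your IVT step: to conclude that each filled cell lies within $C\rho$ of $\mathrm{graph}(h)$, you need that every point of $E_{\eta,\gamma}\setminus E$ lies within $O(\rho)$ (not merely $O(\eta\gamma^{1/q})$) of $\partial E$. This is true, but it comes from opening the construction in Proposition~\ref{prop:setmodification} --- each added piece sits in some $\overline{Q_{12\rho}}$ for a cube $Q_\rho\in\mathcal{Q}^\partial_\rho$ --- not from the Hausdorff bound \eqref{eq: partition}(i) alone. With that $O(\rho)$ bound in hand, for $(x',x_d)\in\hat E_{\eta,\gamma}\setminus E_{\eta,\gamma}$ you find $(z',h(z'))$ with $|x'-z'|\le C\rho$ and $h(z')\le x_d+C\rho$, and the IVT on the segment $[z',x']$ (length $\le C\rho$, not the $\rho$-footprint of the cell) gives a graph point at height $x_d$ within $C\rho$ of $(x',x_d)$; as stated, your ``IVT over the footprint'' does not quite close, since for steep $h$ there need not be any $y'$ in the $\rho$-footprint with $h(y')$ below the cell. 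Finally, your last paragraph on transferring \eqref{eq: main rigitity} to $E'_{\eta,\gamma}$ is not needed: the corollary only furnishes $E'_{\eta,\gamma}\supset E_{\eta,\gamma}$ as an auxiliary supergraph, and the rigidity estimates remain stated for the components of $\tilde\Omega\setminus\overline{E_{\eta,\gamma}}$.
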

In particular, the thickened set can be chosen as a supergraph, at the expense of a coarser estimate in \eqref{eq: partition}(ii).  %\OOO(in the sense of possibly increasing the value of the constant $C_0$ if necessary)\EEE. 
Corollary \ref{cor: rig-cor} and Corollary \ref{cor: graphi} will be proved in Subsection \ref{sec: main proof} and Subsection~\ref{sec: thickening0}, respectively.    \EEE We proceed with some further comments on the result.

\begin{remark}[Version with mean curvature]\label{remark: mean}
{\normalfont

For $d=3$, $q=2$, and a \AAA regular \OOO domain  \EEE $\Omega \subset \R^3$, \OOO there are situations where in estimate \AAA \eqref{eq: partition} \EEE we can replace the second fundamental form $\bm {A}$ by the mean curvature  $\bm{H}\colon \partial E\AAA\cap\Omega\EEE \to \OOO \mathbb{R}$,  \EEE i.e.,  $\bm{H}:= \kappa_1+ \kappa_2$, \OOO where again $\kappa_1$ and $\kappa_2$ are the principal curvatures of $\partial E\AAA\cap\Omega\EEE$. In fact, denote \EEE by \OOO $\BBB \bm{G} \OOO:=\kappa_1\kappa_2$ the Gaussian curvature of $\partial E\AAA\cap\Omega$, by $\chi(\OOO\partial E\OOO\cap\Omega)$ the \emph{Euler characteristic} of $\partial E\cap \Omega$ \EEE and by $\kappa_g$ the \emph{geodesic curvature} of $\partial (\partial E \cap \Omega) \subset \partial \Omega$. (The outermost $\partial$ is meant here to denote the boundary of the 2-dimensional surface $\partial E\cap \Omega$ in the differential geometric sense and we assume \AAA for simplicity \EEE that $\partial(\partial E\cap \Omega)$ is $ C^2$.) Then, the Gauss-Bonnet theorem yields  
\begin{align*}
\int_{\partial E \cap \Omega} |\bm{A}|^2 \, {\rm d}\mathcal{H}^2 & \OOO =\EEE%\le \int_{\partial E} |\bm{A}|^2 \, {\rm d}\mathcal{H}^2 
%=
%\int_{\partial E} (\kappa_1^2 + \kappa_2^2) \, {\rm d}\mathcal{H}^2 = 
\int_{\partial E\OOO\cap \Omega\EEE} |\bm{H}|^2 \, {\rm d}\mathcal{H}^2 - 2\int_{\partial E\OOO\cap \Omega} \bm{G}\, {\rm d}\mathcal{H}^2\\
&  = \int_{\partial E\OOO\cap \Omega\EEE} |\bm{H}|^2 \, {\rm d}\mathcal{H}^2 -4\pi\chi(\OOO\partial E\cap\Omega\EEE)  + 2\int_{  \partial (\partial E\cap \Omega) }  \kappa_g \, {\rm d} \mathcal{H}^{\OOO 1\EEE} \,.       
\end{align*} 
%where we recall that $\kappa_1\kappa_2$ is the Gauss curvature. 
Exemplarily, we address two special cases:\\
(a) If $E \subset \subset \Omega$, i.e., $\partial E \cap \Omega=\partial E$ has no boundary, and if  one has 
\begin{align}\label{eq: charac}
-4\pi\gamma\chi(\partial E) \le C_0\eta\,,
\end{align}
then  one can replace    $\gamma \int_{\QQQ\partial E \EEE %\cap \Omega
} |\bm{A}|^2 \, {\rm d}\mathcal{H}^2$ by $\gamma \int_{\QQQ\partial E \EEE
%\cap\Omega\EEE
} |\bm{H}|^2 \, {\rm d}\mathcal{H}^2$ without essentially affecting estimate \eqref{eq: partition}(ii) (and similarly \eqref{eq: partition}(i)), which in this case would be 
\begin{equation}\label{ineq:surf_bound_mean_curvature}
\int_{\partial E_{\eta,\gamma} \cap \Omega} \varphi(\nu_{E_{\eta,\gamma}}) \, {\rm d}\mathcal{H}^{2}  \leq (1+C_0\eta) \, \left(\int_{\QQQ\partial E\EEE %\partial E \cap \Omega
} \varphi(\nu_E) \, {\rm d}\mathcal{H}^{2} + \gamma\int_{\QQQ\partial E \EEE %\partial E \cap \Omega
} |\bm{H}|^2 \, {\rm d}\mathcal{H}^{2}\,+C_0\eta\right)\, .
\end{equation}
For instance, in this case, \eqref{eq: charac} holds true if $\partial E\cap\Omega\AAA=\partial E$ consists of $m$ connected components which are all topologically equivalent to the sphere $\mathbb{S}^2$. In this case, $\chi(\QQQ\partial E \EEE %\cap \Omega\EEE
) = 2m>0$.\\
(b) In a similar manner\EEE, if $\partial E\cap \Omega$ consists of a single connected component topologically equivalent to the \MMM flat disk \EEE and $\OOO 2\EEE\gamma\int_{  \partial (\partial E \cap \Omega) }  \kappa_g \, {\rm d} \mathcal{H}^{\OOO 1\EEE} \le C_0\eta $, we can again replace \BBB  \eqref{eq: partition}(ii) by \eqref{ineq:surf_bound_mean_curvature}. \EEE

}
\end{remark}

\begin{remark}[Set $\tilde \Omega$]
{\normalfont Due to our proof strategy based on cubic sets, see \eqref{eq: r-cubic set} below, the rigidity estimate is only local, given in terms of $\tilde{\Omega}$. Yet, one \EEE can replace $\OOO\tilde \Omega\EEE$ by $\Omega$, provided that $\Omega$ is regular, \EEE and that we replace \eqref{eq: partition}(ii)  by 
$${ \int_{\partial E_{\eta,\gamma}\BBB \cap \Omega\EEE} \varphi(\nu_{E_{\eta,\gamma}}) \, {\rm d}\mathcal{H}^{d-1}  \leq (1+C_0\eta\EEE) \Big(  \int_{\partial E \cap \Omega} \varphi(\nu_E) \, {\rm d}\mathcal{H}^{d-1} + \gamma\int_{\partial E \cap \Omega} |\bm{A}|^q \, {\rm d}\mathcal{H}^{d-1} \BBB + C_{\Omega,\varphi,\gamma,q}\Big)}
 $$
\BBB  for a suitable constant $C_{\Omega,\varphi,\gamma,q}>0$ independent of $E$. \EEE In fact, this follows by selecting $\Omega_*\supset \supset \Omega$ and applying \BBB Theorem \ref{prop:rigidity} \EEE for $\Omega_*$ in place of $\Omega$, \AAA the set $\Omega$ in place of $\tilde \Omega$, \EEE and for \EEE $E_* = E \cup (\Omega_* \setminus \overline{\Omega})$ in place of $E$. More specifically, the result yields a set $E_*\subset \EEE E_{\eta,\gamma}^* \subset \Omega_*$, and then we define $E_{\eta,\gamma} := E_{\eta,\gamma}^* \cap \Omega$. %\RRR(K: slight catch here, bc the final set might not be good in our sense if $\partial \Omega$ is very weird) \EEE 
}
\end{remark}

\begin{example}[Thin tunnel]\label{ex: tunnel}
{\normalfont
We give an example for the necessity of thickening the set and refer to the schematic Figure~\ref{fig:thin tunnel} for an illustration.   For $\delta>0$  we suppose that, up to a negligible set,  $\Omega \setminus \overline{E}$ is given by the three sets $U_1 =\QQQ (-1,0)\times (0,1)$\EEE, $U^\delta_2 =  (0,1)\times (\tfrac{1}{2},\tfrac{1}{2}+\delta)$, and $U_3 = (1,2)\times (0,1)$. (Strictly speaking, smooth approximations of $U_1$, $U^\delta_2$, and $U_3$ need to be considered.) For $\sigma \in (0,\pi/2)$ we define 
\begin{align}\label{eq: exi1}
y_{\delta,\sigma}(x) = \begin{cases}  x + \tau^\sigma_1 &x \in U_1\,;\\
( (x_2 - \tfrac{1}{2}) + \tfrac{1}{\sigma}) (\sin(\sigma x_1), \cos(\sigma x_1)) &x\in U^\delta_2\,;\\
R_\sigma x + \tau^\sigma_3 &x  \in U_3\,,
\end{cases}
\end{align}
where $R_\sigma \in SO(2)$ denotes the rotation around the origin by the angle $\sigma$ and $ \tau^\sigma_1, \tau^\sigma_3$ are suitable translations such that $y_{\delta,\sigma}$ is continuous. Then $\nabla y_{\delta,\sigma} \in SO(2)$ on $U_1 \cup U_3$ and on  $\AAA U\EEE^\delta_2$ we have 
$$\nabla y_{\delta,\sigma} (x_1,x_2) = \begin{pmatrix}  (1 + \sigma (x_2 - \tfrac{1}{2}) ) \cos(\sigma x_1) & \sin(\sigma x_1) \\ - (1 + \sigma (x_2 - \tfrac{1}{2}) ) \sin(\sigma x_1) & \cos(\sigma x_1) \end{pmatrix}\,.$$ 
This yields $\dist^2(\nabla y_{\delta,\sigma}, SO(2)) = \AAA\big|\EEE\sqrt{\nabla y_{ \delta,\sigma}^T\nabla y_{ \delta,\sigma }} - {\rm Id}\AAA\big|\EEE^2 \EEE = \sigma^2(x_2-\frac{1}{2})^2$ on $U^{\delta}_{2}$, \EEE and therefore 
\begin{align}\label{eq: exi2}
\int_{\Omega \setminus \overline{E}} \dist^2(\nabla y_{\delta,\sigma}, SO(2))\, {\rm d}x = \EEE \sigma^2\delta^3/3\,.
\end{align} 
It is also easy to see that for all $R \in SO(2)$ \EEE one has 
\begin{align*}
\int_{\Omega\setminus\overline{E}} |\nabla y_{\delta,\sigma}-R|^2\,\mathrm{d}x \geq c \sigma^2
\end{align*}
%\RRR(K: put an extra line of calculations?) 
\EEE for a universal constant $c>0$. Therefore, neither \eqref{eq: main rigitity}(i) nor \eqref{eq: main rigitity}(ii) can hold true on $\Omega \setminus \overline{E}$ with a constant independent of $E$.}
\end{example}
\EEE

\begin{figure}
\begin{tikzpicture}[scale=.7]

\tikzset{>={Latex[width=1mm,length=1mm]}};

%Referenzconfiguration

\draw(-.4,3.4) node {$\Omega$}; 
\draw(0,0) rectangle (4,3); 
\draw(3.2,3.4) node {$E$};
 \draw[->] (4.5,1.5) to[out=20,in=160] (8+0.5,1.5);
  \draw(6.5,2.2) node{$y$};
% \draw plot [smooth] coordinates{ (3,0) (3,1.30) (3.5,1.30) (3.5,0)};
%   \draw (3,3)--(3,1.65);
  \draw[gray,fill=gray](3,1.65) arc(180:270:.1)--++(0,.1)--++(-.1,0);
   \draw[gray,fill=gray](3.1,1.55)--(3.4,1.55)--++(0,.1)--++(-.3,0)--++(0,-.1);
     \draw[gray,fill=gray](3.4,1.55) arc(270:360:.1)--++(-.1,0)--++(0,-.1);
%      \draw(3.5,1.65)--(3.5,3);

   \draw[fill=gray] (3,3)--(3,1.65)(3,1.65) arc(180:270:.1)(3.1,1.55)--(3.4,1.55)(3.4,1.55) arc(270:360:.1)(3.5,1.65)--(3.5,3)--(3,3)--(3,1.65);
 
% \draw(3,0)--(3,1.35);
   \draw[fill=gray,gray](3,1.35) arc(180:90:.1)--++(0,-.1)--++(-.1,0);
      \draw[gray,fill=gray](3.1,1.45)--(3.4,1.45)--++(0,-.1)--++(-.3,0)--++(0,.1);
             \draw[fill=gray,gray](3.4,1.45) arc(90:0:.1)--++(-.1,0)--++(0,.1);
%                   \draw(3.5,1.35)--(3.5,0);
                   
                    \draw[fill=gray](3,0)--(3,1.35) (3,1.35) arc(180:90:.1)(3.1,1.45)--(3.4,1.45)(3.4,1.45) arc(90:0:.1)(3.5,1.35)--(3.5,0)--(3,0)--(3,1.35)(3,1.35) arc(180:90:.1)(3.1,1.45)--(3.4,1.45)(3.4,1.45) arc(90:0:.1)(3.5,1.35)--(3.5,0);
 
 \begin{scope}[shift={(9,0)}]
                    
    \draw(3,0)--(0,0)--(0,3)--(3,3);

  \draw[gray,fill=gray]  (3,1.35)arc(180:90:.1)(3.1,1.45)--++(0,-.1)--++(-.1,0);
  \draw[gray,fill=gray] (3.1,1.45)--++(0,-.1)--++(.3,0)--++(0,.1)--++(-.3,0);
  
  \draw[gray,fill=gray](3.4,1.45) arc(90:20:.1)($(3.4,1.35) +(20:.1)$)--++(200:.1)--(3.4,1.45);
  
       \draw[fill=gray,gray](3,0)--(3,1.35)(3,1.35) arc(180:90:.1)(3.1,1.45)--(3.4,1.45)(3.4,1.45) arc(90:20:.1)($(3.4,1.35) +(20:.1)$)--++(290:1.35)--(3,0)--(3,1.35);
    
    \draw(3,0)--(3,1.35)(3,1.35) arc(180:90:.1)(3.1,1.45)--(3.4,1.45)(3.4,1.45) arc(90:20:.1)($(3.4,1.35) +(20:.1)$)--++(290:1.35)--(3,0);
    
    \draw[gray,fill=gray] (3,1.65) arc(180:270:.1)--++(0,.1)--++(-.1,0);
    
    \draw[gray,fill=gray](3.1,1.55)--++(.3,0)--++(0,.2)--++(-.3,0)--++(0,-.2);
    
        \draw[gray,fill=gray](3.4,1.55) arc(270:380:.1)--++(180:.1)--++(270:.1);

\draw[fill=gray] (3,3)--(3,1.65)(3,1.65) arc(180:270:.1)(3.1,1.55)--(3.4,1.55)(3.4,1.55) arc(270:380:.1)($(3.4,1.65)+(20:.1)$)--++(110:1.35)--(3,3)--(3,1.65);

\draw($(3.4,1.35) +(20:.1)$)--++(290:1.35)--++(20:.5)--++(110:3)--++(200:.4);

\end{scope}

\begin{scope}[shift={(15,0)}]
\draw(0,0) rectangle (4,3); 

\draw[pattern=north west lines,pattern color=gray](2.8,0) rectangle (3.5,3); 

\draw(3.2,3.4) node {$E_{\eta,\gamma}$};
\end{scope}

\end{tikzpicture}
\caption{A thin tunnel that leads to failure of uniform rigidity on the unique connected component of $\Omega\setminus\overline{E}$, depicted schematically\EEE. \EEE On the left: The set $\Omega \setminus \overline{E}$, where $E$ is depicted in gray. In the middle: The set $y(\Omega\setminus \overline{E})$. On the right: The set $\Omega \setminus \overline{E_{\eta,\gamma}}$, where $E_{\eta,\gamma}$ is the hatched set.} %\RRR(K: units a bit incompatible in picture and calculations)\EEE}
\label{fig:thin tunnel}
\end{figure}
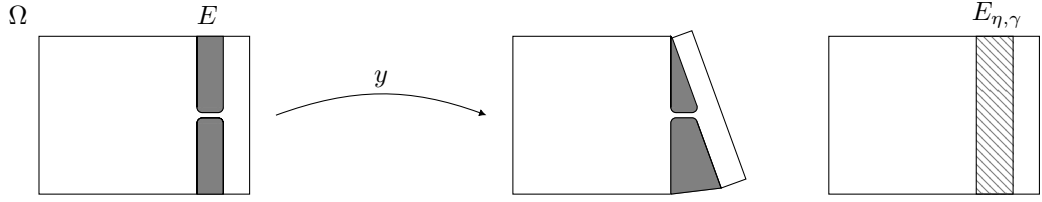

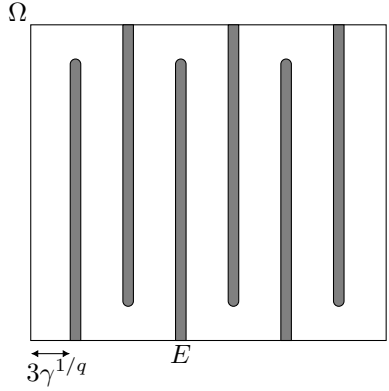
\begin{figure}
\begin{tikzpicture}[scale=0.7]

\tikzset{>={Latex[width=1mm,length=1mm]}};

\draw(0,0) rectangle (6.75,6);

\foreach \k in {0,...,2}{
\begin{scope}[shift={(2*\k+.5,0)}]

\draw[fill=gray](.25,0)--(.25,5.25)arc(180:0:.1)(.45,5.25)--(.45,0)--(.25,0);

\draw[fill=gray](1.25,6)--(1.25,.75)arc(180:360:.1)(1.45,.75)--(1.45,6)--(1.25,6);
\end{scope}

}

\draw(.5,-.6) node{$ 3 \EEE \gamma^{1/q}$};

\draw[<->](0,-.25)--++(.75,0);

\draw(-.25,6.25) node {$\Omega$};

\draw(2.85,-.25) node {$E$};

\end{tikzpicture}
\caption{A set $\Omega\setminus\overline{E}$ that shows that the constant in \eqref{eq: main rigitity}(ii) is sharp, for instance for $\Omega=(0,1)^2$. The set $E$ is depicted in gray.}
\label{fig:sharp}
\end{figure}

\begin{example}[Sharpness of constant]\label{ex: sharp} {\normalfont The constant in \eqref{eq: main rigitity}(ii) is sharp. To this end, consider $\Omega$ and $E$ in dimension $d=2$ as  depicted in Figure~\ref{fig:sharp}, and note that the thickening of the set $E$ will not disconnect $\Omega \setminus \overline{E}$, see \eqref{eq: partition}(i). The set $\Omega \setminus \overline{E}$ consists essentially of $m \sim \gamma^{-1/q}$ ``vertical stripes'' depicted with white color in the picture. We define a deformation $y$ on $\Omega \setminus \overline{E}$ which on each of the stripes bends by an angle of $\sigma:=\gamma^{1/q}$ as indicated in \eqref{eq: exi1} (for $\delta := 3\gamma^{1/q}$) such that between the first and the last stripe a macroscopic rotation is performed. Repeating the argument in \eqref{eq: exi2}, we get $\int_{\Omega \setminus \overline{E}} \dist^2(\nabla y, SO(2))\, {\rm d}x \lesssim m(\gamma^{1/q})^2(3\gamma^{1/q})^3/3 \sim \gamma^{4/q}$ and on the other hand $\int_{\Omega\setminus \overline{E}} |\nabla y-R|^2\,\mathrm{d}x \geq c $ for all $R \in SO(2)$. %\RRR(K: an extra line of explanation?)	
} 
\end{example}

\EEE

\subsection{Proof of Theorem \ref{prop:rigidity}}\label{sec: main proof}

This subsection is devoted to the proof of Theorem \ref{prop:rigidity}. We start with a short outline of the proof collecting the main intermediate steps.  The core of our proof is the construction of the thickened set $E_{\eta,\gamma}$ with the properties in \eqref{eq: partition}. We formulate this in a separate auxiliary result, and for this purpose we recall the definition of  $\mathcal{F}_{\rm surf}^{\varphi,\gamma,q}$ in \eqref{eq: surface energy}. \EEE

%\RRR(K: $\eta_0$ also depends on $\Omega$, see beginning of Proof of Proposition 2.8 in page 16.)\EEE
 %\RRR M.: Tautologies with $d$ dependence  \EEE

\begin{proposition}[\bf Thickening of sets]\label{prop:setmodification} 
\BBB Let $d=2,3$, $q\in [d-1,+\infty)$, $\gamma \in (0,1)$, and  $\varphi$ be a norm on $\R^d$. Let $\Omega \subset \R^d$ be open and bounded and let $\tilde \Omega \subset \subset \Omega$ be an open subset. Then, there exist a \ZZZ constant $C_0=C_0(\varphi)>0$,  $\eta_0 \in (0,1)$ depending only on ${\rm dist}(\partial \Omega,\tilde \Omega)$ and $\varphi$, \EEE 
%\NNN exist \AAA constants $C_0=C_0(\Omega,\varphi)>0$, \EEE $\eta_0= \eta_0(\Omega,\tilde \Omega, \varphi) \in (0,1)$ \BBB 
and  \EEE for each $\eta\in (0,\eta_0]$ there exists $\QQQ c_\eta\EEE\in (0,1)$, \AAA with $c_\eta\to 0$ as $\eta\to 0$, \EEE such that the following holds:\\
Given $E\in \mathcal{A}_{\rm reg}(\Omega)$, we can find an open set $E_{\eta,\gamma}$ such that $E  \subset  E_{\eta,\gamma}  \subset \Omega$, $\partial E_{\eta,\gamma}\cap \Omega$ is a union of finitely many regular \AAA submanifolds\EEE, and   
\begin{align}\label{eq: partition-new}
\begin{split}
{\rm (i)} & \ \   \mathrm{dist}(\OOO x, E\EEE) \geq c_\eta\gamma^{1/q} \quad \text{for all $x  \in \big\{ y\in \Omega  \setminus \overline{E_{\eta,\gamma}}\colon \dist(y, \OOO\tilde \Omega\EEE) <c_\eta  \EEE \gamma^{1/q} \big\}   $}             \,,%\notag
\\
{\rm (ii)} & \ \    \mathcal{L}^d(E_{\eta,\gamma}\setminus E) \le \eta\EEE \gamma^{1/q}  \mathcal{F}_{\rm surf}^{\varphi,\gamma,q}(E),  \EEE \quad \quad \quad  \dist_\mathcal{H}(E, E_{\eta,\gamma}) \le \eta\EEE \gamma^{1/q}\,,  \EEE %\notag
\\
{\rm (iii)} & \ \          \int_{\partial E_{\eta,\gamma} \cap \Omega } \varphi(\nu_{E_{\eta,\gamma}}) \, {\rm d}\mathcal{H}^{d-1}  \leq (1+C_0\eta\EEE) \,  \mathcal{F}_{\rm surf}^{\varphi,\gamma,q}(E)\,.
\end{split}
\end{align}
\end{proposition}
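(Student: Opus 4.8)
The plan is to prove Proposition~\ref{prop:setmodification} via a careful multi-scale covering argument, building the thickened set $E_{\eta,\gamma}$ out of dyadic cubes whose sidelength is tuned to the curvature budget. First I would fix a sidelength $\rho = \rho(\eta,\gamma)$ of order $\gamma^{1/q}$ times a small factor (this is where $c_\eta$ will come from) and consider the tessellation $\mathcal{Q}_\rho$ from \eqref{eq: tess}. Call a cube $Q_\rho \in \mathcal{Q}_\rho$ a \emph{boundary cube} if $Q_{3\rho} \cap \partial E \neq \emptyset$, and let $E^{(0)}$ be the union of $E$ with all boundary cubes that meet $\tilde\Omega$ (with a small collar). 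The key point, which is exactly the content of Lemma~\ref{lemma:slicing}, is that for a suitable choice of $\rho$ every boundary cube carries either $\mathcal{H}^{d-1}(\partial E \cap Q_{c\rho}) \gtrsim \rho^{d-1}$ or $\int_{\partial E \cap Q_{c\rho}} |\bm{A}|^q \,{\rm d}\mathcal{H}^{d-1} \gtrsim \rho^{d-1}$; combined with the bounded-overlap estimate \eqref{ineq:Qintersectbound}, this gives a bound on the \emph{number} $N$ of boundary cubes of the form $N \lesssim \rho^{-(d-1)} \gamma^{-1} \mathcal{F}_{\rm surf}^{\varphi,\gamma,q}(E)$ after multiplying the curvature term by $\gamma^{-1}$ (here one uses $\gamma < 1$ and the definition of $\mathcal{F}_{\rm surf}$). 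Hence $\mathcal{L}^d(E^{(0)} \setminus E) \lesssim N \rho^d \lesssim \rho \gamma^{-1} \mathcal{F}_{\rm surf}^{\varphi,\gamma,q}(E) \lesssim \eta \gamma^{1/q} \mathcal{F}_{\rm surf}^{\varphi,\gamma,q}(E)$ once $\rho$ is chosen small enough in terms of $\eta$, which is \eqref{eq: partition-new}(ii) in volume; the Hausdorff-distance bound follows because every added cube is within $\sqrt d\,\rho$ of $E$. Property \eqref{eq: partition-new}(i) is then immediate by construction: any point of $\tilde\Omega$ close to $\partial E$ sits in a boundary cube, hence in $E^{(0)}$, so points of $\Omega \setminus \overline{E^{(0)}}$ near $\tilde\Omega$ are at distance $\gtrsim \rho$ from $E$.

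The genuinely delicate part is the perimeter estimate \eqref{eq: partition-new}(iii), because replacing pieces of $\partial E$ by cube faces can \emph{increase} surface area, and we need the increase to be only a factor $(1+C_0\eta)$. Here the multi-scale nature of $\partial E$ (thin spikes, small components at many scales, cf.\ Figure~\ref{fig:thinspikes}) must be handled. The strategy is: in each boundary cube where the \emph{area} term dominates, the added face area $\lesssim \rho^{d-1}$ is charged directly against $\mathcal{H}^{d-1}(\partial E \cap Q_{c\rho}) \gtrsim \rho^{d-1}$, so summing gives a contribution $\le C \mathcal{F}_{\rm surf}$ — but this alone only yields a multiplicative constant $C$, not $1+C_0\eta$. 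To upgrade to $1+C_0\eta$ one instead argues that in the cubes where the area term dominates with a \emph{good constant}, $\partial E \cap Q_{c\rho}$ is, by Lemma~\ref{lemma: curve graph} in $d=2$ and by Simon's $\varepsilon$-regularity result Lemma~\ref{Simons_lemma} in $d=3$, a finite union of Lipschitz graphs with small Lipschitz constant and controlled number of sheets; one then replaces $E$ inside such a cube not by the whole cube but by the region below (the graphical thickening), which changes perimeter by a factor $1+o(1)$ as the relevant scale shrinks, i.e.\ as $\eta \to 0$. The remaining ``bad'' cubes — those where the curvature term dominates or the graphical structure fails — are few: their total area contribution is bounded by $\gamma^{-1}$ times the curvature energy times a factor that can be made $\le C_0 \eta$ by the choice of $\rho$, since the curvature term comes multiplied by $\gamma$ in $\mathcal{F}_{\rm surf}$ and $\rho^{d-1}/(\gamma \cdot \rho^{-(d-1)}\cdots)$ produces the needed smallness. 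Anisotropy of $\varphi$ only costs the equivalence constants of $\varphi$ with the Euclidean norm, absorbed into $C_0 = C_0(\varphi)$.

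Assembling: set $E_{\eta,\gamma}$ to be $E$ together with the whole ``bad'' boundary cubes and the graphical thickenings in the ``good'' boundary cubes, then smooth the resulting boundary slightly (rounding edges of the cube union at a scale $\ll \rho$) so that $\partial E_{\eta,\gamma} \cap \Omega$ becomes a finite union of regular submanifolds; smoothing at a fine enough scale changes volume and perimeter by an arbitrarily small amount, preserving all three estimates. The constant $\eta_0$ depends only on ${\rm dist}(\partial\Omega,\tilde\Omega)$ (to guarantee that the collar of boundary cubes around $\tilde\Omega$ stays inside $\Omega$, so that $E_{\eta,\gamma} \subset \Omega$) and on $\varphi$; and $c_\eta \to 0$ as $\eta \to 0$ because $\rho = \rho(\eta,\gamma)$ is forced to shrink (relative to $\gamma^{1/q}$) to make both the volume loss and the ``bad cube'' perimeter surplus of size $\eta$. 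The main obstacle, and the place where dimension $d=3$ genuinely enters, is the quantitative graphical decomposition of $\partial E$ inside a cube with bounded area and curvature energy: in $d=2$ this is the elementary Lemma~\ref{lemma: curve graph}, but in $d=3$ it rests on Simon's Willmore $\varepsilon$-regularity theorem, and it is precisely the higher-dimensional analogue of this step (together with the slicing Lemma~\ref{lemma:slicing} for closed hypersurfaces) that is missing beyond $d=3$, cf.\ Remark~\ref{obstacle_higher_dimensions_generalization}.
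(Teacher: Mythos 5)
Your overall architecture matches the paper's: choose $\rho$ of order $c_\eta\gamma^{1/q}$, invoke Lemma~\ref{lemma:slicing} to control the number of boundary cubes, split boundary cubes into a ``graphical'' part handled by Lemma~\ref{lemma: curve graph} (resp.\ Simon's Lemma~\ref{Simons_lemma}) and a ``curvature-dominated'' part thickened by whole enlarged cubes, and smooth at a fine scale at the end. This is the right skeleton. However, two quantitative steps as you state them are wrong, and both defeat the final estimates.

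First, the count of boundary cubes has a spurious factor of $\gamma^{-1}$. The slicing Lemma~\ref{lemma:slicing} is engineered exactly so that \emph{no} such factor appears: on each boundary cube $Q_\rho$ one gets either $\mathcal{H}^{d-1}(\partial E\cap Q_{8\rho})\gtrsim\rho^{d-1}$ or $\gamma\int_{\partial E\cap Q_{8\rho}}|\bm{A}|^q\gtrsim\rho^{d-1}$, and since both terms appear with precisely those weights in $\mathcal{F}_{\rm surf}^{\varphi,\gamma,q}(E;Q_{8\rho})$, one concludes $\mathcal{F}_{\rm surf}^{\varphi,\gamma,q}(E;Q_{8\rho})\gtrsim\rho^{d-1}$ and hence, by the overlap bound \eqref{ineq:Qintersectbound}, $N\lesssim\rho^{-(d-1)}\mathcal{F}_{\rm surf}^{\varphi,\gamma,q}(E)$ \emph{without} any $\gamma^{-1}$. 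Your version, $N\lesssim\rho^{-(d-1)}\gamma^{-1}\mathcal{F}_{\rm surf}$, gives $\mathcal{L}^d(E_{\eta,\gamma}\setminus E)\lesssim\rho\gamma^{-1}\mathcal{F}_{\rm surf}=c_\eta\gamma^{1/q-1}\mathcal{F}_{\rm surf}$, which blows up as $\gamma\to0$; you cannot repair this by shrinking $\rho$ further, because $\rho/\gamma^{1/q}$ is supposed to depend only on $\eta$, not on $\gamma$. The fix is simply to use the slicing lemma as stated, with the curvature term already multiplied by $\gamma$.

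Second, your treatment of the ``bad'' (curvature-dominated) cubes in the perimeter estimate does not work. Their total face area cannot in general be made $\le C_0\eta\,\mathcal{F}_{\rm surf}(E)$: a boundary $\partial E$ consisting mostly of tightly oscillating pieces can place a macroscopic fraction of the \emph{entire} surface energy into those cubes, so no choice of $\rho$ (and certainly no formula like $\rho^{d-1}/(\gamma\rho^{-(d-1)})$) can make their contribution $o(1)\cdot\mathcal{F}_{\rm surf}(E)$. What actually saves the estimate is a \emph{disjointness} argument, not smallness. Fix the threshold $\Lambda$ (of order $\varphi_{\max}\cdot 2d\cdot 12^{d-1}15^d$, depending only on $\varphi$) once and for all; then each bad cube adds at most $\varphi_{\max}\,2d(12\rho)^{d-1}\le 15^{-d}\Lambda\rho^{d-1}<15^{-d}\,\mathcal{F}_{\rm surf}^{\varphi,\gamma,q}(E;Q_{8\rho})$ of new $\varphi$-perimeter, so with overlap $\le15^d$ the bad cubes together add at most $\mathcal{F}_{\rm surf}^{\varphi,\gamma,q}\big(E;\bigcup_{\rm bad}Q_{8\rho}\big)$. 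Separately, each good cube's graphical thickening adds at most $\mathcal{H}^{d-1}_\varphi(\partial E\cap Q_{3\rho})+C\eta\rho^{d-1}$. Since one also introduces a buffer layer (the ``neighbor'' cubes, set to empty) guaranteeing that the good cubes' $Q_{3\rho}$ do not meet the bad cubes' $Q_{8\rho}$, the two charges are laid against essentially disjoint portions of $\partial E$, and adding the unchanged part of $\partial E$ gives $(1+C_0\eta)\mathcal{F}_{\rm surf}^{\varphi,\gamma,q}(E)$. Without spelling out this disjointness (and the buffer that enforces it), the estimate \eqref{eq: partition-new}(iii) does not follow from your argument.
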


 \EEE We defer the proof to Subsection \ref{sec: thickening0}  below. Note that \eqref{eq: partition-new}(ii),(iii) are exactly the properties stated in the main result, see \eqref{eq: partition}. The additional property \eqref{eq: partition-new}(i) is essential for \BBB the proof of \eqref{eq: main rigitity}  \EEE  as it allows to cover 
\begin{align}\label{eq: strange notation}
\OOO\tilde\Omega\EEE_{\eta,\gamma}^E :=\OOO\tilde\Omega \EEE \setminus \overline{E_{\eta,\gamma}} 
\end{align}
\BBB with \EEE cubes which are all contained in $\Omega \setminus \overline{E}$. More precisely,  for $r>0$ and  $\OOO U \EEE \subset \R^d$ open and bounded, recalling the definition in \eqref{eq: tess},  we define the \emph{$r$-cubic set} corresponding to $\OOO U \EEE $ by  
\begin{align}\label{eq: r-cubic set}
(\OOO U\EEE)^r := {\rm int} \, \Big( \bigcup\nolimits_{Q_r \in \mathcal{Q}_r(\OOO U\EEE)} {Q}_r \Big) \,,
\end{align}
where $\mathcal{Q}_r(\OOO U\EEE) := \lbrace Q_r \in \mathcal{Q}_r \colon \, Q_r \cap \OOO U \EEE \neq \emptyset \rbrace$. 
We define 
\begin{align}\label{eq: r-def}
r_{\eta,\gamma} := \frac{c_\eta \, \gamma^{1/q}}{2\sqrt{d}}\,,
\end{align}
where $c_\eta$ is the constant of Proposition~\ref{prop:setmodification}. Now, by using \eqref{eq: partition-new}(i) and \MMM $c_\eta \to 0$ as $\eta \to 0$, \EEE  by possibly passing to a smaller constant \AAA$\eta_0$ depending on \ZZZ  ${\rm dist}(\partial \Omega,\tilde \Omega)$  \EEE  one can check  that  
\begin{align}\label{incl:Eeta}
Q_{r_{\eta,\gamma}}  \in   \mathcal{Q}_{r_{\eta,\gamma}}\big(\OOO\tilde\Omega\EEE_{\eta,\gamma}^E\big) \quad \quad \Rightarrow \quad \quad Q_{2r_{\eta,\gamma}} \subset \Omega \setminus \overline{E}\,.
\end{align}
For general $r$-cubic sets the following rigidity result holds.

\begin{proposition}[Rigidity on $r$-cubic sets] \label{lemma:chain}
\BBB Let \OOO $d \ge 2$, $U \subset \mathbb{R}^d$ \EEE be  \EEE open and bounded, let $r>0$, and suppose that the $r$-cubic set $(\OOO U\EEE)^r$ defined in \eqref{eq: r-cubic set} is connected.  \EEE Then,   there exists an absolute  constant $C>0$ independent of $U$ and $r$ such that for all  $y \in H^{1}((\OOO U\EEE)^r;\mathbb{R}^d)$ there exist $R \in SO(d)$ and $b \in \R^d$ such that 
\begin{align}\label{eq: RRR}
\begin{split}	
\QQQ {\rm(i)} \quad & \QQQ \int_{(U)^r} |\nabla y -R|^2\,\mathrm{d}x \leq C\big( \# \mathcal{Q}_r(U)\big)^2 \int_{(U)^r} \mathrm{dist}^2(\nabla y,SO(d))\,\mathrm{d}x\,.\\
\QQQ {\rm(ii)} \quad & \QQQ r^{-2} \int_{(U)^r} |y(x) -(R\,x + b)|^2\,\mathrm{d}x \leq C\big( \# \mathcal{Q}_r(U)\big)^4  \int_{(U)^r}\mathrm{dist}^2(\nabla y,SO(d))\,\mathrm{d}x\,.
\end{split}
\end{align}
Additionally, if there exists $Q \in \mathcal{Q}_r(\OOO U\EEE)$  \OOO with \EEE  $\mathcal{L}^d(Q \cap\lbrace \nabla y = {\rm Id}\rbrace) \ge cr^d$ for  \BBB some absolute \EEE  constant $c\in (0,1)$, then \eqref{eq: RRR} holds for $R = {\rm Id}$, for a constant \AAA $C>0$ \EEE depending \AAA on \EEE $c$. \EEE 
\end{proposition}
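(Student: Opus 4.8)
The plan is to prove Proposition~\ref{lemma:chain} by a chaining argument, using the classical Friesecke--James--M\"uller estimate \eqref{eq: rigidity} on each individual cube as a building block, exactly in the spirit of the original argument in \cite{friesecke2002theorem} (and its refinement for Lipschitz domains). First I would apply \eqref{eq: rigidity} to $y$ on each cube $Q=Q_{2r}$ with $Q_r\in\mathcal{Q}_r(U)$: since all these cubes are translates of a fixed cube (and scalings of the unit cube), the rigidity constant is \emph{scale-invariant and absolute}, so there is an absolute $C$ and, for each such cube $Q$, a rotation $R_Q\in SO(d)$ with $\int_{Q}|\nabla y-R_Q|^2\le C\int_{Q}\mathrm{dist}^2(\nabla y,SO(d))$. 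The enlarged cubes $Q_{2r}$ are used precisely so that two cubes $Q_r,Q_r'\in\mathcal{Q}_r(U)$ sharing a face have $Q_{2r}\cap Q_{2r}'$ containing an open set of volume $\gtrsim r^d$ (again of fixed shape up to scaling), which lets one compare the two rotations: on the overlap, $|R_Q-R_{Q'}|^2\lesssim r^{-d}\big(\int_{Q}|\nabla y-R_Q|^2+\int_{Q'}|\nabla y-R_{Q'}|^2\big)$.

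The second step is the chaining/propagation of error. Since $(U)^r$ is connected, the adjacency graph on $\mathcal{Q}_r(U)$ (two cubes adjacent if they share a $(d-1)$-face) is connected, so any two cubes are joined by a path of length at most $\#\mathcal{Q}_r(U)$. Fixing a reference cube and calling its rotation $R$, the triangle inequality along such a path gives, for every cube $Q$, $|R_Q-R|^2\lesssim \#\mathcal{Q}_r(U)\sum_{Q'}|R_{Q'}-R_{Q''}|^2$ (sum over consecutive pairs), and summing over all cubes $Q$ produces the extra factor $\#\mathcal{Q}_r(U)$; combined with the face-overlap estimate and the fact that each cube and each face is used in boundedly many paths' overlaps we can crudely bound everything by $\big(\#\mathcal{Q}_r(U)\big)^2$ times the total defect $\int_{(U)^r}\mathrm{dist}^2(\nabla y,SO(d))$. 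Then
\[
\int_{(U)^r}|\nabla y-R|^2 \le 2\sum_{Q}\Big(\int_{Q}|\nabla y-R_Q|^2 + \mathcal{L}^d(Q)\,|R_Q-R|^2\Big) \lesssim \big(\#\mathcal{Q}_r(U)\big)^2\int_{(U)^r}\mathrm{dist}^2(\nabla y,SO(d)),
\]
which is \eqref{eq: RRR}(i) (the overcounting from overlapping $Q_{2r}$'s only costs an absolute constant, since by \eqref{ineq:Qintersectbound} each point lies in at most $3^d$ of them).

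For \eqref{eq: RRR}(ii) I would first pick $b$ so that $\int_{(U)^r}\big(y-(Rx+b)\big)=0$ and apply the Poincar\'e inequality on $(U)^r$; but since the Poincar\'e constant of $(U)^r$ is not absolute, I would instead argue cube by cube: on each $Q_{2r}$, after subtracting the affine map $R_Q x+b_Q$ with the right mean, the scaled Poincar\'e inequality on a cube of sidelength $2r$ (absolute constant, by scaling) gives $r^{-2}\int_{Q}|y-(R_Qx+b_Q)|^2\lesssim \int_{Q}|\nabla y-R_Q|^2$. One then propagates the affine maps along the chain just as for the rotations: controlling $|b_Q-b|$ requires controlling both $|R_Q-R|$ (already done) and the discrepancy of the translations on overlaps, and because translations accumulate a further factor of $r\cdot\#\mathcal{Q}_r(U)$ along a path (each step moves the ``base point'' by $O(r)$ and the error in the linear part gets multiplied by the diameter $O(r\#\mathcal{Q}_r(U))$ of the cubic set), one ends up with two extra powers of $\#\mathcal{Q}_r(U)$ beyond part (i), i.e.\ the factor $\big(\#\mathcal{Q}_r(U)\big)^4$. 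Summing over $Q$ and using bounded overlap gives \eqref{eq: RRR}(ii).

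Finally, for the last assertion: if some cube $Q_0\in\mathcal{Q}_r(U)$ satisfies $\mathcal{L}^d(Q_0\cap\{\nabla y=\mathrm{Id}\})\ge c r^d$, then on $Q_0=Q_{2r}$ we have $\int_{Q_0}|\mathrm{Id}-R_{Q_0}|^2\le \mathcal{L}^d(Q_0)^{-1}\cdot\tfrac{1}{c}\int_{Q_0}|\nabla y-R_{Q_0}|^2$ restricted to that positive-measure set, hence $|\mathrm{Id}-R_{Q_0}|^2\lesssim c^{-1}r^{-d}\int_{Q_0}\mathrm{dist}^2(\nabla y,SO(d))$; choosing $Q_0$ as the reference cube in the chaining, $R=\mathrm{Id}$ differs from $R_{Q_0}$ by a controlled amount and can be absorbed into the estimates at the cost of a constant depending on $c$. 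The main obstacle — and the only genuinely delicate bookkeeping — is the second step: getting the \emph{powers} of $\#\mathcal{Q}_r(U)$ right (quadratic for gradients, quartic for the function itself) while keeping every other constant absolute, which hinges on the scale-invariance of both the FJM rigidity estimate and the Poincar\'e inequality on a fixed cube, and on a careful accounting of how many times each cube/face is traversed by the chosen family of connecting paths. This is where I would be most careful; the rest is standard.
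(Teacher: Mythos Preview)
Your overall strategy---local FJM rigidity, comparison of neighboring rotations, chaining along paths in the connected cubic set, then a Poincar\'e-type argument for part~(ii)---matches the paper's proof. There is, however, one concrete gap: you propose to apply \eqref{eq: rigidity} on the \emph{enlarged} cubes $Q_{2r}$ and then compare rotations on their overlap. But $y$ is only given in $H^1((U)^r;\R^d)$, and for $Q_r\in\mathcal{Q}_r(U)$ the cube $Q_{2r}$ need not be contained in $(U)^r$, so $y$ may simply be undefined there. The paper circumvents this by staying inside $(U)^r$: it applies FJM on each single cube $Q$ (obtaining $R_Q$) \emph{and} on each union $Q\cup Q'$ of two face-adjacent cubes (a rectangular box, hence still with an absolute, scale-invariant rigidity constant), obtaining an auxiliary rotation $R_{Q,Q'}$. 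Then $r^d|R_Q-R_{Q,Q'}|^2\le C\int_{Q\cup Q'}\mathrm{dist}^2(\nabla y,SO(d))$ and likewise for $R_{Q'}$, which gives the neighbor estimate \eqref{ineq:RiRip1} without leaving $(U)^r$. With this replacement your chaining and summation go through exactly as you describe, yielding the factor $(\#\mathcal{Q}_r(U))^2$ in part~(i).

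For part~(ii) your idea of propagating the translations $b_Q$ along chains is correct in spirit, but the paper takes a cleaner route: it establishes a Poincar\'e inequality on $(U)^r$ itself, namely $r^{-2}\int_{(U)^r}|v-b_v|^2\le C(\#\mathcal{Q}_r(U))^2\int_{(U)^r}|\nabla v|^2$, proved by the \emph{same} chaining argument applied to the local means (Poincar\'e on single cubes, compare means on adjacent pairs, sum along paths). Applying this to $v(x)=y(x)-Rx$ and combining with part~(i) immediately gives the factor $(\#\mathcal{Q}_r(U))^4$, without having to track how rotation errors feed into translation errors across the diameter. Your argument for the final assertion (choosing the reference cube to be the one where $\nabla y=\mathrm{Id}$ on a set of measure $\ge cr^d$, so that $|R_{Q_0}-\mathrm{Id}|^2\le c^{-1}r^{-d}\int_{Q_0}\mathrm{dist}^2(\nabla y,SO(d))$) is exactly what the paper does.
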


The result is a direct consequence of the rigidity estimate \OOO\eqref{eq: rigidity} \EEE proved by {\sc
Friesecke, James, and M\"uller} \cite{friesecke2002theorem}, \EEE applied on a cube, along with estimating the  variation of the rotations on different cubes. Although the latter argument is well-known and has been performed, e.g., in   \cite[Section 4]{friesecke2002theorem}, we include a short proof in Appendix \ref{appendix_c} for convenience of the reader. 

Observe that typically one has $\# \mathcal{Q}_r(\OOO U\EEE) \sim \OOO \mathcal {L}^d(U)r^{-d}$, which along with \eqref{eq: r-def} explains the scaling in \eqref{eq: main rigitity}(ii). The proof of \eqref{eq: main rigitity}(i) instead will rely on Proposition  \ref{lemma:chain} along with the linearization formula \cite[(3.20)]{friesecke2002theorem}\AAA,\EEE
\begin{align}\label{eq: linear-form}
|{\rm sym}(R^T F- {\rm Id})| =   {\rm dist}(F, SO(d)) + {\rm O}(|F- R|^2)
\end{align}
for $F \in \R^{d\times d}$ and $\OOO R\in \EEE SO(d)$. In fact, the latter shows that it suffices to have a good bound on $\int |\nabla y - R|^4\, {\rm d}x$ \EEE in order to control the symmetrized gradient \AAA in $L^2$\EEE. We are now ready to give the proof of Theorem \ref{prop:rigidity}.

%
%\begin{step}{1}(Reduction to the case of small elastic energy) It suffices to consider the case that for $\delta_0(\eta)>0$ as in Proposition \ref{prop:setmodification} (\RRR possibly up to choosing it smaller if necessary?) \EEE we have
%\begin{align}\label{ineq:small elastic}
%\int_{\Omega\setminus {\overline E}\EEE} \mathrm{dist}^2(\nabla y,SO(3))\,\mathrm{d}x \leq \delta_0^2\,.
%\end{align} 
%In fact, if this is not the case, we could choose $R_j=\mathrm{Id}$ for all $j$. Then
%\begin{align*}
%\int_{\Omega\setminus {\overline E}\EEE}|e( y - x)|^2\,\mathrm{d}x &\leq \int_{\Omega\setminus {\overline E}\EEE} |\nabla y - \mathrm{Id}|^2\,\mathrm{d}x \leq  2\EEE\int_{\Omega\setminus {\overline E}\EEE}\big(|\nabla y|^2 + 3\big)\EEE\,\mathrm{d}x \\&\leq C\int_{\Omega\setminus {\overline E}\EEE} \big(\mathrm{dist}^2(\nabla y,SO(3))+6\big)\,\mathrm{d}x \leq C\left(\int_{\Omega\setminus {\overline E}\EEE} \mathrm{dist}^2(\nabla y,SO(3))\,\mathrm{d}x +|\Omega\setminus E| \right) \\&\leq C_{ \Omega,\delta_0}\int_{\Omega\setminus {\overline E}\EEE} \mathrm{dist}^2(\nabla y,SO(3))\,\mathrm{d}x \,.
%\end{align*}
%Hence, both  properties \EEE {\emph{(1)}} and {\emph{(3)}}  in Proposition \ref{prop:rigidity}  \EEE  are satisfied. Due to Proposition \ref{prop:setmodification} {\emph{(1)}}, also property {\emph{(2)}} therein is satisfied. 
%\end{step}
%\\[10pt]

\begin{proof}[Proof of Theorem  \ref{prop:rigidity}] Let $q\in [d-1,+\infty)$, $\gamma\in (0,1)$, $\OOO\tilde\Omega \EEE \subset \subset \Omega$ be an open subset, and let $\varphi$ be a norm on $\R^d$. Without restriction we can assume that $\tilde\Omega$ is smooth. \EEE    We let $\eta_0$ as in Proposition~\ref{prop:setmodification} and  $\eta \in (0,\eta_0]$. We \ZZZ can assume that for $c_\eta \EEE$ given in Proposition \ref{prop:setmodification} also \eqref{incl:Eeta} holds,  where  $r_{\eta,\gamma}$  is defined as in \eqref{eq: r-def}. \EEE From now on, we write $r$ in place of $r_{\eta,\gamma}$  for  notational simplicity.

We  let  \EEE $E_{\eta,\gamma}$ be the set obtained from Proposition~\ref{prop:setmodification}. In particular, \eqref{eq: partition} holds by \eqref{eq: partition-new}(ii),(iii). Let  $\OOO \tilde\Omega\EEE_{\eta,\gamma}^E$ be the set in \eqref{eq: strange notation}, and denote  \EEE by $(\OOO \tilde\Omega\EEE^{\eta,\gamma}_j)_j$ the connected components of $\OOO\tilde\Omega\EEE_{\eta,\gamma}^E$. Note that \AAA these \EEE are finitely many due to the regularity of $E_{\eta,\gamma}$ and  $\tilde \Omega$.  \EEE  The main part of the proof now consists in deriving \eqref{eq: main rigitity}. To this end, \BBB similarly \EEE to the proof in \cite{friesecke2002theorem}, a crucial step \OOO is to reduce \EEE the problem to harmonic mappings, see Steps~1--2 below. In Steps~3--4 we then provide the \ZZZ rigidity estimate \eqref{eq: main rigitity}(i),(ii), and briefly indicate the Poincar\'e-type estimate \eqref{eq: main rigitity}(iii) in Step 5. \EEE   In the following, $C>0$ denotes a generic constant only depending \BBB $\Omega$, \EEE which may change from line to line.  \BBB Without restriction, we suppose that the sets 
\begin{align}\label{eq: tiliP}
{\OOO \hat{\Omega}\EEE}^{\eta,\gamma}_j := {\rm int} \Big(\bigcup\nolimits_{Q_r\in \mathcal{Q}_r(\OOO\tilde \Omega\EEE_j^{\eta,\gamma})} \overline{Q_{2r}} \Big) \quad \text{ are pairwise disjoint}.
\end{align}   
Indeed, whenever $\hat{\Omega}^{\eta,\gamma}_i \cap \hat{\Omega}^{\eta,\gamma}_j \neq \emptyset$, one can replace $\tilde\Omega^{\eta,\gamma}_i$ and $\tilde\Omega^{\eta,\gamma}_j$ in the reasoning below by $\tilde\Omega\EEE^{\eta,\gamma}_i \cup \tilde\Omega\EEE^{\eta,\gamma}_j$ and can derive \eqref{eq: main rigitity} for a single rotation on $\tilde\Omega\EEE^{\eta,\gamma}_i \cup \tilde\Omega\EEE^{\eta,\gamma}_j$.   \\[8pt]
\begin{step}{1}(Reduction to Lipschitz \OOO mappings  \EEE on cubes). For every cube $Q_r \in \mathcal{Q}_r(\OOO \tilde\Omega\EEE_{\eta,\gamma}^E)  \EEE $ we have $Q_{2r} \subset \Omega \setminus \overline{E}$ by \eqref{incl:Eeta}. By a variant of \cite[Theorem 6.15]{EvansGariepy92}, see  \OOO also \EEE \cite[Proposition A.1]{friesecke2002theorem},  we let   \EEE $y_Q \in W^{1,\infty}(Q_{2r};\mathbb{R}^d)$ be a Lipschitz truncation obtained from $y$ satisfying
\begin{align}\label{ineq:step2claim}
\begin{split}
{\rm (i)} &  \ \ \| \nabla y_Q\|_{L^\infty(Q_{2r})} \leq C\,,  \EEE %\notag 
\\
{\rm (ii)} &  \ \ \int_{Q_{2r}}|\nabla y-\nabla y_Q|^2\,\mathrm{d}x \leq C \int_{Q_{2r}\cap\{|\nabla y| > 2\sqrt{d}\}} |\nabla y|^2\,\mathrm{d}x\,.
\end{split}
\end{align}
%{\rm (ii)} & \ \  |\{ y \neq y_Q\} \cap Q_{2r}| \leq C \int_{Q_{2r}\cap\{|\nabla y| > 2\sqrt{d}\}} |\nabla y|^2\,\mathrm{d}x\,, \\
%\RRR Old (ii) was not needed  \EEE ¸
Here, with a slight abuse of notation, we write $y_{Q}$ instead of $y_{Q_r}$. We now claim  \EEE  that it suffices to prove that there exist   $(R^{\eta,\gamma}_j)_j \subset SO(d)$  such that \EEE
\begin{align}\label{ineq:Lipschitzrigidtysymm}
\hspace{-0.2em}\sum\nolimits_{j} \sum\nolimits_{Q_r  \in \mathcal{Q}_r(\OOO \tilde\Omega\EEE^{\eta,\gamma}_j)} \int_{Q_r} \big| {\rm sym} \big( (R^{\eta,\gamma}_j)^T \nabla y_Q- {\rm Id} \big) \big|^2\,%\mathrm{d}x
\leq  C(1 + r^{-5d}\AAA \eps  \EEE)  \EEE  \int_{\Omega\setminus  \overline {E}\EEE} \mathrm{dist}^2(\nabla y,SO(d))\,%\mathrm{d}x\,
,
\end{align}
where here and in the following we use the shorthand notation $\eps := \int_{\Omega\setminus  \overline{E}\EEE} \mathrm{dist}^2(\nabla y,SO(d))\,\mathrm{d}x$,  \EEE and
\begin{align}\label{ineq:Lipschitzrigidityfull}
\sum\nolimits_{j} \sum\nolimits_{Q_r  \in \mathcal{Q}_r(\OOO \tilde\Omega\EEE^{\eta,\gamma}_j)} \int_{Q_r} \big|  (R^{\eta,\gamma}_j)^T \nabla y_Q- {\rm Id} \big|^2\,\mathrm{d}x  \leq C r^{-2d \EEE} \int_{\Omega\setminus  \overline E\EEE} \mathrm{dist}^2(\nabla y,SO(d))\,\mathrm{d}x\,.
\end{align}
Indeed, let us note that  
\begin{align}\label{ineq:largegradientbound}
 \int_{Q_{2r}\cap\{|\nabla y| > 2\sqrt{d}\}} |\nabla y|^2\,\mathrm{d}x\leq C\int_{Q_{2r}} \mathrm{dist}^2(\nabla y,SO(d))\,\mathrm{d}x\,\AAA,\EEE
\end{align}
\BBB since \EEE  $|F|\leq 2\mathrm{dist}(F,SO(d))$ for all $F\in \mathbb{R}^{d\times d}$ \EEE with $|F|>2\sqrt d$. 
%We claim that \eqref{ineq:Lipschitzrigidtysymm} implies
%\begin{align*}
%\sum_{j} \int_{P_{\eta,  \delta\EEE,j}} |e(R^T_{\eta,  \delta\EEE,j}y-x)|^2\,\mathrm{d}x \leq C \int_{\Omega\setminus  \overline{E}\EEE} \mathrm{dist}^2(\nabla y,SO(d))\,\mathrm{d}x\,.
%\end{align*}
\EEE This along with \eqref{ineq:Qintersectbound}, \eqref{incl:Eeta}, \BBB \eqref{eq: tiliP}, \EEE and \eqref{ineq:step2claim}(ii) \EEE shows that \begin{align}\label{eq:NNN}
\EEE \sum\nolimits_j \sum\nolimits_{Q_r  \in \mathcal{Q}_r(\OOO \tilde\Omega\EEE^{\eta,\gamma}_j)} \OOO\int_{Q_{2r}} |\nabla  y   - \nabla y_Q|^2\,\mathrm{d}x\, \EEE \le C\int_{\Omega\setminus  \overline{E}\EEE} \mathrm{dist}^2(\nabla y,SO(d))\,\mathrm{d}x\,. 
\end{align}
\EEE Then, \eqref{eq: main rigitity}(i),(ii) for a constant $C_\eta=C_\eta(\eta,\AAA\Omega,\tilde\Omega\EEE)>0$  \EEE and \BBB $C_0>0$ (depending on $\Omega$)  \EEE clearly follows from \eqref{ineq:Lipschitzrigidtysymm}--\eqref{ineq:Lipschitzrigidityfull}, \AAA \eqref{eq:NNN}, \EEE the triangle inequality,  the definition of $r=r_{\eta,\gamma}$ in \eqref{eq: r-def}, and the definition of $\mathcal{Q}_r(\OOO \tilde \Omega\EEE^{\eta,\gamma}_j)$ below \eqref{eq: r-cubic set}.  Therefore, it suffices to prove \eqref{ineq:Lipschitzrigidtysymm}--\eqref{ineq:Lipschitzrigidityfull}.  \EEE %\RRR I made the argument a bit shorter here. \EEE
%In fact, due to \eqref{ineq:largegradientbound} and \eqref{ineq:Qintersectbound}, we have
%\begin{align*}
%\sum_{j} \sum_{Q \in P^r_{\eta, \delta \EEE, j}} \int_{Q_{r}} |e(R^T_{\eta,  \delta\EEE,j}y-x)|^2\,\mathrm{d}x  &\leq C\sum_{j} \sum_{Q \in P^r_{\eta,  \delta\EEE,j}}\left(\int_{Q_{r}} |e(R_{j,\eta}^Ty_Q-x)|^2\,\mathrm{d}x + \int_{Q_r} |\nabla y -\nabla y_Q|^2\,\mathrm{d}x \right) \\&\leq  C\int_{\Omega\setminus  \overline{E}\EEE} \mathrm{dist}^2(\nabla y,SO(3))\mathrm{d}x\,.
%\end{align*}
%Now \eqref{ineq:step2claim} follows by \eqref{incl:Eeta}.
%The same reasoning shows also that \eqref{ineq:Lipschitzrigidityfull}
%implies 
%\begin{align*}
%\sum_{j}   \int_{P_{\eta, \delta\EEE,j}} |R^T_{\eta, \delta\EEE,j} \nabla y-\mathrm{Id}|^2\,\mathrm{d}x \leq Cr^{-6} \int_{\Omega\setminus \overline{E}\EEE} \mathrm{dist}^2(\nabla y,SO(3))\,\mathrm{d}x\,.
%\end{align*}
%%We conclude Step 2.
\end{step} \EEE
\\[13pt]
\begin{step}{2}(Reduction to harmonic \OOO mappings\EEE) 
For  \EEE every $Q_r \in \mathcal{Q}_r(\OOO\tilde \Omega\EEE_{\eta,\gamma}^E)  \EEE $, we consider $y_Q= w_Q+z_Q$, where, in the sense of distributions,  
\begin{align*}
%\label{eq:harmonic replacement}
\begin{cases} \Delta w_Q = 0 &\text{on } Q_{2r},\\
w_Q =y_Q &\text{on } \partial Q_{2r},
\end{cases}
\quad \text{ and } \quad  
\begin{cases} \Delta z_Q = \mathrm{div}\left(\nabla y_Q-\mathrm{cof}\nabla y_Q\right) &\text{on } Q_{2r},\\
z_Q =0 &\text{on } \partial Q_{2r}.
\end{cases}
\end{align*}
It holds that 
\begin{align}\label{ineq:zQbound}
\int_{Q_{2r}} |\nabla z_Q|^2\,\mathrm{d}x \leq  \int_{Q_{2r}}  |\mathrm{cof}\nabla y_Q - \nabla y_Q  |^2 \, {\rm d}x                      \le   \EEE    C \int_{Q_{2r}} \mathrm{dist}^2(\nabla y_Q,  \EEE SO(d))\, \mathrm{d}x\,.
\end{align}
In fact, this follows from the arguments in \cite[Proof of Theorem 3.1, Step 1]{friesecke2002theorem}, in particular using that $|\mathrm{cof} F - F| \le c\, {\rm dist}(F,SO(d))\EEE$ for all $F \in \R^{d \times d}$ with $|F|\le C$ for some $c=c(C)>0$, where here $C$ denotes the constant of \eqref{ineq:step2claim}(i). In view of \eqref{ineq:step2claim}(ii) and \eqref{ineq:largegradientbound}, \eqref{ineq:zQbound} implies 
\BBB
\begin{align}\label{ineq:zQbound2}
\int_{Q_{2r}} |\nabla y_Q -  \nabla w_Q|^2\,\mathrm{d}x    \leq C \int_{Q_{2r}} \mathrm{dist}^2(\nabla y_Q,  SO(d))\, \mathrm{d}x \le C \int_{Q_{2r}} \mathrm{dist}^2(\nabla y,  SO(d))\, \mathrm{d}x\,.
\end{align}
\EEE  This along with \eqref{ineq:Qintersectbound},  \eqref{incl:Eeta}, \BBB and \eqref{eq: tiliP} \EEE shows that, in order to establish \eqref{ineq:Lipschitzrigidtysymm}--\eqref{ineq:Lipschitzrigidityfull}, it suffices to show that there exist   $(R^{\eta,\gamma}_j)_j \subset SO(d)$   such that  \EEE 
\begin{align}\label{ineq:harmonicrigiditysymm}
%\hspace{-1em}
\sum\nolimits_{j} \sum\nolimits_{Q_r  \in \mathcal{Q}_r(\OOO\tilde \Omega\EEE^{\eta,\gamma}_j)} \int_{Q_r} \big| {\rm sym} \big( (R^{\eta,\gamma}_j)^T \nabla w_Q- {\rm Id} \big) \big|^2%\,\mathrm{d}x
\leq C(1 + r^{-5d}\AAA\varepsilon\EEE)  \EEE  \int_{\Omega\setminus  \overline {E}\EEE} \mathrm{dist}^2(\nabla y,SO(d))%\,\mathrm{d}x\,,
\end{align}
and
\begin{align}\label{ineq:harmonicrigidityfull}
\sum\nolimits_{j} \sum\nolimits_{Q_r  \in \mathcal{Q}_r(\OOO\tilde \Omega\EEE^{\eta,\gamma}_j)} \int_{Q_r} \big|  (R^{\eta,\gamma}_j)^T \nabla w_Q- {\rm Id} \big|^2\,\mathrm{d}x  \leq C r^{-2d \EEE} \int_{\Omega\setminus  \overline E\EEE} \mathrm{dist}^2(\nabla y,SO(d))\,\mathrm{d}x\,.
\end{align}
\end{step}
\\[3pt]
\begin{step}{3}(Local  ($L^2$-$L^\infty$)-estimate \OOO for harmonic mappings\EEE) In this step we show that for each $\OOO\tilde \Omega\EEE_j^{\eta,\gamma}$ there exists $R^{\eta,\gamma}_j \in SO(d)$ such that 
\begin{align}\label{ineq:globalbad}
\sum\nolimits_{j} \sum\nolimits_{Q_r  \in \mathcal{Q}_r(\OOO\tilde \Omega\EEE^{\eta,\gamma}_j)} \int_{Q_{2r}}|\nabla w_Q- R^{\eta,\gamma}_j|^2\,\mathrm{d}x \leq Cr^{-2d} \int_{\Omega\setminus\overline{E}\EEE}\mathrm{dist}^2(\nabla y,SO(d))\,\mathrm{d}x\,,
\end{align}
\BBB and \EEE   for each $Q_r \in \mathcal{Q}_r(\OOO\tilde \Omega\EEE^{\eta,\gamma}_j)$  \BBB it holds that \EEE   
\begin{align}\label{ineq:Linfty}
\|\nabla w_Q -R^{\eta,\gamma}_j\|_{L^\infty(Q_r)} \leq Cr^{-3d/2}\left(\int_{\Omega\setminus  \overline{E}\EEE} \mathrm{dist}^2(\nabla y,SO(d))\,\mathrm{d}x\right)^{1/2} \BBB = \EEE Cr^{-3d/2}\sqrt{\eps}\,,
\end{align}
where we recall the notation \OOO for  \EEE $\eps$  \OOO below  \EEE \eqref{ineq:Lipschitzrigidtysymm}.  \EEE 
%First, by applying \cite{friesecke2002theorem}[Theorem 3.1] for $Q_{2r}$ and using \eqref{ineq:zQbound}, we obtain that there exists $R_Q \in SO(3)$ such that
%\begin{align}\label{ineq:harmoniccube}
%\int_{Q_{2r}}|\nabla w_Q-R_Q|^2\,\mathrm{d}x \leq C \int_{Q_{2r}} \mathrm{dist}^2(\nabla y,SO(3))\,\mathrm{d}x\,.
%\end{align}
%Since $w_Q$ is a harmonic function on $Q_{2r}$ and $Q_{2r} \subset \Omega\setminus E$, as a consequence of the mean value property, we have
%\begin{align}
%\begin{split}
%\|\nabla w_Q -R_{j,\eta}\|_{L^\infty(Q_r)} &\leq Cr^{-3/2} \left(\int_{Q_{2r}}|\nabla w_Q-R_Q|^2\,\mathrm{d}x\right)^{1/2}\\&\leq Cr^{-3/2} \left(\int_{\Q_{2r}} \mathrm{dist}^2(\nabla y,SO(3))\,\mathrm{d}x\right)^{1/2} \leq Cr^{-3/2}\left(\int_{\Omega\setminus E} \mathrm{dist}^2(\nabla y,SO(3))\,\mathrm{d}x\right)^{1/2}\,.
%\end{split}
%\end{align}
\BBB To see this, \EEE  we apply  Proposition  \ref{lemma:chain} for $2r/3$ in place of $r$ on the function  $y$ and on  the \BBB sets $\hat{\Omega}^{\eta,\gamma}_j$ introduced in \eqref{eq: tiliP} in place of $U$. In view of the fact that $\hat{\Omega}^{\eta,\gamma}_j = (\hat{\Omega}^{\eta,\gamma}_j)^{2r/3}$,  we find   \EEE  $(R^{\eta,\gamma}_j)_j \subset SO(d)$ such that
\begin{align}\label{ineq:yglobalbad-new}
\int_{\OOO \hat{\Omega}\EEE^{\eta,\gamma}_j}|\nabla y- R^{\eta,\gamma}_j|^2\,\mathrm{d}x \leq Cr^{-2d} \int_{\OOO \hat{\Omega}\EEE^{\eta,\gamma}_j}\mathrm{dist}^2(\nabla y,SO(d))\,\mathrm{d}x\,,
\end{align}
where we used that $\# \mathcal{Q}_{2r/3}(\OOO \hat{\Omega}^{\eta,\gamma}_j) \le  \mathcal{L}^d(\Omega) (2r/3)^{-d} $, i.e., $C$ in \eqref{ineq:yglobalbad-new} also depends on $\Omega$. By  \eqref{ineq:Qintersectbound} this yields  \EEE 
\begin{align}\label{ineq:yglobalbad}
\sum\nolimits_{j} \sum\nolimits_{Q_r  \in \mathcal{Q}_r(\OOO \tilde \Omega\EEE^{\eta,\gamma}_j)}  \int_{Q_{2r}} \big|  (R^{\eta,\gamma}_j)^T \nabla y- {\rm Id} \big|^2\,\mathrm{d}x  \leq Cr^{-2d} \int_{\Omega\setminus  \overline{E}\EEE}\mathrm{dist}^2(\nabla y,SO(d))\,\mathrm{d}x\,,
\end{align}
where \BBB as before we employed also  \eqref{incl:Eeta} and \eqref{eq: tiliP}. \EEE In view of \eqref{ineq:Qintersectbound}, \eqref{incl:Eeta}, \BBB  \eqref{eq: tiliP},  \eqref{eq:NNN}, \EEE \eqref{ineq:zQbound2}, and the triangle inequality  we get 
\begin{align}\label{ineq:globalbad-NNN}
\sum\nolimits_{j} \sum\nolimits_{Q_r  \in \mathcal{Q}_r(\OOO \tilde \Omega\EEE^{\eta,\gamma}_j)} \int_{Q_{2r}}|\nabla w_Q- \nabla y|^2\,\mathrm{d}x \leq C\int_{\Omega\setminus\overline{E}\EEE}\mathrm{dist}^2(\nabla y,SO(d))\,\mathrm{d}x\,.
\end{align}\EEE
 Consequently, by  \eqref{ineq:yglobalbad} we finally obtain \eqref{ineq:globalbad}. 
 
 We now address \eqref{ineq:Linfty}.
%
%
%In fact, due to \eqref{ineq:zQbound}, \eqref{ineq:yglobalbad}, and Step 2, we have
%\begin{align*}
%\sum_{j} \sum_{Q \in P^r_{\eta, \delta \EEE,j}}\int_{Q_{2r}}|\nabla w_Q- R_{\eta, \delta \EEE,j}|^2\,\mathrm{d}x &\leq C\sum_{j} \sum_{Q \in P^r_{\eta, \delta \EEE,j}}\left(\EEE\int_{Q_{2r}}|\nabla w_Q- \nabla y_Q|^2\,\mathrm{d}x + \int_{Q_{2r}}|\nabla y_Q-\nabla y|^2\,\mathrm{d}x\right) \EEE \\& \quad+ C\sum_{j} \sum_{Q \in P^r_{\eta, \delta \EEE,j}} \int_{Q_{2r}} | \nabla y- R_{\eta, \delta \EEE,j}|^2\,\mathrm{d}x \\&\leq Cr^{-6}\int_{\Omega\setminus  \overline{E}\EEE} \mathrm{dist}^2(\nabla y,SO(3))\,\mathrm{d}x + \sum_{j }\sum_{Q \in P^r_{\eta, \delta \EEE,j}}\int_{Q_{2r}} |\nabla z_Q|^2\,\mathrm{d}x\\& \leq Cr^{-6} \int_{\Omega\setminus  \overline{E}} \mathrm{dist}^2(\nabla y,SO(3))\,\mathrm{d}x\,.
%\end{align*}
For every $j$ and every $Q_r  \in \mathcal{Q}_r(\OOO \tilde \Omega\EEE^{\eta,\gamma}_j)$,  \EEE due to \eqref{ineq:globalbad}, the fact that $w_Q$ is a harmonic \OOO mapping \EEE on $Q_{2r}$ and $Q_{2r} \subset \Omega\setminus \overline{E}\EEE$, as a consequence of the mean value property and \OOO the Cauchy-Schwarz  inequality, \EEE we have
\begin{align}\label{Linfty_estimate}
\QQQ\| \nabla w_Q-R^{\eta,\gamma}_j\|_{L^\infty(Q_r)} \leq \frac{C}{r^{d/2}}\left( \int_{Q_{2r}}|\nabla w_Q- R^{\eta,\gamma}_j|^2\right)^{\frac{1}{2}} \leq \frac{C}{r^{3d/2}}\left(\int_{\Omega\setminus  \overline{E}} \mathrm{dist}^2(\nabla y,SO(d)) \right)^{\frac{1}{2}}\,.\EEE
\end{align}
This yields  \EEE  \eqref{ineq:Linfty}, and Step 3 of the proof is concluded.
\end{step}
\\[13pt]
\begin{step}{4}(Global estimates) %The goal of this step is to prove 
In this step \AAA we \EEE finally prove \eqref{eq: main rigitity}(i),(ii). In view of  Step 2, it suffices to check \eqref{ineq:harmonicrigiditysymm}--\eqref{ineq:harmonicrigidityfull}.  First,  \eqref{ineq:harmonicrigidityfull} follows directly  from 
\eqref{ineq:globalbad}.  By the linearization formula \eqref{eq: linear-form}, \eqref{ineq:Linfty},  \eqref{ineq:globalbad-NNN},   and Young's inequality   \EEE we have
\begin{align}\label{last estimates}
\begin{split}
\sum\nolimits_{j} &\sum\nolimits_{Q_r  \in \mathcal{Q}_r(\OOO \tilde \Omega\EEE^{\eta,\gamma}_j)}    \int_{Q_r} \big| {\rm sym} \big( (R^{\eta,\gamma}_j)^T \nabla w_Q- {\rm Id} \big) \big|^2\,\mathrm{d}x \\
& \leq C \sum\nolimits_{j} \sum\nolimits_{Q_r  \in \mathcal{Q}_r(\OOO \tilde \Omega\EEE^{\eta,\gamma}_j)}\Big( \int_{Q_r}\mathrm{dist}^2(\nabla w_Q, SO(d))\,\mathrm{d}x + \int_{Q_r} |\nabla w_Q-R^{\eta,\gamma}_j|^4\,\mathrm{d}x \Big) \EEE \\
&\leq C\int_{\Omega\setminus  \overline{E}\EEE} \mathrm{dist}^2(\nabla y,SO(d))\,\mathrm{d}x +Cr^{-3d}\AAA\eps  \EEE  \sum\nolimits_{j} \sum\nolimits_{Q_r  \in \mathcal{Q}_r(\OOO \tilde \Omega\EEE^{\eta,\gamma}_j)} \int_{Q_r} |\nabla w_Q-R^{\eta,\gamma}_j|^2\,\mathrm{d}x\,.
\end{split}
\end{align}
Then, by using  \eqref{ineq:globalbad} we \BBB get \EEE  
\begin{align*}
\sum\nolimits_{j} \sum\nolimits_{Q_r  \in \mathcal{Q}_r(\OOO \tilde \Omega\EEE^{\eta,\gamma}_j)}    \int_{Q_r} \big| {\rm sym} \big( (R^{\eta,\gamma}_j)^T \nabla w_Q- {\rm Id} \big) \big|^2\,\mathrm{d}x \leq C(1 + r^{-5d}\AAA\eps\EEE) \int_{\Omega \setminus \overline{E}} \mathrm{dist}^2(\nabla y,SO(d))\,\mathrm{d}x\,.
\end{align*}
This  yields  \EEE \eqref{ineq:harmonicrigiditysymm} and concludes the proof of \eqref{eq: main rigitity}(i),(ii). 
\end{step}
\\[5pt] \QQQ 
\begin{step}{5}(Poincar\'e estimate) %The goal of this step is to prove 
We briefly indicate how to derive \eqref{eq: main rigitity}(iii). \QQQ By applying  \eqref{eq: RRR}$\rm{(ii)}$ of Proposition \ref{lemma:chain} for $2r/3$ in place of $r$ on the function $y(x)-R_j^{\eta,\gamma}x$ and on   $\hat{\Omega}^{\eta,\gamma}_j$ in place of $U$, using again that $\# \mathcal{Q}_{2r/3}(\hat{\Omega}^{\eta,\gamma}_j) \le  \mathcal{L}^d(\Omega) (2r/3)^{-d} $, we also find     $(b^{\eta,\gamma}_j)_j \subset \R^d$ such that \EEE

\begin{align} 
\sum\nolimits_{j} \sum\nolimits_{Q_r  \in \mathcal{Q}_r(\tilde \Omega^{\eta,\gamma}_j)}  \int_{Q_{2r}} \big|y(x)- (R^{\eta,\gamma}_j \, x + b^{\eta,\gamma}_j) \big|^2\,\mathrm{d}x  \leq Cr^{2-\QQQ 4\EEE d} \int_{\Omega\setminus  \overline{E}\EEE}\mathrm{dist}^2(\nabla y,SO(d))\,\mathrm{d}x\,\QQQ .
\end{align}
Recalling the definition of $r=r_{\eta,\gamma}$ in \eqref{eq: r-def}, and the definition of $\mathcal{Q}_r(\OOO \tilde \Omega\EEE^{\eta,\gamma}_j)$ below \eqref{eq: r-cubic set} we conclude  \eqref{eq: main rigitity}(iii).
\end{step} \EEE
%
%\begin{step}{2}(Local estimates on every cube)
%We set
%\begin{align}
%\mathcal{Q}_\rho(A) =\{Q_\rho(x) : x \in \rho\mathbb{Z}^d, Q_\rho(x) \cap A \neq \emptyset\}\,.
%\end{align}
%
%\begin{align}
%(A)^\rho = \bigcup_{Q \in \mathcal{Q}_\rho(A)} Q_\rho
%\end{align}
%Denote by $\{P_{\eta,j}^\rho\}_j$ the connected components of $(\Omega\setminus E_\eta)^\rho$.
%\begin{align}
% \sum_{j}\int_{P_{\eta,j}^\rho} |\nabla y-R_j|^2\,\mathrm{d}x  \leq C\rho^{-6} \sum_{j}\int_{P_{\eta,j}^\rho} \mathrm{dist}^2(\nabla y,R_j)\,\mathrm{d}x\,.
%\end{align}
%
%\begin{align}
%\int_{Q_{2\rho}} |\nabla w_Q -R_Q|^2\,\mathrm{d}x \leq C\int_{Q_{2\rho}}\mathrm{dist}^2(\nabla y,SO(3))\,\mathrm{d}x\,. 
%\end{align}
%
%\end{step}
%
%\begin{step}{3}(Global estimate)
%
%\end{step}
\end{proof}

\QQQ

\begin{remark}\label{localization_of_rigidity}
\normalfont A closer inspection of the proof shows that Theorem \ref{prop:rigidity} can be \textit{localized}, in the sense that in the rigidity estimates \eqref{eq: main rigitity} the sets $\Omega, \tilde \Omega$ can be replaced by  open sets $U,\tilde U$ respectively, with $\tilde{U} \subset \subset  U \subset \Omega$,  $\tilde{U} \subset \tilde{\Omega}$, and ${\rm dist}(\partial U,\tilde U) \ge {\rm dist}(\partial \Omega,\tilde \Omega)$, where the connected components of $\tilde U\setminus \overline{E}_{\eta,\gamma}$ would then be denoted by $(\tilde U^{\eta,\gamma}_j)_j$.  In that case, the constant $C_\eta$ does not depend on $\Omega$, $\tilde{\Omega}$, but only on $\eta$ and on $(\L^3(U))^2$. 

Indeed, the above arguments essentially rely on \eqref{incl:Eeta} and the estimates on cubic sets given in   Proposition \ref{lemma:chain}. The estimate ${\rm dist}(\partial U,\tilde U) \ge {\rm dist}(\partial \Omega,\tilde \Omega)$ guarantees  \eqref{incl:Eeta} for $U$, $\tilde{U}$. The fact that the constant $C>0$ appearing in \eqref{ineq:yglobalbad-new} depends quadratically on $\mathcal{L}^3(U)$  follows by the comment just below it, while by scaling invariance, $C>0$ appearing in \eqref{Linfty_estimate} can be chosen to be an absolute constant. Therefore, the estimates in \eqref{last estimates} yield this precise dependence. 	
\end{remark}

\EEE

We close this subsection with the short proof of Corollary \ref{cor: rig-cor}.

\begin{proof}[Proof of Corollary \ref{cor: rig-cor}]
A careful inspection of the previous proof shows that we only need to check that, whenever $\mathcal{L}^d(\OOO \tilde \Omega\EEE^{\eta,\gamma}_j \cap U_D)>0$ holds, then in \eqref{ineq:yglobalbad-new} we can choose $R^{\eta,\gamma}_j = {\rm Id}$. To this end, when   $\mathcal{L}^d(\OOO \tilde \Omega\EEE^{\eta,\gamma}_j \cap U_D)>0$,  we \BBB find $Q_r \in  \mathcal{Q}_{r}(\tilde{\Omega}_j^{\eta,\gamma})$  such that $Q_r \cap U_D\neq \emptyset$. Then, we can select  \EEE $Q'_{2r/3}\in \mathcal{Q}_{2r/3}(\hat{\Omega}_j^{\eta,\gamma})$, $Q_{2r/3}' \subset Q_{2r} \subset \hat{\Omega}^{\eta,\gamma}_j $, see \eqref {eq: tiliP},  such that \EEE by  \eqref{eq: r-def}, the fact that $\gamma\AAA\in (0,1)\EEE$, and by the fact that $U_D$ has Lipschitz boundary we get $\mathcal{L}^d(\AAA{Q'_{2r/3}}\EEE \cap U_D) \ge cr^d$ for a small absolute constant  $c\in (0,1)$, provided that     $c_\eta$ is sufficiently small also depending on $U_D$. \EEE Then the desired property follows from the additional statement in Proposition \ref{lemma:chain} \BBB and \AAA the fact that \EEE $y = {\rm id}$ on  $U_D$. \EEE In this context, note that the constant $C_\eta$ in \eqref{eq: main rigitity} depends on $c_\eta$ and therefore $C_\eta$ also depends on $U_D$.
\end{proof}

\subsection{Thickening of sets}\label{sec: thickening0}

\AAA In this subsection we prove Proposition \ref{prop:setmodification}. Without restriction we will assume from now on that $\varphi_{\rm min}:=\min_{\mathbb{S}^{d-1}} \varphi = 1$. Indeed, we can simply perform the proof for $\varphi_{\rm min}^{-1}\varphi$ in place of $\varphi$ and $\varphi_{\rm min}^{-1} \gamma$ in place of $\gamma$ to see that \eqref{eq: partition-new}(iii) holds. \EEE %\AAA(Notice that under this rescaling the new $\gamma$ belongs to $(0,\varphi_{\min}^{-1})$ and not necessarily in $(0,1)$, which does not change anything in the proofs below, except possibly for the dependence of some extra constants on $\varphi$.) \EEE 
The proof essentially relies on a local construction to thicken the set $E$ in a \BBB suitable \EEE  way. To formulate the local statement, we introduce some further notation. Given $\rho >0$ and  a cube $Q_\rho \in \mathcal{Q}_\rho$, \MMM  see \eqref{eq: tess}, \EEE  we denote the set of \emph{neighboring cubes} by 
\begin{align}\label{eq_ neighb-c}
\mathcal{N}(Q_\rho):= \big\{ Q_\rho' \in \mathcal{Q}_\rho \colon  \mathcal{H}^{d-1}(\partial Q_\rho \cap \partial Q'_\rho)>0\big\}\,.
\end{align}
\BBB Note that $\#  \mathcal{N}(Q_\rho) =2d$. \EEE \MMM We also recall the definition of $\mathcal{F}_{\rm surf}^{\varphi,\gamma,q}$ in \eqref{eq: surface energy}. \EEE
Moreover, for notational convenience, we denote the anisotropic perimeter by 
\begin{align}\label{eq: ani-per}
\mathcal{H}^{d-1}_\varphi(\Gamma) := \int_\Gamma \varphi(\nu_\Gamma)\, {\rm d}\mathcal{H}^{d-1}
\end{align}
for  a norm $\varphi$ on $\R^d$ and for a $\QQQ(d-1)$-rectifiable set $\Gamma$, where $\nu_\Gamma$ denotes a measure-theoretical unit normal to $\Gamma$. Note that the integral is invariant under changing the orientation of $\nu_\Gamma$ as $\varphi$ is a norm.  The proof of Proposition \ref{prop:setmodification} will make use of the following lemma, whose proof will be given later in Subsection \ref{sec: thickening2}. 

%\RRR $\Lambda$ is the old $\beta$. We can go back to that.
 \EEE

\begin{lemma}[Local thickening of sets]\label{lemma: localthick}
\BBB Let $d=2,3$, $q\in[d-1,+\infty)\EEE$, $\gamma \in (0,1)$, and  $\varphi$ be a norm on $\R^d$. Let $\Omega \subset \R^d$ be open and bounded, $\tilde \Omega \subset \subset \Omega$ be an open subset, \EEE and   \EEE   let $\Lambda >0$.  Then, there exist constants  $C=C(\varphi,\Lambda)>0$ and   $\eta_0  = \eta_0(\Lambda)   \in (0,1)$ such that for all $\eta \in (0,\eta_0]$  the following holds:\\
For \OOO every \EEE $\OOO 0<\EEE\rho \le   \eta^7   \gamma^{1/q}$ and for each $Q_\rho \in \mathcal{Q}_\rho$ such that $\AAA \overline{Q_{12\rho}} \EEE \subset \Omega$, and 
\begin{align}\label{eq: necessary assu}
\mathcal{F}_{\rm surf}^{\varphi,\gamma,q}(E \AAA ;\EEE Q_{8\rho}) \le \Lambda\rho^{d-1},\ \ \mathcal{F}_{\rm surf}^{\varphi,\gamma,q}(E\AAA ;\EEE Q'_{8\rho}) \le \Lambda\rho^{d-1} \ \ \text{$\forall\ Q_\rho' \in \mathcal{N}(Q_\rho)$}\,,
\end{align}
we can find pairwise disjoint sets $(\Gamma_i)_{i=1}^I$ in  $ \partial E \cap Q_{3\rho}$  with $I \le \BBB C \EEE$, corresponding closed  sets $(T_i)_{i=1}^I \subset Q_{8\rho}$, \AAA with $\partial T_i$ being a union of finitely many regular submanifolds \EEE  and a decomposition $\lbrace 1, \ldots, I\rbrace = \mathcal{I}_{\rm good} \cup \bigcup_{Q'_\rho \in \mathcal{N}(Q_\rho)}  \mathcal{I}_{\rm bad}(Q'_\rho)$ such that
\begin{align}\label{eq: good}
\begin{split}
{\rm (i)} & \ \  \mathcal{H}^{d-1}_\varphi\big(\partial T_i \setminus \overline{E}\big) \le \mathcal{H}^{d-1}_\varphi (\Gamma_i \cap Q_\rho) + C\eta\rho^{d-1}\quad %\text{for all $
\OOO\forall\,  \EEE i \in \mathcal{I}_{\rm good}%$} \,
,\\[1pt]
{\rm (ii)} & \ \  \mathcal{H}^{d-1}_\varphi\big(\partial T_i \setminus \overline{E}\big) \le \mathcal{H}^{d-1}_\varphi \big(\Gamma_i \cap (Q_\rho \cup Q_\rho')\big) + C\eta\rho^{d-1} \quad %\text{for all $
\OOO \forall \ Q_\rho' \in \mathcal{N}(Q_\rho),\, 
\forall \, i \in \mathcal{I}_{\rm bad}(Q'_\rho),%$ 
%and %all $
%$}\notag
\end{split}
\end{align} 
%\RRR(K: reads weirdly, $i$ first) \EEE 
and    
\begin{align}\label{eq: bad}
 \OOO  {\rm dist}\Big(\partial E \cap Q_\rho, \Big(\AAA E \cup\EEE\bigcup\nolimits_{i=1}^I T_i\Big)^c\Big)\geq \eta\rho\,.  \EEE 
\end{align}
Moreover,  fixing $Q'_\rho \in  \mathcal{N}(Q_\rho)$, introducing the notation  $\mathcal{I}'_{\rm bad}(Q_\rho)$ as above with respect to the cube $Q'_\rho$, and letting $\Gamma_i'$ and $T_i'$ be the corresponding sets, \EEE  we  \OOO have \EEE
\begin{align}\label{eq: ugly}
i \in \mathcal{I}_{\rm bad}(Q_\rho') \quad \Rightarrow  \quad \text{$\exists$ %\,  
$j \in \mathcal{I}'_{\rm bad}(Q_\rho)$ such that $(\Gamma_i \triangle \Gamma_j') \cap (Q_\rho \cup Q_\rho') = \emptyset$ and  $T_i = T_j'$.}
\end{align}
\end{lemma}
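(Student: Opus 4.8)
The plan is to pass, via a dilation that exploits $\rho\le\eta^7\gamma^{1/q}$, to a unit-scale picture in which $\partial E$ has bounded area but \emph{small} total $q$-curvature, to invoke the available $\varepsilon$-regularity theory to write $\partial E$ near $Q_\rho$ as a bounded number of almost flat $C^2$ graphs, and then to thicken $E$ by an explicit cut-off slab along each such graph, charging the newly created boundary only to the portion of the graph lying in $Q_\rho$ (or in $Q_\rho\cup Q'_\rho$ for sheets that straddle a common face). Concretely, after translating and dilating by $\rho^{-1}$ we may assume $\rho=1$, $Q_\rho=Q_1(0)$, $\overline{Q_{12}}\subset\Omega$, and (as in the reduction used for Proposition~\ref{prop:setmodification}) $\min_{\mathbb S^{d-1}}\varphi=1$. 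Since $\mathcal H^{d-1}$ scales like $\rho^{d-1}$ and $|\bm A|$ like $\rho^{-1}$, the hypotheses \eqref{eq: necessary assu} become, for the rescaled set (still denoted $E$), $\mathcal H^{d-1}(\partial E\cap Q_8)\le\Lambda$ and $\mathcal H^{d-1}(\partial E\cap Q'_8)\le\Lambda$ for all $Q'_1\in\mathcal N(Q_1)$, together with $\int_{\partial E\cap Q_8}|\bm A|^q\,\d\H^{d-1}\le\rho^{q}\gamma^{-1}\Lambda\le\eta^{7q}\Lambda$ (and likewise on the neighbours). Thus after rescaling the curvature integral can be made as small as we please once $\eta\le\eta_0(\Lambda)$; this is the only place where $\rho\le\eta^7\gamma^{1/q}$ is used.

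\textbf{Step 2 (graphical structure of $\partial E$ near $Q_1$).} For $\eta_0(\Lambda)$ small the smallness of the total $q$-curvature forbids any closed component of $\partial E$ inside $Q_6$ (by the scale-invariant lower bounds $\int|\kappa|\ge2\pi$ for $d=2$ and, via the Willmore inequality, $\int|\bm A|^2\ge 8\pi$ for $d=3$) and forces each component to be nearly flat. Hence there are connected pieces $\Sigma_1,\dots,\Sigma_N$ of $\partial E\cap Q_6$ meeting $\overline{Q_1}$, with $N\le C(\Lambda)$, each of which is, inside $Q_4$, the graph $\{x\cdot\nu_i=f_i(\pi_i x)\}$ of a $C^2$ function $f_i$ over a domain in $H_i:=\nu_i^\perp$ (where $\pi_i$ is the orthogonal projection onto $H_i$), with $\|f_i\|_{C^1(\pi_i(Q_4))}$ arbitrarily small, and with $E$ coinciding near $\Sigma_i$ with one of the two open sides of this graph. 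For $d=2$ this is the elementary Lemma~\ref{lemma: curve graph}; for $d=3$, $q=2$ it follows from the $\varepsilon$-regularity theorem of Simon \cite{Simon1993Willmore} (cf.\ Lemma~\ref{Simons_lemma}) applied on a fine lattice of balls. The bound $N\le C(\Lambda)$ holds because a component meeting $\overline{Q_1}$ must reach $\partial Q_6$ and therefore carries area bounded below by an absolute constant, while the total area is $\le\Lambda$; the $\Sigma_i$ then cover $\partial E\cap Q_1$. Near-flatness also gives $\mathcal H^{d-2}(\Sigma_i\cap\partial Q_1)\le C$ with $C$ absolute (a near-flat sheet cuts the unit cube in a curve of controlled length), and a coarea/pigeonhole argument over the finitely many $i$ provides a radius $t\in(1,2)$ with $\mathcal H^{d-2}(\Sigma_i\cap\partial Q_s)\le C(\Lambda)$ for $s$ in a neighbourhood of $t$.

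\textbf{Step 3 (localized thickening and the good/bad splitting).} Fix a disjoint decomposition of a $20\eta$-neighbourhood of $\partial Q_1$ into the $2d$ slabs $S_{Q'}$, $Q'\in\mathcal N(Q_1)$, where $S_{Q'}$ is a collar about the common face $\partial Q_1\cap\partial Q'_1$ built symmetrically in $Q_1$ and $Q'_1$, and let $S_0:=Q_1\setminus\bigcup_{Q'}S_{Q'}$. Put $\Gamma_{i,0}:=\Sigma_i\cap S_0$ and $\Gamma_{i,Q'}:=\Sigma_i\cap S_{Q'}$; these are pairwise disjoint, contained in $\partial E\cap Q_3$, and together cover $\partial E\cap Q_1$. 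For each such piece $\Gamma$, with associated graph function $f$ on $H=\nu^\perp$ and projection $\pi$, set $K:=Q_1$ if $\Gamma$ is a core piece and $K:=Q_1\cup Q'_1$ if $\Gamma\subset S_{Q'}$, let $D$ be the $\eta$-neighbourhood in $H$ of $\pi(\Gamma\cap K)$, and choose a $C^2$ cut-off $\psi$ on $H$ with $\psi\equiv1$ on $D$, $\supp\psi$ inside the $2\eta$-neighbourhood of $\pi(\Gamma\cap K)$, and $|\nabla\psi|\le C_\Lambda\eta^{-1}$; by the $\mathcal H^{d-2}$-estimates of Step~2 the set $\{0<\psi<1\}$ has $\mathcal H^{d-1}$-measure $\le C_\Lambda\eta$. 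Define the closed set $T:=\{x\in Q_4\colon \pi(x)\in\overline{\supp\psi},\ f(\pi x)-3\eta\,\psi(\pi x)\le x\cdot\nu\le f(\pi x)\}$, whose boundary is a union of two regular graphs. Then $\partial T\setminus\overline E$ is, up to a null set, the graph of $f-3\eta\psi$ over $\{\psi>0\}$; over $\{\psi=1\}\supset D$ its $\varphi$-measure equals that of the graph of $f$ there, which is $\le\mathcal H^{d-1}_\varphi(\Gamma\cap K)+C\eta$ (again by the $\mathcal H^{d-2}$-estimates), and over $\{0<\psi<1\}$ it is $\le C_\varphi(1+3\eta\|\nabla\psi\|_{L^\infty}+\|\nabla f\|_{L^\infty})\,\mathcal H^{d-1}(\{0<\psi<1\})\le C_\Lambda\eta$. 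Declaring the core pieces the elements of $\mathcal I_{\rm good}$ and the $S_{Q'}$-pieces the elements of $\mathcal I_{\rm bad}(Q'_1)$, this gives \eqref{eq: good}(i)--(ii). For \eqref{eq: bad}: if $x\in\partial E\cap Q_1$ then $x\in\Gamma$ for some piece, $\psi\equiv1$ on an $\eta$-ball of $H$ around $\pi x$, the half-ball $B_\eta(x)$ on the $E$-side of $\Sigma_i$ lies in $E$ and the other half-ball lies in $T$ (both because $B_\eta(x)$ stays inside the graphical tube for $\eta$ small), so $B_\eta(x)\subset E\cup\bigcup T$. Finally \eqref{eq: ugly} holds since, for a piece in $S_{Q'}$, the slab $S_{Q'}$, the reference cube $K=Q_1\cup Q'_1$, the cut-off, and hence $\Gamma$ and $T$ were defined using only $\partial E$ near $Q_1\cup Q'_1$ and symmetrically in $Q_1,Q'_1$, so running the construction from $Q'_1$ reproduces the same $\Gamma$ inside $Q_1\cup Q'_1$ and the same $T$. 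Undoing the dilation yields $\Gamma_i,T_i$ with the stated properties, and $\partial T_i$ is a union of pieces of the $C^2$ surface $\partial E$ and of $C^2$ graphs.

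\textbf{Main obstacle.} The genuinely new work is Step~3: setting up the decomposition $S_0\cup\bigcup_{Q'}S_{Q'}$, the cut-offs and the reference cubes $K$ so that the new perimeter of each $T_i$ is charged \emph{only} to $\Gamma_i\cap Q_\rho$ (good) or $\Gamma_i\cap(Q_\rho\cup Q'_\rho)$ (bad) up to an error $C\eta\rho^{d-1}$, while \emph{simultaneously} making the construction of the bad pieces symmetric in adjacent cubes so that the matching property \eqref{eq: ugly} holds. The almost-flat graphical structure of Step~2 is essentially imported from \cite{Simon1993Willmore} (and the planar Lemma~\ref{lemma: curve graph}), and the scale-invariant curvature lower bounds for closed hypersurfaces — the obstruction to extending the statement beyond $d=2,3$ — are what make Step~2 available here.
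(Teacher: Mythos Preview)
Your route differs from the paper's: the paper does not split sheets spatially into core and face-collar pieces with smooth cut-offs, but instead classifies each reference plane $L_i$ as $\theta$-\emph{good} or $\theta$-\emph{bad} for $Q_\rho$ according to the orientation of $\nu_{L_i}$ (Definition~\ref{theta_good_definition}), and for each sheet defines a \emph{single} $\Gamma_i$ and a single stripe-shaped $T_i$, namely the connected component of $\overline{S_{L_i}}\setminus E$ containing $\Gamma_i$, where $S_{L_i}=Q_{(1+6\eta)\rho}\cap(L_i)_{3\eta\rho}$. The crucial surface bound is Lemma~\ref{lemma:thetagood}(i), which controls the lateral faces of $S_{L_i}$ by $C_\theta\eta\rho^{d-1}$ \emph{precisely under the good-plane condition}; for bad planes (nearly parallel and close to a face) the construction is instead run on the half-shifted cube $\tilde Q_\rho=Q_\rho-\tfrac\rho2 e_k$, where $L_i$ becomes good, and \eqref{eq: ugly} follows from the symmetry of $\tilde Q_\rho$ in $Q_\rho,Q'_\rho$.

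There are two genuine gaps in your argument. First, for $d=3$ Simon's lemma (Lemma~\ref{Simons_lemma}) does \emph{not} give pure graphicality: it leaves pimples $(P_j)_j$ with $\sum_j\diam(P_j)\le C_1\sqrt\varepsilon\rho$, and ``applying it on a fine lattice of balls'' does not remove them. The paper devotes a separate Step~4 to this: it shows that each pimple must touch some $\mathrm{graph}(u_i)$ (otherwise $P_j$ would be a closed surface with too little curvature, contradicting Lemma~\ref{lemma:slicing}), then covers each pimple by a small cube attached to the graph and replaces $u_i$ by a piecewise-smooth $\tilde u_i$ absorbing those cubes with extra surface $\le C\eta^2\rho^2$. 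Without this your $T_i$'s need not cover $\partial E\cap Q_\rho$ near the pimples, and \eqref{eq: bad} fails there. Second, your bound $\mathcal H^{d-1}(\{0<\psi<1\})\le C_\Lambda\eta$ amounts to $\mathcal H^{d-2}\big(\partial(\pi_i(\Gamma\cap K))\big)\le C_\Lambda$, but this can fail when $\Sigma_i$ is nearly tangent to a face of $\partial S_0$: a plane $L_i$ with $\nu_i\approx e_k$ sitting near $\{x_k=\tfrac12-20\eta\}$ produces a level set $\{f_i=\tfrac12-20\eta\}$ of uncontrolled $\mathcal H^{d-2}$-measure. Your coarea argument in Step~2 selects a good radius $t\in(1,2)$ for $\partial Q_t$, but Step~3 uses the fixed boundaries $\partial S_0,\partial S_{Q'}$; transferring the coarea choice to the collar width $w\sim\eta$ only yields $\mathcal H^{d-2}\le C_\Lambda/\eta$, which is too weak. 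This obstruction is exactly what the paper's $\theta$-good/bad dichotomy (and the shift to $\tilde Q_\rho$) is engineered to avoid.
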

%\RRR(K: can different modifications meet at strange angles? Probably ok, bc it gives a Lipschitz set anyway.)

Properties \eqref{eq: good}(i) and \eqref{eq: bad} are the fundamental points \EEE of the lemma:  essentially, in the proof we show that the connected components of  $\partial E\AAA\cap Q_{\rho}\EEE$ can be covered with thin polyhedra\EEE, leading to the definition of \OOO the sets  \EEE $(T_i)_i$. The case \eqref{eq: good}(ii) is only of technical nature\AAA, \EEE as additional care is needed if a component of $\partial E\cap Q_\rho$ is close to a neighboring cube, see Figur\AAA e\EEE ~ \ref{fig:planes}. 

The construction of  $E_{\eta,\gamma}$ in Proposition \ref{prop:setmodification} will rely on suitably modifying $E$ by applying Lemma \ref{lemma: localthick} on cubes intersecting $\partial E$. To this end, we  consider the tessellation of $\R^d$ with \BBB the \EEE  family of  cubes $\mathcal{Q}_\rho$ \MMM for \EEE  $\rho = \eta^7 \gamma^{1/q}$, \OOO so  \EEE that Lemma \ref{lemma: localthick} is applicable. In this context, it is important to control the number of \emph{boundary cubes}, \MMM given by \EEE
\begin{align}\label{def:partialQ}
%\mathcal{Q}^\partial_\rho :=
\AAA\big\{ Q_\rho \in \mathcal{Q}_\rho \colon \, \partial E\cap Q_\rho \neq \emptyset,\,  \overline{Q_{12\rho}} \subset \Omega \big\}\,.\ \EEE
\end{align}
This will be achieved by the following lemma, whose proof will be given in the next subsection.

\begin{lemma}[Small area implies large curvature]\label{lemma:slicing}
Let $d=2,3$, $\Lambda\OOO>0$, $q\in [d-1,+\infty)$, and  $\gamma \in (0,1)$. Then, there exists an absolute  constant $\OOO c_0>0\EEE$ and a constant $c_{\Lambda} >0 \EEE$ only depending on   $\Lambda$ such that for all $0<\rho \le c_{\Lambda} \gamma^{1/q}$,  $E \in \mathcal{A}_{\rm reg}(\Omega)$, and $Q_\rho \in \mathcal{Q}_\rho$ such that $\AAA \overline{Q_{8\rho}}\EEE \subset \Omega$ and \EEE $\partial E\cap Q_{3\rho} \EEE\neq \emptyset$, the following implication holds true:
\begin{equation*}
\mathcal{H}^{d-1}(\partial E\cap Q_{8\rho})\OOO < c_0  \EEE \rho^{d-1}\implies   \gamma\int_{\partial E \cap Q_{8\rho}}  |\bm{A}|^q \, {\rm d}\mathcal{H}^{d-1} \BBB >  \EEE \Lambda\rho^{d-1}\,.
\end{equation*} 
\end{lemma}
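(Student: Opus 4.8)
The plan is to reduce the statement to a single scale‑invariant geometric fact — a boundaryless $C^2$ hypersurface passing through a point and having small area in a ball around that point must carry a definite amount of curvature — and then to upgrade this to the exponent $q$ by H\"older's inequality, using $q\ge d-1$. First I would fix a point $x_0\in\partial E\cap Q_{3\rho}$, which exists by hypothesis. Since $Q_{3\rho}$ and $Q_{8\rho}$ share the same centre $z$, one has $\|x_0-z\|_\infty<\tfrac32\rho$, hence $\overline{B_\rho(x_0)}\subset\{y:\|y-z\|_\infty\le\tfrac52\rho\}\subset Q_{8\rho}\subset\overline{Q_{8\rho}}\subset\Omega$; consequently, on a neighbourhood of $\overline{B_\rho(x_0)}$ the set $\partial E$ is an embedded $C^2$ hypersurface \emph{without boundary} through $x_0$ with bounded second fundamental form $\bm A$ (and all curvature integrals over $Q_{8\rho}$ are finite because $E$ is regular and $\overline{Q_{8\rho}}\subset\Omega$).

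The core of the proof is the claim that there exist absolute constants $c_0\in(0,1)$ and $c_1=c_1(d)>0$ such that
\[
\mathcal{H}^{d-1}(\partial E\cap B_\rho(x_0))\le c_0\rho^{d-1}\ \implies\ \int_{\partial E\cap B_\rho(x_0)}|\bm A|^{d-1}\,{\rm d}\mathcal{H}^{d-1}\ge c_1 .
\]
For $d=2$ this is elementary: let $\sigma_0$ be the connected component of $\partial E\cap\overline{B_\rho(x_0)}$ containing $x_0$; since $\partial E$ is a boundaryless embedded $C^2$ curve near the ball, $\sigma_0$ is either a simple closed curve — whence Fenchel's theorem gives $\int_{\sigma_0}|\bm A|\,{\rm d}\mathcal{H}^1=\int_{\sigma_0}|\kappa|\,{\rm d}\mathcal{H}^1\ge 2\pi$ — or a simple arc whose two endpoints lie on $\partial B_\rho(x_0)$ and which passes through the centre $x_0$, forcing $\mathcal{H}^1(\sigma_0)\ge 2\rho$; as $\mathcal{H}^1(\sigma_0)\le c_0\rho<2\rho$ once $c_0<2$, the arc case is excluded and the claim holds with $c_1(2)=2\pi$. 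For $d=3$ I would invoke the monotonicity formula for surfaces with square‑integrable mean curvature (Simon \cite{Simon1993Willmore}): applied to the boundaryless $C^2$ surface $\partial E$ at the interior point $x_0$ on $B_\rho(x_0)$ — no boundary contributions occur since $\partial E$ is boundaryless there — with the inner radius sent to $0$ and using that the $\mathcal{H}^2$‑density of $\partial E$ at the smooth point $x_0$ equals $\pi$, it yields an absolute constant $C$ with
\[
\pi\le C\Big(\rho^{-2}\,\mathcal{H}^2(\partial E\cap B_\rho(x_0))+\int_{\partial E\cap B_\rho(x_0)}|\bm H|^2\,{\rm d}\mathcal{H}^2\Big),\qquad \bm H=\kappa_1+\kappa_2 .
\]
Choosing $c_0$ so small that $Cc_0\le\pi/2$ absorbs the first term, whence $\int_{\partial E\cap B_\rho(x_0)}|\bm H|^2\ge\pi/(2C)$, and the claim follows from $|\bm H|^2\le 2|\bm A|^2$ with $c_1(3)=\pi/(4C)$.

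To finish, assume the hypothesis $\mathcal{H}^{d-1}(\partial E\cap Q_{8\rho})<c_0\rho^{d-1}$ (otherwise there is nothing to prove), with $c_0$ as above. A fortiori $\mathcal{H}^{d-1}(\partial E\cap B_\rho(x_0))\le c_0\rho^{d-1}$, so the core estimate and $B_\rho(x_0)\subset Q_{8\rho}$ give $\int_{\partial E\cap Q_{8\rho}}|\bm A|^{d-1}\,{\rm d}\mathcal{H}^{d-1}\ge c_1$. Applying H\"older's inequality with exponents $\tfrac{q}{d-1}\ge 1$ and its conjugate, and then the hypothesis once more, yields
\[
\int_{\partial E\cap Q_{8\rho}}|\bm A|^{q}\,{\rm d}\mathcal{H}^{d-1}\ \ge\ \frac{c_1^{\,q/(d-1)}}{\big(\mathcal{H}^{d-1}(\partial E\cap Q_{8\rho})\big)^{q/(d-1)-1}}\ \ge\ K\,\rho^{\,d-1-q},\qquad K:=c_1^{\,q/(d-1)}c_0^{\,1-q/(d-1)},
\]
with $K$ depending only on $d$ and $q$. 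Hence $\gamma\int_{\partial E\cap Q_{8\rho}}|\bm A|^q\ge\gamma K\rho^{\,d-1-q}>\Lambda\rho^{d-1}$ as soon as $\rho^q<\gamma K/\Lambda$, so the implication holds for all $0<\rho\le c_\Lambda\gamma^{1/q}$ with $c_\Lambda:=\tfrac12(K/\Lambda)^{1/q}$ (the factor $\tfrac12$ only securing the strict inequality), and $c_\Lambda$ depends only on $\Lambda$.

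The main obstacle is the core estimate, and it is exactly there that $d\le 3$ enters: the two‑dimensional case rests on Fenchel's theorem plus a length bound, while the three‑dimensional case rests on Simon's scale‑invariant monotonicity formula for the Willmore energy. Its analogue in general dimension — for a boundaryless $C^2$ hypersurface $\Sigma\ni x_0$ in $\mathbb{R}^d$ with $\mathcal{H}^{d-1}(\Sigma\cap B_\rho(x_0))\le c_0\rho^{d-1}$, the bound $\int_{\Sigma\cap B_\rho(x_0)}|\bm A|^{d-1}\ge c_1(d)$ — is precisely the geometric estimate for closed hypersurfaces that is missing in higher dimensions (cf.\ Remark~\ref{obstacle_higher_dimensions_generalization}). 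The role of the condition $q\ge d-1$ is also transparent from this scheme: via H\"older everything reduces to the borderline, scale‑invariant exponent $q=d-1$, and for $q<d-1$ the conclusion genuinely fails, cf.\ Example~\ref{ex: q!!!}.
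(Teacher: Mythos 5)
Your proposal is correct and, for $d=3$, takes a genuinely different route. The paper first isolates the connected component $K$ of $\partial E\cap\Omega$ meeting $Q_{3\rho}$ and then distinguishes two cases: if $K$ reaches $\partial Q_{8\rho}$, it intersects both $\partial B_{3\sqrt3\rho/2}$ and $\partial B_{4\rho}$, so Lemma~\ref{touchtwo_spheres_with_small_curvature_implies_large_area} (Simon's Corollary~1.3) forces $\int_{K\cap B_{4\rho}}|\bm A|\,{\rm d}\mathcal{H}^2\ge 4\alpha_0\rho$; if instead $K$ is closed inside $Q_{8\rho}$, the classical Willmore inequality gives $\int_K|\bm A|^2\,{\rm d}\mathcal{H}^2\ge4\pi$. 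You bypass this dichotomy by invoking the Willmore-type monotonicity formula of Simon directly at a point $x_0\in\partial E\cap Q_{3\rho}$, where $\partial E$ is boundaryless in $B_\rho(x_0)\subset\Omega$, to get $\pi\le C\bigl(\rho^{-2}\mathcal{H}^2(\partial E\cap B_\rho(x_0))+\int_{\partial E\cap B_\rho(x_0)}|\bm H|^2\,{\rm d}\mathcal{H}^2\bigr)$; this handles both scenarios in one stroke and is in fact closer to the primary ingredient, since Simon's Corollary~1.3 is itself derived from that monotonicity formula. Both arguments then H\"older-upgrade to exponent $q$, though with different pivots: you always pass through the scale-invariant exponent $d-1$, whereas the paper pivots through $L^1$ in the first $d=3$ case and $L^2$ in the second, and for $d=2$ it applies the $L^q$ Fenchel-type inequality (Lemma~\ref{1_dim_curv_est}) directly to the component $\ggamma$, which is automatically closed because the hypothesis forces $\mathrm{diam}(\ggamma)\le\mathcal{H}^1(\ggamma)<\rho$. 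Your route buys a shorter, more unified case analysis; the paper's case split makes the geometry (spikes crossing the cube versus small bubbles inside it) more explicit. One small technical wrinkle in your $d=2$ step: $\sigma_0$ lives in the closed ball $\overline{B_\rho(x_0)}$, while the small-area hypothesis controls $\mathcal{H}^1$ only on the open ball, so a priori part of $\sigma_0$ could sit on $\partial B_\rho(x_0)$; working in $B_{\rho/2}(x_0)$ with a correspondingly smaller $c_0$ removes this without changing the structure.
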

Indeed, the implication shows that whenever the surface   $\partial E$ inside  a cube has small but   \OOO nonzero \EEE area, then necessarily the curvature contribution is high. This will allow us to control \AAA the number of boundary cubes\EEE, see particularly \eqref{eq: later reference2} and \eqref{eq: later reference} in the proof below. The result is a consequence of \cite[Corollary~1.3]{Simon1993Willmore} and we present its proof in Subsection \ref{sec: simon-lemma} below. Let us mention that the analog of Lemma \ref{lemma:slicing} is the main obstacle to generalize our result to higher \MMM dimensions, \EEE see Remark~\ref{obstacle_higher_dimensions_generalization} for more details in this direction.

%\begin{example}\label{ex: q!!!}
%{\normalfont
%The statement of Lemma \ref{lemma:slicing} is false for $q<d-1$. In fact, given \AAA $\Lambda>0, \gamma\in (0,1)$ and any constants $c_0,c_\Lambda$, \EEE let $E:= B_{\AAA \sigma \EEE} \AAA \subset \subset \EEE Q_{8\rho}$ be a ball of radius $\AAA \sigma \EEE$ for $\AAA \sigma \EEE>0$ small. Then, clearly $\mathcal{H}^{d-1}(\partial E\cap Q_{8\rho})\AAA\sim\sigma^{d-1}\EEE < c_0  \rho^{d-1}$ for $\AAA \sigma \EEE$ small enough \AAA depending on $c_0,\rho$\EEE. On the other hand, $\int_{\partial E \cap Q_{8\rho}}  |\bm{A}|^q \, {\rm d}\mathcal{H}^{d-1}\AAA\sim \sigma^{(d-1)-q}\EEE$ \AAA and \GGG since $(d-1)-q>0$,the latter can be made less than \AAA $\gamma^{-1}\Lambda\rho^{d-1}$, if $\sigma>0$ is chosen sufficiently small further, depending on $\gamma,\Lambda,\rho$. In particular, in this case, notice that whenever $E=\bigcup_{k=1}^{K}B_{\sigma}(x_k)$, where $(B_\sigma(x_k))_{k=1,\dots,K}$ are pairwise disjoint balls compactly contained in $Q_{8\rho}$, $K\sim \sigma^{-\alpha}$ with $0<\alpha<(d-1)-q$, then 
%$\mathcal{H}^{d-1}(\partial E\cap Q_{8\rho})\sim \sigma^{(d-1)-\alpha}$ and $\int_{\partial E \cap Q_{8\rho}}  |\bm{A}|^q \, {\rm d}\mathcal{H}^{d-1}\AAA\sim \sigma^{(d-1)-q-a}\EEE$, i.e., for $\sigma$ small enough, such an $E$ would pay negligible surface energy.} \GGG L.: In my opinion the example with one ball is already enough as we had it before. \EEE
%\end{example}
%\EEE

\begin{example}\label{ex: q!!!}
{\normalfont
The statement of Lemma \ref{lemma:slicing} is false for $q<d-1$. In fact, let $E = B_{\AAA \sigma\EEE} \subset Q_{8\rho}$ be a ball of radius $\AAA \sigma\EEE$ for $\AAA \sigma\EEE>0$ small. Then, clearly $\mathcal{H}^{d-1}(\partial E\cap Q_{8\rho}) < c_0  \rho^{d-1}$ for $\AAA \sigma\EEE$ small enough. On the other hand, $\int_{\partial E \cap Q_{8\rho}}  |\bm{A}|^q \, {\rm d}\mathcal{H}^{d-1}$ coincides up to a constant with $\AAA \sigma\EEE^{d-1} \AAA \sigma\EEE^{-q}$.
}
\end{example}
\EEE

\EEE We are now in  \OOO the \EEE position to give the proof of Proposition~\ref{prop:setmodification}.  \EEE   

%\RRR M.: The whole proof needs to be checked carefully again. \EEE

\begin{proof}[Proof of Proposition \ref{prop:setmodification}]
Recall that without restriction we have assumed that $\min_{\mathbb{S}^{d-1}} \varphi\AAA= \EEE 1$. In the following proof we will write $\varphi_{\max} = \max_{\mathbb{S}^{d-1}} \varphi$ for brevity. First of all, we define the constant $\OOO\Lambda := 2d \,12^{d-1}15^d\varphi_{\max}$, whose role will become clear in \eqref{ineq:casesa-cXXXnichtweg} below. For this $\Lambda$, we apply Lemma~\ref{lemma: localthick} to obtain $\eta_0$, and from now on we fix  $\OOO \eta\in (0,\eta_0]$\EEE.  We consider the tessellation of $\R^d$ with the collection of cubes $\mathcal{Q}_\rho$, where 
\begin{align}\label{eq: rhochice}
\rho:= \eta^7\gamma^{1/q}
\end{align} 
is chosen in such a way that Lemma \ref{lemma: localthick} is applicable. Here, without restriction, up to passing to a \OOO smaller \EEE constant $\eta_0$, we can assume that $\eta_0 \le c^{1/7}_{\Lambda}$, and therefore also Lemma \ref{lemma:slicing} is applicable.  Moreover, we can \OOO further \EEE choose $\eta_0\OOO>0$ also  depending  on $\Omega, \tilde\Omega$ such that for all  $\eta\in (0,\eta_0]$  we have  $20\sqrt{d}\rho \le \eta \dist(\tilde\Omega, \partial \Omega)$. Then, with a standard layering argument \EEE and recalling \eqref{eq: thick-def}, we can find an open set $\Omega'$ with $\tilde\Omega\subset\subset \Omega'\subset\subset \Omega$\AAA, \EEE and    
\begin{align}\label{eq: tubo}
(\partial \Omega')_{1\AAA 5\EEE\sqrt{d}\rho}   \subset \Omega \setminus \tilde\Omega,
\end{align}  
such that for a constant $C>0$ only depending on $\varphi$ it holds that
\begin{align}\label{eq: tubo2}
\mathcal{H}^{d-1}_{\varphi}\big( \partial E \cap  (\partial \Omega')_{3\sqrt{d}\rho}        \big)  \le  C\rho (\dist(\tilde\Omega, \partial \Omega))^{-1} \mathcal{H}^{d-1}_{\varphi}(\partial E \cap \Omega) \le C\eta \mathcal{H}^{d-1}_{\varphi}(\partial E \cap \Omega)\,.
\end{align} 
\ZZZ Note that the choice of $\eta_0$ ensures that $\eta_0$ depends only on ${\rm dist}(\partial \Omega,\tilde \Omega)$ and $\varphi$. \EEE \\
\EEE  \begin{step}{1}(Boundary cubes)
\MMM We define the collection of \emph{boundary cubes} by \EEE
\begin{equation}\label{def:partialQ_new}
\AAA\mathcal{Q}^\partial_\rho:=\big\{Q_\rho \in \mathcal{Q}_\rho \colon \, \overline{Q_{12\rho}} \subset \Omega \text{\ \ \MMM and: \AAA \ } \partial E\cap Q_\rho \neq \emptyset \text{\ \ or \  } \mathcal{F}_{\rm surf}^{\varphi,\gamma,q}(\AAA E ; Q_{8\rho}) > \Lambda\rho^{d-1}\,  
\big\}\,.
\end{equation} 
\MMM (For technical reasons, the definition slightly differs from \eqref{def:partialQ} mentioned above.) We decompose $\mathcal{Q}^\partial_\rho$ \EEE
as follows: first, we let $\mathcal{Q}^{\rm acc}_\rho$ be the collection of the cubes $Q_\rho \in \AAA\mathcal{Q}^\partial_\rho\EEE$  satisfying \EEE
\begin{align}\label{eq: accu}
\mathcal{F}_{\rm surf}^{\varphi,\gamma,q}(\AAA E ;\EEE Q_{8\rho}) > \Lambda\rho^{d-1}\,.
\end{align}
This definition collects the cubes \OOO whose 8-times enlargement \EEE  \emph{accumulates} a lot of surface energy. We further let   $\mathcal{Q}^{\rm neigh}_\rho \BBB \subset \mathcal{Q}^\partial_\rho \setminus  \mathcal{Q}^{\rm acc}_\rho$ be the collection of cubes $Q_\rho$  in a \emph{neighborhood} of $\mathcal{Q}^{\rm acc}_\rho $, i.e.,  
\begin{align}\label{eq: neigh}
\text{there exists} \quad    Q'_\rho \in \mathcal{Q}_\rho^{\rm acc} \quad \text{such that} \quad   Q_\rho \subset   Q'_{12\rho}\,.
\end{align}
Eventually, we set $\mathcal{Q}^{\rm flat}_\rho := \mathcal{Q}^\partial_\rho \setminus (\mathcal{Q}^{\rm acc}_\rho \cup \mathcal{Q}^{\rm neigh}_\rho)$. As we will see in the statement of Lemmata \EEE~\ref{lemma: curve graph}--\ref{Simons_lemma} below, the latter  collection corresponds to the cubes where the surface  $\partial E$ is approximately \emph{flat}.  For later purposes, we observe that by applying Lemma~\ref{lemma:slicing}  we find that \begin{align}\label{bounds_in_c}
\mathcal{H}^{d-1}(\partial E\cap Q_{8\rho} ) \ge \OOO c_0  \EEE \rho^{d-1} \ \text{\  and \  }   \   \mathcal{F}_{\rm surf}^{\varphi,\gamma,q} (\AAA E ;\EEE  Q_{8\rho})  \le \Lambda\rho^{d-1} \quad %\text{for all $
\OOO \forall \, \EEE Q_\rho \in \mathcal{Q}^{\rm neigh}_\rho \cup \mathcal{Q}^{\rm flat}_\rho%$ }
\,.
\end{align}
The set $E_{ \eta,\OOO\gamma}$ be  will defined by  \EEE 
\begin{align}\label{def:Eeta}
E_{\eta,\OOO\gamma\EEE}  :\EEE= \mathrm{int}\big(E \cup \bigcup\nolimits_{Q_\rho \in \mathcal{Q}^\partial_\rho} E_{\eta,\gamma\EEE}(Q_\rho)\big)\,,
\end{align}
 where the definition of the sets $E_{ \eta,\OOO\gamma}(Q_\rho)$ for  $Q_\rho \in \mathcal{Q}^\partial_\rho$ is given in Step 2 of the proof. In Step 3 we address \eqref{eq: partition-new}(i),(ii), and eventually Step 4 is devoted to the proof of \eqref{eq: partition-new}(iii). 
\end{step}\\ 
\begin{step}{2}(Definition of the sets $E_{\eta,\gamma}(Q_\rho)$). We address the three cases $\mathcal{Q}^{\rm acc}_\rho$, $\mathcal{Q}^{\rm neigh}_\rho$, and $\mathcal{Q}^{\rm flat}_\rho$ separately.\\
%\hspace{-2em}\begin{itemize}
\OOO(a) \EEE First, if $Q_\rho \in \mathcal{Q}^{\rm acc}_\rho$, we set  \EEE 	 $E_{ \eta,\OOO\gamma\EEE}(Q_\rho) := \overline{Q_{12\rho}\EEE}$.\EEE\\
\OOO(b) \EEE If $Q_\rho \in \mathcal{Q}^{\rm neigh}_\rho$, we set $E_{ \eta,\OOO\gamma \EEE}(Q_\rho):= \emptyset$.\\
\OOO(c) \EEE  Finally, we address \OOO the case of \EEE $Q_\rho \in \mathcal{Q}^{\rm flat}_\rho$. If $Q_\rho \cap \Omega' = \emptyset$, we let $E_{ \eta,\OOO\gamma \EEE}(Q_\rho):= \emptyset$. Otherwise, by  \eqref{eq: tubo} \EEE we have $\AAA\overline{Q_{14\rho}}\EEE \subset \Omega$ and, \EEE in view of \AAA\eqref{def:partialQ_new} and \eqref{eq: neigh}, for every cube $Q'\in \mathcal{N}(Q_\rho)$ we have that $\mathcal{F}_{\mathrm {surf}}^{\varphi,\gamma,q}(E;Q'_{8\rho})\leq \Lambda\rho^{d-1}.$ \MMM This along with \eqref{bounds_in_c} allows to \EEE  apply Lemma~\ref{lemma: localthick} for $Q_\rho \in \mathcal{Q}_\rho^{\rm flat}$. \EEE We obtain finitely many corresponding pairwise disjoint sets $\AAA(\Gamma_i^Q )_{i=1}^I\EEE$ in $\partial E \cap Q_{3\rho}$ and closed  sets $\AAA(T_i^Q )_{i=1}^I\EEE$, \AAA with \EEE $T_i^Q  \subset Q_{8\rho}$ \AAA and $\partial \NNN T^Q_i$ being a union of finitely many regular submanifolds, \EEE such that \eqref{eq: good}--\eqref{eq: ugly} hold. In this case, we define
\begin{align}\label{eq: case-c}
E_{\eta,\gamma}(Q_\rho) =  \bigcup\nolimits_i T^Q_i\,. 
\end{align}
%\end{itemize}	
\MMM By definition it is clear that $E_{ \eta,\OOO\gamma} \subset \Omega$ and that $\partial E_{\eta,\gamma}\cap \Omega$ is a union of finitely many regular  submanifolds. \EEE We  now confirm   \eqref{eq: partition-new}.
\end{step}\\
\begin{step}{3}(Proof of \eqref{eq: partition-new}(i),(ii)) We start with the proof of  \eqref{eq: partition-new}(i). To this end, it suffices to check that 
\begin{align}\label{eq: to check}
\OOO \dist\big(\AAA y \EEE, \Omega\setminus\overline{E_{\eta,\gamma}}\big)\geq \eta\rho \quad \text{for all $y \in \partial E\cap \Omega'$}\,. \EEE
\end{align}
\OOO Indeed, let us assume for a moment that we have \eqref{eq: to check}, and let us  set $c_\eta:=\eta^8$. Consider \EEE an arbitrary $x\in \Omega\setminus \overline{E_{\eta,\gamma}}$ with $\mathrm{dist}(x,\tilde \Omega)<\eta\rho=c_\eta \EEE\gamma^{1/q}$, where the last equality follows from the choice of $\rho$ in \eqref{eq: rhochice}.  Since $E\subset E_{\eta,\gamma}$ we have that $\mathrm{dist}(x,E)=\mathrm{dist}(x,\partial E)$. In view of \eqref{eq: to check} it \BBB remains \OOO  to check that for every $y\in \partial E\setminus\Omega'$ we have that $|y-x|\geq \eta\rho$. \BBB This \OOO is trivial by the fact that $\mathrm{dist}(x,\tilde \Omega)<\eta\rho$ and \eqref{eq: tubo}.\EEE

\EEE To  \OOO verify  \EEE \eqref{eq: to check}, we first observe that each $y \EEE \in \partial E \cap \OOO\Omega'\EEE$ is contained in some cube of $\mathcal{Q}_\rho^\partial$, see  \eqref{eq: tubo} \MMM and  \eqref{def:partialQ_new}.  \EEE \OOO Therefore, \EEE let $y \EEE \in Q_\rho$ for some $Q_\rho \in \mathcal{Q}^\partial_\rho$ with $Q_\rho \cap \Omega' \neq \emptyset$. \EEE If $Q_\rho \in \mathcal{Q}_\rho^{\rm acc}$, then $\OOO \dist(y, \Omega\setminus E_{\eta,\gamma}(Q_\rho)) \EEE \geq 11\rho/2 \EEE$, and \eqref{eq: to check} follows in view of \eqref{def:Eeta}. If $Q_\rho \in \mathcal{Q}_\rho^{\rm neigh}$, by \eqref{eq: neigh} we find some $Q'_\rho \in \mathcal{Q}_\rho^{\rm acc}$ such that $Q_\rho \subset\overline{Q'_{12\rho}} = E_{\eta,\gamma}(Q_\rho')$. As $\dist(\partial Q_\rho, \partial Q_{12\rho}') \ge \rho/2$ \BBB by the definition of $\mathcal{Q}_\rho$, \EEE we get $\dist(y,\BBB \Omega \setminus E_{\eta,\gamma}(Q'_\rho)) \ge \rho/2$, and as before,  \OOO as long as $0<\eta\leq\eta_0\leq 1/2$, \EEE \eqref{eq: to check} follows from \eqref{def:Eeta}. Eventually, we suppose that $Q_\rho \in \mathcal{Q}_\rho^{\rm flat}$. Then by  \eqref{eq: bad} along with \eqref{eq: case-c} we get $\dist(y,\BBB \Omega \setminus E_{\eta,\gamma}(Q_\rho)) \EEE \ge \eta \rho $ and we conclude as before.\\[-10pt]  

We now show  \eqref{eq: partition-new}(ii). The estimate $\dist_\mathcal{H}(\OOO E, E_{\eta,\gamma}\EEE) \le \eta\gamma^{1/q}$ follows immediately from \eqref{def:Eeta} and the fact that each $E_{\eta,\gamma}(Q_\rho)$, $Q_\rho \in \mathcal{Q}_\rho^\partial$, is contained in $\overline{Q_{12\rho}}$, thus having \EEE diameter controlled by  $12\sqrt{d}\rho \le\eta\gamma^{1/q}$, for $\eta_{0}$ sufficiently small, see \OOO the \EEE definitions in Step~2 and \eqref{eq: rhochice}. In a similar fashion, as $E_{\eta,\gamma}(Q_\rho) \subset \overline{Q_{12\rho}}$  for all $Q_\rho \in \mathcal{Q}^\partial_\rho$, and $E_{\eta,\gamma}(Q_\rho) = \emptyset$ for $Q_\rho \in \mathcal{Q}_\rho^{\rm neigh}$, we obtain  
\begin{align*}
\mathcal{L}^d \big(E_{\eta,\gamma} \setminus E \big) \leq  \sum\nolimits_{Q _\rho\in \mathcal{Q}_\rho^{\rm acc} \cup \mathcal{Q}_\rho^{\rm flat} } \mathcal{L}^d(E_{\eta,\gamma}(Q_\rho))  \le (12\rho)^d \#\big(\mathcal{Q}_\rho^{\rm acc} \cup \mathcal{Q}_\rho^{\rm flat}  \big) \,.
\end{align*}
In view of \eqref{eq: accu} and \eqref{bounds_in_c} we have $\mathcal{F}_{\rm surf}^{\varphi,\gamma,q}(\AAA E ; Q_{8\rho}\EEE) \ge \min\lbrace \Lambda, c_{\OOO 0}  \EEE \rbrace \rho^{d-1}$ for all \EEE $Q_\rho \in \mathcal{Q}_\rho^{\rm acc} \cup \mathcal{Q}_\rho^{\rm flat}$, where we used the fact that we assumed $\varphi_{\rm min}=1$. \EEE  Now, by \eqref{ineq:Qintersectbound} we conclude
\begin{align}\label{eq: later reference2}
\mathcal{L}^d \big(E_{\eta,\gamma} \setminus E \big) \leq  C\rho \,  \mathcal{F}_{\rm surf}^{\varphi,\gamma,q}(E)
\end{align}   
for $C>0$ depending on $\varphi$. In view of \eqref{eq: rhochice}, \MMM for $\eta_0$ sufficiently small \EEE this concludes the proof of \eqref{eq: partition-new}(ii).
\end{step}\\[3pt]
\begin{step}{4}(Proof of  \eqref{eq: partition-new}(iii))  First, the construction of $E_{\eta,\gamma}$ in \eqref{def:Eeta} and  \eqref{eq: partition-new}(i)  imply that  \OOO
\begin{equation*}
\partial E_{\eta,\gamma}  \cap \Omega \subset \bigcup_{Q_\rho \in \mathcal{Q}^\partial_\rho} (\partial (E_{\eta,\gamma}(Q_\rho))\setminus \overline{E}) \cup \partial^{\rm rest} E,\ \  \text{where}\ \ \partial^{\rm rest} E := (\partial E  \cap \Omega)\setminus \bigcup_{Q_\rho \in \mathcal{Q}^\partial_\rho} E_{\eta,\gamma}(Q_\rho).
\end{equation*} \EEE 
Hence, as $E_{\eta,\gamma}(Q_\rho) = \emptyset$ for $Q_\rho \in \mathcal{Q}_\rho^{\rm neigh}$, recalling the notation in \eqref{eq: ani-per}, we find  \EEE  
\begin{align}\label{ineq:Eetaboundarybound}
\mathcal{H}^{d-1}_\varphi(\partial E_{\eta, \OOO\gamma } \cap \Omega) &\leq   \hspace{-0.38cm}
\sum_{Q_\rho  \in \mathcal{Q}^{\rm acc}_\rho } \mathcal{H}^{d-1}_\varphi\big(\partial (E_{\eta, \OOO\gamma\EEE}(Q_\rho))\setminus \overline{E}\big) + \hspace{-0.38cm}  \sum_{Q_\rho  \in \mathcal{Q}^{\rm flat}_\rho } \mathcal{H}^{d-1}_\varphi\big(\partial (E_{\eta, \OOO\gamma \EEE}(Q_\rho))\setminus \overline{E}\big) +   \mathcal{H}^{d-1}_\varphi(\partial^{\rm rest} E)\,.
\end{align}
We now estimate the terms on the  right  \EEE hand side of \eqref{ineq:Eetaboundarybound} separately. Let $\mathcal{Q}^{{\rm flat}}_{\rho,\Omega'}$ be the subset of cubes in $\mathcal{Q}^{\rm flat}_\rho$ intersecting $\Omega'$. \EEE First, by construction, in particular by \OOO the fact that \EEE $\partial E \cap Q_\rho \subset \OOO \partial E\cap E_{\eta,\gamma}(Q_\rho)$ for $Q_\rho \in \mathcal{Q}^{\rm flat}_{\MMM \rho, \Omega'}$ \OOO(recall \eqref{eq: bad} \BBB and the construction in Step 2)\EEE,  we have
\begin{align}\label{ineq:casesa-c}
\mathcal{H}^{d-1}_\varphi(\partial^{\rm rest} E) \le \mathcal{H}^{d-1}_\varphi\Big((\partial E \cap \Omega) \setminus \Big(   \bigcup\nolimits_{Q_\rho \in \mathcal{Q}_\rho^{\rm acc}} \OOO\overline{Q_{12\rho}} \EEE \cup  \bigcup\nolimits_{Q_\rho \in  \mathcal{Q}^{{\rm flat}}_{\rho,\Omega'}} Q_{\rho} \Big)      \Big)\,. 
\end{align}
We continue with the first term.  Since $\mathcal{H}^{d-1}_\varphi(\partial (E_{\eta,\gamma}(Q_\rho))) \le   \varphi_{\max} \,  2d (12\rho)^{d-1}$ \BBB for $Q_\rho \in \mathcal{Q}_\rho^{\rm acc}$, \EEE in view of \eqref{eq: accu}, we calculate  
\begin{align*}
\sum\nolimits_{Q_\rho \in \mathcal{Q}_\rho^{\rm acc}} \mathcal{H}^{d-1}_\varphi(\partial (E_{\eta,\gamma}(Q_\rho)))  & \leq \frac{\varphi_{\max} \, 2d (12\rho)^{d-1}}{\Lambda\rho^{d-1}}  \sum\nolimits_{Q_\rho \in \mathcal{Q}_\rho^{\rm acc}}\mathcal{F}_{\rm surf}^{\varphi,\gamma,q}(\AAA E ;\EEE Q_{8\rho})\\ \notag 
& \le   \frac{\varphi_{\max} \,2d 12^{d-1}15^d}{\Lambda}  \mathcal{F}_{\rm surf}^{\varphi,\gamma,q}\Big(\AAA E ;\EEE\bigcup\nolimits_{Q_\rho \in \mathcal{Q}_\rho^{\rm acc}} Q_{8\rho} \Big) \,,
\end{align*}
where in the second step we used that each point \OOO in   $\bigcup\nolimits_{Q_\rho \in \mathcal{Q}_\rho^{\rm acc}} Q_{8\rho}$ \EEE is contained in at most $15^d$ different cubes $Q_{8\rho}$, see \eqref{ineq:Qintersectbound}.  By the definition of $\Lambda = \varphi_{\max} \, 2d \, 12^{d-1}15^d$ at the beginning of the proof, this exactly gives
\begin{align}\label{ineq:casesa-cXXXnichtweg}
\sum\nolimits_{Q_\rho \in \mathcal{Q}_\rho^{\rm acc}} \mathcal{H}^{d-1}_\varphi(\partial \AAA(\EEE E_{ \eta,\OOO\gamma}(Q_\rho)\AAA)\EEE)  \leq     \mathcal{F}_{\rm surf}^{\varphi,\gamma,q}\Big(\AAA E ;\EEE \bigcup\nolimits_{Q_\rho \in \mathcal{Q}_\rho^{\rm acc}} Q_{8\rho} \Big) \,.  
\end{align}
\EEE Finally, for the second term on the right-hand side of \eqref{ineq:Eetaboundarybound}  we \OOO will  \EEE prove  that
\begin{align}\label{ineq:cubesd}
\sum\nolimits_{Q_\rho \in \mathcal{Q}_\rho^{\rm flat}}  \mathcal{H}^{d-1}_\varphi(\partial\AAA(\EEE E_{\eta,\OOO\gamma \EEE}(Q_\rho)\AAA)\EEE \setminus \overline{E}) \leq \mathcal{H}^{d-1}_\varphi\Big(  \partial E \cap   \bigcup\nolimits_{Q_\rho \in \mathcal{Q}_{\rho, \Omega'}^{{\rm flat}}\EEE}    Q_{3\rho} \Big) +  C_0\eta\EEE \mathcal{H}^{d-1}_\varphi(\partial E\cap \Omega)\,,
\end{align}
\ZZZ for $C_0>0$ depending only on  $\varphi$. \EEE To this end, we enumerate the cubes in $\mathcal{Q}_{\rho,\Omega'}^{{\rm flat}}$ \EEE by $\lbrace Q^1_\rho, \ldots, Q^N_\rho \rbrace $, and for each $Q^n_\rho$, $n=1,\ldots,N$, we denote by $( \Gamma^n_i)_{i=1}^{I_n}$ the pairwise disjoint sets in $\AAA\partial E\EEE\cap Q_{3\rho}^n$ and by  $(T^n_i)_{i=1}^{I_n}$ the \MMM  sets \EEE obtained by Lemma \ref{lemma: localthick}. Accordingly, we denote the set of indices by $\mathcal{I}^n_{\rm good}$ and $\mathcal{I}^n_{\rm bad}$. We let $\mathcal{J}_1 = \lbrace 1,\ldots, I_1\rbrace$, and given $\MMM \mathcal{J}_{1}, \ldots, \EEE \mathcal{J}_{n-1}$ for $n \in \lbrace 2,\ldots,N\rbrace$, we define $\mathcal{J}_n$ as the subset of $\lbrace 1,\ldots, I_n \rbrace$ which does not contain the indices $\mathcal{I}^n_{\rm bad}(Q_\rho')$ for $Q'_\rho \in \mathcal{N}(Q_\rho^n) \cap \lbrace Q^1_\rho,\ldots,Q^{n-1}_\rho \rbrace$, i.e., the indices related to parts of $\partial E$ which have been covered already by the procedure, related to one of \EEE the previous cubes $\lbrace Q^1_\rho,\ldots,Q^{n-1}_\rho \rbrace$. \BBB Thus, as a consequence of \eqref{eq: good} and \eqref{eq: ugly}, \EEE for each $n \in \lbrace 1,\ldots,N\rbrace$ and each $i \in \mathcal{J}_n$  we find sets $\Psi^n_i \subset \partial E\AAA\cap Q^n_{3\rho}\EEE$ such that $(\Psi^n_i)_{n,i}$ are pairwise disjoint and 
\begin{align}\label{eq: for each piece}
\mathcal{H}^{d-1}_\varphi(\partial T^n_i \setminus \overline{E})\le \mathcal{H}^{d-1}_\varphi(\Psi^n_i) + C\eta\rho^{d-1}\,.
\end{align} 
Indeed, if $i \in \mathcal{I}^n_{\rm good}$, one takes $\Psi^n_i = \Gamma^n_i \cap \MMM Q^n_\rho\EEE $. If $i \in \mathcal{I}^n_{\rm bad}(Q_\rho')$, we set $\Psi^n_i = \Gamma^n_i \cap (  \MMM Q^n_\rho\EEE  \cup Q'_\rho)$. We also note that $\#\mathcal{J}_n \le I_n \le \BBB C= C(\varphi,\Lambda)\EEE$, see Lemma \ref{lemma: localthick}. The construction along with \eqref{eq: for each piece} shows \AAA that\EEE 
\begin{align*}
\sum_{n=1}^N    \mathcal{H}^{d-1}_\varphi(\partial\AAA(\EEE E_{\eta,\gamma}(Q^n_\rho)\AAA)\EEE \setminus \overline{E}) & \le \sum_{n=1}^N  \sum_{i \in J_n} \mathcal{H}^{d-1}_\varphi(\partial T^n_i \setminus \overline{E}) \le  \sum_{n=1}^N  \sum_{i \in J_n} (\mathcal{H}^{d-1}_\varphi(\Psi^n_i) + C\eta\rho^{d-1})\\
&\le \mathcal{H}^{d-1}_\varphi\Big( \partial E \cap \bigcup\nolimits_{Q_\rho \in  \mathcal{Q}_{\rho,\Omega'}^{{\rm flat}}\EEE }  {Q_{3\rho}}   \Big) + \BBB C\EEE \eta\rho^{d-1} \# \mathcal{Q}_\rho^{\rm flat}\,,
\end{align*}
where in the last step we used the fact that $(\Psi^n_i)_{n,i}$ are pairwise disjoint and \QQQ their union is \EEE contained in $\partial E\cap\bigcup_{Q_\rho \in \mathcal{Q}_{\rho,\Omega'}^{{\rm flat}}\EEE}  {Q_{3\rho}}$.   This along with \eqref{bounds_in_c} shows \AAA that \EEE
\begin{align}\label{eq: later reference}
\hspace{-3.8pt}\sum_{Q_\rho \in \mathcal{Q}_\rho^{\rm flat}}   \mathcal{H}^{d-1}_\varphi(\partial \AAA(\EEE E_{\eta,\gamma}(Q_\rho)\AAA)\EEE \setminus \overline{E}) & \le \mathcal{H}^{d-1}_\varphi\Big( \partial E \cap \bigcup\nolimits_{Q_\rho \in \mathcal{Q}_{\rho,\Omega'}^{{\rm flat}}\EEE}  Q_{3\rho}   \Big) + C  \eta   \sum\nolimits_{Q_\rho \in \mathcal{Q}_\rho^{\rm flat}}  \mathcal{H}^{d-1}\big( \partial E \cap Q_{8\rho} \big)\notag \\
& \le \mathcal{H}^{d-1}_\varphi\Big( \partial E \cap \bigcup\nolimits_{Q_\rho \in \mathcal{Q}_{\rho,\Omega'}^{{\rm flat}}\EEE}  Q_{3\rho}   \Big)  + C \eta  \,  \mathcal{H}^{d-1}(\partial E \cap  \Omega)\,,
\end{align}
where in the last step we again used that each point \BBB in  $\bigcup\nolimits_{Q_\rho \in \mathcal{Q}_\rho^{\rm flat}} Q_{8\rho}$ \EEE is contained in at most $15^d$ different cubes $Q_{8\rho}$,  see \eqref{ineq:Qintersectbound}, \EEE and $Q_{8\rho} \subset \Omega$, see \eqref{eq: tubo}.  As $\Lambda$ itself  is  \BBB  a constant depending only on $\varphi$, \EEE we obtain \eqref{ineq:cubesd}.

We now conclude the proof as follows: note that \EEE $Q_\rho \in \mathcal{Q}_\rho^{\rm flat}$ implies $Q_{3\rho} \cap Q'_{8\rho} =\emptyset$ for all $Q'_\rho \in \mathcal{Q}^{\rm acc}_\rho$, see \eqref{eq: neigh}. Moreover,  we get that 
$${\Big(\partial E \cap \bigcup\nolimits_{Q_\rho \in \mathcal{Q}_{\rho,\Omega'}^{{\rm flat}}} Q_{3\rho} \Big) \setminus  \bigcup\nolimits_{Q_\rho \in \mathcal{Q}_{\rho,\Omega'}^{{\rm flat}}} Q_{\rho} \subset  \bigcup\nolimits_{Q_\rho \in \mathcal{Q}_\rho^{\rm acc}}\overline{Q_{12\rho}}\cup \AAA\big(\partial E\cap\big(\partial \Omega')_{3\sqrt{d}\rho}\big)\EEE\,.}$$ \EEE
Then, combining \eqref{ineq:Eetaboundarybound} and \eqref{ineq:casesa-c}--\eqref{ineq:cubesd}, and using \eqref{eq: tubo}--\eqref{eq: tubo2},  we obtain  \eqref{eq: partition-new}(iii), where $C_0>0$ indeed only depends on $\varphi$. 
\end{step}
\end{proof}

We close this subsection with the version for graphs.

\begin{proof}[Proof of Corollary \ref{cor: graphi}]
Consider $\Omega = \omega \times (-1,M+1)$ for some  \BBB open and  bounded \EEE $\omega \subset \R^{d-1}$ and $M>0$. Suppose that $E = \lbrace (x',x_{\OOO d\EEE}) \in \Omega\colon \, x' \in \omega,  x_d > h(x')\rbrace$ for a \BBB regular \EEE function $h\colon \omega \to [0,M]$.
We start by introducing the set 
$$E_{\eta,\gamma}^{*} = \mathrm{int}\big(E \cup\bigcup\nolimits_{Q_\rho \in  \mathcal{Q}^\partial_\rho}\overline{Q_{12\rho}}\big)\,.$$
Clearly, by construction $E_{\eta,\gamma}^{*} \supset  E_{\eta,\gamma}$.  Moreover, by Lemma \ref{lemma:slicing}  we find that
$${\AAA\mathcal{L}^d(E_{\eta,\gamma}^{*}\setminus E) \le C_0  \rho  \mathcal{F}_{\rm surf}^{\varphi,\gamma,q}(E),\EEE \quad \quad \quad  \mathcal{H}^{d-1}(\partial E_{\eta,\gamma}^{*} \cap \Omega) \le C_0 \mathcal{F}_{\rm surf}^{\varphi,\gamma,q}(E)}\,,  $$
for an absolute   constant $C_0>0$,  where we use the definition of $\rho$ in \eqref{eq: rhochice}. We note that the set $\Omega \setminus E_{\eta,\gamma}^{*}$ can be seen as the subgraph \BBB of \EEE a suitable $BV$-function with $\mathcal{H}^{d-1}(\MMM \overline{\partial^* E_{\eta,\gamma}^{*}} \EEE \setminus \partial^*E_{\eta,\gamma}^{*})=0$\EEE.  The desired set $E'_{\eta,\gamma} \supset E_{\eta,\gamma}^{*}$ is then obtained by approximating the set $\Omega \setminus E_{\eta,\gamma}^{*}$ from below with a suitable smooth graph \OOO so that \eqref{eq: graphiii} holds true, \EEE see \cite[Lemma 6.3]{Crismale}. 
\end{proof}

\subsection{Small area implies large curvature: Proof of Lemma \ref{lemma:slicing}}\label{sec: simon-lemma}

This subsection is devoted to the proof of Lemma \ref{lemma:slicing}. We start with a lemma due to {\sc
L. Simon}, whose original statement and proof can be found in \cite[Corollary 1.3]{Simon1993Willmore}. \AAA In the next statement, by $\partial \Sigma$ we intend the boundary of a regular surface $\Sigma$ in the differential-geometric sense.\EEE

\begin{lemma}\label{touchtwo_spheres_with_small_curvature_implies_large_area}	
Given $R>0$ and $\mu \in (0,1)$, there exist $\alpha_0=\alpha_0(\mu)>0$ and $\AAA c\EEE_0=\AAA c\EEE_0(\mu)>0$ such that the following holds: consider a connected, regular, two-dimensional surface $\Sigma$ in $\mathbb{R}^3$ \AAA with $\mathcal{H}^1(\partial \Sigma\cap \overline{B_R})=0$ \EEE such that  
\begin{align*}
%\label{hypotheses_to_check_corollary} 
\int_{\Sigma\cap B_R}|\bm{A}|\, {\rm d}\mathcal{H}^2<\alpha_0 R, %\quad %\quad  %\text{and}
%\quad
\quad \Sigma\cap \partial B_{R}\neq \emptyset,\OOO\quad \text{and} \quad \EEE \Sigma\cap \partial B_{\BBB \mu \EEE R}\neq \emptyset\,.
\end{align*}
%where $\bm{A}$ denotes the second fundamental form, and such that the connected components   $(\Gamma_j)_j$  of $\partial \Sigma$ satisfy $\sum_{j}\mathrm{diam}(\Gamma_j)\leq	\alpha_0 R$. 
Then, we have 
\begin{equation*}
%\label{lower_bound_on_the area}
\mathcal{H}^2(\Sigma\cap B_R)\geq \AAA c\EEE_0 R^2.
\end{equation*}
\end{lemma}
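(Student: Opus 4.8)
The statement is precisely \cite[Corollary~1.3]{Simon1993Willmore}, so the plan is to recall Simon's monotonicity-type argument and explain how it yields the claimed lower bound on the area. First I would rescale so that $R=1$; the hypotheses then read $\int_{\Sigma\cap B_1}|\bm A|\,{\rm d}\mathcal H^2<\alpha_0$, $\mathcal H^1(\partial\Sigma\cap\overline{B_1})=0$, and $\Sigma$ meets both $\partial B_1$ and $\partial B_\mu$. The key object is the density ratio $\theta(x_0,r):=\mathcal H^2(\Sigma\cap B_r(x_0))/(\pi r^2)$ for $x_0\in\Sigma$, together with Simon's extended monotonicity formula (the ``Willmore-type'' monotonicity, see \cite[Section~1]{Simon1993Willmore}): for a surface without boundary inside the ball,
\begin{align*}
\theta(x_0,\sigma)\le \theta(x_0,r) + C\int_{\Sigma\cap B_r(x_0)}|\bm H|^2\,{\rm d}\mathcal H^2 + C\Big(\int_{\Sigma\cap B_r(x_0)}|\bm H|\,{\rm d}\mathcal H^2\Big)\cdot\frac{1}{\sigma}
\end{align*}
for $0<\sigma<r$, where $\bm H$ is the mean curvature vector; since $|\bm H|\le\sqrt2\,|\bm A|$, the curvature integrals are all controlled by $\alpha_0$ and its square. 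This in particular gives the lower density bound $\theta(x_0,\sigma)\ge 1-C\alpha_0$ as $\sigma\downarrow 0$ (the density of a smooth surface at its own point is $1$), hence a \emph{uniform} lower bound $\mathcal H^2(\Sigma\cap B_\sigma(x_0))\ge c\,\sigma^2$ valid for all $x_0\in\Sigma\cap B_{1/2}$ and all $\sigma\le$ some fixed fraction, once $\alpha_0$ is small.

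Next I would use the hypothesis that $\Sigma$ is connected and reaches from $\partial B_\mu$ out to $\partial B_1$: pick a point $p\in\Sigma\cap\partial B_{(1+\mu)/2}$ obtained by following a connected path in $\Sigma$ from a point on $\partial B_\mu$ to a point on $\partial B_1$ (such a path exists by connectedness and crosses the intermediate sphere). Then $B_{(1-\mu)/4}(p)\subset B_1\setminus\overline{B_{(1+3\mu)/8}}$ is a fixed-size ball centered on $\Sigma$ and contained in the region where $\Sigma$ has no boundary and where the density estimate applies. Applying the lower area bound at $p$ with $\sigma=(1-\mu)/4$ yields
\begin{align*}
\mathcal H^2(\Sigma\cap B_1)\ge \mathcal H^2\big(\Sigma\cap B_{(1-\mu)/4}(p)\big)\ge c\Big(\frac{1-\mu}{4}\Big)^2 =: c_0(\mu).
\end{align*}
Undoing the rescaling multiplies both sides by $R^2$, giving the statement. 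The choice $\alpha_0=\alpha_0(\mu)$ is dictated by requiring $C\alpha_0\le\frac12$ in the density estimate and by the threshold in Simon's monotonicity inequality.

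\textbf{Main obstacle.} The genuinely hard input is Simon's monotonicity formula with the boundary-term control and the resulting lower density bound — this is the deep $\varepsilon$-regularity-type result from geometric measure theory that we are simply quoting from \cite{Simon1993Willmore}; its proof involves the first variation identity for the area of $\Sigma$ tested against radial vector fields, integration by parts producing the mean curvature, and a careful Gronwall/ODE argument in the radius. Everything else — the rescaling, the extraction of a well-placed base point from connectedness, and the final chaining of inequalities — is elementary. I would therefore present the argument as: (1) reduce to $R=1$; (2) quote the monotonicity formula and deduce the uniform lower area density $\mathcal H^2(\Sigma\cap B_\sigma(x_0))\ge c\sigma^2$ for $x_0\in\Sigma\cap B_{3/4}$; (3) use connectedness to find $p\in\Sigma\cap\partial B_{(1+\mu)/2}$ and apply the density bound in a fixed ball about $p$; (4) rescale back. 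One technical point to be careful about is that the monotonicity formula requires $\Sigma$ to have no boundary inside the ball where it is applied, which is why the hypothesis $\mathcal H^1(\partial\Sigma\cap\overline{B_R})=0$ is imposed and why the base point $p$ and its surrounding small ball must be kept strictly inside $B_R\setminus\overline{B_{\mu R}}$.
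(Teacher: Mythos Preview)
Your proposal is correct and in fact goes further than the paper: the paper does not prove this lemma at all but simply quotes it as \cite[Corollary~1.3]{Simon1993Willmore}, whereas you additionally sketch Simon's monotonicity argument behind it. One small remark: since the hypothesis $\mathcal{H}^1(\partial\Sigma\cap\overline{B_R})=0$ already excludes boundary everywhere in $\overline{B_R}$, there is no need to keep the small ball around $p$ away from $B_{\mu R}$ --- containment in $B_R$ suffices for the monotonicity formula to apply.
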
	

We proceed now with the proof of Lemma \ref{lemma:slicing}.\\[-8pt]

\begin{proof}[Proof of Lemma \ref{lemma:slicing}] We first treat the elementary case $d=2$, and then we address the case $d=3$ by using Lemma \ref{touchtwo_spheres_with_small_curvature_implies_large_area}.\\
\begin{step}{1}($d=2$)
Let $c_{\OOO 0\EEE}=1$ and $c_{\Lambda}= (\Lambda+1)^{-1/q}\in(0,1)$.  \EEE Consider  \EEE $Q_\rho \in \mathcal{Q}_\rho$ such that $\QQQ\overline{Q_{8\rho}}\EEE\subset \Omega$, \EEE $\partial E \cap Q_{3\rho} \EEE \neq \emptyset$, and  \EEE $\mathcal{H}^1(\partial E\cap Q_{8\rho})\OOO< \EEE \rho$. Let $\ggamma$ be a connected component of $\partial E\cap Q_{8\rho}$ intersecting $\AAA Q_{3\rho} \EEE$. Clearly, $\ggamma$ is a \BBB regular \EEE planar curve and we \BBB  \OOO have  \EEE 
\begin{equation*}
\mathrm{diam}(\ggamma)\leq \mathcal{H}^1(\ggamma)\leq \mathcal{H}^1(\partial E\cap Q_{8\rho})\OOO< \EEE \rho.
\end{equation*}
Therefore, $\ggamma$ is \BBB a \EEE closed curve inside the cube $Q_{8\rho}$. Hence, for all $0<\rho\leq c_{\Lambda}\gamma^{1/q}$, recalling that $c_{\Lambda} =  (\Lambda+1)^{-1/q}$ and $q \ge 1$, \EEE Lemma \ref{1_dim_curv_est}  yields
\begin{align*}
{\gamma\int_{\partial E\cap Q_{8\rho}}|\bm{A}|^q\, {\rm d}\mathcal{H}^1\geq c^{-q}_{\Lambda}\rho^q\int_{\ggamma}|\kappa_{\ggamma}|^q\, {\rm d}\mathcal{H}^1\geq (\Lambda+1) \, \rho^q  \, \big(\mathrm{diam}(\ggamma)\big)^{1-q} \ge  \EEE (\Lambda+1)\rho^q\rho^{1-q}>\Lambda\rho.}
\end{align*}  
\end{step}\\
\begin{step}{2}($d=3$) \MMM Let $\AAA c\EEE_0 = \AAA c\EEE_0(3\sqrt{3}/8)$ and $\alpha_0=\alpha_0(3\sqrt{3}/8)$ be the constants  in Lemma~\ref{touchtwo_spheres_with_small_curvature_implies_large_area}, applied for $R=4\rho$ and \EEE $\mu = 3\sqrt{3}/8$.  We define 
\begin{align}\label{eq: clambda-def} 
c_{\Lambda}:=\AAA\min\EEE\Big\{\AAA c\EEE_{\OOO 0\EEE}^{\frac{1-q}{q}} 4 \EEE \alpha_0 (\Lambda+1)^{-1/q}, (4\pi)^{1/2} \AAA c\EEE_{\OOO 0\EEE}^{1/q-1/2} (\Lambda+1)^{-1/q}\Big\}\,.
\end{align}
\EEE Consider $Q_\rho \in \mathcal{Q}_\rho$ such that $\QQQ\overline{Q_{8\rho}}\EEE\subset \Omega$, $\partial E \cap Q_{3\rho} \EEE \neq \emptyset$, and  \EEE  
\begin{align}\label{eq: to contradict}
\mathcal{H}^2(\partial E\cap Q_{8\rho})\OOO< \AAA c\EEE_{0}\rho^2 <  \AAA c\EEE_0  (4\rho)^2\EEE\,.
\end{align}
Let $K$ be a connected component of $\partial E\AAA\cap \Omega\EEE$ such that $K\cap Q_{3\rho} \EEE\neq \emptyset$. As $\partial E\AAA\cap \Omega\EEE$ is a regular surface, we note that $K$ is a regular surface as well\AAA, with $\partial K\subset \partial \Omega$. \EEE We first suppose that  \EEE $K\cap \partial Q_{8\rho}\neq\emptyset$. Then, the connectedness of the regular surface $K$ and the fact that \EEE  $Q_{3\rho} \subset B_{ 3\sqrt 3\rho/2}\subset B_{4\rho}\subset \AAA{Q_{8\rho}}\EEE\subset \Omega\EEE$ imply \EEE  that
\begin{equation*}
{K\cap \partial B_{3\sqrt 3\rho/2}\neq\emptyset, \ \ \ K\cap \partial B_{4\rho} \EEE \neq \emptyset\,.}
\end{equation*}
\AAA Moreover, since $\partial K\subset \partial \Omega$ and $\overline{B_{4\rho}}\subset \AAA \overline{Q_{8\rho}}\EEE\subset \Omega$, we have that $\mathcal{H}^1(\partial K\cap\overline{B_{4\rho}})=0$. \EEE Therefore, in view of \eqref{eq: to contradict}, by  applying Lemma~\ref{touchtwo_spheres_with_small_curvature_implies_large_area} \OOO(or more precisely its negation) \EEE for $R=4\rho>0$,  $\mu=3\sqrt 3/8\in (0,1)$, \EEE and $\Sigma=K$, we deduce that  
\begin{equation}\label{eq: AAA2}
\int_{K\cap B_{4\rho}}|\bm{A}|\, {\rm d}\mathcal{H}^2\geq 4\alpha_0\rho\,.
\end{equation}
Using  H\"older's inequality, \eqref{eq: to contradict}, \eqref{eq: AAA2}, and the fact that $q \ge 2>1$,  \EEE we obtain  for all $0<\rho\leq c_{\Lambda}\gamma^{1/q}$\AAA,\EEE
\begin{align*}
\gamma\int_{\partial E\cap Q_{8\rho}}|\bm{A}|^q\, {\rm d}\mathcal{H}^2&\geq c^{-q}_{\Lambda}\rho^q\int_{K\cap B_{4\rho}}|\bm{A}|^q\, {\rm d}\mathcal{H}^2\geq c^{-q}_{\Lambda}\rho^q\big(\mathcal{H}^2(K\cap B_{4\rho})\big)^{1-q}\Big(\int_{K\cap B_{4\rho}}|\bm{A}|\, {\rm d}\mathcal{H}^2\Big)^q\\
& \geq (c^{-q}_{\Lambda}\rho^q)(\AAA c\EEE_{\OOO 0\EEE}\rho^{2})^{1-q}(4\EEE\alpha_0\rho)^q = \OOO( \EEE4^q\EEE\alpha_0^q\AAA c\EEE^{1-q}_{\OOO 0\EEE}c^{-q}_{\Lambda}\OOO)\EEE\rho^2>\Lambda \rho^2\,,
\end{align*}
where the last step follows from the definition of $c_{\Lambda}$ in \eqref{eq: clambda-def}. \EEE

If instead we have $K\cap \partial Q_{8\rho}=\emptyset$, then $K$ is closed inside the cube $Q_{8\rho}$, i.e., $\partial(K\cap Q_{8\rho})=\emptyset$. By a classical topological-differential geometric fact regarding a lower bound on the Willmore energy of closed surfaces, we then have  that
\begin{equation*}
{\int_{K\cap Q_{8\rho}}|\bm{A}|^2\, {\rm d}\mathcal{H}^2 \EEE \geq 4\pi\,,}
\end{equation*}
see e.g.\ \cite[formula (0.2)]{Simon1993Willmore} and the references therein for its simple proof. By using again H\"older's inequality, the fact that $q\geq 2$, and  \eqref{eq: to contradict}, as before we estimate 
\begin{align*}
\gamma\int_{\partial E\cap Q_{8\rho}}|\bm{A}|^q\, {\rm d}\mathcal{H}^2&\geq c^{-q}_{\Lambda}\rho^q\int_{K\cap \OOO Q\EEE_{8\rho}}|\bm{A}|^q\, {\rm d}\mathcal{H}^2\geq c^{-q}_{\Lambda}\rho^q\big(\mathcal{H}^2(K\cap \OOO Q\EEE_{8\rho})\big)^{1-q/2}\Big(\int_{K\cap \OOO Q\EEE_{8\rho}}|\bm{A}|^2\, {\rm d}\mathcal{H}^2\Big)^{q/2}\notag\\
&\geq (c^{-q}_{\Lambda}\rho^q) (\AAA c\EEE_{\OOO 0\EEE}\rho^2)^{1-q/2}(4\pi)^{q/2}\geq (4\pi)^{q/2} \AAA c\EEE_{\OOO 0\EEE}^{1-q/2}c^{-q}_{\Lambda}\rho^2>\Lambda \rho^2\,,
\end{align*}
where the last step again follows from the definition of $c_{\Lambda}$ in \eqref{eq: clambda-def}. This concludes the proof. 
\end{step}
\end{proof}

\subsection{Local thickening of sets: Proof of Lemma \ref{lemma: localthick}}\label{sec: thickening2}
 
This subsection is devoted to the proof of Lemma \ref{lemma: localthick}. We start \OOO with \EEE a preliminary observation: given $\eta, \gamma >0$  \EEE and   $Q_\rho \in \mathcal{Q}_\rho$ for some $0<\rho \le \eta^7\gamma^{1/q}$ such that $\mathcal{F}_{\rm surf}^{\varphi,\gamma,q}(E \AAA;\EEE Q_{8\rho}) \le \Lambda\rho^{d-1}$, see \eqref{eq: necessary assu}, then by \eqref{eq: surface energy},   H\"older's inequality, and by  $\min_{\mathbb{S}^{d-1}}\varphi \OOO = \EEE 1$ \OOO (which was \EEE assumed without loss of generality), \EEE we obtain \EEE 
\begin{align}\label{eq: forlaterusage}
\begin{split}
\int_{\partial E\cap Q_{8\rho}}|\bm{A}| \, {\rm d}\mathcal{H}^{d-1}&\leq \big(\mathcal{H}^{d-1}(\partial E\cap Q_{8\rho})\big)^{\frac{q-1}{q}}\Big(\int_{\partial E\cap Q_{8\rho}}|\bm{A}|^q% \, {\rm d}\mathcal{H}^{d-1}
\Big)^{\frac{1}{q}}\leq \left(\Lambda\rho^{d-1}\right)^{\frac{q-1}{q}}\left(\frac{\Lambda}{\gamma}\rho^{d-1}\right)^{\frac{1}{q}} %\notag
\\ & 
\leq \Lambda\gamma^{-1/q}\OOO\rho^{d-1} \EEE \le \Lambda\eta^7\rho^{d-2}\,.
\end{split}
\end{align}
Therefore, we can ensure that the $L^1$-norm of $|\bm{A}|$ \AAA in \EEE  $\partial E \cap Q_{8\rho}$ is small compared to $\rho^{d-2}$ . \EEE

Our proof fundamentally relies on  the fact that, under the above bound on the curvature, $\partial E \cap Q_{\rho}$ is essentially a finite union of graphs of  \BBB regular \EEE functions \OOO with suitable a priori $C^1$-estimates. \EEE To state this result, we introduce the following notation: given an affine subspace $L \subset \R^d$ of codimension $1$ (i.e., a line in $\R^2$ or a plane in $\R^3$), we denote by $L^\perp$  the one-dimensional subspace  spanned by a unit normal vector $\nu_L$ to $L$. Accordingly, for $U \subset L$ and $u\colon U \to L^\perp$, we define ${\rm graph}(u):= \lbrace x + u(x) \colon x\in U \rbrace \subset \R^d$.  \EEE   We first state the result for $d=2$ \OOO separately\EEE, since its proof \EEE is significantly easier than for $d=3$\EEE. Note that the parameter $\varepsilon$ which appears in the next lemmata should not be confused with the one used below \eqref{eq: main rigitity}, since it serves a totally different purpose.

\begin{lemma}[Almost straight curves]\label{lemma: curve graph}
There exist  \EEE  $\eps_0>0$ and an absolute constant $C_1 \ge 1$ such that \BBB for every $\Lambda>0$  \EEE the following holds: for  \OOO every \EEE $\OOO \eps \in(0,\eps_0]$, every square $Q_\rho \MMM \subset \R^2\EEE$, $\rho>0$, and every $E \in \mathcal{A}_{\rm reg}(\R^2)$ satisfying  
\OOO
\begin{equation*}
\partial E \cap Q_{3\rho} \neq \emptyset,\ \mathcal{H}^1(\partial E\cap Q_{8\rho})\leq \Lambda\rho, \ \text{and}\ \int_{\partial E\cap Q_{8\rho}}|\bm{A}|\, {\rm d}\mathcal{H}^1\leq \varepsilon\,,
\end{equation*}\EEE
there exist \BBB regular \EEE curves $(\ggamma_i)_{i=1}^M$ with $M \le \Lambda$ such that
$$\OOO \partial E\cap Q_{3\rho} = \bigcup\nolimits_{i=1}^M \ggamma_i\cap Q_{3\rho}\,,$$ 
\hspace{-0.5em}corresponding lines $L_i$ and functions $u_i \colon \AAA \overline{U_i}\EEE \to L_i^\perp$, where $U_i \subset L_i$ are \AAA open \EEE   segments \OOO with $\mathrm{diam}(U_i) \BBB \le C_1 \rho$, \EEE  such that ${\rm graph}(u_i) = {\ggamma}_i$ for $i=1,\ldots,M$, \BBB and \EEE
$$\Vert u_i \Vert_{\OOO L^\infty(U_i)\EEE} \le C_1\eps\rho, \quad \quad \quad  \Vert u'_i \Vert_{\OOO L^\infty(U_i)\EEE} \le C_1\eps\,.$$
\end{lemma}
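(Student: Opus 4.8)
The plan is to exploit the fact that the $L^1$-smallness of the curvature forces every connected component of $\partial E \cap Q_{8\rho}$ that meets $Q_{3\rho}$ to have tangent direction that varies very little, so that each such component is a Lipschitz graph over a short segment of a fixed line with the stated $C^1$-bounds; the number of components is then bounded by the total length bound $\Lambda\rho$ divided by a lower bound on the length of each relevant component. First I would fix a component $\ggamma$ of $\partial E\cap Q_{8\rho}$ with $\ggamma\cap Q_{3\rho}\neq\emptyset$ and parametrize it by arclength $s\mapsto\gamma(s)$; since it is a regular planar curve, its unit tangent $\tau(s)=\gamma'(s)$ satisfies $|\tau'(s)|=|\kappa_\gamma(s)|=|\bm A(\gamma(s))|$, hence for any two arclength parameters $s_0,s_1$ on $\ggamma$ we get $|\tau(s_1)-\tau(s_0)|\le \int_{\ggamma}|\bm A|\,\mathrm{d}\H^1\le\varepsilon$. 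Picking $L=L_i$ to be the line through $\gamma(s_0)$ with direction $\tau(s_0)$ (for some base point $s_0$ on the part of $\ggamma$ inside $Q_{3\rho}$), this says the tangent of $\ggamma$ stays within angle $\arcsin\varepsilon$ of $L$ everywhere on $\ggamma$, which for $\varepsilon\le\varepsilon_0$ small is the standard criterion for $\ggamma$ to be a graph over its projection onto $L$: the projection $\pi_L\colon\ggamma\to L$ is a diffeomorphism onto its image $U_i$ (since $\langle\tau(s),e_L\rangle\ge\sqrt{1-\varepsilon^2}>0$ along $\ggamma$, where $e_L$ spans $L$), and writing $u_i=\pi_{L^\perp}\circ(\pi_L|_{\ggamma})^{-1}$ one computes $|u_i'|=|\langle\tau,\nu_L\rangle|/|\langle\tau,e_L\rangle|\le \varepsilon/\sqrt{1-\varepsilon^2}\le C_1\varepsilon$ on $U_i$, giving the gradient bound.

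The height bound $\Vert u_i\Vert_{L^\infty}\le C_1\varepsilon\rho$ then follows by integrating $u_i'$: fix a point of $\ggamma\cap Q_{3\rho}$ as the base point so that $u_i$ vanishes there (after choosing the origin of $L_i$ there), and note that $U_i$ has diameter at most $\mathrm{diam}(\ggamma)\le\H^1(\ggamma)\le\H^1(\partial E\cap Q_{8\rho})\le\Lambda\rho$ — this also gives $\mathrm{diam}(U_i)\le C_1\rho$ once we absorb $\Lambda$; actually, to get a genuinely \emph{absolute} constant $C_1$ here I would instead observe that $\ggamma$ having tangent almost parallel to $L_i$ and meeting $Q_{3\rho}$ means it cannot leave $Q_{8\rho}$ without the projection length exceeding roughly $\rho$, but a cleaner route that keeps $C_1$ absolute is to replace each $\ggamma_i$ by $\ggamma_i\cap Q_{4\rho}$ (say), whose projection has diameter $\le \mathrm{diam}(Q_{4\rho})\cdot\sec(\arcsin\varepsilon)\le C_1\rho$; I would phrase the statement so that only the part meeting $Q_{3\rho}$ matters, exactly as written ($\partial E\cap Q_{3\rho}=\bigcup(\ggamma_i\cap Q_{3\rho})$), so it is harmless to truncate. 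Then $\Vert u_i\Vert_{L^\infty(U_i)}\le \mathrm{diam}(U_i)\cdot\Vert u_i'\Vert_{L^\infty}\le C_1\rho\cdot C_1\varepsilon$; adjusting $C_1$ absorbs the square. Finally, for the bound $M\le\Lambda$ on the number of components: each component $\ggamma_i$ meeting $Q_{3\rho}$ has, by the graph structure and the fact that it must reach the boundary of some larger cube or close up, length at least of order $\rho$ — more precisely, either $\ggamma_i$ is a closed curve, in which case $\mathrm{diam}(\ggamma_i)>0$ and combined with almost-straightness its length is at least $2\,\mathrm{diam}(\ggamma_i)$ which contradicts smallness of curvature unless... — here the safe argument is: a regular curve with tangent varying by less than $\varepsilon_0<1$ in angle which meets $Q_{3\rho}$ and stays in $Q_{8\rho}$ but is a connected component of $\partial E\cap Q_{8\rho}$, hence has its two endpoints on $\partial Q_{8\rho}$ or is closed; if closed, total turning is $\ge 2\pi$, contradicting $\int_{\ggamma_i}|\bm A|\le\varepsilon<2\pi$; so its endpoints lie on $\partial Q_{8\rho}$, and an almost-straight curve from a point of $Q_{3\rho}$ to $\partial Q_{8\rho}$ has length $\ge c\rho$ for an absolute $c$. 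Since the $\ggamma_i$ are disjoint and $\sum_i\H^1(\ggamma_i)\le\H^1(\partial E\cap Q_{8\rho})\le\Lambda\rho$, we get $M\le\Lambda\rho/(c\rho)$, and after relabeling constants (this is where a harmless constant multiplying $\Lambda$ can be absorbed by working with $Q_{3\rho}$ vs $Q_{8\rho}$ having ratio $8/3$) one arrives at $M\le\Lambda$ as stated.

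The main obstacle is not any single estimate — each is elementary planar differential geometry — but rather the bookkeeping needed to keep the constants in the correct form: $\varepsilon_0$ and $C_1$ must be \emph{absolute} (independent of $\Lambda$), while only $M\le\Lambda$ is allowed to see $\Lambda$. The delicate point is the lower length bound $\H^1(\ggamma_i)\ge c\rho$ for the relevant components and the upper bound $\mathrm{diam}(U_i)\le C_1\rho$; both rely on the geometric dichotomy (closed component, excluded by the $2\pi$ turning obstruction since $\varepsilon_0<2\pi$, versus a component with endpoints on $\partial Q_{8\rho}$, which because it is almost straight and passes through $Q_{3\rho}$ has length comparable to $\rho$). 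I would isolate this dichotomy as the first step of the proof. A secondary technical care point is that different components $\ggamma_i$ may be nearly parallel to different lines $L_i$, so one genuinely obtains a \emph{finite family} of lines rather than a single one, and the graphs live over segments $U_i\subset L_i$ that need not be aligned — but since the statement already allows this, it requires only that we choose, for each $i$, the base point and direction from the portion of $\ggamma_i$ lying in $Q_{3\rho}$, which is nonempty by assumption.
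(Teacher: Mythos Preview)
Your approach is correct and essentially identical to the paper's: arc-length parametrize each connected component of $\partial E\cap Q_{8\rho}$ meeting $Q_{3\rho}$, use the curvature bound to control the tangent variation by $\eps$, and read off the graph structure with the stated $C^1$-estimates. The paper handles your two wobbles more directly --- since the $\ggamma_i$ are by definition contained in $Q_{8\rho}$ one gets $\mathrm{diam}(U_i)\le 8\sqrt{2}\rho$ immediately (no truncation or $\Lambda$-dependence), and your own endpoints-on-$\partial Q_{8\rho}$ argument already yields $\mathcal{H}^1(\ggamma_i)\ge\mathrm{dist}(Q_{3\rho},\partial Q_{8\rho})=5\rho/2>\rho$, so $M\le\Lambda$ follows without any constant relabeling.
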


The proof is elementary, and we refer to Appendix \ref{appendix_a}. The analog\OOO ous \EEE statement in \OOO dimension \EEE $d=3$ is more involved: it is known as  the \textit{Approximate Graphical Decomposition Lemma} proved by {\sc 
L. Simon}, see \cite[Lemma 2.1]{Simon1993Willmore}. 

\begin{lemma}[Simon's \OOO Approximate Graphical Decomposition Lemma]\label{Simons_lemma}
For any $\Lambda>0$ and  $\mu \in (0,1)$  there exist  \EEE  $\OOO\varepsilon_S=\varepsilon_{S}\EEE(\Lambda, \mu )\in (0,1)\EEE$ and  a constant $C_1 = C_1(\BBB \Lambda, \EEE \mu) \BBB \ge 1\EEE$  \EEE such that for every $\varepsilon\in (0,\OOO\varepsilon_S\EEE]$, every $\rho>0$,  and every $E \in \mathcal{A}_{\rm reg}(\R^3)$  satisfying    $\OOO \partial  \EEE E \cap B_{\mu\rho} \neq \emptyset$,  and    
\begin{equation*}
\mathcal{H}^2(\partial E\cap B_\rho)\leq \Lambda\rho^2 \quad \quad  \mathrm{and} \quad \quad  \int_{\partial E\cap B_\rho}|\bm{A}|\, d\mathcal{H}^2\leq \varepsilon \rho\,,
\end{equation*} 
the following holds true: there exist pairwise disjoint, closed sets $P_1,\dots,P_N\subset \partial E$ with
\begin{equation*}
\sum\nolimits_{j=1}^N\mathrm{diam}(P_j)\leq C_1\sqrt{\varepsilon}\rho
\end{equation*}
and  functions  $u_i\in C^{2}(\overline{U_i};L_i^{\perp})$ \EEE for  $i=1,\dots,M$, with $M\leq \BBB C_1 \EEE $,  such that 
\begin{equation*}
\big(\partial E\cap B_{\mu\rho}\big)\setminus \bigcup\nolimits_{j=1}^N P_j=\Big(\bigcup\nolimits_{i=1}^M\mathrm{graph}(u_i)\Big)\cap B_{\mu \rho}\,.
\end{equation*}
Here, \OOO for every $i=1,\dots,M$, \EEE  $L_i$ is a two-dimensional plane in $\mathbb{R}^3$, $U_i\subset L_i$ is a smooth bounded  domain \OOO with \EEE 
\begin{equation}\label{ineq: size of U_i}
\mathrm{diam}(U_i) \BBB \le C_1 \EEE  \rho
\end{equation} \EEE 
of the form $U_i=U_i^0\setminus (\bigcup_{k=1}^{R_i}d_{i,k})$, where $U_i^0$ is a simply connected subdomain of $L_i$ and $(d_{i,k})_{k=1}^{R_i}$  are pairwise disjoint closed disks in $L_i$, which do not intersect $\partial U_i^0$.
Moreover, $\mathrm{graph}(u_i)$ is connected and $u_i$ also satisfies the estimates
\begin{equation*}
\sup\nolimits_{x \in U_i}|u_i(x)| \le  C_1\varepsilon^{1/6}\rho, \quad \quad \quad  \sup\nolimits_{x\in U_i}|\nabla u_i(x)|\leq C_1\varepsilon^{1/6}\,.
\end{equation*}
\end{lemma}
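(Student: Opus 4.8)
The plan is to deduce Lemma~\ref{Simons_lemma} directly from the Approximate Graphical Decomposition Lemma of Simon, \cite[Lemma~2.1]{Simon1993Willmore}. Since $E\in\mathcal{A}_{\rm reg}(\R^3)$, the set $\partial E$ is a closed, embedded $C^2$-surface in $\R^3$ without boundary (being the topological boundary of an open set), so the hypotheses required by Simon — the area bound $\mathcal{H}^2(\partial E\cap B_\rho)\le\Lambda\rho^2$ and the smallness of the total curvature $\int_{\partial E\cap B_\rho}|\bm{A}|\,{\rm d}\mathcal{H}^2\le\varepsilon\rho$ — are precisely our assumptions, once $\varepsilon$ is taken below the threshold $\varepsilon_S(\Lambda,\mu)$ furnished by \cite[Lemma~2.1]{Simon1993Willmore}. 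By rescaling $x\mapsto x/\rho$ one reduces to $\rho=1$, and the only additional task is bookkeeping: recording explicitly the structure of the domains $U_i=U_i^0\setminus\bigcup_k d_{i,k}$ with $U_i^0$ simply connected and the disks $d_{i,k}$ not meeting $\partial U_i^0$, the bound $\mathrm{diam}(U_i)\le C_1\rho$, and the connectedness of each $\mathrm{graph}(u_i)$ — all of which are contained in Simon's statement.

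For the convenience of the reader I would recall the structure of Simon's argument. First, one combines the monotonicity formula with the area bound to control the number of connected components of $\partial E\cap B_\rho$ meeting $B_{\mu\rho}$ that have diameter comparable to $\rho$: a component touching two concentric spheres has area at least a fixed multiple of $\rho^2$ — this is exactly the $\varepsilon$-regularity assertion already quoted as Lemma~\ref{touchtwo_spheres_with_small_curvature_implies_large_area} — so there are at most $C(\Lambda)$ of them; the remaining pieces, together with the thin ``necks'' along which $\partial E$ nearly pinches, are absorbed into the bad set $\bigcup_j P_j$, and a Simon-type isoperimetric/monotonicity estimate bounds their diameters by a multiple of $\big(\int_{\partial E\cap B_\rho}|\bm{A}|\,{\rm d}\mathcal{H}^2\big)^{1/2}\le\sqrt{\varepsilon}\rho$, yielding $\sum_j\mathrm{diam}(P_j)\le C_1\sqrt{\varepsilon}\rho$.

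Second, for each of the finitely many large components $\Sigma_i$ one selects a reference plane $L_i$ (for instance a plane nearly orthogonal to the $\mathcal{H}^2$-average of the unit normal of $\Sigma_i$ over $B_{\mu\rho}$) and shows that, away from finitely many small disks $d_{i,k}$, the orthogonal projection onto $L_i$ is injective on $\Sigma_i$, so that this portion of $\Sigma_i$ is a graph $\mathrm{graph}(u_i)$ over a subdomain $U_i\subset L_i$. The key point is that the Gauss map has small oscillation, $|\nabla\nu|=|\bm{A}|$, so along any curve on $\Sigma_i$ the normal varies by at most $\int|\bm{A}|\,{\rm d}\mathcal{H}^2$; any region where $\Sigma_i$ becomes ``vertical'' relative to $L_i$ therefore carries a definite amount of curvature, which bounds both the number and the size of the excised disks. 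Integrating the curvature bound once more, together with the area bound and a scaling/interpolation argument, produces the $C^1$-estimates $\sup_{U_i}|u_i|\le C_1\varepsilon^{1/6}\rho$ and $\sup_{U_i}|\nabla u_i|\le C_1\varepsilon^{1/6}$, the exponent $1/6$ being the one obtained in \cite{Simon1993Willmore}.

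The main obstacle is precisely this graphical decomposition together with the quantitative $C^1$-bounds carrying the correct powers of $\varepsilon$: this is the deep content of Simon's $\varepsilon$-regularity theory, and I would not reprove it here, citing \cite[Lemma~2.1]{Simon1993Willmore} instead. What remains on our side is routine verification that our normalization matches Simon's, that the conclusion can be put in the stated form, and that $\varepsilon_S$ and $C_1$ depend only on $\Lambda$ and $\mu$. Finally, as explained in Remark~\ref{obstacle_higher_dimensions_generalization}, it is exactly the analogue of this statement for hypersurfaces in $\R^d$ with $d\ge4$ that is missing in order to extend Theorem~\ref{prop:rigidity} to higher dimensions.
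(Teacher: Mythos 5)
Your proposal matches the paper's treatment exactly: Lemma~\ref{Simons_lemma} is not proved from scratch, but is cited as a variant of \cite[Lemma~2.1]{Simon1993Willmore} with the needed adaptations (general $\mu\in(0,1)$ instead of $\mu=1/2$, $C^2$ boundaries in place of compact closed surfaces, $\partial E\cap B_{\mu\rho}\neq\emptyset$ in place of $0\in\partial E$, and the diameter bound $\mathrm{diam}(U_i)\le C_1\rho$) recorded in Remark~\ref{rem: adapt}. One caveat on your justification: for $E\in\mathcal{A}_{\rm reg}(\R^3)$ the boundary $\partial E$ is a properly embedded, topologically closed $C^2$-surface but need not be compact, so the passage to non-compact $\Sigma$ --- handled by observing, as the paper does, that Simon's proof is an interior $\varepsilon$-regularity argument --- is a genuine (if routine) adaptation rather than a consequence of $\partial E$ ``being the topological boundary of an open set''; and the heuristic $(\int|\bm{A}|)^{1/2}\le\sqrt{\varepsilon}\rho$ in your recap of Simon's proof is a dimensional slip, though it appears only in informal motivation.
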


Here, $\partial U_i^0$ has to be understood with respect to the relative topology of $L_i$. Roughly speaking, the result states that, apart from sets $(P_j)_{\OOO j=1,\dots,N}$ of small diameter, so-called \emph{pimples},  $\partial E \cap B_{\mu\rho}$ can be written as the union of finitely many graphs of regular \EEE functions with small \OOO heights and small  gradients\EEE. Compare the result to the easier statement of Lemma \ref{lemma: curve graph}.

\begin{remark}[Adaptions to \BBB the \EEE original statement]\label{rem: adapt}
{\normalfont We have phrased the result slightly differently compared to the original statement in \cite[Lemma 2.1]{Simon1993Willmore}, where the lemma was stated only for $\mu=1/2$ but for general smooth, closed $2$-dimensional manifolds $\Sigma$. \EEE However, it is easy to verify through the proof that it is an interior \textit{$\varepsilon$-regularity} result, valid for every $\partial E \in C^2$ and for \EEE every $\mu\in (0,1)$, up to adapting the constants. \EEE The estimate \eqref{ineq: size of U_i} is implicitly mentioned in  the original statement, being a simple geometric observation, see also the proof of Lemma \ref{lemma: curve graph} in Appendix~\ref{appendix_a} for the analogous fact in two dimensions. Finally, the original statement has the assumption $0 \in \partial E$ which however can readily be generalized to requiring that $\partial E \cap B_{\mu\rho} \neq \emptyset$. \EEE
}
\end{remark} 
\begin{remark}[Result on cubes]\label{remark: simon}
{\normalfont

As in \cite{Simon1993Willmore}, the lemma is phrased  as  \OOO an interior \EEE statement for balls in $\mathbb{R}^3$. In the application below, we will apply \EEE it  on  cubes, by using $Q_{8\rho}$ in place of  $B_\rho$  and $Q_{3\rho}$ in place of $B_{\mu\rho}$. Indeed, to this end, it suffices to note that $B_{4\rho} \subset Q_{8\rho}$ and $Q_{3\rho} \subset B_{4\mu\rho}$ for $\mu \in (3\sqrt{3}/8,1)$. } 
\end{remark}

Both statements above involve functions which are defined with respect to suitable lines or planes, respectively. As a second preparation, we need to distinguish between \emph{good and bad lines and planes} for a cube $Q_\rho$. We discuss the following definitions and properties  only for planes in $\R^3$ as the analogous definitions for lines in $\R^2$ can be simply obtained by identifying lines in $\R^2$ with planes in $\R^3$ with one tangent vector given by $e_3$.

\EEE

Without restriction we suppose for the following arguments that $Q_\rho$ is centered in $0$, as this can always be achieved by a translation. %\RRR(K: Next definition includes also planes that don't intersect $Q_\rho$)\EEE
\begin{definition}\label{theta_good_definition}
Let $\theta \in (0,1/\sqrt{3})$ and let $L$ be a plane with normal $\nu_L=(\nu_1,\nu_2,\nu_3) \in \mathbb{S}^2\EEE$ such that   $(L)_{3\eta\rho} \cap Q_\rho \neq \emptyset$, see \eqref{eq: thick-def}. We say that $L$ is a  \textit{$\theta$-good plane} for $Q_\rho$ if and only if  one of the following two properties holds true:
\begin{itemize}
\item[(1)] There exist $i,j \in \{1,2,3\}$, $i\neq j$, such that $|\nu_i|,|\nu_j|\geq \theta$;
\item[(2)] There exists $k \in \{1,2,3\}$ such that $|\nu_k| \geq \theta$ and
\begin{align}\label{eq: assu-bad plane}
\mathrm{dist}\big(L \cap Q_{3\rho}, \lbrace x_k = -\rho/2\rbrace \cup   \lbrace x_k = \rho/2\rbrace  \EEE \big) \geq 20\theta\rho\,.
\end{align}
\end{itemize}
\end{definition}
If $L$ is not a $\theta$-good plane \BBB for $Q_\rho$, \EEE we say that it is a  \textit{$\theta$-bad plane} \BBB for $Q_\rho$. \EEE In the statement of Lemma \ref{lemma: localthick}, the two different possibilities, namely  $\theta$-good or $\theta$-bad planes, are  reflected in the two cases described in \eqref{eq: good}(i) and \eqref{eq: good}(ii), respectively. The different cases of good and bad planes are depicted in Figure~\ref{fig:planes}\EEE. In the following, \OOO we denote by $\nu_L$  \BBB a unit \AAA vector normal \EEE to $L$ whose orientation will be specified in the proof below.    Recall also the shorthand \OOO  notation  \EEE for the anistropic perimeter in \eqref{eq: ani-per}. 
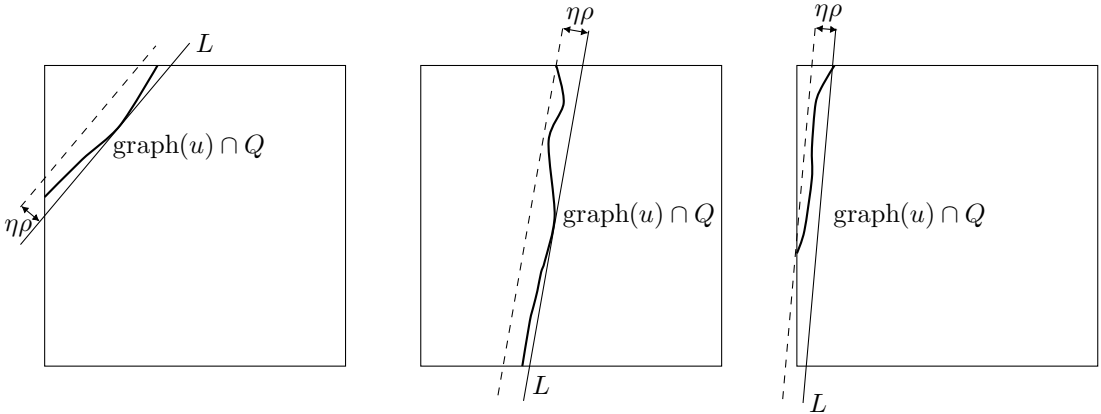
\begin{figure}[H]\label{figure 3}
\begin{tikzpicture}
\tikzset{>={Latex[width=1mm,length=1mm]}};
\begin{scope}[shift={(5,0)}]
\draw(1.6,-.25) node {$L$};
	
\draw(0,0) rectangle (4,4);
\draw(1.8,2)++(80:2.5)--++(260:5);
	
\draw[thick] plot [smooth, tension=.5] coordinates {(1.35,0)(1.45,.6)(1.5,.8)(1.55,1)(1.6,1.25) (1.65,1.4)(1.78,2)(1.7,3)(1.9,3.5)(1.8,4)};
\draw(2.9,2) node {$\mathrm{graph}(u) \cap Q$};
\draw[<->](1.8,4)++(80:.5)--++(350:.35);
\draw(1.8,4)++(80:.7)++(350:.2) node {$\eta \rho$};
	
\draw[dashed](1.8,4)++(80:.5)--++(260:5);
\end{scope}
	
\begin{scope}[shift={(0,0)}]
\draw(0,0) rectangle (4,4);
\draw(0,2)++(230:.5)--++(50:3.5);
\draw[dashed](0,2.5)++(230:.5)--++(50:2.8);
	
\draw[thick] plot [smooth, tension=.5] coordinates {(0,2.25)(.5,2.75)(1,3.2)(1.5,4)};
\draw[<->](0,2)++(230:.1)--++(140:.325);
	
\draw(0,2)++(230:.35)++(140:.16) node {$\eta \rho$};
	
\draw(1,2)++(45:1.3) node {$\mathrm{graph}(u) \cap Q$};
	
\draw(.2,2)++(230:.5)++(50:3.5) node {$L$};
	
\end{scope}
	
\begin{scope}[shift={(10,0)}]
\draw(0,0) rectangle (4,4);
	
\draw(.3,2)++(85:2.5)--++(265:5);
	
\draw(.5,2)++(85:-2.5) node {$L$};
	
\draw[dashed](.2,4)++(85:.5)--++(265:5);
\begin{scope}[shift={(.1,0)}]
\draw[thick] plot [smooth, tension=.5] coordinates {(-.1,1.5)(0,1.8)(.1,2.5)(.1,2.9)(.15,3.5)(.25,3.75)(.4,4)};
	
\end{scope}
	
\draw[<->](.2,4)++(85:.5)--++(175:-.3);
\draw(.1,4.05)++(80:.7)++(350:.2) node {$\eta \rho$};
\draw(1.5,2) node {$\mathrm{graph}(u) \cap Q$};
	
\end{scope}
	
\end{tikzpicture}
\caption{Different positions of planes inside a cube. In the left and in the middle cube, the two different cases of good planes are depicted whereas the \AAA figure on the right \EEE shows a bad plane. The thick surfaces illustrate $\mathrm{graph}(u)\cap Q_\rho$ and the dashed planes are at distance $\eta\rho$ to $L$, i.e, at the maximal distance of $\mathrm{graph}(u)$ from the plane $L$ inside $Q_\rho$. In the two pictures \AAA on the left\EEE,  the area of the dashed plane inside $Q_\rho$ and \AAA of \EEE the plane $L$ inside $Q_\rho$ are  comparable up to an error of order $\eta\rho^2$. This is the key observation for the proof of \eqref{eq: goodtheta1}--\eqref{ineq:area_of_domains_of_parametrization}. In the case of a bad plane, this is in general not true. %\RRR(K Here seems to be better)\EEE
} 
\label{fig:planes}
\end{figure}

\begin{lemma}[Surface estimate for $\theta$-good planes]\label{lemma:thetagood} 
There exists $\theta \in (0,1/\sqrt{3})$ small enough and a constant $C_\theta>0$ such that for any $\rho>0$ and any $\theta$-good plane $L$ \OOO for $Q_\rho$ \EEE the following holds:\\
\noindent  {\rm (i)} By letting $S_L :=  Q_{\OOO(1+  6\OOO\eta)\rho\EEE} \cap (L)_{3\eta\rho}$\AAA,\EEE we obtain
\begin{align}\label{eq: goodtheta1}
\mathcal{H}^2_\varphi(\partial^-S_L) \le    \mathcal{H}^2_\varphi(L\cap Q_\rho)   +C_\theta \OOO\varphi_{\mathrm{max}} \EEE \eta\rho^2\,,
\end{align}
where $\partial^- S_L :=\OOO \partial S_L \setminus (-3\eta\rho\nu_L+L)$. \\
\noindent  {\rm (ii)} Let $u\NNN\in L^{\infty} \EEE (\AAA U\EEE;L^\perp)\EEE$ for some \OOO bounded domain $\MMM L\cap Q_\rho\EEE \subset \AAA U\EEE \subset L$ with $\Vert  u \Vert_{L^\infty(\AAA U\EEE)} \le \MMM 2 \EEE \eta\rho$. \EEE Define \EEE $\AAA\omega_{u}^{\rho}\EEE := \Pi_L\big(\mathrm{graph}(u)\cap {Q_\rho}  \EEE \big)$, %\RRR M.: Overline removed; check throughout the paper 
\EEE where $\Pi_L$ denotes the orthogonal projection onto the plane $L$. Then
\begin{equation}\label{ineq:area_of_domains_of_parametrization}
\mathcal{H}^2( \AAA\omega_{u}^{\rho}\EEE  \triangle (L \cap Q_\rho)   ) \le \MMM C_\theta  \EEE \eta\rho^2\,.
\end{equation}
\end{lemma}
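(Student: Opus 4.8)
\textbf{Proof strategy for Lemma \ref{lemma:thetagood}.}
The plan is to reduce everything to elementary planar geometry in the cube, treating the two defining cases of a $\theta$-good plane separately. For (i), the key observation is that $S_L = Q_{(1+6\eta)\rho}\cap (L)_{3\eta\rho}$ is a thin slab of thickness $6\eta\rho$ around $L$, and its boundary $\partial^- S_L$ consists of (a) the ``top'' face $+3\eta\rho\,\nu_L + L$ intersected with $Q_{(1+6\eta)\rho}$, and (b) the ``lateral'' pieces lying in $\partial Q_{(1+6\eta)\rho}$. The top face is a parallel translate of $L$, so its $\varphi$-measure inside $Q_{(1+6\eta)\rho}$ equals $\mathcal H^2_\varphi(L\cap Q_{(1+6\eta)\rho})$, which exceeds $\mathcal H^2_\varphi(L\cap Q_\rho)$ only by a term of order $\eta\rho^2$ (since enlarging the cube by a factor $1+6\eta$ changes a planar section's area by $O(\eta)\cdot\rho^2$, with the constant depending on $\theta$ through a lower bound on how transversally $L$ can meet $Q_\rho$ — here is exactly where we use $|\nu_k|\ge\theta$). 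The lateral contribution is bounded by $\varphi_{\max}$ times the $\mathcal H^2$-area of $\partial Q_{(1+6\eta)\rho}\cap(L)_{3\eta\rho}$, which is a union of at most finitely many strips of width $\le C\eta\rho$ and length $\le C\rho$ on the faces of the cube, hence is $\le C_\theta\,\eta\rho^2$. In case (2) of Definition \ref{theta_good_definition}, the condition \eqref{eq: assu-bad plane} is precisely what guarantees that the slab $(L)_{3\eta\rho}$ does not ``exit through the top/bottom faces $\{x_k=\pm\rho/2\}$'' of $Q_\rho$, so that $L\cap Q_\rho$ is a single quadrilateral (or triangle) whose boundary lies on the four lateral faces $\{x_i=\pm\rho/2\}_{i\neq k}$, and the same counting works; in case (1), having two coordinates of $\nu_L$ bounded below by $\theta$ forces $L$ to meet $Q_\rho$ transversally enough that the section is again a bounded polygon with controlled perimeter, giving the same estimate. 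I would summarize both subcases in a single geometric sublemma: a $\theta$-good plane meets $Q_\rho$ in a convex polygon with at most $\le 6$ sides, each of length $\le C_\theta\rho$, all contained in $(\partial Q_\rho)$, and with the property that moving from $Q_\rho$ to $Q_{(1+6\eta)\rho}$ or translating by $\le 3\eta\rho\,\nu_L$ perturbs this polygon's area by at most $C_\theta\eta\rho^2$.

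For (ii), note first that $\omega_u^\rho = \Pi_L(\mathrm{graph}(u)\cap Q_\rho)$ and, since $\Pi_L(x+u(x))=x$, we simply have $\omega_u^\rho = \{x\in U : x+u(x)\in Q_\rho\}$. Because $\Vert u\Vert_{L^\infty(U)}\le 2\eta\rho$, the point $x+u(x)$ differs from $x$ by at most $2\eta\rho$; hence $x+u(x)\in Q_\rho$ forces $x$ to lie within distance $2\eta\rho$ of $Q_\rho$, and conversely every $x\in L\cap Q_{(1-c\eta)\rho}$ (for a suitable $c$ depending only on the geometry, i.e.\ ultimately on $\theta$) satisfies $x+u(x)\in Q_\rho$. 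Therefore the symmetric difference $\omega_u^\rho\triangle(L\cap Q_\rho)$ is contained in the set of points of $L$ within distance $C\eta\rho$ of $\partial(L\cap Q_\rho)$, i.e.\ a tubular neighbourhood of the polygon boundary of width $C_\theta\eta\rho$. By the polygon sublemma from part (i), $\partial(L\cap Q_\rho)$ has total length $\le C_\theta\rho$, so this tube has $\mathcal H^2$-measure $\le C_\theta\eta\rho^2$, which is \eqref{ineq:area_of_domains_of_parametrization}. The role of $\theta$ throughout is uniformity: a lower bound on the relevant direction cosines of $\nu_L$ guarantees that the ``width-in-the-normal-direction $\to$ width-along-$L$'' conversion factors, and the number and lengths of the polygon's edges, are all controlled by a single constant $C_\theta$.

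The main obstacle — and the only genuinely non-routine point — is isolating the correct choice of the absolute threshold $\theta\in(0,1/\sqrt 3)$ and proving the polygon sublemma uniformly in $L$ over \emph{all} $\theta$-good planes. One must check that case (1) and case (2) together exhaust the ``good'' configurations in a way that excludes precisely the degenerate near-tangential positions (a plane almost parallel to a face of $Q_\rho$ and passing very close to it), for which $L\cap Q_\rho$ can be a long thin sliver and the slab $(L)_{3\eta\rho}$ can protrude through a face, breaking both \eqref{eq: goodtheta1} and \eqref{ineq:area_of_domains_of_parametrization}; this is the content of Figure \ref{fig:planes} (right). Once $\theta$ is fixed so that in case (2) the $20\theta\rho$-gap in \eqref{eq: assu-bad plane} dominates the $3\eta\rho$ slab half-thickness and the $6\eta\rho$ cube enlargement (which holds for all $\eta\le\eta_0$ with $\eta_0$ absolute, say $\eta_0\le\theta$), and so that in case (1) the cross-section area is bounded below by $c_\theta\rho^2$ and its perimeter above by $C_\theta\rho$, the rest is the bookkeeping sketched above. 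The $d=2$ case is the same argument with ``polygon'' replaced by ``segment'' and is strictly easier.
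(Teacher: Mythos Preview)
Your approach is correct and captures all the essential geometry, but it differs in organization from the paper's proof. The paper first proves a single unified estimate: for any $v\in L^\infty(V;L^\perp)$ with $\|v\|_\infty\le 3\eta\rho$ and any $\rho\le r\le(1+6\eta)\rho$,
\[
\mathcal H^2\big(\omega_v^r\triangle(L\cap Q_\rho)\big)\le C_\theta\eta\rho^2,
\]
and then derives \emph{both} (i) and (ii) from it: (ii) is the case $v=u$, $r=\rho$, while for (i) one applies the estimate to the constant function $z\equiv 3\eta\rho\,\nu_L$ with $r=(1+6\eta)\rho$ to control the ``top'' of $\partial^-S_L$, and bounds the lateral pieces separately via a coarea argument. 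To prove the unified estimate itself, the paper first establishes the sandwich $L\cap \overline{Q_{(1-6\eta)\rho}}\subset\overline{\omega_w^r}\subset L\cap\overline{Q_{(1+12\eta)\rho}}$ (this is the analogue of your ``shell'' containment) and then bounds $\mathcal H^2\big(L\cap(Q_{(1+12\eta)\rho}\setminus Q_{(1-6\eta)\rho})\big)$ by an explicit calculation: in Case~(1) a coarea slicing in the direction $e_3$ of smallest $|\nu_i|$, with a rather detailed parametrization of the one-dimensional slices and an interval-by-interval comparison; in Case~(2) a direct area-formula computation using that $L$ is a graph over $\{x_k=0\}$ and, thanks to \eqref{eq: assu-bad plane}, $L\cap Q_r$ projects exactly to the square $[-r/2,r/2]^2$.

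Your route---bounding $\omega_u^\rho\triangle(L\cap Q_\rho)$ by a tube of width $C_\theta\eta\rho$ around $\partial(L\cap Q_\rho)$ and using that this polygon has perimeter $\le C\rho$---is more geometric and avoids the explicit slice parametrization. The crucial point you correctly identify is that in Case~(2) the potentially dangerous face direction $e_k$ (where $\sqrt{1-\nu_k^2}$ may be tiny) does not contribute, precisely because \eqref{eq: assu-bad plane} keeps $L\cap Q_{3\rho}$ at distance $\ge 20\theta\rho$ from $\{x_k=\pm\rho/2\}$, so no part of $\partial(L\cap Q_\rho)$ lies on those faces; in Case~(1) the condition $|\nu_i|,|\nu_j|\ge\theta$ forces $|\nu_\ell|\le\sqrt{1-\theta^2}$ for every $\ell$, so all conversion factors $1/\sqrt{1-\nu_\ell^2}$ are bounded by $1/\theta$. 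One small correction: in your treatment of the top face in (i) you write that its $\varphi$-area ``equals $\mathcal H^2_\varphi(L\cap Q_{(1+6\eta)\rho})$'', but the top face is the \emph{translate} $(3\eta\rho\,\nu_L+L)\cap Q_{(1+6\eta)\rho}$, whose section of the cube is not identical to $L\cap Q_{(1+6\eta)\rho}$; you do account for this in your polygon sublemma (translation by $\le 3\eta\rho\,\nu_L$ perturbs the area by $\le C_\theta\eta\rho^2$), so this is only a phrasing issue. In short: both arguments are valid; the paper's is more computational and unifies (i) and (ii) via one auxiliary estimate, while yours is more conceptual and treats the two items in parallel via a common geometric sublemma.
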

\begin{lemma}[$\theta$-bad planes]\label{lemma: bad plane}
 There exists $\theta \in (0,1/\sqrt{3})$ small enough   such that for any $\rho>0$  and any $\theta$-bad plane $L$ \OOO for $Q_\rho$ \EEE the following holds: let $k\in \{1,2,3\}$ be the unique component such that   $|\nu_k|\geq \theta$. Then, we have 
\begin{align}\label{ineq:uniformcloseness}
\mathrm{either}\ -\frac{3\rho}{4} \BBB < \EEE x \cdot e_k <-\frac{\rho}{4} \ \ \forall \, x \in L\cap Q_{3\rho}, \ \ \ \mathrm{or}\ \ \ \frac{\rho}{4}<x \cdot e_k \BBB < \EEE \frac{3\rho}{4}  \ \  \forall \, x \in L\cap Q_{3\rho}. 
\end{align}
\end{lemma}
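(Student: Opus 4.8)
The statement to prove is Lemma~\ref{lemma: bad plane}. Here is my plan.

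\medskip

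\textbf{Setup and reduction.} Recall that $Q_\rho$ is centered at $0$, so $Q_{3\rho} = 3\rho[-\tfrac12,\tfrac12)^3$ and the slab condition $(L)_{3\eta\rho}\cap Q_\rho\neq\emptyset$ is in force. A plane $L$ with unit normal $\nu_L=(\nu_1,\nu_2,\nu_3)$ is $\theta$-bad for $Q_\rho$ precisely when \emph{neither} alternative in Definition~\ref{theta_good_definition} holds. Negating (1): there are \emph{not} two indices with $|\nu_i|,|\nu_j|\ge\theta$; since $|\nu_L|=1$ forces at least one component to be $\ge 1/\sqrt3 > \theta$ (once $\theta<1/\sqrt3$), there is a \emph{unique} $k$ with $|\nu_k|\ge\theta$, and for $\ell\neq k$ we have $|\nu_\ell|<\theta$. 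This $k$ is the index in the statement. Negating (2) for this $k$: the distance of $L\cap Q_{3\rho}$ to the pair of planes $\{x_k=\pm\rho/2\}$ is $<20\theta\rho$. So the whole proof is: extract from these two pieces of information (a small tilt of $L$ away from the coordinate hyperplane $\{x_k=\mathrm{const}\}$, together with the fact that $L\cap Q_{3\rho}$ nearly touches one of the faces $\{x_k=\pm\rho/2\}$) the conclusion that $x\cdot e_k$ stays trapped in one of the two quarter-slabs $(-\tfrac{3\rho}{4},-\tfrac{\rho}{4})$ or $(\tfrac{\rho}{4},\tfrac{3\rho}{4})$ throughout $L\cap Q_{3\rho}$.

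\medskip

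\textbf{Main geometric estimate.} Parametrize $L$ as $\{x : x\cdot\nu_L = c\}$ for the offset $c\in\R$. For $x\in L\cap Q_{3\rho}$ write $x_k = x\cdot e_k$. From $x\cdot\nu_L=c$ we get $\nu_k x_k = c - \sum_{\ell\neq k}\nu_\ell x_\ell$, hence
\[
\Big| x_k - \frac{c}{\nu_k}\Big| = \frac{1}{|\nu_k|}\Big|\sum\nolimits_{\ell\neq k}\nu_\ell x_\ell\Big| \le \frac{1}{\theta}\cdot 2\theta \cdot \frac{3\rho}{2} = 3\rho\,,
\]
using $|\nu_k|\ge\theta$, $|\nu_\ell|<\theta$ for $\ell\neq k$, and $|x_\ell|\le 3\rho/2$ on $Q_{3\rho}$; this is the wrong order, so I must be quantitatively sharper. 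The right move: on $L\cap Q_{3\rho}$ the oscillation of $x_k$ is controlled by
\[
\mathrm{osc}_{L\cap Q_{3\rho}}(x_k) \;\le\; \frac{1}{|\nu_k|}\,\mathrm{osc}\Big(\sum\nolimits_{\ell\neq k}\nu_\ell x_\ell\Big) \;\le\; \frac{1}{\theta}\,\big(|\nu_i| + |\nu_j|\big)\cdot 3\rho \;\le\; \frac{6\theta\rho}{\theta} \cdot \text{(oops)}\,;
\]
again not small. The correct bookkeeping uses that $|\nu_\ell|$ is not just $<\theta$ but, since $\nu_k^2 = 1-\sum_{\ell\neq k}\nu_\ell^2 \ge 1-2\theta^2$, we have $|\nu_k|\ge\sqrt{1-2\theta^2}$, and $\sum_{\ell\neq k}|\nu_\ell| \le \sqrt{2}\sqrt{\sum_{\ell\neq k}\nu_\ell^2}\le \sqrt2\cdot\sqrt{2}\theta = 2\theta$. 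Therefore
\[
\mathrm{osc}_{L\cap Q_{3\rho}}(x_k)\;\le\;\frac{2\theta}{\sqrt{1-2\theta^2}}\cdot 3\rho\;=\;\frac{6\theta\rho}{\sqrt{1-2\theta^2}}\;\le\; 10\theta\rho
\]
for $\theta$ small. So $x_k$ ranges over an interval $I\subset\R$ of length at most $10\theta\rho$, and $I$ meets $[-\tfrac{3\rho}{2},\tfrac{3\rho}{2}]$ since $L\cap Q_{3\rho}\ne\emptyset$.

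\medskip

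\textbf{Locating the interval.} Combine two facts. First, the slab hypothesis $(L)_{3\eta\rho}\cap Q_\rho\ne\emptyset$ gives a point $x_0\in L$ with $\mathrm{dist}(x_0,Q_\rho)<3\eta\rho$; projecting, $(x_0)_k\in(-\tfrac{\rho}{2}-3\eta\rho,\tfrac{\rho}{2}+3\eta\rho)$, and since $x_0$ need not be in $Q_{3\rho}$ one has to relate it to $I$ — but by continuity of $L\cap Q_{3\rho}$ and the oscillation bound, $I\subset(-\tfrac\rho2-3\eta\rho-10\theta\rho,\ \tfrac\rho2+3\eta\rho+10\theta\rho)$, so for $\theta,\eta$ small, $I\subset(-\tfrac{3\rho}{4},\tfrac{3\rho}{4})$ and moreover $I$ does not contain points with $|x_k|$ close to $3\rho/2$. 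Second — and this is the decisive input — the negation of (2): $\mathrm{dist}(L\cap Q_{3\rho},\{x_k=-\rho/2\}\cup\{x_k=\rho/2\})<20\theta\rho$, which since these are the coordinate hyperplanes means $\inf_{x\in L\cap Q_{3\rho}}\big(\mathrm{dist}(x_k,\{-\rho/2,\rho/2\})\big)<20\theta\rho$ (distance in $\R^3$ to a hyperplane $=$ distance of the $k$-th coordinate to the constant). Hence $I$ comes within $20\theta\rho$ of $-\rho/2$ or of $\rho/2$; say of $\rho/2$ (the other case symmetric). Then $I\subset(\rho/2 - 20\theta\rho - 10\theta\rho,\ \rho/2 + 20\theta\rho + 10\theta\rho) = (\rho/2 - 30\theta\rho,\ \rho/2 + 30\theta\rho)$. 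Choosing $\theta$ small enough that $30\theta < \tfrac14$ (and absorbing the $3\eta\rho$ term by also requiring $\eta\le\eta_0$ small, or noting $\eta<\theta$ in the regime of application), we get $I\subset(\tfrac{\rho}{4},\tfrac{3\rho}{4})$, which is exactly the second alternative in \eqref{ineq:uniformcloseness}. The case where $I$ is near $-\rho/2$ gives the first alternative. Finally one must double-check that $I$ cannot simultaneously be near $+\rho/2$ in one part and $-\rho/2$ in another: impossible since $\mathrm{diam}(I)\le 10\theta\rho < \rho$.

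\medskip

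\textbf{Expected main obstacle.} The delicate point is purely quantitative: one must choose the single absolute $\theta$ so that the three error contributions — the oscillation $\lesssim\theta\rho$, the slab thickness $3\eta\rho$, and the near-touching tolerance $20\theta\rho$ — all fit inside the gap $\tfrac{\rho}{4}$ between $\pm\tfrac\rho2$ and $\pm\tfrac{\rho}{4}$, uniformly in $\rho$, and consistently with the choices of $\theta$ already fixed in Lemma~\ref{lemma:thetagood} (the two lemmas should share one $\theta$). Care is also needed because $\eta$ is a free small parameter in Lemma~\ref{lemma: localthick}: either one tracks that in the application $\eta$ is much smaller than $\theta$, or one simply states the lemma for $\eta\le\eta_0(\theta)$, which is harmless since $\eta_0$ is already taken small elsewhere. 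Apart from that, the proof is elementary linear algebra plus the triangle inequality; no hard analysis is involved.
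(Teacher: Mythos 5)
Your argument is correct and mirrors the paper's proof: both control the oscillation of $x\cdot e_k$ over $L\cap Q_{3\rho}$ by $O(\theta\rho)$ (the paper via the rotation argument in \eqref{eq: goodprep} giving $18\theta\rho$, you via the direct identity $\nu_k x_k=c-\sum_{\ell\ne k}\nu_\ell x_\ell$ giving $10\theta\rho$) and combine this with the $20\theta\rho$ tolerance supplied by the failure of \eqref{eq: assu-bad plane} to trap $x\cdot e_k$ in $(\rho/4,3\rho/4)$ or $(-3\rho/4,-\rho/4)$. One remark: the slab hypothesis $(L)_{3\eta\rho}\cap Q_\rho\neq\emptyset$ and the accompanying worry about $\eta$ at the end are superfluous — $\eta$ does not appear in the statement, the interval is located by the $20\theta\rho$ tolerance alone, and the paper's proof accordingly makes no use of the slab condition.
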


The proofs of the above lemmata   are elementary but tedious. They are deferred to  Appendix~\ref{appendix_b}. We are now in  \OOO the \EEE position to give the proof of Lemma \ref{lemma: localthick}.

\begin{proof}[\NNN Proof of Lemma \ref{lemma: localthick}]
Let $\gamma \in (0,1)$ \BBB and without restriction $\Lambda \ge 1$.    \EEE Consider $Q_\rho$ centered \OOO without restriction \EEE at $0$ such that $\AAA \overline{Q_{12\rho}} \EEE \subset \Omega$ and   \eqref{eq: necessary assu} holds.  In the case $d=2$, we choose $\eta_0 = \eta_0(\Lambda)$ such that $\Lambda\eta_0^7 \le \eps_0$, where $\eps_0\OOO>0\EEE$ is the constant of Lemma \ref{lemma: curve graph}. Then, by \BBB \eqref{eq: necessary assu} and \EEE \eqref{eq: forlaterusage} it is possible to apply Lemma \ref{lemma: curve graph}.  In the case $d=3$, we choose $\eta_0 = \eta_0(\Lambda)$ such that $\Lambda\eta_0 \le  \min\lbrace C_1\varepsilon_{\OOO S\EEE}^{1/6}, C_1^{-6},\OOO 2^{-(1+q/2)} \EEE c_{\OOO 0\EEE}\rbrace$, where $\eps_{\OOO S\EEE}$ and \BBB $C_1 \ge 1$ \EEE are the constants in Lemma \ref{Simons_lemma}, and $c_{\OOO 0\EEE}$ is the constant in Lemma~\ref{lemma:slicing}.  \EEE Consequently, in view of \eqref{eq: forlaterusage}, we have
\begin{align*}
\int_{\partial E\cap Q_{8\rho}}|\bm{A}| \, {\rm d}\mathcal{H}^2\leq  \Lambda\eta_0\eta^6\rho^{\OOO d\EEE-2} \le  \left(\eta/C_1\right)^6 \rho\,.
\end{align*} 
In particular, as $\eta \le \eta_0 \le C_1 \varepsilon_{\OOO S\EEE}^{1/6}$, we get $ \int_{\partial E\cap Q_{8\rho}}|\bm{A}| \, {\rm d}\mathcal{H}^2\leq   \eps_{\OOO S\EEE}\rho$. \BBB This along with  \eqref{eq: necessary assu} allows to \EEE apply Lemma~\ref{Simons_lemma} in the version of Remark \ref{remark: simon}. From now on, we only treat the case $d=3$ since the case $d=2$ is simpler (in the latter case, the \MMM sets \EEE  $\OOO( \EEE P_j\OOO)\EEE_{\OOO j}$ below in \eqref{ineq:diskestimate} can be chosen empty). In the following,  $C>0$ \BBB again \EEE  denotes a generic absolute constant, whose value is allowed to vary from line to line. \\
\begin{step}{1}(Application of Lemma \ref{Simons_lemma})
By Lemma \ref{Simons_lemma} in the version of Remark \ref{remark: simon}, \OOO applied for $\varepsilon:=(\eta/C_1)^6 \BBB \le \eps_S\EEE$, \EEE there exist   planes $L_i \subset \R^3$ and functions $u_i\colon \NNN \overline{U_i} \EEE \to L_i^\perp$ for $i=1,\ldots,M$, with  $M \le \BBB C_1 \EEE $, where $U_i= U_i^0 \setminus  {\bigcup_{k=1}^{R_i}} d_{i,k}$ for    (two-dimensional) disks \OOO$(d_{i,k})_{i,k}$ \EEE in the planes $L_i$, as well as pairwise disjoint closed subsets  $( P_{j})_{j=1}^{N} \BBB \subset \EEE \partial E$ such that
\begin{align}\label{eq:partialESimon}
\partial E \cap  Q_{3\rho}  = \left(\bigcup\nolimits_{i=1}^M\mathrm{graph}(u_i) \cup \bigcup\nolimits_{j=1}^{N} P_{j}\right)\cap Q_{3\rho}\,\AAA.\EEE
\end{align}
\AAA Moreover, \EEE 
\begin{align}\label{ineq:diskestimate}
 \sum\nolimits_{j=1}^N\mathrm{diam}(P_{j}) \leq  C_1(\eta/C_1)^3\rho \le   \eta\rho\,,
\end{align}
and for the functions $\OOO(u_i)_{i=1,\dots,M}$ we have \OOO the $C^1$-estimates  \EEE 
\begin{align}\label{eq:boundui}
\sup\nolimits_{x \in U_i}  |u_i(x)|  \leq  \OOO C_1 (\eta/C_1) \EEE \rho   =\OOO \eta\rho \NNN \le 2\eta\rho\EEE \,,  \quad \quad \quad  \sup\nolimits_{x \in U_i}  |\nabla u_i(x)|  \le  C_1 (\eta/C_1) = \eta\,.
\end{align}
Here, in the estimates \eqref{ineq:diskestimate}--\eqref{eq:boundui} we used that $\varepsilon=(\eta/C_1)^6$ \EEE and \BBB the fact that we can choose $\eta \leq \eta_0\le C_1$. \EEE \NNN The nonoptimal estimate with $2\eta\rho$ is introduced for later purposes in Step 4 below. \EEE \NNN To simplify the exposition, we \EEE assume for the moment that there are no pimples in $\partial E \cap Q_{3\rho}$, \NNN i.e., \EEE by  \eqref{eq:partialESimon}, that we have
\begin{align}\label{graphical approximation free of pimples}
\partial E \cap  Q_{3\rho}  = \bigcup\nolimits_{i=1}^M\mathrm{graph}(u_i) \cap Q_{3\rho}\,.
\end{align}
We defer the analysis of the case \MMM with \EEE pimples to Step 4. \EEE  We fix $\theta>0$ sufficiently small such that Lemmata \EEE \ref{lemma:thetagood}--\ref{lemma: bad plane} are applicable.  We distinguish the two cases
$$\text{{\rm (i)} \ \  $L_i$   is a $\theta$-good plane for $Q_{\rho}$,  \quad \quad \quad \quad  {\rm (ii)} \ \ $L_i$  is a $\theta$-bad plane  for $Q_{\rho}$\,.} $$
\AAA Let us  note that  $I:=M \le C_1  $ by the statement of Lemma~\ref{Simons_lemma}.\EEE
\end{step}

\begin{step}{2}(Good planes) 
\MMM First, \EEE let $L_i$ be a $\theta$-good plane for $Q_\rho$ and consider  $u_i \colon \NNN \overline{U_i}\EEE \subset L_i \to L_i^\perp$. \MMM In this case, we will define $\Gamma_i := {\rm graph}(u_i) \cap Q_\rho$ and thus it is not restrictive to assume that ${\rm graph}(u_i)$ intersects $Q_\rho$. \EEE   In the following, for notational convenience, we drop the subscript $i$ and simply write $L$ for the plane, $\nu_L$ for a unit normal to $L$,   $u$ for the function, and $\AAA U\EEE$ for its corresponding domain.  

\MMM 

We will first verify that $L\cap Q_{2\rho}\subset  U$. Indeed, by \eqref{eq:boundui} and \AAA the fact that \MMM ${\rm graph}(u) \cap Q_\rho \neq \emptyset$ we get that $U \cap Q_{2\rho} \neq \emptyset$ for $\eta$ sufficiently small. Moreover,  by \eqref{eq:boundui}  and \AAA by taking $\eta$ small\AAA er if necessary\MMM, we get  $| x+u(x)|_\infty <\frac{3}{2}\rho$ for all $x\in L \cap Q_{2\rho} \cap \partial U$.  Since $\partial\AAA(\partial E \cap Q_{3\rho})\MMM \subset \partial Q_{3\rho}$, \eqref{graphical approximation free of pimples} implies \AAA that \MMM $\partial U \cap Q_{2\rho} = \emptyset$. As $U \cap Q_{2\rho} \neq \emptyset$, we conclude $U  \supset  L  \cap Q_{2\rho}$, as desired. \EEE

%
%
%
%
%\AAA Indeed, let us assume that $L\cap Q_{\rho}\neq \emptyset$, otherwise there is nothing to prove. In view of \eqref{graphical approximation free of pimples}, $\mathrm{graph}(u)\cap Q_{3\rho}$ is a non-empty, simply connected surface with $\partial(\mathrm{graph}(u)\cap Q_{3\rho})\subset \partial Q_{3\rho}$. This observation, together with $\|u\|_{L^{\infty}(U)}\leq \eta\rho$, and the definition of $\theta$-good planes imply that $\mathrm{graph}(u)\cap Q_{2\rho}\neq \emptyset$ and that $\omega_{u}^{2\rho}=\Pi_L(\mathrm{graph}(u)\cap Q_{2\rho})$ is a simply connected subset of $Q_{(1+2\eta)\rho}$. Since $\partial U\cap Q_{2(1+\eta)\rho})\subset L\cap\partial Q_{2(1+\eta)\rho}$, this forces $L\cap Q_\rho\subset U$, since otherwise $\partial E\cap Q_{3\rho}$ would have boundary inside $Q_{2(1+\eta)\rho}$, which is a contradiction. \EEE
%Consequently, $(L\AAA\cap Q_\rho\EEE)\setminus \AAA U\EEE =\emptyset$.

  We choose the following orientation for $\nu_L$, which is important  for the definition of $\partial^- S_L$   in Lemma~\ref{lemma:thetagood}(i): \BBB we denote by $\bm{n}(x)$ the outer  unit normal  to  $\partial E\AAA\cap \Omega\EEE$  at $x$ and choose the orientation $\nu_L$ as well as an orthonormal basis $(\tau_1,\tau_2)$ \EEE of $L$ such \AAA that \EEE the normal vector $\tilde{\bm{n}}(x) = -(\partial_{\tau_1}u) \tau_1 -(\partial_{\tau_2}u) \tau_2 + \nu_L$  to ${\rm graph}(u)$ at the point $x+ u(x)$ satisfies $\bm{n}(x) = \tilde{\bm{n}}(x)/|\tilde{\bm{n}}(x)|$. Then, in view of \eqref{eq:boundui}, we have
\begin{align}\label{eq: orientL}
\Vert\AAA \bm{n}-\nu_L\EEE\Vert_{L^\infty(U)} \le C\eta\,.
\end{align}
As in Lemma \ref{lemma:thetagood}, we introduce the \emph{stripes} $S_L:= Q_{(1+  6\EEE\eta)\rho} \cap (L)_{3\eta\rho}$. We claim that
\begin{align}\label{ineq:distance}
\mathrm{dist}\big(\mathrm{graph} (u)\cap {Q_\rho}\EEE, S_L^c\big) \geq \NNN \eta\rho\, \EEE
\end{align}
and 
\begin{align}\label{ineq:graphdelE-}
\mathcal{H}^2_\varphi\big(\mathrm{graph}(u) \cap {Q_\rho}\EEE\big) \geq  \mathcal{H}^2_\varphi(L\cap Q_\rho)  - \BBB C_\theta \EEE \eta\rho^2  \ge  \mathcal{H}^2_\varphi(\partial^- S_L) - \BBB C_\theta \EEE \eta\rho^2\,,
\end{align}
where $\BBB C_\theta:=C(\theta,\varphi)>0$ is a constant depending only on $\theta$ and $\varphi$. Here, recall the notation in \eqref{eq: ani-per} and the definition of $\partial^- S_L$ in Lemma \ref{lemma:thetagood}(i). To obtain  \eqref{ineq:distance}, it suffices to check that \OOO
\begin{equation*}
{\rm (a)}\ \  \mathrm{dist}\Big(\mathrm{graph} (u)\cap \OOO{Q_\rho}, Q^c_{(1+  6\OOO\eta)\rho}\Big) \geq \NNN \eta\rho \quad \text{and}\quad {\rm (b)}\ \  \mathrm{dist}\Big(\mathrm{graph} (u)\cap \OOO{Q_\rho}, (L)^c_{3\eta\rho}\Big) \geq \NNN \eta\rho\,.\EEE
\end{equation*}
\EEE Item (a) is clear. To see (b),  we  first note  that $\dist(L, (L)^c_{3\eta\rho})   \OOO = \EEE 3\eta\rho$. Then, in view of \eqref{eq:boundui},  for each $y \in {\rm graph}(u) \cap \OOO{Q_\rho}\EEE$ we have $\dist(y,L) \le \MMM 2 \EEE \eta \rho$. Consequently, 
$${\mathrm{dist}\big(\mathrm{graph} (u)\cap \OOO{Q_\rho}\EEE, (L)^c_{3\eta\rho}\big) \geq \mathrm{dist}\big(L, (L)^c_{3\eta\rho}\big) -\NNN 2\eta\rho \EEE \ge   3\eta\rho - 2\eta\rho  \OOO =  \NNN \eta\rho \,.\EEE} $$  
%  and thus by \eqref{ineq:diskestimate}(i) we get
%\begin{align}\label{eq: UUU}
%\mathcal{H}^2\big((L_i \setminus U_i) \cap Q_\rho\big)\OOO \leq \frac{\pi}{4}\sum\nolimits_{k} (\mathrm{diam}(d_{i,k}))^2\leq \frac{\pi}{4}\Big(\sum\nolimits_{k} \mathrm{diam}(d_{i,k})\Big)^2  \EEE \le C(\eta\rho)^2\, .
%\end{align}
	%\RRR bit simplified... \EEE
%  is immediate. Indeed, take a point $x\in (E_\eta(L))^c=Q^c_{(1+ 8\sqrt{3}\eta)\rho} \cup (L)^c_{8\sqrt{3}\eta\rho}$ and a point $y:=z+u(z)\in \left(\mathrm{graph} u|_{L\setminus \bigcup_{k}d_{k}} \right)\cap Q_\rho$. If $x\in Q^c_{(1+8\sqrt{3}\eta)\rho}$ then there exists $k\in \{1,2,3\}$ such that $|x_k|>(1+8\sqrt{3}\eta)\rho/2$ and therefore
%\begin{equation*}
%|x-y|\geq |x_k-y_k|\geq |x_k|-|y_k|\geq (1+8\sqrt{3}\eta)\rho/2-\rho/2\geq 4\sqrt{3}\eta\rho.
%\end{equation*}
%If $x\in (L)^c_{4\sqrt{3}\eta\rho}$ then we can write it as $x=\tilde{x}+t_x\nu$, where $\tilde{x}\in L^\perp$ and $|t_x|\geq 8\sqrt{3}\eta\rho$, and therefore (by the orthogonal decomposition in $(L,L^{\perp})$)
%\begin{equation*}
%|x-y|=|(\tilde x-z)+(t_x\nu-u(z))|\geq |t_x|-|u(z)|\geq 8\sqrt{3}\eta\rho-\eta\rho\geq 4\sqrt{3}\eta\rho.
%\end{equation*}
%Taking the infimum over all such possible $x$ and $y$ yields \eqref{ineq:distance}.\\[-2pt]
Regarding \eqref{ineq:graphdelE-} we argue as follows: set \AAA as before \EEE $\AAA\omega_{u}^{\rho}\EEE = \Pi_L\AAA\big(\EEE\mathrm{graph}(u)\cap {Q_\rho}  \AAA \big)\EEE$, where $\Pi_L$ denotes the orthogonal projection onto the plane $L$. By Lemma \ref{lemma:thetagood}(ii) \MMM and the fact that $L \cap Q_\rho \subset {U}$, \EEE we have 
\begin{align}\label{eq:before}
\mathcal{H}^2\big(\AAA\omega_{u}^{\rho}\EEE  \triangle (L \cap Q_\rho)   \big) \le  C_\theta  \eta\rho^2 \,. \EEE
\end{align}   
Due to \eqref{eq: orientL} and  the fact that $\varphi$ is Lipschitz, being a \EEE norm, we get  $\Vert\varphi(\AAA\bm{n}\EEE) - \varphi(\AAA \nu_L\EEE)\Vert_{L^\infty(U)}\le C'\eta$, for a constant \BBB $C'$ \EEE depending \OOO additionally \EEE on $\varphi$.  \EEE Therefore, by \eqref{eq:before}, and the fact that we have assumed without restriction that $\min_{\mathbb{S}^2}\varphi \BBB = \EEE 1$, \EEE   we obtain %\RRR(K: constant depend on $\varphi$ also?) \EEE
\begin{align*}
\begin{split}
\mathcal{H}^2_\varphi\big(\mathrm{graph}(u) \cap {Q_\rho}\big) &= \int_{\AAA\omega_{u}^{\rho}\EEE} \varphi(\bm{n}(x)) \sqrt{1+|\nabla u(x)|^2}\, \mathrm{d}\mathcal{H}^2(x) \geq \mathcal{H}^2(\AAA\omega_{u}^{\rho}\EEE) (\varphi(\nu_L) -  C' \EEE \eta)\\ %\notag
&\OOO\geq \big(\mathcal{H}^2(L\cap Q_{\rho})-\mathcal{H}^2(\AAA\omega_{u}^{\rho}\EEE\triangle(L\cap Q_{\rho}))\big)(\varphi(\nu_L)- C' \eta)\\
&\geq \mathcal{H}^2_\varphi(L\cap Q_\rho) - C_\theta\eta\rho^2\,, \EEE
\end{split}
\end{align*}
where in the last step we also used the obvious bound $ \mathcal{H}^2(L\cap Q_\rho) \le 3\rho^2$.  Then, by Lemma \ref{lemma:thetagood}(i) 
$$ 
\mathcal{H}^2_{\varphi }\big(\mathrm{graph}(u) \cap Q_\rho\big)  \geq \mathcal{H}^2_\varphi(L\cap Q_\rho) - C_\theta  \eta\rho^2 \ge \mathcal{H}^2_\varphi(\partial^-S_L) - C_\theta  \eta\rho^2\,.\EEE
$$
This concludes the proof of \eqref{ineq:graphdelE-}.

\begin{figure}
\begin{tikzpicture}
			
\begin{scope}[rotate=90]
\tikzset{>={Latex[width=1mm,length=1mm]}};
			
\draw(1.6,-.25) node {$L$};
			
\draw(0,0) rectangle (4,4);
\draw(1.8,2)++(80:2.5)--++(260:5);
			
\draw[fill=gray!10!white] plot [smooth, tension=.5] coordinates {(1.35,0)(1.45,.6)(1.5,.8)(1.55,1)(1.6,1.25) (1.65,1.4)(1.78,2)(1.7,3)(1.9,3.5)(1.8,4)}--(0,4)--(0,0)--(1.35,0);
			
\draw[thick] plot [smooth, tension=.5] coordinates {(1.35,0)(1.45,.6)(1.5,.8)(1.55,1)(1.6,1.25) (1.65,1.4)(1.78,2)(1.7,3)(1.9,3.5)(1.8,4)};
\draw(1.9,4.25) node {$\Gamma_i$};
			
\draw[pattern=north west lines,pattern color=gray](1.35,0)--(2,0)--++(80:4.05)--(1.8,4) plot [smooth, tension=.5] coordinates {(1.8,4)(1.9,3.5)(1.7,3)(1.78,2)(1.65,1.4)(1.6,1.25)    (1.55,1)(1.5,.8)(1.45,.6)(1.35,0) };
		
\draw(2.55,2) node {$T_i$};
			
\draw(.75,2) node {$E$};
			
\end{scope}

\begin{scope}[shift={(6,0)},rotate=90]
\tikzset{>={Latex[width=1mm,length=1mm]}};
			
\draw(1.6,-.25) node {$L$};
			
\draw(0,0) rectangle (4,4);
\draw(1.8,2)++(80:2.5)--++(260:5);

\draw[dashed,fill=gray!50!white](1.35,0)--++(-.65,0)--++(80:4.05)--(1.6,4)plot [smooth, tension=.5] coordinates {(1.6,4)(1.7,3.8)(1.5,3.2)(1.6,2.8)(1.5,2.2)(1.4,1.4)(1.3,.6)(1.2,0)};
			
\draw[fill=gray!10!white] plot [smooth, tension=.5] coordinates {(1.35,0)(1.45,.6)(1.5,.8)(1.55,1)(1.6,1.25) (1.65,1.4)(1.78,2)(1.7,3)(1.9,3.5)(1.8,4)}--(1.6,4) plot [smooth, tension=.5] coordinates {(1.6,4)(1.7,3.8)(1.5,3.2)(1.6,2.8)(1.5,2.2)(1.4,1.4)(1.3,.6)(1.2,0)}--(1.35,0);
			
\draw[thick] plot [smooth, tension=.5] coordinates {(1.6,4)(1.7,3.8)(1.5,3.2)(1.6,2.8)(1.5,2.2)(1.4,1.4)(1.3,.6)(1.2,0)};
			
\draw[thick] plot [smooth, tension=.5] coordinates {(1.35,0)(1.45,.6)(1.5,.8)(1.55,1)(1.6,1.25) (1.65,1.4)(1.78,2)(1.7,3)(1.9,3.5)(1.8,4)};
\draw(1.9,4.25) node {$\Gamma_i$};
\draw(1,-.25) node {$\Gamma_j$};

\draw[pattern=north west lines,pattern color=gray](1.35,0)--(2,0)--++(80:4.05)--(1.8,4) plot [smooth, tension=.5] coordinates {(1.8,4)(1.9,3.5)(1.7,3)(1.78,2)(1.65,1.4)(1.6,1.25)    (1.55,1)(1.5,.8)(1.45,.6)(1.35,0) };
			
\draw(2.55,2) node {$T_i$};
			
\draw(1.63,2) node {$E$};
			
\end{scope}
\end{tikzpicture}
			
\caption{The two fundamental cases for the selection of the sets $T_i$ (hatched).  In the second figure, \EEE the dark gray set is another connected component of $\overline{S_L}\setminus E$ that is not selected.}
\label{fig:Ti}
\end{figure}
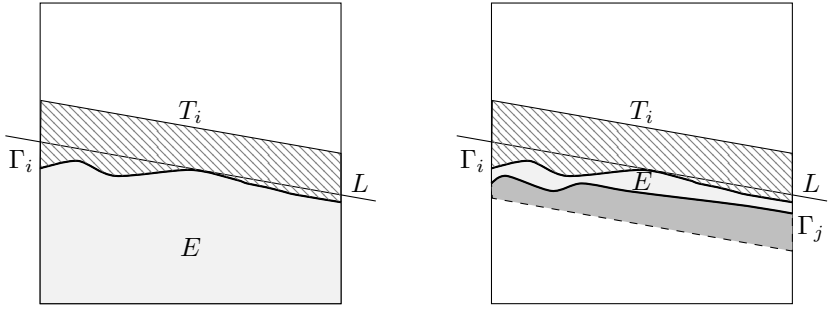
		
\EEE
		
We now set $\Gamma_i = \BBB {\rm graph}(u_i) \EEE \cap \OOO{Q_\rho}\EEE$ and let $T_i$ be the connected component of $\BBB \overline{S_{L_i}}\EEE\setminus {E}$ which contains $\Gamma_i$, see Figure~\ref{fig:Ti}. \EEE (From now on, \EEE for clarification we again add the index $i$ as the definition depends on $u_i \colon \AAA U_i\EEE \to L_i^\perp$.) Let us verify \eqref{eq: good}(i).  To this end, we observe by \eqref{eq:boundui} 
that
$${\partial T_i \setminus \overline{E}  \BBB \subset  \partial S_{L_i} \setminus (- 3\eta\rho\nu_{L_i} +   L_i)  = \partial^- S_{L_i}     \,,}$$
where the last identity \EEE is the definition of $\partial^-S_{L_i}$.   \EEE Thus,  \eqref{ineq:graphdelE-} implies \eqref{eq: good}(i). We close this step with the observation that \BBB for $x \notin E$ \EEE 
\begin{align}\label{eq: SSSS}
\dist(x, \Gamma_i) \BBB \le \EEE  \dist\Big(x, \bigcup\nolimits^M_{\BBB j\EEE=1} {\rm graph}(u_{\BBB j \EEE}) \cap \OOO{Q_\rho}\EEE\Big) <  \EEE \NNN \eta\rho \EEE \quad \quad \Rightarrow \quad \quad x \in T_i \,.
\end{align}
In fact, by using  \eqref{ineq:distance} and \OOO by assuming that  \EEE $\dist(x, \Gamma_i)  \OOO <  \NNN \eta\rho\EEE$,  we get $x \in S_{L_i}$, \BBB in particular $x \in {S_{L_i}} \setminus E$. \EEE If we had $x \in S_{L_i}\setminus T_i$, then we would necessarily find $\Gamma_j$, $j \neq i$, such that $\dist(x,\Gamma_j) < \dist(x,\Gamma_i)$, see also Figure~\ref{fig:Ti}. This is \EEE   a contradiction. \EEE
\end{step}\\	
\begin{step}{3}(Bad planes) Now we suppose that \BBB $L_i$ \EEE is a $\theta$-bad plane \OOO for $Q_{\rho}$. \EEE  Then, there exists exactly one  $k\in \{1,2,3\}$ such that $|\nu_k|\geq \theta$ and $|\nu_j|< \theta$ for $j \neq k$, and by Lemma \ref{lemma: bad plane} we find that  \eqref{ineq:uniformcloseness} holds. 
Without restriction we suppose that $k=1$ and that $x \in L_i\cap Q_{3\rho}$ implies that $-\frac{3\rho}{4}\leq x \cdot e_{\OOO 1}<-\frac{\rho}{4}$. In fact, the other cases can be treated along  \OOO the very same \EEE lines. We let $Q_\rho' := Q_\rho - \rho e_{\OOO 1\EEE} \in \mathcal{N}(Q_\rho)$ be the neighboring cube \OOO of $Q_{\rho}$ to the left of it, recall notation \eqref{eq_ neighb-c}.
		
Due to \eqref{ineq:uniformcloseness}, we have that $L_i$ is a $\theta$-good plane   for the shifted cube $\tilde{Q}_\rho :=  Q_\rho -\frac{\rho}{2}e_{\OOO 1\EEE} $.  \EEE In fact, \BBB Case~(2) \EEE of Definition \ref{theta_good_definition} \EEE is satisfied, provided that $\theta\OOO>0\EEE$ is chosen small enough. Consequently, \BBB given \eqref{eq: necessary assu} and the fact that $\tilde{Q}_{8\rho}\subset \Omega$, \EEE we can repeat the above reasoning for $\tilde{Q}_\rho$ in place of $Q_\rho$. Accordingly, we define $\Gamma_i \AAA:\EEE= {\rm graph}(u_{\OOO i\EEE}) \cap \tilde{Q}_\rho$  and $T_i$ as the connected component of  $\overline{S_{L_i}}\setminus {E}$ containing $\Gamma_i$, where now $S_{L_i} =   (\tilde{Q}_\rho)_{(1+ 6\EEE\eta)\rho} \cap (L_i)_{3\eta\rho}$. Then  \eqref{eq: good}(ii) can be proved along similar  lines as   \eqref{eq: good}(i) above, by using $\tilde{Q}_\rho \subset Q_\rho \cup Q_\rho'$.  In the same way, we obtain \eqref{eq: SSSS} \NNN in this case. We now observe that \eqref{eq: SSSS} for good and bad planes  yields \eqref{eq: bad}. \EEE In particular, we also note that
\eqref{ineq:uniformcloseness} and \eqref{ineq:graphdelE-} imply  
\begin{align}\label{eq LLLL}
\mathcal{H}^2_\varphi\big(\mathrm{graph}(u_i) \cap (Q_\rho \cup Q_\rho')\big) \geq  \frac{1}{C} \rho^2 \,,
\end{align}
provided that $\eta_0\OOO>0\EEE$ is chosen sufficiently small. 
		
Next, we confirm \eqref{eq: ugly}. To this end, we exemplarily apply the construction for the neighboring cube $Q_\rho' = Q_\rho - \rho e_{\OOO 1\EEE} $. By Lemma \ref{Simons_lemma} and Remark \ref{remark: simon} \EEE  (which are applicable by \eqref{eq: necessary assu} and the fact that $Q_{8\rho}'\subset \Omega$) \EEE we find planes $\BBB L'_j \EEE \subset \R^3$,  \MMM open sets \EEE $U'_j$ in $L'_j$, and  functions \OOO$u'_{j}$ \EEE such that \eqref{eq:partialESimon}--\eqref{eq:boundui} hold. Given the $\theta$-bad plane $L_i$ with corresponding ${\rm graph}(u_i)$ for the original cube $Q_\rho$ considered above, in view of \AAA \eqref{eq:partialESimon} \EEE applied for both $Q_\rho$ and $Q_\rho'$,  and by using \eqref{eq LLLL}, we observe that there exists a unique \OOO function \EEE $u_j'$ such that ${\rm graph}(u_j') \cap {\rm graph}(u_i) \cap (Q_\rho \cup Q_\rho') \neq \emptyset$. \BBB (In fact, since $\partial E\AAA \cap \Omega\EEE$ is a \AAA regular manifold with boundary only in $\partial \Omega$ and $\overline {Q_{12\rho}}\subset \Omega$\EEE, different graphs cannot intersect and the graphs of the functions in the above representation are unique\EEE.) \EEE  Then we observe that
one could replace \AAA ${\rm graph}(u_{i})$ and ${\rm graph}(u_j')$ \EEE in  Lemma~\ref{Simons_lemma} applied on $Q_\rho$ and $Q_\rho'$, respectively, by the union ${\rm graph}(u_{i}) \cup {\rm graph}(u_j')$ which can again be understood as the graph of a function defined on the plane $L_i$. This shows that the objects $\Gamma_j'$ and $T_j'$ for $L_j'$ can be chosen identical to $\Gamma_i$ and $T_i$, i.e., the sets can indeed be constructed such that \eqref{eq: ugly} is ensured. 
\end{step}\\
\begin{step}{4}(\NNN Presence of  pimples\EEE)
Now we argue how to reduce the case of existence of pimples to the case of non-existence of pimples. \EEE As a preparation, \EEE  we first show that for every pimple $P_j\subset \partial E$ such that $P_j\cap Q_{3\rho}\neq \emptyset$ \EEE there exists $i \in \{1,\ldots,M\}$ such that $P_j\cap \mathrm{graph}(u_i)\neq \emptyset$.  In fact, suppose by contradiction that this \EEE was \EEE not the case. Due to  the fact that Lemma~\ref{Simons_lemma} guarantees that $P_k\cap P_j=\emptyset$ for all $k\neq j$, %\RRR here Simon lemma applied! 
\EEE we would get that $P_j$ is a compact manifold without boundary. Thus, applying \cite[Lemma~1.1]{Simon1993Willmore} we get 
$$\mathcal{H}^2(P_j) \le ({\rm diam}(P_j))^2  \int_{P_j} |\bm{H}|^2\,{\rm d}\mathcal{H}^2\,,$$ 
where $\bm{H}$ denotes the mean curvature. As the estimate  clearly still holds with $\bm{A}$ in place of $\bm{H}$ \BBB up to a factor of $2$, \EEE we get along with  \EEE H\"older's inequality for $q/2\geq 1$, \EEE \eqref{eq: surface energy}, \eqref{eq: necessary assu},   and  \eqref{ineq:diskestimate} that 
\begin{align*}
\mathcal{H}^2(P_j) &\leq \OOO 2 \EEE (\mathrm{diam}(P_j))^2 \int_{P_j}|\bm{A}|^2\,\mathrm{d}\mathcal{H}^2\leq \OOO 2\EEE\eta^2\rho^2 \big(\mathcal{H}^2(P_j)\big)^{1-2/q} \Big(\int_{P_j}|\bm{A}|^q\,\mathrm{d}\mathcal{H}^2\Big)^{2/q} \\&\leq \OOO 2\EEE\eta^2\rho^2 \big(\mathcal{H}^2(P_j)\big)^{1-2/q} \big(\Lambda\OOO\gamma^{-1}\EEE\rho^2\big)^{2/q}\,.
\end{align*}
Simplifying the above formula and using the assumption $\rho \le  \eta^7  \EEE \gamma^{1/q}$,  we have
\begin{align*}
\mathcal{H}^2(P_j) \leq \OOO 2^{q/2} \EEE \Lambda \gamma^{-1} \rho^2 \, \rho^q \eta^q \leq \OOO 2^{q/2}\EEE\Lambda\eta^{8q}  \rho^2\OOO< \EEE c_{\OOO 0 \EEE}\rho^2\,,
\end{align*}  
where the last step follows from the fact that \EEE $2^{q/2}\Lambda \eta_0^{8q} \le 2^{q/2}\Lambda \eta_0 \EEE <  c_{0}$, see \OOO the \EEE beginning of the proof \OOO for our choice of $\eta_0$. \EEE    By Lemma~\ref{lemma:slicing} applied to $P_j$ we would then obtain the estimate $\mathcal{F}_{\rm surf}^{\varphi,\gamma,q}(\AAA E;\EEE Q_{8\rho}) \ge \mathcal{F}_{\rm surf}^{\varphi,\gamma,q}(P_j) > \Lambda \rho^{2}$, where we used that $P_j \subset \partial E \cap Q_{8\rho}$, which follows from \eqref{ineq:diskestimate}. This is a contradiction.  Therefore, for all $j \in \{1,\ldots,N\}$ there exists an index $i \in \{1,\ldots,M\}$ such that $P_j \cap \mathrm{graph}(u_i) \neq \emptyset$. %\RRR(K: shall we say somewhere that $I=M$?) \EEE

 Now, omitting again  the indices for simplicity,  we consider a plane $L$ and  a function $u\colon \AAA \overline{U}\EEE\subset L\to L^\perp$ satisfying \eqref{eq:boundui}, in particular $\Vert u \Vert_\infty \le \eta\rho$. \MMM Here, $U$ is
of the form $U=U^0\setminus \bigcup_{k}d_{k}$, where $U^0$ is a simply connected subdomain of $L$ and $(d_{k})_{k}$  are pairwise disjoint closed disks in $L$, which do not intersect $\partial U^0$. \EEE    If a pimple $P$ touches ${\rm graph}(u)$,  it can be covered by a cube  that also touches ${\rm graph}(u)$, has normal $\nu_L$ to one of its faces  (the orientation of the others being irrelevant), and sidelength \MMM $\mathrm{diam}(P)$. \EEE Due to \eqref{ineq:diskestimate}, performing this construction for  every pimple, the additional surface introduced by the cubes is bounded by $C\eta^2\rho^2$ \AAA for an absolute constant $C>0$\EEE. Furthermore, by this procedure we obtain a piecewise smooth function \EEE $\tilde{u}\colon \BBB \AAA U^0\EEE \subset L \to L^\perp$ such that $\Vert \tilde{u} \Vert_\infty \le \Vert u \Vert_\infty  +  \MMM \max_{j=1}^N \EEE \mathrm{diam}(P_{j})\le 2\eta\rho$, i.e.,  \eqref{eq:boundui} holds true, where  (the classical gradient) $\nabla \tilde{u}$ is well-defined up to a set of $\mathcal{H}^{\AAA 1\EEE}$-measure zero. Additionally, due to the diameter bound on the cubes, see  \eqref{ineq:diskestimate}, we have
\begin{align*}
\mathcal{H}^2(\mathrm{graph}(\tilde{u}) \cap Q_\rho) \leq \mathcal{H}^2(\mathrm{graph}(u) \cap Q_\rho) +C\eta^2\rho^2\,.
\end{align*}
Now Step 2 and Step 3 can be performed for the function $\tilde{u}$ instead of the function $u$ in order to conclude the proof. \EEE
\end{step}
\end{proof}

\begin{remark}[Obstacles in higher dimensions]\label{obstacle_higher_dimensions_generalization}
{\normalfont
We close this section by commenting on the current obstacles to generalize our results to higher dimensions. The two essential ingredients depending crucially on the dimension are Lemma~\ref{lemma:slicing} and Lemma~\ref{Simons_lemma}\AAA, \EEE whereas  the rest of our proof strategy can be carried along with very minor modifications.  Lemma~\ref{Simons_lemma} can in some sense be generalized to any dimension $d\geq 2$ in the spirit of \textit{$\varepsilon$-regularity results}, with respect to the $L^q$-norm of the second fundamental form, \AAA but \EEE for $q>d-1$. The result is due to {\sc
Hutchinson}, see \cite{allard1986geometric}, pages 281-306, in particular Theorem 3.7 on page 295, as well as \cite{Hutchinson}, and it is a \textit{graphical representation} rather than an \textit{approximation} result, i.e., the condition $q>d-1$ excludes the presence of pimples. As we have seen in Lemma \ref{lemma: curve graph}, for $d=2$ this graphical representation can easily be obtained for every $q\geq 1$, while for $d=3$ Simon's lemma also handles the case $1\leq q\leq 2$, modulo the presence of small pimples. For $d>3$, it would be interesting to investigate to which extent Simon's lemma can be generalized for $q=d-1$.

\AAA The other obstacle \EEE to generalize our result to higher dimensions, especially for the critical case $q=d-1$, \BBB is Lemma \ref{lemma:slicing}. As in the statement of Lemma \ref{touchtwo_spheres_with_small_curvature_implies_large_area}, the main question consists in the validity of the implication that 
\begin{equation*}
%\label{lower_bound_on_the area}
\mathcal{H}^{d-1}(\Sigma\cap B_R)\geq \AAA c\EEE_0 R^{d-1}
\end{equation*}
for 
every connected, \AAA regular \EEE  $(d-1)$-dimensional hypersurface $\Sigma$ in $\mathbb{R}^d$ with \AAA $\mathcal{H}^{\QQQ d-2\EEE}(\partial\Sigma\cap \overline{B_R})=0$, and \EEE   
\begin{align*}
%\label{hypotheses_to_check_corollary} 
\int_{\Sigma\cap B_R}|\bm{A}|^{d-2}\, {\rm d}\mathcal{H}^{d-1}<\alpha_0 R, %\quad %\quad  %\text{and}
%\quad
\quad \Sigma\cap \partial B_{R}\neq \emptyset, \quad \text{and} \quad \Sigma\cap \partial B_{\mu R}\neq \emptyset\,,
\end{align*}
for suitable $\alpha_0=\alpha_0(\AAA d,\EEE\mu)>0$ and $\AAA c\EEE_0=\AAA c\EEE_0(\AAA d,\EEE\mu)>0$, $\mu\in (0,1)$.  In fact, this would allow us to repeat the proof of Lemma \ref{lemma:slicing} for \BBB $q \ge d-1$. \EEE  Whereas the above implication holds true in $d=2$ and $d=3$, to the best of our knowledge it is an open question for $d>3$. For related results in higher dimensions, yet not sufficient for our purposes, we refer to \cite[Theorem 1.1]{topping2008relating} and \cite[Theorem A]{Menne2017}.

}
\end{remark}

  \section{Applications}\label{sec: applications}

This section is devoted to applications of our rigidity result. We identify effective linearized models of nonlinear elastic energies in the small-strain limit in two settings, namely \EEE for a model with  material voids in elastically stressed solids  and for epitaxially strained \OOO elastic thin \EEE films.  In the following, \OOO for $d=2,3$ \EEE we let $\Omega \subset \mathbb{R}^{\OOO d\EEE}$ be a \BBB bounded  \EEE Lipschitz domain, and $W \colon \mathbb{R}^{\OOO d\times d\EEE} \to [0,+\infty)$ be a frame-indifferent stored \OOO elastic \EEE energy density with the usual assumptions \OOO in \EEE nonlinear elasticity. Altogether, we suppose that $W$ satisfies the following assumptions \EEE
\begin{align}\label{eq: nonlinear energy}
\begin{split}
{\rm (i)} & \ \  \text{Frame indifference: $W(RF) = W(F)$ for all $R \in SO(d)$, $F\in \mathbb{R}^{d\times d}$}\,,\\ 
{\rm (ii)} & \ \ \OOO\text{Single energy-well structure: } \EEE  \{W=0\} = SO(d)\,,\\
{\rm (iii)} & \ \ \OOO\text{Regularity:} \EEE   \ \ \text{$W \in C^3$ in a neighborhood of $SO(d)$}\,,\\
{\rm (iv)} & \ \ \OOO\text{Coercivity:} \EEE \ \  \text{There exists $c>0$ such that for all $F \in \mathbb{R}^{d\times d}$ it holds that} \\ 
& \quad   \quad \quad  \quad \quad \quad \, W(F) \geq c\, \mathrm{dist}^2(F,SO(d))\,. 
\end{split}
\end{align}
\OOO Notice that the above assumptions \BBB particularly \OOO  imply that $DW(\BBB {\rm Id} \EEE )=0$. \EEE The general approach  in  linearization results  in many different settings (see, e.g., \cite{alicandro.dalmaso.lazzaroni.palombaro, Braides-Solci-Vitali:07, DalMasoNegriPercivale:02, MFMK, Friedrich-Schmidt:15, NegriToader:2013, Schmidt:08, Schmidt:2009}) is to consider sequences of deformations $(y_\delta)_{\delta\OOO>0\EEE}$ with small elastic energy, more precisely 
\OOO
\begin{equation*}
\sup\nolimits_{\delta\OOO>0\EEE} \delta^{-2}\int_\Omega W(\nabla y_\delta)\ \, {\rm d}x \EEE <+\infty\,,
\end{equation*}
\EEE and to pass to \OOO the \EEE small-strain limit \OOO as   \EEE $\delta \to 0$, in terms of \emph{rescaled displacement fields}, i.e., \OOO mappings \EEE
\begin{align}\label{eq: rescali1}
u_\delta = \frac{1}{\delta}(y_{\OOO \delta\EEE} - {\rm id}).
\end{align}
These  \OOO maps  \EEE measure the distance of the deformations from the identity, rescaled by the typical strain $\delta\OOO>0\EEE$. This yields a linearization of the elastic energy, which can be expressed in terms of the quadratic form   $\mathcal{Q}\colon \R^{d \times d} \to [0,+\infty)$  defined by 
 \begin{align}\label{eq: quadratic form}
\mathcal{Q}(F) := D^2W({\rm Id})F : F \quad \text{ for all $F \in \R^{d \times d}$}.
\end{align}
In view of  \eqref{eq: nonlinear energy},  $\mathcal{Q}$ is positive-definite on $\R^{d \times d}_{\rm sym}$ and vanishes on $\R^{d \times d }_{\rm skew}$. We will consider models containing surface energies with an additional curvature regularization as indicated in \eqref{eq: surface energy}, where we choose  \BBB a sequence of scaling parameters $(\gamma_\delta)_{\delta >0} \subset (0,+\infty)$ for which we require \EEE 
 \begin{align}\label{eq:CdeltaRate-appli}
 { \BBB \gamma_\delta \to 0 \quad \quad \quad \text{and} \EEE \quad \quad \quad 
\liminf_{\delta \to 0}\left(   \delta^{-\frac{q}{3d}}  \gamma_\delta   \right) =+\infty\,.}
\end{align}
In fact, this allows us to define a sequence $(\kappa_\delta)_{\delta\OOO>0\EEE} \subset (0,+\infty)\EEE$ satisfying 
\begin{align}\label{eq: kappa-pro}
\delta \kappa_\delta^3 \to 0, \quad \quad \quad \gamma_\delta^{d/q}\OOO \kappa\EEE_\delta \to \infty\, \quad \OOO\text{as}\ \ \delta\to 0,
\end{align}
which will play a pivotal role in the linearization procedure. 
In the following, we will focus on \AAA a \EEE curvature regularization in terms of the second fundamental form $\bm{A}$. Under certain assumptions however, \OOO in the case $d=3$, $q=2$, \EEE $\bm{A}$ can be replaced by the mean curvature \OOO $\bm{H}$. \EEE We refer to  Corollaries \ref{cor: mean1} and \ref{cor: mean2} for details in this direction.

In our applications, it will turn out that limiting  \OOO mappings  \EEE lie in the space of generalized special functions of bounded deformation $GSBD^2(\Omega)$. For basic properties of $GSBD^2(\Omega)$, we refer to \cite{DalMaso:13} and  \BBB Appendix  \ref{sec: GSBD}  \EEE below. In particular, for $u\in GSBD^2(\Omega)$, we will denote by $e(u) = \frac{1}{2}(\nabla u + \nabla u^{T})$  the approximate symmetric differential   and by $J_u$ the jump set \OOO of $u$ \EEE with  measure-theoretical normal $\nu_u$. Moreover, by $L^0(\Omega;\R^d)$ we denote  the space of $\mathcal{L}^d$-measurable \OOO mappings \EEE $v \colon \Omega \to \R^d$, endowed with the topology of the convergence in measure. For any $s \in [0,1]$ and any $E \in \M(\Omega)$, $E^s$ denotes the set of points with  \OOO $d$-dimensional \EEE density $s$ \OOO with respect to  \EEE $E$. By $\partial^* E$ we indicate \OOO the  \EEE essential boundary \OOO of $E$\EEE,  see \cite[Definition 3.60]{Ambrosio-Fusco-Pallara:2000}.
 
We \BBB now \EEE present  our two applications in Subsections \ref{sec: results1}--\ref{sec: results2}. The proofs \EEE of the results are deferred to Subsections \ref{sec: compre}--\ref{sec: compre2}.

\subsection{Material voids in elastically stressed solids}\label{sec: results1}

We study boundary value problems for  elastically stressed solids with voids. We suppose that the boundary data are  imposed  on an \BBB open \EEE subset  $\partial_D \Omega \subset \partial \Omega$ and are close to the identity. To this end, let   $u_0 \in W^{1,\infty}(\R^d;\R^d)$, \OOO$d=2,3$\EEE, and for $\delta >0$ define $y_0^\delta := {\rm id} + \delta u_0$.   Let  further  $\varphi$ be a norm, \OOO $\AAA q\in [d-1,+\infty)\EEE$, and  \BBB $(\gamma_\delta)_{\delta >0}$ \EEE as in \eqref{eq:CdeltaRate-appli}.  Then for the density  $W\colon  \R^{d \times d}\to  [0,\infty) $ introduced in \eqref{eq: nonlinear energy}, we let $F_\delta\colon  L^0( \Omega;  \R^d) \times \M(\Omega) \to \AAA [0,+\infty]\EEE$   be   the functional  defined by 
\begin{equation}\label{eq: F functional}
F_\delta(y,E) \OOO:\EEE= 
\OOO \frac{1}{\delta^2}\EEE\int_{\Omega \setminus \OOO \overline{E}} 
%\hspace{-1em}
W(\nabla y)\, \dx + \int_{\OOO\partial E\EEE\cap\Omega% \cup \partial_D\Omega)
}  
%\hspace{-1em}
\varphi(\nu_E) \, \d\mathcal{H}^{d-1} 
+ \gamma_\delta\int_{\partial E \cap \Omega} |\bm{A}|^q \, {\rm d}\mathcal{H}^{d-1}\,,
\end{equation}
if $E \in\mathcal{A}_{\rm reg}(\Omega)$, $\QQQ\overline{E}\cap \partial_D\Omega=\emptyset\EEE$,  $y|_{\Omega\setminus \overline E} \in H^1( \Omega \setminus \overline E;\R^d)$,  $y|_E={\rm id}$, and $\mathrm{tr}(y)  =\mathrm{tr}(y^\delta_0)$ on $\QQQ\partial_D\Omega\EEE% \setminus \overline{E}
$, and $F_\delta(y,E) = +\infty$   \OOO otherwise. \EEE Here,  $\nu_E$ denotes \OOO again  \EEE the outer \OOO unit \EEE normal to $\OOO\partial E\EEE$. We emphasize that the energy is determined by $E$ and the values of $y$ on $\Omega \setminus \overline E$. The condition $y|_E={\rm id}$ is for definiteness only. The relaxation of this model \AAA without the curvature regularization term \EEE has been studied in \BBB \cite{BraChaSol07, santilli}. \EEE Here, instead, we are interested in an effective description in the small-strain limit $\delta \to 0$, in terms of displacement fields \OOO defined in  \EEE \eqref{eq: rescali1}.  From now on, we write 
$$\mathcal{F}_\delta(u,E) := F_\delta({\rm id} + \delta u, E)$$
for notational convenience. We start with a compactness result which fundamentally relies on Theorem \ref{prop:rigidity}. Note that in what follows, the \AAA sets $\omega_u^\delta, \omega_u$ serve \EEE a totally different purpose, and should not be confused with the set \AAA $\omega_u^\rho$ \EEE in Section~\ref{sec: rigidi}, see for instance \eqref{ineq:area_of_domains_of_parametrization}.\EEE 

\begin{proposition}[Compactness, void case\EEE]\label{prop: compi1}
For  every \OOO sequence of pairs $(u_\delta, E_\delta)_{\delta>0}$ with 
$$\QQQ M:=\EEE\sup\nolimits_{\delta>0} \mathcal{F}_\delta(u_\delta, E_\delta) < +\infty,$$ there exist a subsequence (not relabeled), $u \in GSBD^2(\Omega)$,  sets of finite perimeter $E \in \M(\Omega)$,  $(E_\delta^*)_{\delta\OOO>0} \subset \M(\R^d)$ with  $E_\delta \subset E_\delta^*$, as well as sets $\omega_u , (\omega_u^\delta)_{\delta>0} \subset \M(\Omega)$  such that $u\equiv 0$ on $E \cup \omega_u$,  \QQQ$$\mathcal{H}^{d-1}(\partial^* \omega_u)+ \sup_{\delta > 0} \mathcal{H}^{d-1}(\partial^* \omega_u^\delta) \le C_M $$ for a constant $C_M>0$ depending only on $M$, \EEE and as $\delta\to 0$,
\begin{align}\label{eq:lsc0}
\begin{split}
{\rm (i)} & \ \ u_\delta \to u \text{ in measure on $\Omega \setminus \omega_u$}\,,\\
{\rm (ii)} &  \ \ \chi_{\Omega \setminus (E^*_\delta \cup \omega^\delta_u)} \EEE e(u_\delta) \rightharpoonup \OOO \chi_{\Omega \setminus (E \cup \omega_u)} e(u)  \ \ \text{weakly in $L^2_{\rm loc}(\Omega;\R^{d\times d}_{\rm sym})$}\,,\\
{\rm (iii)} & \ \ \mathcal{L}^d(\lbrace |\nabla u_\delta|>\kappa_\delta \rbrace \setminus \omega_u )  \to 0\,,\\
{\rm (iv)} & \ \   \liminf_{\delta \to 0}   \int_{\partial{E_{\delta}^{*}}\cap \Omega } \varphi(\nu_{E^*_{\delta}}) \, {\rm d}\mathcal{H}^{d-1} \le  \liminf_{\delta \to 0}\mathcal{F}_{\rm surf}^{\varphi,\gamma_\delta,q}(E_\delta)\,,\\
\QQQ{\rm (v)} &\QQQ \ \ \lim_{\delta \to 0 }\mathcal{L}^d(\omega_u^\delta \triangle \omega_u) = \lim_{\delta \to 0} \mathcal{L}^d(E_\delta^*  \setminus  E_\delta)=\lim_{\delta\to 0}\mathcal{L}^d(E_\delta\triangle E) =0\,, 
 \end{split}
\end{align}
where $\kappa_\delta$ is defined in \eqref{eq: kappa-pro} and $\mathcal{F}_{\rm surf}^{\varphi,\gamma_\delta,q}$ in \eqref{eq: surface energy}.

%\ \ u = 0 \ \ \text{ on } \ \omega, \ \ \ \mathcal{H}^{d-1}\big((\partial^* E_u \BBB \cap \Omega'  \EEE )\setminus J_u \big) = \mathcal{H}^{d-1}(J_u \cap \BBB (E_u)^1  \EEE ) = 0,
\end{proposition}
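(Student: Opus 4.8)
\textbf{Proof plan for Proposition \ref{prop: compi1}.}
The plan is to combine Theorem \ref{prop:rigidity} (in the version of Corollary \ref{cor: rig-cor}, since Dirichlet data are imposed on $\partial_D\Omega$) with the $GSBD$-compactness theorem of Dal Maso (see Appendix \ref{sec: GSBD}). First I would fix a suitable pair $\eta,\gamma=\gamma_\delta$ in the rigidity estimate: given the energy bound $\mathcal{F}_\delta(u_\delta,E_\delta)\le M$, we have both a uniform bound on $\delta^{-2}\int_{\Omega\setminus\overline{E_\delta}}W(\nabla y_\delta)$ and on $\mathcal{F}_{\rm surf}^{\varphi,\gamma_\delta,q}(E_\delta)$. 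By coercivity \eqref{eq: nonlinear energy}(iv), $\varepsilon_\delta:=\int_{\Omega\setminus\overline{E_\delta}}\mathrm{dist}^2(\nabla y_\delta,SO(d))\le C\delta^2 M$. Apply Corollary \ref{cor: rig-cor} to obtain the thickened sets $E_\delta^*:=(E_\delta)_{\eta,\gamma_\delta}$, the partition $(\tilde\Omega_j^{\eta,\gamma_\delta})_j$ and rotations $R_j^\delta$, with $R_j^\delta=\mathrm{Id}$ on any component touching $U_D$ (a neighborhood of $\partial_D\Omega$ of positive measure). The key quantitative input is that, by \eqref{eq:CdeltaRate-appli}, $\delta^{-q/(3d)}\gamma_\delta\to\infty$, which is exactly the smallness regime \eqref{eq:CdeltaRate} making the prefactor $1+C_\eta\gamma_\delta^{-5d/q}\varepsilon_\delta$ bounded; hence \eqref{eq: main rigitity}(i) gives a \emph{uniform} $L^2$-bound on $\sum_j\int_{\tilde\Omega_j^{\eta,\gamma_\delta}}|\mathrm{sym}((R_j^\delta)^T\nabla y_\delta-\mathrm{Id})|^2\le C\delta^2 M$, while \eqref{eq: main rigitity}(ii) controls full gradients with the (possibly blowing up) constant $C_\eta\gamma_\delta^{-2d/q}$.

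Next I would pass to rescaled displacements. On each component $\tilde\Omega_j^{\eta,\gamma_\delta}$ set $a_j^\delta:=(R_j^\delta-\mathrm{Id})/\delta$; since $|R_j^\delta-\mathrm{Id}|^2\sim|\mathrm{skew}(R_j^\delta)|^2$ and these skew parts are controlled by \eqref{eq: main rigitity}(ii), after subtracting affine maps $x\mapsto a_j^\delta x + b_j^\delta$ the corrected displacements $v_\delta:=u_\delta - \sum_j(a_j^\delta x+b_j^\delta)\chi_{\tilde\Omega_j^{\eta,\gamma_\delta}}$ satisfy $\int_{\Omega\setminus\overline{E_\delta^*}}|e(v_\delta)|^2\le C$ uniformly and $\|\nabla v_\delta\|$ is $L^2$-bounded with a $\delta$-dependent (but finite) constant. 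This is the standard device to exploit piecewise rigidity: the affine corrections absorb the non-uniformity, and what survives is a $GSBD$-bound. The jump set of the limit displacement will come from two sources: the collapsed parts of the thickened sets $\partial^* E_\delta^*$ (which, by \eqref{eq: partition}(ii), have $\varphi$-anisotropic perimeter converging to something $\le\liminf\mathcal{F}_{\rm surf}^{\varphi,\gamma_\delta,q}(E_\delta)$, giving (iv)), and the regions where the affine corrections jump between adjacent components. I would define $\omega_u^\delta$ to be (a slight enlargement of) the union of $E_\delta^*\setminus E_\delta$ together with the "small" components $\tilde\Omega_j^{\eta,\gamma_\delta}$ on which $R_j^\delta$ is far from $\mathrm{Id}$ and whose total measure is controlled; by \eqref{eq: partition}(i), $\mathcal{L}^d(E_\delta^*\setminus E_\delta)\le\eta\gamma_\delta^{1/q}\mathcal{F}_{\rm surf}^{\varphi,\gamma_\delta,q}(E_\delta)\to0$, so these auxiliary sets vanish, yielding (v) after passing to a further subsequence (using also compactness of sets of finite perimeter for $E_\delta\to E$).

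Then I would invoke $GSBD$-compactness on the fixed open set $\Omega$ (or on an exhausting sequence $\Omega'\subset\subset\Omega$, then diagonalize): the uniform bound $\int_{\Omega\setminus\omega_u^\delta}|e(v_\delta)|^2 + \mathcal{H}^{d-1}(\partial^*\omega_u^\delta)\le C$, together with a uniform bound on the perimeter of the set where gradients are large, gives — after subtracting a piecewise-constant ``bad'' function supported on a set of vanishing measure — a subsequence converging in measure to some $u\in GSBD^2(\Omega)$ with $e(v_\delta)\rightharpoonup e(u)$ weakly in $L^2_{\rm loc}$ off $\omega_u$. Statement (iii) follows because $\mathcal{L}^d(\{|\nabla u_\delta|>\kappa_\delta\})\to0$: indeed by Chebyshev and \eqref{eq: main rigitity}(ii), $\mathcal{L}^d(\{|\nabla v_\delta|>\kappa_\delta/2\}\setminus\omega_u^\delta)\le C\kappa_\delta^{-2}\varepsilon_\delta\delta^{-2}\cdot\gamma_\delta^{-2d/q}$, and \eqref{eq: kappa-pro} ($\delta\kappa_\delta^3\to0$, $\gamma_\delta^{d/q}\kappa_\delta\to\infty$) is precisely calibrated so that this tends to $0$; the affine parts contribute only on the vanishing set $\omega_u$. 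The main obstacle I anticipate is the bookkeeping of the affine corrections across the many (in the voids case, possibly unboundedly many) components $\tilde\Omega_j^{\eta,\gamma_\delta}$: one must show that the corrections that are \emph{not} absorbed into $\omega_u$ have bounded total ``jump surface'' $\mathcal{H}^{d-1}$ so that the $GSBD$-compactness hypotheses hold, and that the limiting $e(u)$ does not see the affine pieces — this requires carefully choosing which components go into $\omega_u^\delta$ (those where $|a_j^\delta|$ is large relative to a threshold) versus which are harmless, and checking that the thresholding is consistent with the volume bound \eqref{eq: partition}(i) and the perimeter bound \eqref{eq: partition}(ii). The convergence (ii) with the correct characteristic functions, and the identification $u\equiv0$ on $E\cup\omega_u$, then follow from the weak-$L^2$ convergence together with $\mathcal{L}^d(E_\delta\triangle E)\to0$ and $y_\delta=\mathrm{id}$ on $E_\delta$.
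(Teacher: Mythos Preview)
Your overall architecture --- apply Corollary~\ref{cor: rig-cor}, then feed a modified displacement into $GSBD^2$-compactness --- is correct, and your arguments for (iii), (iv), (v) are essentially on target. However, the implementation on the ``bad'' components (those not touching $U_D$, where $R_j^\delta\neq\mathrm{Id}$) contains a genuine gap, and your picture of $\omega_u$, $\omega_u^\delta$ is off.

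First, the affine correction does \emph{not} give the uniform bound on $e(v_\delta)$ you claim on bad components. With $v_\delta=u_\delta-a_j^\delta x-b_j^\delta$ and $a_j^\delta=(R_j^\delta-\mathrm{Id})/\delta$ you get $\nabla v_\delta=(\nabla y_\delta-R_j^\delta)/\delta$, hence $e(v_\delta)=\mathrm{sym}(\nabla y_\delta-R_j^\delta)/\delta$. But estimate \eqref{eq: main rigitity}(i) controls $\mathrm{sym}\big((R_j^\delta)^T\nabla y_\delta-\mathrm{Id}\big)=\mathrm{sym}\big((R_j^\delta)^T(\nabla y_\delta-R_j^\delta)\big)$, and for a rotation $R\neq\mathrm{Id}$ one has $|\mathrm{sym}(RB)|\not\sim|\mathrm{sym}(B)|$ in general (take $B$ skew and $R$ a quarter-turn). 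The discrepancy is of order $|R_j^\delta-\mathrm{Id}|\cdot|\mathrm{skew}(B)|$, and $\mathrm{skew}(B)$ is only controlled by \eqref{eq: main rigitity}(ii), with the blowing-up constant $C_\eta\gamma_\delta^{-2d/q}$. So on bad components your $\|e(v_\delta)\|_{L^2}$ is \emph{not} uniformly bounded, and you cannot invoke $GSBD$-compactness.

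Second, $\omega_u$ and $\omega_u^\delta$ are \emph{not} sets of vanishing measure. The bad components can occupy a fixed positive volume in the limit (think of $E_\delta$ disconnecting $\Omega$ into two halves, only one of which touches $\partial_D\Omega$). The statement (v) says $\mathcal{L}^d(\omega_u^\delta\triangle\omega_u)\to 0$, i.e.\ $\omega_u^\delta\to\omega_u$, not $\omega_u^\delta\to\emptyset$.

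The paper's device avoids both issues at once and is much simpler than thresholding affine corrections: on the union of bad components set $v_\delta:=\tfrac{1}{\delta}e_1$ (a constant that blows up), and on $E_\delta^*\cup(V\setminus\tilde\Omega_\delta)$ set $v_\delta:=0$. Then $e(v_\delta)=0$ on bad components trivially, the only jumps of $v_\delta$ lie on $\partial E_\delta^*\cup\partial\tilde\Omega_\delta$ (uniformly bounded $\mathcal{H}^{d-1}$), and $GSBD$-compactness (Theorem~\ref{th: GSDBcompactness}) automatically identifies $\omega_u=\{|v_\delta|\to\infty\}$ --- which absorbs exactly the bad components --- as a set of finite perimeter, with no bookkeeping required. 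One then defines $\omega_u^\delta:=\omega_u\cup\big((\Omega\cap\tilde\Omega_\delta)\setminus(\Omega_\delta^{\rm good}\cup E_\delta^*)\big)$.

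A further point you glossed over: to get (iv) without the spurious factor $(1+C_0\eta)$ from \eqref{eq: partition}(ii), one must take $\eta=\eta_\delta\to 0$ (and correspondingly $\tilde\Omega_\delta\nearrow V$) via a diagonal argument, chosen so that $C_{\eta_\delta}\kappa_\delta^{-2}\gamma_\delta^{-2d/q}\to 0$ and $C_{\eta_\delta}\delta^{1/3}$ stays bounded; this is where the precise calibration \eqref{eq:CdeltaRate-appli}--\eqref{eq: kappa-pro} enters.
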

In the following, we say that  a sequence $(u_\delta,E_\delta)_{\OOO \delta>0\EEE} \subset L^0(\Omega;\mathbb{R}^{\OOO d\EEE}) \times \mathfrak{M}(\Omega)$ converges to \OOO a pair  \EEE $(u,E) \in L^0(\Omega;\mathbb{R}^{\OOO d\EEE}) \times \mathfrak{M}(\Omega)$ in the $\tau$-sense \EEE and write $(u_\delta,E_\delta) \overset{\tau}{\to} (u,E)$ \OOO iff \EEE there \NNN exists a set \EEE     $\omega_u \in \M(\Omega)$ such that  $\chi_{E_\delta} \to \chi_E \text{ in }L^1(\Omega)$, $u_\delta \to u$ in measure on $\Omega \setminus \omega_u$, and $u\equiv 0$ on $E \cup \omega_u$.\\[-10pt] 

The compactness result is non-standard in the sense that the behavior of the sequence \OOO$(u_\delta)_{\delta>0}$ \NNN on \EEE $\omega_u$  cannot be controlled. This set is related to the fact that   $\Omega \setminus \overline{E_{\BBB \delta}}$ might be disconnected into various \BBB connected components \EEE $(P^{\delta}_j)_j$  by $E_{\OOO\delta\EEE}$, and on the sets not intersecting $\partial_D \Omega$ the corresponding rotations $R^{\delta}_j$, obtained from \EEE \eqref{eq: main rigitity}, cannot be controlled. It is however essential that $|R^{\delta}_j - {\rm Id}|$ is at most of order $\delta$, as otherwise $u_\delta$ defined in \eqref{eq: rescali1} blows up on $P^{\delta}_j$. In this sense, roughly speaking, \BBB $\omega^\delta_u$ \EEE consists of the components $(P^{\delta}_j)_j$ not intersecting $\partial_D \Omega$. \BBB Moreover, the sets $E_\delta$ need to be replaced by the slightly larger sets $E_\delta^*$ corresponding to the sets in \eqref{eq: partition}. \EEE

We now introduce the linearized model studied in \cite{Crismale}. 
Given $u \in GSBD^2(\Omega)$ and $E  \in \M(\Omega)$  with $\mathcal{H}^{d-1}(\partial^* E) <+\infty$,  we first define the \emph{boundary energy \EEE term} by
\begin{align}\label{eq: bdaypart}
\mathcal{F}^{\rm bd\OOO r \EEE y}(u,E) :=   \int_{\OOO\partial^* E\cap\EEE\partial_D\Omega}  \varphi  (\nu_E) \,{\rm d}\mathcal{H}^{d-1}  +  \int\limits_{ \{ \mathrm{tr}(u)  \neq \mathrm{tr}(u_0)  \} \cap  (\partial_D \Omega \setminus \partial^* E) }  \hspace{-0.5cm} 2 \, \varphi(  \nu_\Omega  ) \, {\rm d}\mathcal{H}^{d-1}, 
\end{align}
which is nontrivial if the void goes up to the \KKK Dirichlet part of the \EEE boundary or the   \OOO mapping  \EEE $u$ does not satisfy the imposed boundary conditions. Here, $\nu_\Omega$ denotes the outer unit normal to \EEE $\partial \Omega$, and ${\rm tr}(u)$ indicates the trace of $u$ at $\partial \Omega$, which  is well defined for functions in $GSBD^2(\Omega)$, see \OOO Appendix \ref{sec: GSBD}.   \EEE Recalling the definition of $\mathcal{Q}$ in \eqref{eq: quadratic form}, we introduce the \emph{effective  limiting energy} $\mathcal{F}_0 \colon  L^0(\Omega;\R^d) \times \M(\Omega) \to [0,+\infty]\EEE$  by
\begin{align}\label{def:F}
\mathcal{F}_0(u,E) &\OOO: \EEE =   \frac{1}{2}\int_{\Omega\setminus E} \mathcal{Q}(e(u)) \,\mathrm{d}x  + \int\limits_{\partial^* E\AAA\cap \Omega\EEE} \hspace{-0.2cm} \varphi  (\nu_E) \,{\rm d}\mathcal{H}^{d-1}  + \int\limits_{J_u \setminus \partial^* E}\hspace{-0.2cm} 2\, \varphi(\nu_u)  \,{\rm d}\mathcal{H}^{d-1} + \mathcal{F}^{\rm bd\OOO r \EEE y}(u,E) 
\end{align}
if $\mathcal{H}^{d-1}(\partial^* E) <+\infty$ and $u = \OOO \chi_{\Omega \setminus E} \EEE u \in GSBD^2(\Omega)$, and  $\mathcal{F}_0(u,E) = +\infty$ otherwise.

We now address that \eqref{def:F} can be identified as \OOO the \EEE $\Gamma$-limit of \eqref{eq: F functional} for $\delta\to 0$. In fact, the functional \eqref{def:F} is effective in two respects: first, in the small-strain limit the density of nonlinear elasticity is replaced by \OOO its linearized version \EEE $\mathcal{Q}$. Secondly, the fact that $\mathcal{F}_\delta$ is not lower semicontinuous  in the variable $E$ with respect to $L^1$-convergence of sets  is remedied by a suitable relaxation. Indeed, in the limiting process, the voids $E$ may collapse into a discontinuity of the displacement $u$. In particular, this phenomenon is taken into account in the relaxed  \OOO functional \EEE since collapsed surfaces are counted twice in the \OOO surface \EEE energy. Eventually, we point out \EEE that, due to \OOO the fact that \EEE $\gamma_\delta \to 0$ as $\delta\to 0$, the curvature regularization of the nonlinear energy $\mathcal{F}_\delta$ does not affect the linearized limit.

 For the $\Gamma$-limsup inequality,   more precisely for the application of a  density result in $GSBD^2$,  see \cite[Lemma 5.7]{Crismale},   we make the following geometrical assumption on the Dirichlet boundary $\partial_D\Omega$: there exists a decomposition $\partial \Omega = \partial_D \Omega \cup \partial_N\Omega \cup N$ with 
\begin{align*}
%\label{eq: density-condition2}
\partial_D \Omega, \partial_N\Omega \text{ relatively open}, \ \  \  \mathcal{H}^{d-1}(N) = 0, \ \ \  \partial_D\Omega \cap \partial_N \Omega = \emptyset, \ \ \  \partial (\partial_D \Omega) = \partial (\partial_N \Omega),
\end{align*}
\OOO where the outermost boundary has to be understood in the relative sense,  \EEE and there exist $\bar{\sigma}>0\EEE$ small \OOO enough \EEE and $x_0 \in\R^d$ such that for all $\sigma\EEE \in (0,\bar{\sigma}\EEE)$ it holds that
\begin{align*}
%\label{eq: density-condition}
O_{\sigma\EEE,x_0}(\partial_D   \Omega   ) \subset \Omega,
\end{align*}
where $O_{\sigma\EEE,x_0}(x) := x_0 + (1-\sigma\EEE)(x-x_0)$.  \EEE  Recall the convergence $\tau$ introduced below Proposition~\ref{prop: compi1}.

\begin{theorem}[$\Gamma$-convergence\OOO, void case\EEE] \label{theorem:convergence}  Under the above assumptions, as $\delta\to 0$, we have that the sequence of functionals $\OOO(\EEE\mathcal{F}_\delta\OOO)\EEE_{\OOO \delta>0\EEE}$ $\Gamma$-converges to $\mathcal{F}_0$ with respect to the convergence $\tau$.   
\end{theorem}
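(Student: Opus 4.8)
\textbf{Proof strategy for Theorem \ref{theorem:convergence}.}

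The plan is to prove the $\Gamma$-convergence by establishing separately the $\Gamma$-$\liminf$ and $\Gamma$-$\limsup$ inequalities with respect to the $\tau$-convergence, building on the compactness result of Proposition \ref{prop: compi1} and the rigidity estimate of Theorem \ref{prop:rigidity} (in the form of Corollary \ref{cor: rig-cor}, to handle the Dirichlet condition on $\partial_D\Omega$).

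\emph{Lower bound.} Let $(u_\delta,E_\delta)\overset{\tau}{\to}(u,E)$ with $\liminf_\delta \mathcal{F}_\delta(u_\delta,E_\delta)<+\infty$; passing to a subsequence we may assume the $\liminf$ is a limit and that the conclusions of Proposition \ref{prop: compi1} hold, producing the thickened sets $E_\delta^*\supset E_\delta$, the exceptional sets $\omega_u^\delta,\omega_u$, and the weak $L^2_{\rm loc}$-convergence of the rescaled strains on $\Omega\setminus(E_\delta^*\cup\omega_u^\delta)$. The elastic term is handled by a Taylor expansion of $W$ around $SO(d)$ combined with the rigidity estimate: on each component $\tilde\Omega_j^{\eta,\gamma_\delta}$ one writes $\nabla y_\delta=R_j^\delta(\mathrm{Id}+\delta G_\delta)$ and uses \eqref{eq: main rigitity}(ii) together with \eqref{eq: kappa-pro} and item (iii) of Proposition \ref{prop: compi1} to control the set where $|\nabla u_\delta|$ is large, so that $\tfrac{1}{\delta^2}W(\nabla y_\delta)\to\tfrac12\mathcal{Q}(e(u))$ in the sense needed for lower semicontinuity; this is exactly the argument referenced as Lemma \ref{prop:liminfel} in the excerpt. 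The surface term splits into the contribution of $\partial^*E$ and the contribution of $J_u\setminus\partial^*E$ (counted with multiplicity $2$), plus the boundary term $\mathcal{F}^{\rm bdry}$; here one uses item (iv) of Proposition \ref{prop: compi1} to pass from $\mathcal{F}_{\rm surf}^{\varphi,\gamma_\delta,q}(E_\delta)$ to $\int_{\partial E_\delta^*\cap\Omega}\varphi(\nu_{E_\delta^*})$, and then a lower semicontinuity result for the anisotropic surface energy in $GSBD^2$ coupled with sets of finite perimeter (precisely the one underlying the relaxation in \cite{Crismale}, see also \cite{ChaSol07}) to conclude that in the limit the parts of $\partial E_\delta^*$ that collapse are recovered as $2\varphi(\nu_u)\,\mathcal{H}^{d-1}\llcorner(J_u\setminus\partial^*E)$. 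The curvature term is simply dropped since it is nonnegative, which is consistent with $\gamma_\delta\to0$.

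\emph{Upper bound.} For the $\Gamma$-$\limsup$ one argues by density: using \cite[Lemma 5.7]{Crismale} (whose applicability is guaranteed by the stated geometric assumption on $\partial_D\Omega$), it suffices to construct recovery sequences for pairs $(u,E)$ in a dense class, namely $E$ with polyhedral boundary and $u$ smooth away from a finite union of closed Lipschitz pieces forming $J_u\setminus\partial^*E$. For such a pair one thickens $J_u\setminus\partial^*E$ and $E$ into a regular open set $E_\delta\in\mathcal{A}_{\rm reg}(\Omega)$ whose boundary approximates $\partial^*E\cup(J_u\setminus\partial^*E)$ (the latter with a thin two-sided collar, producing the factor $2$), with $\mathcal{L}^d(E_\delta)\to\mathcal{L}^d(E)$ and $\mathcal{H}^{d-1}_\varphi(\partial E_\delta\cap\Omega)\to\int_{\partial^*E\cap\Omega}\varphi(\nu_E)+\int_{J_u\setminus\partial^*E}2\varphi(\nu_u)$; moreover one can keep the curvature scaled so that $\gamma_\delta\int_{\partial E_\delta\cap\Omega}|\bm A|^q\,{\rm d}\mathcal{H}^{d-1}\to0$, exploiting $\gamma_\delta\to0$ and a careful choice of the rounding scale of the collar (this is where the second condition in \eqref{eq:CdeltaRate-appli} is used, via \eqref{eq: kappa-pro}). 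On $\Omega\setminus\overline{E_\delta}$ one sets $y_\delta=\mathrm{id}+\delta \tilde u$ for a smooth $\tilde u$ close to $u$ matching the boundary datum $\delta u_0$ on $\partial_D\Omega$ (adjusting near the collapsed jump set and near $\partial^*E\cap\partial_D\Omega$ so that $\mathcal{F}^{\rm bdry}$ is recovered), and a Taylor expansion of $W$ together with $\delta\kappa_\delta^3\to0$ gives $\tfrac1{\delta^2}\int_{\Omega\setminus\overline{E_\delta}}W(\nabla y_\delta)\to\tfrac12\int_{\Omega\setminus E}\mathcal{Q}(e(u))$. Finally one checks $(u_\delta,E_\delta)\overset{\tau}{\to}(u,E)$ with $\omega_u=\emptyset$.

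\emph{Main obstacle.} The delicate point is the interplay, in the lower bound, between the thickening $E_\delta\rightsquigarrow E_\delta^*$ coming from Theorem \ref{prop:rigidity} and the surface energy: one must ensure that the extra perimeter introduced by $E_\delta^*\setminus E_\delta$ is asymptotically negligible (this is exactly \eqref{eq: partition}(i),(ii) with the choice $\gamma=\gamma_\delta$ and $\eta=\eta_\delta\to0$ chosen compatibly with $\delta$, so that both $\eta_\delta\gamma_\delta^{1/q}\mathcal{F}_{\rm surf}\to0$ and the rigidity constant $C_{\eta_\delta}\gamma_\delta^{-5d/q}\varepsilon_\delta$ stays controlled along the sequence), while simultaneously the exceptional components $\omega_u^\delta$ — where no rigidity and hence no strain control is available — must be shown to disappear in measure and not to carry energy that is lost. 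Reconciling these competing scales, and proving the $GSBD$ lower semicontinuity for the combined bulk–surface energy along a sequence living on the $\delta$-dependent domains $\Omega\setminus(E_\delta^*\cup\omega_u^\delta)$, is the technical heart of the argument; everything else is a by-now standard adaptation of the linearization scheme of \cite{DalMasoNegriPercivale:02} in the free-discontinuity setting of \cite{Friedrich:15-2,higherordergriffith} combined with the relaxation analysis of \cite{Crismale}.
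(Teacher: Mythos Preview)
Your strategy is correct and matches the paper's architecture: the lower bound via Proposition~\ref{prop: compi1} combined with the Taylor expansion (Lemma~\ref{prop:liminfel}) for the bulk and the $GSBD^2$ lower semicontinuity from \cite{Crismale} for the surface (this is exactly Remark~\ref{remark: lower bound}); the upper bound by density and Taylor expansion (Lemma~\ref{lemma: Taylor}).

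Two remarks on the upper bound. First, the paper does not construct the recovery pair by hand as you outline (polyhedral $E$, thin collars around $J_u$); it simply invokes \cite[Theorem~2.2]{Crismale}, which already produces smooth sets $E_\delta\subset\subset\Omega$ and $H^1$-maps $u_\delta$ matching the boundary datum with the correct linearized energy limit. Your construction is essentially what lies inside that theorem, so the approaches coincide, but citing the black-box result is considerably shorter. Second, your attribution of the second scaling condition in~\eqref{eq:CdeltaRate-appli} to the upper bound is inaccurate: in the recovery one only needs $\gamma_\delta\to0$, and the curvature term is handled by a diagonal argument choosing $E_\delta$ smooth enough that $\int_{\partial E_\delta}|\bm A|^q\le\gamma_\delta^{-1/2}$, whence $\gamma_\delta\int|\bm A|^q\le\gamma_\delta^{1/2}\to0$. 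The scale-compatibility condition $\delta^{-q/(3d)}\gamma_\delta\to\infty$ is needed \emph{only} in the lower bound, to guarantee $C_{\eta_\delta}\gamma_\delta^{-5d/q}\delta^2\to0$ in \eqref{eq: main rigitity}(i) (see \eqref{eq: main rigitity-appli}(i) in the compactness proof); no careful choice of the rounding scale against $\kappa_\delta$ is required for the $\Gamma$-limsup.
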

\begin{remark}[Volume of voids]\label{rem:voids}
{\normalfont (i) In the previous result, if $\mathcal{L}^d(E)>0$, then for any $(u,E) \in L^0(\Omega;\R^d){\times}\M(\Omega)$ there exists a recovery sequence $(u_\delta,E_\delta)_{\delta\OOO>0\EEE}\subset L^0(\Omega;\R^d){\times}\M(\Omega)$ such that $\mathcal{L}^d(E_\delta) = \mathcal{L}^d(E)$ for all $\delta >0$. This shows that it is possible to incorporate a volume constraint on $E$ in the $\Gamma$-convergence result.\\
(ii) If we impose  \OOO the  \EEE condition $\mathcal{L}^d(E_\delta) \to 0$ along the sequence, we obtain $E = \emptyset$, and the limiting model corresponds to an (anisotropic) \emph{Griffith energy of brittle fracture}.
} 
\end{remark}

We address an alternative formulation with the \EEE mean curvature in place of the \EEE second fundamental form, \OOO in the case $d=3$, $q=2$. \EEE 

\begin{corollary}[Mean curvature regularization]\label{cor: mean1}
\normalfont We consider \eqref{eq: F functional} with $|\bm{H}|^2$ in place of $|\bm{A}|^{2}$ \OOO when $d=3$, $q=2$\EEE. \EEE We suppose that for $\mathcal{F}_\delta$, only sets $E$ satisfying $E \subset \subset \Omega$ and $- 4\pi\chi(\OOO\partial \EEE E) \le  \lambda_\delta\gamma_\delta^{-1}$
for some $\lambda_\delta \to 0$ \OOO are admissible, \EEE where $\chi(\OOO\partial \EEE E)$ indicates the Euler characteristic \OOO of $\partial E$. (For instance, this holds if $\OOO \partial E$ consists of  connected components topologically equivalent to the sphere $\mathbb{S}^2$.) Then,  the statements of Proposition~\ref{prop: compi1}  and Theorem~\ref{theorem:convergence} hold. 
 \end{corollary}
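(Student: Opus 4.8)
The plan is to show that the mean-curvature variant of the model differs from the second-fundamental-form model studied in Theorem~\ref{theorem:convergence} by a vanishing perturbation, and hence inherits both the compactness and the $\Gamma$-convergence statements. The Gauss--Bonnet identity recalled in Remark~\ref{remark: mean} is the key tool: for any admissible $E$ with $E \subset\subset \Omega$ we have
\begin{align*}
\int_{\partial E} |\bm{A}|^2 \, {\rm d}\mathcal{H}^2 = \int_{\partial E} |\bm{H}|^2 \, {\rm d}\mathcal{H}^2 - 4\pi\chi(\partial E)\,,
\end{align*}
so that under the standing assumption $-4\pi\chi(\partial E) \le \lambda_\delta \gamma_\delta^{-1}$ with $\lambda_\delta \to 0$, the two curvature functionals satisfy
\begin{align*}
\gamma_\delta \int_{\partial E}|\bm{H}|^2 \, {\rm d}\mathcal{H}^2 \le \gamma_\delta\int_{\partial E}|\bm{A}|^2 \, {\rm d}\mathcal{H}^2 + \lambda_\delta\,, \qquad \gamma_\delta \int_{\partial E}|\bm{A}|^2 \, {\rm d}\mathcal{H}^2 \le \gamma_\delta\int_{\partial E}|\bm{H}|^2 \, {\rm d}\mathcal{H}^2\,.
\end{align*}
(The first inequality holds because the Gauss curvature term has a sign-controlled contribution; the second is the pointwise bound $|\bm{H}|^2 \le 2|\bm{A}|^2$ combined with the same identity, or simply $|\bm{A}|^2 = \tfrac12|\bm{H}|^2 + \tfrac12(\kappa_1-\kappa_2)^2 \ge \tfrac12|\bm{H}|^2$ together with the Gauss--Bonnet correction.) Denoting by $\mathcal{F}_\delta^{\bm{H}}$ the functional with $|\bm{H}|^2$ in place of $|\bm{A}|^2$, these estimates give $\mathcal{F}_\delta^{\bm{H}}(u,E) \le \mathcal{F}_\delta(u,E) + \lambda_\delta$ and $\mathcal{F}_\delta(u,E) \le \mathcal{F}_\delta^{\bm{H}}(u,E)$ on the common domain of admissibility, uniformly in $u$.

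First I would establish the compactness statement. Given a sequence $(u_\delta,E_\delta)$ with $\sup_\delta \mathcal{F}_\delta^{\bm{H}}(u_\delta,E_\delta)=:M<+\infty$, the lower bound $\mathcal{F}_\delta(u_\delta,E_\delta) \le \mathcal{F}_\delta^{\bm{H}}(u_\delta,E_\delta)$ shows $\sup_\delta \mathcal{F}_\delta(u_\delta,E_\delta) \le M$, so Proposition~\ref{prop: compi1} applies verbatim and yields the desired subsequence, limit objects, and convergences \eqref{eq:lsc0}. In particular the auxiliary sets $E_\delta^*$ of Theorem~\ref{prop:rigidity} are unchanged, since the thickening construction in Proposition~\ref{prop:setmodification} only uses the surface-energy bound $\mathcal{F}_{\rm surf}^{\varphi,\gamma_\delta,q}(E_\delta) \le C$, which follows from the $|\bm{H}|^2$-bound exactly as above. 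One subtle point to record: the constraint $E_\delta \subset\subset \Omega$ is preserved in the limit only in the sense that $E$ is a set of finite perimeter in $\Omega$; this is already built into Proposition~\ref{prop: compi1} and requires no change.

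Next I would treat the two $\Gamma$-convergence inequalities. For the $\Gamma$-$\liminf$ inequality, if $(u_\delta,E_\delta)\overset{\tau}{\to}(u,E)$ with $\liminf_\delta \mathcal{F}_\delta^{\bm{H}}(u_\delta,E_\delta) <+\infty$, then along a subsequence realizing the $\liminf$ we have $\mathcal{F}_\delta(u_\delta,E_\delta) \le \mathcal{F}_\delta^{\bm{H}}(u_\delta,E_\delta)$, so by Theorem~\ref{theorem:convergence} (the $\liminf$ part)
\begin{align*}
\mathcal{F}_0(u,E) \le \liminf_{\delta\to 0}\mathcal{F}_\delta(u_\delta,E_\delta) \le \liminf_{\delta\to 0}\mathcal{F}_\delta^{\bm{H}}(u_\delta,E_\delta)\,.
\end{align*}
For the $\Gamma$-$\limsup$ inequality, I would take the recovery sequence $(u_\delta,E_\delta)$ constructed for $\mathcal{F}_\delta$ in the proof of Theorem~\ref{theorem:convergence}; by inspection of that construction the sets $E_\delta$ can be taken with $E_\delta\subset\subset\Omega$ and with $\partial E_\delta$ consisting of finitely many components each topologically a sphere (the construction mollifies and, if necessary, fills small topological handles, so $-4\pi\chi(\partial E_\delta)\le 0 \le \lambda_\delta\gamma_\delta^{-1}$ is automatic). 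Then the admissibility hypothesis is met and
\begin{align*}
\limsup_{\delta\to 0}\mathcal{F}_\delta^{\bm{H}}(u_\delta,E_\delta) \le \limsup_{\delta\to 0}\big(\mathcal{F}_\delta(u_\delta,E_\delta) + \lambda_\delta\big) = \limsup_{\delta\to 0}\mathcal{F}_\delta(u_\delta,E_\delta) = \mathcal{F}_0(u,E)\,,
\end{align*}
since $\lambda_\delta\to 0$. This closes both inequalities and proves the corollary.

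The main obstacle I anticipate is purely bookkeeping rather than analytic: one must check that the recovery sequence produced in the proof of Theorem~\ref{theorem:convergence} genuinely satisfies $E_\delta\subset\subset\Omega$ together with the Euler-characteristic constraint, i.e.\ that the smoothing/thickening of $E$ used there does not create components of negative Euler characteristic with uncontrolled multiplicity. If the generic construction there does not obviously have this property, the fix is to modify the recovery sequence by capping off any handles by surgeries of vanishingly small area and curvature cost — each such surgery changes $\chi$ by a bounded amount per component and, since the number of components of $\partial E_\delta$ can be taken bounded (or growing slowly enough that $\gamma_\delta$ times the bounded curvature defect still vanishes), the correction $-4\pi\chi(\partial E_\delta)$ stays below $\lambda_\delta\gamma_\delta^{-1}$ for a suitable choice of $\lambda_\delta\to 0$ compatible with \eqref{eq:CdeltaRate-appli}. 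All other steps are immediate consequences of the sandwiching estimates above and of Theorem~\ref{theorem:convergence}.
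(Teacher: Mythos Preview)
Your overall strategy is exactly the paper's: use Gauss--Bonnet (Remark~\ref{remark: mean}(a)) to sandwich the two curvature terms, so that compactness and the $\Gamma$-liminf reduce to Theorem~\ref{theorem:convergence}, while the $\Gamma$-limsup is unaffected because the curvature term vanishes along the recovery sequence anyway.

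However, your two displayed inequalities are reversed. From $\int_{\partial E}|\bm{A}|^2 = \int_{\partial E}|\bm{H}|^2 - 4\pi\chi(\partial E)$ and the assumption $-4\pi\chi(\partial E)\le\lambda_\delta\gamma_\delta^{-1}$ one gets
\[
\gamma_\delta\int_{\partial E}|\bm{A}|^2 \;\le\; \gamma_\delta\int_{\partial E}|\bm{H}|^2 + \lambda_\delta,
\]
i.e.\ $\mathcal{F}_\delta \le \mathcal{F}_\delta^{\bm{H}} + \lambda_\delta$, not the other way around. This is precisely the inequality needed for compactness and for the liminf (your applications use the right direction, but you justify it by ``the pointwise bound $|\bm{H}|^2\le 2|\bm{A}|^2$'', which goes the wrong way). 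Conversely, your claimed $\gamma_\delta\int|\bm{A}|^2 \le \gamma_\delta\int|\bm{H}|^2$ without the $\lambda_\delta$ would require $\chi(\partial E)\ge 0$, which is not part of the hypothesis. For the limsup, the inequality $\mathcal{F}_\delta^{\bm{H}}\le\mathcal{F}_\delta+\lambda_\delta$ is also false in general (for the sphere-type recovery sets you propose, $\chi>0$ gives $\mathcal{F}_\delta^{\bm{H}}>\mathcal{F}_\delta$); the correct observation is simply that along the recovery sequence of Theorem~\ref{theorem:convergence} one has $\gamma_\delta\int|\bm{H}_\delta|^2 \le 2\gamma_\delta\int|\bm{A}_\delta|^2 \le 2\gamma_\delta^{1/2}\to 0$, so the curvature contribution disappears regardless.

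With these corrections your argument goes through and matches the paper's (very short) proof. Your concern about the Euler-characteristic admissibility of the recovery sequence is legitimate bookkeeping; the construction in \cite[Proposition~5.4]{Crismale} indeed produces $E_\delta\subset\subset\Omega$ with smooth $\partial E_\delta$, and one can arrange the components to be diffeomorphic to spheres (so $\chi(\partial E_\delta)\ge 0$), which makes the constraint trivially satisfied.
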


% \EEE

%
%\RRR
%  
%
%
%\begin{theorem}[Existence of minimizers for $\overline{F}_{\mathrm{Dir}}$]\label{th: relF-extended}
%Suppose that $f$ is convex and satisfies \eqref{eq: growth conditions}, and that $\varphi$ is a norm on $\R^d$. Let $m>0$. Then the minimization problem
%$$
%\inf \big\{ \overline{F}_{\mathrm{Dir}}(u,E)\colon (u,E) \in  L^0(\Omega;\R^d){\times}\M(\Omega), \ \mathcal{L}^d(E) = m  \big\}  
%$$
%admits solutions. 
%\end{theorem}
%
%\EE#E

\subsection{Energies on domains with a subgraph constraint: epitaxially strained films}\label{sec: results2}

We now address a second application, \BBB namely \EEE deformations of an elastic material in a domain which is the subgraph of an unknown nonnegative function $h$.  Assuming that $h$ is defined on a smooth \OOO bounded \EEE domain $\omega \subset \R^{d-1}$, $d=2,3$, \EEE  deformations $y$ will be defined on the
subgraph 
$$\Omega_h^+ := \{ x \in \omega \times \R \colon 0  <  x_d   <   h(x')\},$$
where here and in the following we use the notation $x = (x',x_d)$ for $x \in \R^d$. To model Dirichlet boundary data on \EEE the flat surface $\omega \times \lbrace 0 \rbrace$, we will suppose that  \OOO mappings  \EEE are extended to the set $\Omega_h := \{ x \in  \omega \times \R  \colon -1 <  x_d   <  h(x')\}$ and satisfy $y = y^\delta_0  := {\rm id} + \delta u_0$ on $\omega{\times}(-1,0\OOO ]\EEE$ for a given function $u_0 \in W^{1,\infty}(\omega{\times}(-1,0\OOO]\EEE;\R^d)$.  In the application to epitaxially strained films, $y^\delta_0$ represents the interaction with the substrate and   $h$ indicates the profile of  the  free surface of the film. \BBB We refer to \cite{BonCha02, ChaSol07, Crismale} for a thorough description of the model and  a detailed account of the available literature. \EEE

For convenience, we introduce the reference domain $\Omega:=\omega{\times} (-1, M+1)$ for some $M>0$. For \EEE $q\in [d-1,+\infty)\EEE$, $\gamma_\delta$ as in \eqref{eq:CdeltaRate-appli}, and the density  $W\colon  \R^{d \times d}\to  [0,\infty) $ introduced in \eqref{eq: nonlinear energy}, we  define the energy $G_\delta\colon    L^0(\Omega;\R^d) \times L^1(\omega;[0,M]) \to \AAA [0,+\infty]\EEE$ by  
\begin{equation}\label{eq: Gfunctional}
G_\delta(y,h) \OOO:\EEE=  
\frac{1}{\delta^2}\int_{\Omega_h^+} W ( \nabla y(x)) \, \dx + \mathcal{H}^{d-1}\big(\partial \Omega_h \EEE \cap \Omega \big) + \gamma_\delta\int_{\partial \Omega_h \EEE \cap \Omega} |\bm{A}|^q \, {\rm d}\mathcal{H}^{d-1}\,,
\end{equation}
if $h \in \BBB C^2\EEE (\omega;[0,M])$,  $y|_{\Omega_h} \in H^1(\Omega_h;\R^d)$,  $y={\rm id}$  in $\Omega\setminus \overline{\Omega_h}$, $y=y^\delta_0$  in $\omega{\times}(-1,0\OOO]\EEE$, and $G_\delta(y,h) := +\infty$ otherwise. We emphasize that the two surface terms only contribute in terms of  the upper surface $\partial \Omega_h\EEE \cap \Omega$ of the film, which exactly corresponds to the graph of $h$. In other words, the first surface term is exactly $\int_{\omega} \sqrt{1 + |\nabla h(x')|^2} \, \dx' $. On the other hand, the curvature term can be written as $\int_{\omega}|\nabla^2 h(x')|^{q}(1+|\nabla h(x')|^2)^{\frac{1-q}{2}}\,\mathrm{d}x' $.  \EEE Note that this model can be seen as a special case of \eqref{eq: F functional} when we choose  $E = \Omega \setminus \overline{\Omega_h}$. As \OOO in \EEE Subsection \ref{sec: results1}, the assumption $y={\rm id}$  in $\Omega\setminus\overline{\Omega_h}\EEE$ is for definiteness only. 

The relaxation of this model has been studied in \cite{ChaSol07}. Notice  that, in contrast to \cite{BonCha02, ChaSol07}, \OOO here  \EEE we assume  that the functions  $h$ are  equibounded by a value $M$: this is for technical reasons only and is justified from a mechanical point of view, as indeed other \OOO physical \EEE effects come into play for  very high crystal profiles. In the present work, we address the effective behavior of the model  in the small-strain limit $\delta \to 0$, again in terms of displacement fields \OOO as defined in  \EEE \eqref{eq: rescali1}. From now on, we write 
$$\mathcal{G}_\delta(u,h) \OOO: \EEE = G_\delta({\rm id} + \delta u, h)$$
for notational convenience.  Based on Theorem \ref{prop:rigidity}, we obtain the following compactness result.

\begin{proposition}[Compactness, graph case]\label{prop: compi2} 
For any \OOO sequence of pairs $(u_\delta, h_\delta)_{\delta\OOO>0\EEE}$ with 
$$\QQQ K:=\EEE{\sup\nolimits_{\delta\OOO>0} \mathcal{G}_\delta(u_\delta,h_\delta) <+\infty,}$$
 there exist a subsequence (not relabeled),   sets of finite perimeter $(E_\delta^*)_{\delta>0} \subset \M(\Omega)$ with $\Omega \setminus \overline{\Omega_{h_\delta}} \subset E_\delta^*$, as well as $(\omega_u^\delta)_{\delta>0}  \subset \M(\Omega)$,  and functions $u \in GSBD^2(\Omega)$, $h \in BV(\omega;[0,M])$  with $u = \chi_{\Omega_h} \EEE u$ and $u=u_0$ on $\omega \times (-1,0]$  such that  \QQQ$$\sup_{\delta > 0} \mathcal{H}^{d-1}(\partial^* \omega_u^\delta) \le C_K $$ for a constant $C_K>0$ depending only on $K$, and as $\delta\to 0$, \EEE
\begin{align}\label{eq:lsc0XXXX}
\begin{split}
{\rm (i)} & \ \ u_\delta \to u \text{ in measure on $\Omega$}\,,%\notag,
\\
{\rm (ii)} &  \ \ \chi_{\Omega \setminus (E^*_\delta \cup \omega_u^\delta)}  e(u_\delta) \rightharpoonup e(u) = \chi_{\Omega_h} e(u)   \ \ \text{weakly in $L^2_{\rm loc}(\Omega;\R^{d\times d}_{\rm sym})$}\,,%\notag
\\
{\rm (iii)} & \ \ \mathcal{L}^d(\lbrace |\nabla u_\delta|>\kappa_\delta \rbrace )  \to 0\,,%\notag,
\\
{\rm (iv)} & \ \    \liminf\nolimits_{\delta \to 0}   \mathcal{H}^{d-1}(\partial E^*_{\delta} \cap \Omega)\le  \liminf\nolimits_{\delta \to 0}\mathcal{F}_{\rm surf}^{q,\delta}(E_\delta)\,,\\
\QQQ {\rm (v)} & \QQQ\ \ \lim_{\delta\to 0}\|h_\delta-h\|_{L^1(\omega)}=\lim_{\delta \to 0 }\mathcal{L}^d(\omega_u^\delta)  = \lim_{\delta \to 0 }\mathcal{L}^d(E_\delta^* \cap \Omega_{h_\delta}) = 0\,, 
\end{split} 
\end{align}
 where  $\kappa_\delta$ is defined in \eqref{eq: kappa-pro}, and $\mathcal{F}_{\rm surf}^{q\OOO,\delta\EEE}$ in \eqref{eq: surface energy} for $\varphi \equiv 1$ \OOO and $\gamma=\gamma_\delta$. \EEE
\end{proposition}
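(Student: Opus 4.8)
The plan is to deduce Proposition~\ref{prop: compi2} almost entirely from the general compactness statement for the void case, Proposition~\ref{prop: compi1}, together with Corollary~\ref{cor: graphi} on graphs. First I would observe that the graph energy $\mathcal{G}_\delta(u,h)$ is essentially a special case of $\mathcal{F}_\delta(u,E)$ from Section~\ref{sec: results1} under the identification $E = \Omega\setminus\overline{\Omega_{h}}$, with $\varphi\equiv 1$, Dirichlet datum imposed on $\partial_D\Omega = \omega\times\{-1\}$, and the additional structural constraint that $E$ be a supergraph over $\omega$. Since $\sup_\delta\mathcal{G}_\delta(u_\delta,h_\delta)=K<+\infty$, the hypotheses of Proposition~\ref{prop: compi1} are met (the only point to check is that the extra subgraph constraint does not interfere with the energy bounds, which is immediate), so passing to a subsequence we obtain $u\in GSBD^2(\Omega)$, sets $E\in\M(\Omega)$ and $E_\delta^*\supset E_\delta:=\Omega\setminus\overline{\Omega_{h_\delta}}$, sets $\omega_u,\omega_u^\delta$, with all the conclusions \eqref{eq:lsc0}(i)--(v). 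The second key point is that, thanks to Corollary~\ref{cor: graphi}, the thickened sets $E_\delta^*$ can themselves be taken to be \emph{supergraphs} $E_\delta'=\{x_d>h_{\eta,\gamma_\delta}(x')\}$ of smooth functions with $h_{\eta,\gamma_\delta}\le h_\delta$; hence $\Omega\setminus E_\delta^*$ is a subgraph, and since $E=\lim_\delta E_\delta$ in $L^1$ and each $\Omega\setminus E_\delta^*$ is a subgraph with $\mathcal{L}^d(E_\delta^*\setminus E_\delta)\to 0$, the limit set $\Omega\setminus E$ is (up to a null set) the subgraph $\Omega_h$ of some $h\in BV(\omega;[0,M])$, with $\|h_\delta - h\|_{L^1(\omega)}\to 0$ by the standard equivalence between $L^1$-convergence of subgraphs and $L^1$-convergence of the defining $BV$-functions. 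The equiboundedness $h_\delta\le M$ gives the compactness in $BV$ via the uniform perimeter bound coming from the surface term in $\mathcal{G}_\delta$.

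Next I would translate the conclusions of Proposition~\ref{prop: compi1} into the present notation. Item (i) of \eqref{eq:lsc0XXXX} — convergence in measure on all of $\Omega$, not merely on $\Omega\setminus\omega_u$ — requires showing $\mathcal{L}^d(\omega_u)=0$. This is where the Dirichlet condition on $\omega\times(-1,0]$ and the geometry of the subgraph are used: by Corollary~\ref{cor: rig-cor}, on every component $\tilde\Omega^{\eta,\gamma_\delta}_j$ of $\tilde\Omega\setminus\overline{E_\delta^*}$ that meets the Dirichlet region the rotation can be taken to be $\mathrm{Id}$, and since $\Omega_{h_\delta}$ is connected to the flat substrate $\omega\times(-1,0]$ through the bulk (the subgraph over a connected $\omega$ is connected, and contains a neighborhood of $\omega\times\{-1/2\}$ on which $y_\delta=y_0^\delta$), \emph{every} relevant component intersects $\partial_D\Omega$. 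Hence no component is ``lost'': the set $\omega_u$ from Proposition~\ref{prop: compi1}, which consists of those components on which the rotation $R^\delta_j$ is uncontrolled, is empty, so $\mathcal{L}^d(\omega_u)=0$ and the convergence in measure holds on all of $\Omega$; this also yields $\mathcal{L}^d(\omega_u^\delta)\to 0$ from \eqref{eq:lsc0}(v). From $u=\chi_{\Omega\setminus(E\cup\omega_u)}u$ and $\Omega\setminus E=\Omega_h$ we then get $u=\chi_{\Omega_h}u$, and from the boundary condition, $u=u_0$ on $\omega\times(-1,0]$. Items (ii), (iii), (iv) are then immediate from the respective items of \eqref{eq:lsc0}, using $e(u)=\chi_{\Omega\setminus(E\cup\omega_u)}e(u)=\chi_{\Omega_h}e(u)$ and $\mathcal{F}_{\rm surf}^{q,\delta}(E_\delta)=\mathcal{H}^{d-1}(\partial\Omega_{h_\delta}\cap\Omega)+\gamma_\delta\int_{\partial\Omega_{h_\delta}\cap\Omega}|\bm A|^q$; the bound $\mathcal{L}^d(E_\delta^*\cap\Omega_{h_\delta})\to 0$ in (v) is just $\mathcal{L}^d(E_\delta^*\setminus E_\delta)\to 0$ rewritten with $E_\delta=\Omega\setminus\overline{\Omega_{h_\delta}}$, so $E_\delta^*\cap\Omega_{h_\delta}=E_\delta^*\setminus\overline{E_\delta}$ up to null sets.

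The main obstacle I anticipate is the verification that $\omega_u=\emptyset$, i.e.\ the connectedness-to-the-substrate argument. One has to be careful because $E_\delta^*$ is strictly larger than $E_\delta$ and could in principle pinch the film off from the substrate, or create spurious components of $\tilde\Omega\setminus\overline{E_\delta^*}$; but this is precisely excluded by \eqref{eq: partition-new}(i) (equivalently \eqref{eq: to check}) together with the fact that the thickening is confined to a thin layer near $\partial E_\delta$ of width $O(\eta\gamma_\delta^{1/q})$ which, by \eqref{eq:CdeltaRate-appli}, is much smaller than any fixed macroscopic scale, so for small $\delta$ the set $\tilde\Omega\cap\{x_d<1/2\}$ (say) is untouched and still connected to a neighborhood of the Dirichlet face, forcing all components to inherit the rotation $\mathrm{Id}$. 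A secondary technical point is to ensure that the limit $h$ indeed takes values in $[0,M]$ and that $\Omega_h$ is recognized as $\Omega\setminus E$ with $\mathcal{H}^{d-1}(\partial^*(\Omega\setminus E))<+\infty$; this follows from lower semicontinuity of perimeter along $E_\delta\to E$ in $L^1$ and the uniform bound on $\mathcal{H}^{d-1}(\partial\Omega_{h_\delta}\cap\Omega)$, exactly as in \cite{ChaSol07}. I would package all of this by first invoking Proposition~\ref{prop: compi1} verbatim, then running the short graph-specific arguments (supergraph structure of $E_\delta^*$ via Corollary~\ref{cor: graphi}, $BV$-compactness of $h_\delta$, and $\omega_u=\emptyset$ via Corollary~\ref{cor: rig-cor}) to upgrade its conclusions to \eqref{eq:lsc0XXXX}.
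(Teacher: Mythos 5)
The overall architecture of your proposal --- invoke Proposition~\ref{prop: compi1} for $E_\delta:=\Omega\setminus\overline{\Omega_{h_\delta}}$, obtain $h$ from $BV$-compactness, and then argue that the exceptional set $\omega_u$ is $\mathcal{L}^d$-null --- matches the paper's. The gap is in your argument for $\mathcal{L}^d(\omega_u)=0$. You claim that every component of $\tilde\Omega\setminus\overline{E_\delta^*}$ meets the Dirichlet region, that consequently every rotation can be taken to be $\mathrm{Id}$, and that therefore ``the set $\omega_u$\dots is empty.'' This reasoning does not close, for two reasons.

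First, $E_\delta^*$ is \emph{not} a supergraph. You write that ``thanks to Corollary~\ref{cor: graphi}, the thickened sets $E_\delta^*$ can themselves be taken to be supergraphs,'' but the corollary only produces a \emph{larger} supergraph $E_\delta'\supset E_\delta^*$. So $\tilde\Omega\setminus\overline{E_\delta^*}$ can have components lying inside $E_\delta'\setminus E_\delta^*$ that do not touch $\partial_D\Omega$; the estimate \eqref{eq: partition-new}(i) controls distance to $E$ of points near $\tilde\Omega$, not the topology of the complement, so it does not ``exclude'' these. (They have measure $\le\mathcal{L}^d(E_\delta'\setminus E_\delta)\to 0$, so this is a sloppiness rather than a fatal error, but the claim as stated is wrong.)

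Second, and more seriously, showing $R_j^{\eta,\gamma_\delta}=\mathrm{Id}$ for all components does not imply $\omega_u=\emptyset$. The set $\omega_u$ is not ``the union of components with uncontrolled rotation''; it is the abstract escape-to-infinity set produced by the $GSBD^2$ compactness theorem~\ref{th: GSDBcompactness}, and that theorem permits $\omega_u\neq\emptyset$ even when $R_j=\mathrm{Id}$ everywhere. Rotation $=\mathrm{Id}$ only controls $\nabla v_\delta$ in $L^2$ by $C_{\eta_\delta}\gamma_\delta^{-2d/q}$, which blows up; and even the Poincar\'e-type bound \eqref{eq: main rigitity}(iii), after dividing by $\delta^2$, gives $\|u_\delta - u_0 - b_j/\delta\|_{L^2}^2\lesssim C_{\eta_\delta}\gamma_\delta^{(2-4d)/q}\to\infty$. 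Neither bound forces compactness of $v_\delta$ in measure across the varying domains $\Omega_{h_\delta}$. What actually closes the argument in the paper is a separate, subgraph-specific compactness theorem (\cite[Theorem~2.5]{Crismale}) applied to $\chi_{\Omega_{h_\delta'}}v_\delta$ on the subgraphs $\Omega_{h_\delta'}=\Omega\setminus\overline{E_\delta'}$ coming from Corollary~\ref{cor: graphi}: the uniform bounds on $\|e(v_\delta)\|_{L^2(\Omega^+_{h_\delta'})}$ and on $\int_\omega\sqrt{1+|\nabla h_\delta'|^2}$ give convergence in measure on all of $\Omega$, and since $v_\delta=0$ on $E_\delta$ while $\mathcal{L}^d(E_\delta'\setminus E_\delta)\to 0$ this transfers to $v_\delta$, forcing $\mathcal{L}^d(\omega_u)=0$. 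You identified Corollary~\ref{cor: graphi} as the right tool, but used it for a topological claim rather than to feed the subgraph compactness that is the actual missing ingredient.
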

We note that in contrast to Proposition \ref{prop: compi1} no exceptional set $\omega_u$ is needed \OOO here. \EEE Indeed, in this setting we obtain a stronger compactness result due to the graph constraint on $\partial \Omega^+_{h_\delta} \cap\Omega$.

We now introduce the effective model studied \OOO in \EEE \cite{Crismale}.  Recalling the definition of $\mathcal{Q}$ in \eqref{eq: quadratic form}, we introduce  $\mathcal{G}_0 \colon  L^0(\Omega;\R^d) \times L^1(\omega; [0,M]) \to \AAA[0,+\infty]\EEE$  by
\begin{align}\label{def:G}
\mathcal{G}_0(u,h) &\OOO :\EEE=   \frac{1}{2}\int_{\Omega_h^+} \mathcal{Q}(e(u)) \,\mathrm{d}x  +\mathcal{H}^{d-1}(\partial^* \Omega_h   \cap \Omega  ) + 2 \mathcal{H}^{d-1}(J_u' \cap \Omega_h^1)
\end{align}
if  $u=  \OOO \chi_{ \Omega_h } \EEE u  \in GSBD^2(\Omega)$, $u=u_0 \text{ in }\omega{\times}(-1,0\OOO]\EEE$, $h \in BV (\omega;  [0,M]  )$, and  $\mathcal{G}_0(u,h) = +\infty$ otherwise.   Here,    $e(u) = \frac{1}{2}\left(\nabla u +\nabla u^T\right)$ again denotes the symmetric part of the  (approximate)  gradient of $u \in GSBD^2(\Omega)$, \BBB $\Omega^1_h$ denotes the set of points with  density $1$,   \EEE and 
\begin{align}\label{eq: Ju'}
J_u' := \lbrace (x',x_d + t)\colon  \, x \in J_u, \,  t \ge 0 \rbrace\,.
\end{align}
 As for the functional  \eqref{def:F}, \OOO the \EEE energy  \eqref{def:G}  is effective  in the sense  that the \AAA elastic energy \EEE density $W$ is replaced by the linearized density $\mathcal{Q}$ and the model accounts for ``vertical cuts'' $J_u' \cap \Omega^1_h$ (\OOO see \EEE\cite{FonFusLeoMor07}) which may appear along the relaxation process. Similarly to the \AAA corresponding \EEE term in \eqref{def:F}, this part is counted twice in the energy. The set $(\partial^{\BBB *} \Omega_h \cap \Omega) \cup \BBB (J_u' \cap \Omega^1_h) \EEE $  can be interpreted as a ``generalized interface'', cf.~Figure~\ref{fig:graphcase} for a two dimensional section of a possible limiting $\Omega_h$. As before, due to \OOO the fact that \EEE $\gamma_\delta \to 0$ as $\delta\to 0$, the curvature regularization of the nonlinear energy $\mathcal{G}_\delta$ does not affect the linearized limit.   

We work under the additional assumption that   $\omega \subset \R^{d-1}$ is uniformly star-shaped with respect to the origin, i.e.,
\begin{align*}
%\label{eq: star-shaped}
tx  \OOO \in  \EEE \omega \ \ \ \text{for all} \ \ t \in \AAA [\EEE 0,1), \, x \in \partial \omega.
\end{align*}
This condition, however, is   only of technical nature and could be dropped  at  the expense of more elaborated estimates, see also \cite{ChaSol07, Crismale}.  We obtain the following result.

\begin{theorem}[$\Gamma$-convergence\OOO, graph case\EEE]\label{theorem:convergence2} Under the above assumptions, as $\delta\to 0$, we have that the sequence of functionals $\OOO(\EEE\mathcal{G_\delta}\OOO)\EEE_{\delta\OOO>0\EEE}$ $\Gamma$-converges to \AAA the functional \EEE $\mathcal{G}_0$ with respect to the topology of \linebreak $L^0(\Omega;\R^d){\times}L^1(\omega; [0,M])$.
\end{theorem}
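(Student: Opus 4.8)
The plan is to prove Theorem~\ref{theorem:convergence2} following the standard two-part $\Gamma$-convergence scheme, using Theorem~\ref{prop:rigidity} (more precisely Corollary~\ref{cor: graphi} and Corollary~\ref{cor: rig-cor}, in the localized form of Remark~\ref{localization_of_rigidity}) together with the compactness result of Proposition~\ref{prop: compi2}. First I would set up the identification $E = \Omega \setminus \overline{\Omega_h}$, so that $G_\delta(y,h)$ becomes an instance of the void functional $F_\delta(y,E)$ with $\varphi \equiv 1$ restricted to subgraph sets, and record that $\partial E \cap \Omega = \partial \Omega_h \cap \Omega$ is exactly the graph of $h$. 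The convergence $\tau$ of Theorem~\ref{theorem:convergence} then specializes: $\chi_{E_\delta} \to \chi_E$ in $L^1$ is equivalent to $h_\delta \to h$ in $L^1(\omega)$ by the subgraph structure, and Proposition~\ref{prop: compi2}(i) gives $u_\delta \to u$ in measure on all of $\Omega$ with no exceptional set $\omega_u$, which is precisely the topology of $L^0(\Omega;\R^d) \times L^1(\omega;[0,M])$ appearing in the statement.

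\textbf{Liminf inequality.} Given $(u_\delta, h_\delta) \to (u,h)$ with $\sup_\delta \mathcal{G}_\delta(u_\delta,h_\delta) < +\infty$, I would invoke Proposition~\ref{prop: compi2} to extract the sets $E_\delta^*$, $\omega_u^\delta$ and the weak $L^2_{\mathrm{loc}}$ convergence $\chi_{\Omega \setminus (E_\delta^* \cup \omega_u^\delta)} e(u_\delta) \rightharpoonup \chi_{\Omega_h} e(u)$. The elastic part is handled by the now-classical linearization-lower-semicontinuity argument of \cite{DalMasoNegriPercivale:02}: on the good set where $|\nabla u_\delta| \le \kappa_\delta$ one has the pointwise expansion $\delta^{-2} W(\mathrm{Id} + \delta \nabla u_\delta) \to \tfrac12 \mathcal{Q}(e(u))$ modulo a frame-indifference/rotation correction, and by \eqref{eq:lsc0XXXX}(iii) the bad set $\{|\nabla u_\delta| > \kappa_\delta\}$ has vanishing measure; convexity of $\mathcal{Q}$ and weak lower semicontinuity then yield $\liminf_\delta \delta^{-2}\int_{\Omega_{h_\delta}^+} W(\nabla y_\delta) \ge \tfrac12 \int_{\Omega_h^+} \mathcal{Q}(e(u))$. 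This is exactly the step where Lemma~\ref{prop:liminfel} and the uniform control \eqref{eq: main rigitity}(ii) enter. The surface part is handled via \eqref{eq:lsc0XXXX}(iv): $\mathcal{H}^{d-1}(\partial E_\delta^* \cap \Omega) \le \mathcal{H}^{d-1}(\partial \Omega_{h_\delta} \cap \Omega) + o(1)$, and then lower semicontinuity of the perimeter plus the structure theorem for $BV$ subgraphs shows that the limiting essential boundary $\partial^* \Omega_h \cap \Omega$ plus twice the vertical cuts $J_u' \cap \Omega^1_h$ is bounded below by $\liminf_\delta \mathcal{H}^{d-1}(\partial^* E_\delta^* \cap \Omega)$ — this is precisely the relaxation identity from \cite{ChaSol07, Crismale}, whose proof one can cite rather than reproduce. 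Combining the two gives $\mathcal{G}_0(u,h) \le \liminf_\delta \mathcal{G}_\delta(u_\delta,h_\delta)$, with the curvature term simply dropped since $\gamma_\delta \to 0$ makes it nonnegative and asymptotically irrelevant.

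\textbf{Limsup inequality.} Given a target $(u,h)$ with $\mathcal{G}_0(u,h) < +\infty$, I would first reduce by density to a dense class where the construction is explicit. The relevant density result is \cite[Lemma~5.7 / Section~6]{Crismale} (or the graph-case analog): one may assume $h$ is smooth on $\omega$, the displacement $u$ restricted to $\Omega_h^+$ is $H^1$ away from a polyhedral jump set contained in a finite union of vertical hyperplanes realizing $J_u' \cap \Omega_h^1$, and $u = u_0$ near $\omega \times \{0\}$. For such $(u,h)$ one builds $h_\delta := h$ (or a mild smoothing; since $h$ is already $C^2$ one can take $h_\delta = h$, making the perimeter term exact and the curvature term equal to the fixed finite quantity $\int_\omega |\nabla^2 h|^q (1+|\nabla h|^2)^{(1-q)/2}$, which when multiplied by $\gamma_\delta \to 0$ vanishes). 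The recovery displacement $y_\delta$ is obtained by opening up the cuts: near each vertical interface one inserts a thin transition layer of width $\sim \delta^\beta$ for suitable $\beta$, interpolating between the two traces of $u$, at elastic cost $o(1)$ and producing two copies of the interface in the surface measure — this is the standard "two-sided crack" construction, and here it is simpler than in the $GSBD$ jump case because $h$ is fixed. Setting $y_\delta = \mathrm{id} + \delta u_\delta$ with $u_\delta$ this modified displacement, one checks $u_\delta \to u$ in measure, $h_\delta \to h$ in $L^1$, and $\limsup_\delta \mathcal{G}_\delta(u_\delta, h_\delta) \le \mathcal{G}_0(u,h)$ by Taylor-expanding $W$ near $\mathrm{Id}$ (using \eqref{eq: nonlinear energy}(iii) and $DW(\mathrm{Id})=0$) on the region where $\nabla u_\delta$ stays bounded, and controlling the $o(1)$ contribution of the transition layers. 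A general target is then recovered by the standard diagonal argument from the density of the explicit class in the $\mathcal{G}_0$-energy topology.

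\textbf{Main obstacle.} I expect the genuinely delicate point to be the limsup construction in the graph setting, specifically reconciling the subgraph constraint $u = \chi_{\Omega_h} u$ with the opening of vertical cuts $J_u' \cap \Omega_h^1$: one must ensure that the thin layers and the smoothed profile $h_\delta$ remain mutually compatible (the layer must stay inside $\Omega_{h_\delta}^+$, and $h_\delta \ge$ the relevant portions), and that no spurious interface is created where the film profile meets a vertical cut. The clean way around this is to push all topological complications into the cited density lemma of \cite{Crismale} — reducing to piecewise-rigid $u$ on polyhedral pieces with vertical cuts — so that the remaining construction is a finite, local, essentially one-dimensional interpolation. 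The liminf side, by contrast, is mostly an assembly of Proposition~\ref{prop: compi2}, Lemma~\ref{prop:liminfel}, and the perimeter relaxation of \cite{ChaSol07}, and should present no serious difficulty beyond bookkeeping of the $o(1)$ curvature and exceptional-set terms.
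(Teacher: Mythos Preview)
Your overall two-step scheme and the treatment of the elastic liminf (via Proposition~\ref{prop: compi2} and Lemma~\ref{prop:liminfel}) match the paper. There are, however, two genuine gaps.

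\textbf{Surface liminf.} You write that perimeter lower semicontinuity plus a structure theorem for $BV$ subgraphs yields
\[
\mathcal{H}^{d-1}(\partial^*\Omega_h\cap\Omega)+2\mathcal{H}^{d-1}(J_u'\cap\Omega_h^1)\le \liminf_{\delta\to 0}\mathcal{H}^{d-1}(\partial E_\delta^*\cap\Omega),
\]
and that this is a relaxation identity one can simply cite. It is not. Perimeter lower semicontinuity only gives the first term; the contribution $2\mathcal{H}^{d-1}(J_u'\cap\Omega_h^1)$ does not follow from any set-convergence argument, because the sets $E_\delta^*$ converge in measure to $\Omega\setminus\Omega_h$ and carry no information about $J_u$ a priori. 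The paper establishes this term via the $\sigma^2_{\rm sym}$-convergence machinery (Definition~\ref{def:spsconv}, Theorem~\ref{thm:compSps}, Lemma~\ref{lemma: vertical sets}): one first shows, using Corollary~\ref{cor: graphi}, that the families $\mathcal{X}_{C,\delta}$ associated with $\Gamma_\delta=\partial E_\delta^*\cap\Omega$ are equi-precompact, extracts a $\sigma^2_{\rm sym}$-limit $\Gamma$, and then proves $J_u'\cap\Omega_h^1\subset\Gamma$ by testing Definition~\ref{def:spsconv}(i) against the vertically shifted functions $v_\delta'(x)=\chi_{\Omega\setminus E_\delta^*}(x)\,\psi\, u_\delta(x',x_d-t)$ for every $t\ge 0$. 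This shifting argument is precisely what produces the \emph{extended} jump set $J_u'$ of~\eqref{eq: Ju'} rather than just $J_u$, and it is not contained in the results of \cite{ChaSol07} or in \cite[Theorem~5.1]{Crismale} that you propose to cite.

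\textbf{Limsup construction.} Your sketch keeps $h_\delta=h$ fixed and realizes the vertical cuts by a transition layer \emph{in the displacement}, interpolating between the traces of $u$ over width $\delta^\beta$. This does not work at the energy scale: on that layer $|\nabla u_\delta|\sim\delta^{-\beta}$, hence $\delta^{-2}W(\mathrm{Id}+\delta\nabla u_\delta)\sim\delta^{-2\beta}$ on volume $\sim\delta^\beta$, giving a divergent contribution. In the graph setting the cuts must be realized \emph{geometrically}, by carving thin channels in $h_\delta$ down to zero so that the two sides become disconnected; the displacement stays piecewise $H^1$ with no transition. The paper avoids the construction entirely by citing \cite[Theorem~2.4]{Crismale} for the linear recovery $(u_\delta,h_\delta)$ with $h_\delta\in C^\infty$, then uses a diagonal argument to enforce $\|\nabla u_\delta\|_{L^\infty}\le\delta^{-1/4}$ and $\int_{\partial\Omega_{h_\delta}\cap\Omega}|\bm A_\delta|^q\le\gamma_\delta^{-1/2}$, and concludes by Lemma~\ref{lemma: Taylor}.
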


\begin{remark}[Volume constraint]\label{rem: vol2}
{\normalfont 
We note that along the linearization process one could consider an additional volume constraint on the film, i.e., $\mathcal{L}^d(\Omega_h^+) = \int_\omega h(x') \, \d x'$ is fixed. } 
\end{remark}

\begin{figure}
\begin{tikzpicture}
\draw[fill=gray](0,1)--(8,1)--(8,2)--(0,2);
\draw[fill=gray!10!white] plot [smooth] coordinates {(0,3)(1,3)(3,4)(4,4)(4.2,2)}--(0,2);
\draw[fill=gray!10!white] plot [smooth] coordinates {(8,3.75)(6,3.5)(5.5,2.5)(5,3)(4.3,4)(4.2,2)}--(8,2);

\draw(9,1.5) node {$\omega \times (-1,0)$};
\draw(9,3.8) node {$\partial^* \Omega_h   \cap \Omega $};
\draw(4,.75) node {$\omega$};
\draw(-.25,3) node {$h$};
\draw(2,3.5)--++(270:1);
\draw(2.25,2.5) node {$J_u' \cap \Omega_h^1\EEE$};

\end{tikzpicture}
\caption{Possible limiting set $\Omega_h$.}
\label{fig:graphcase}
\end{figure}
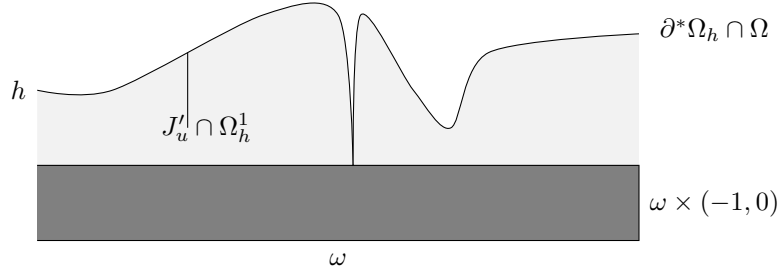

%In particular, in view of  Theorem \ref{theorem:convergence2} and Remark \ref{rem: vol2}, by a general property of $\Gamma$-convergence we get that,  given  $0<m<M\mathcal{H}^{d-1}(\omega)$,  the minimization problem 
%\begin{align}\label{eq: minimization problem2}
%\inf \Big\{ \mathcal{G}_0(u,h)\colon (u,E) \in  L^0(\Omega;\R^d) \times L^1(\omega), \ \mathcal{L}^d(\Omega_h^+) =  m  \Big\}  
%\end{align} 
% admits solutions and almost minimizers of \eqref{eq: minimization problem2} with $\mathcal{G}_\delta$ in place of $\mathcal{G}_0$ converge to minimizers of $\mathcal{G}_0$.  
% 
We close this section with a result for an alternative  \OOO setting  \EEE where in \eqref{eq: Gfunctional} the second fundamental form is replaced by the mean curvature,  \OOO again in the case $d=3$, $q=2$. \EEE 

\begin{corollary}[Mean curvature regularization]\label{cor: mean2}
We consider \eqref{eq: Gfunctional} with $|\bm{H}|^2$ in place of $|\bm{A}|^{\AAA 2 \EEE}$ \OOO when $d=3$\AAA, $q=2$\EEE. We suppose that for $\mathcal{G}_\delta$ only functions $h$ are admissible such that $\Gamma_h := \partial \Omega_h \cap \Omega$ satisfies that $\partial \Gamma_h$ is $C^2$ and that \EEE
$$  \int_{  \partial \Gamma_h  }  \kappa_{\AAA h,g\EEE} \, {\rm d} \mathcal{H}^{\OOO 1\EEE}  \le  \lambda_\delta\gamma_\delta^{-1}$$
for some $\lambda_\delta \to 0$ \OOO as $\delta\to 0$, \EEE where $\kappa_{\AAA h,g \EEE}$ denotes the geodesic curvature of $\partial \Gamma_h$. Then, the statements of Proposition~\ref{prop: compi2}  and Theorem~\ref{theorem:convergence2} hold. 
 \end{corollary}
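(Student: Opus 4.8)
\textbf{Proof strategy for Corollary \ref{cor: mean2}.}

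The plan is to reduce the mean-curvature version to the already-established second-fundamental-form version (Proposition \ref{prop: compi2} and Theorem \ref{theorem:convergence2}) by showing that, under the stated assumption on the geodesic curvature of $\partial\Gamma_h$, the two curvature functionals differ only by a term that is negligible after multiplication by $\gamma_\delta$. Concretely, since $d=3$, $q=2$, and $\Gamma_h = \partial\Omega_h\cap\Omega$ is (for an admissible $h$) a single connected $C^2$-graph over $\omega$, hence topologically a disk, the Gauss--Bonnet theorem in the form quoted in Remark \ref{remark: mean}(b) gives
\begin{align*}
\int_{\Gamma_h}|\bm{A}|^2\,{\rm d}\mathcal{H}^2 = \int_{\Gamma_h}|\bm{H}|^2\,{\rm d}\mathcal{H}^2 - 4\pi\chi(\Gamma_h) + 2\int_{\partial\Gamma_h}\kappa_{h,g}\,{\rm d}\mathcal{H}^1,
\end{align*}
and $\chi(\Gamma_h) = 1$ since $\Gamma_h$ is a disk. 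Therefore $\gamma_\delta\int_{\Gamma_h}|\bm{A}|^2 = \gamma_\delta\int_{\Gamma_h}|\bm{H}|^2 - 4\pi\gamma_\delta + 2\gamma_\delta\int_{\partial\Gamma_h}\kappa_{h,g}$, and by the admissibility assumption $2\gamma_\delta\int_{\partial\Gamma_h}\kappa_{h,g} \le 2\lambda_\delta \to 0$, while $4\pi\gamma_\delta\to 0$ by \eqref{eq:CdeltaRate-appli}. Hence, writing $G_\delta^{\bm{H}}$ for the functional with $|\bm{H}|^2$ in place of $|\bm{A}|^2$, we have $G_\delta^{\bm{H}}(y,h) = G_\delta(y,h) + o(1)$ uniformly over admissible pairs, with the $o(1)$ depending only on $\gamma_\delta$ and $\lambda_\delta$.

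First I would prove the compactness statement: given a sequence with $\sup_\delta \mathcal{G}_\delta^{\bm{H}}(u_\delta,h_\delta) = K < +\infty$, the identity above shows $\sup_\delta\big(\mathcal{G}_\delta(u_\delta,h_\delta) + 4\pi\gamma_\delta - 2\lambda_\delta\big) \le K$, so $\sup_\delta\mathcal{G}_\delta(u_\delta,h_\delta) \le K + C$ for $\delta$ small; then Proposition \ref{prop: compi2} applies verbatim and yields all of \eqref{eq:lsc0XXXX}, noting that the bound $\liminf_\delta\mathcal{H}^{d-1}(\partial E^*_\delta\cap\Omega)\le\liminf_\delta\mathcal{F}_{\rm surf}^{q,\delta}(E_\delta)$ only uses the perimeter part of the surface energy and is unaffected. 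For the $\Gamma$-liminf inequality, since $\gamma_\delta\int_{\Gamma_{h_\delta}}|\bm{A}|^2 \le \gamma_\delta\int_{\Gamma_{h_\delta}}|\bm{H}|^2 + C(\gamma_\delta + \lambda_\delta)$ and the correction tends to $0$, one has $\liminf_\delta\mathcal{G}_\delta(u_\delta,h_\delta) \le \liminf_\delta\mathcal{G}_\delta^{\bm{H}}(u_\delta,h_\delta)$ along any converging sequence, so the liminf inequality for $\mathcal{G}_0$ already proved in Theorem \ref{theorem:convergence2} transfers. For the $\Gamma$-limsup inequality, I would take the recovery sequence $(u_\delta,h_\delta)$ constructed for Theorem \ref{theorem:convergence2}; one checks that it can be chosen with each $h_\delta$ smooth with $\partial\Gamma_{h_\delta}$ of class $C^2$ and satisfying the geodesic-curvature bound (the recovery sequences in \cite{Crismale}, and in the construction used for Theorem \ref{theorem:convergence2}, are built from smooth graphs over $\omega$ that are eventually flat near $\partial\omega$, whence $\int_{\partial\Gamma_{h_\delta}}\kappa_{h_\delta,g} = \mathcal{H}^1(\partial\omega)$ is bounded and $\gamma_\delta\int_{\partial\Gamma_{h_\delta}}\kappa_{h_\delta,g}\to 0 \le \lambda_\delta$ for $\delta$ small, after possibly slowing down $\lambda_\delta$); then $\limsup_\delta\mathcal{G}_\delta^{\bm{H}}(u_\delta,h_\delta) = \limsup_\delta\mathcal{G}_\delta(u_\delta,h_\delta) \le \mathcal{G}_0(u,h)$.

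The main obstacle is the $\Gamma$-limsup part: one must ensure that the recovery sequence of Theorem \ref{theorem:convergence2} — or a suitable modification of it — consists of profiles $h_\delta$ for which $\partial\Gamma_{h_\delta}$ is $C^2$ and the geodesic-curvature constraint $\int_{\partial\Gamma_{h_\delta}}\kappa_{h_\delta,g}\le\lambda_\delta\gamma_\delta^{-1}$ is met, which requires controlling the behaviour of $h_\delta$ near $\partial\omega$ (the geodesic curvature of the boundary curve of the graph is determined by $h_\delta$ and its derivatives restricted to $\partial\omega$, together with the curvature of $\partial\omega$ itself). Since the density result \cite[Lemma 5.7]{Crismale} used for the recovery sequence produces $h_\delta$ that are constant (in fact zero, matching the substrate height) in a neighbourhood of $\partial\omega$, there $\Gamma_{h_\delta}$ is flat and $\kappa_{h_\delta,g}$ reduces to the geodesic curvature of $\partial\omega$ viewed inside the flat plane, so $\int_{\partial\Gamma_{h_\delta}}\kappa_{h_\delta,g} = 2\pi$ by Gauss--Bonnet for the planar region $\omega$; this is bounded, so $\gamma_\delta$ times it tends to $0$, and the constraint holds for all $\delta$ small provided $\lambda_\delta$ is not required to decay faster than $\gamma_\delta$ — if it is, one replaces $\lambda_\delta$ by $\max\{\lambda_\delta, 2\pi\gamma_\delta\}$, which still tends to $0$, and the statement is understood for such a (still admissible) choice. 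Everything else is a routine transcription of the proofs of Proposition \ref{prop: compi2} and Theorem \ref{theorem:convergence2}.
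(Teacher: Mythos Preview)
Your proposal is correct and follows the same Gauss--Bonnet reduction as the paper, which proves Corollaries \ref{cor: mean1} and \ref{cor: mean2} jointly in two lines: replacing $|\bm{A}|^2$ by $|\bm{H}|^2$ affects only the compactness step (the curvature term being immaterial for both the surface lower bound and the recovery construction, where $\gamma_\delta\int|\bm H|^2\le 2\gamma_\delta\int|\bm A|^2\to 0$), and for compactness the inequality $\liminf_\delta\gamma_\delta\int|\bm A_\delta|^2\le\liminf_\delta\gamma_\delta\int|\bm H_\delta|^2$ follows from Remark~\ref{remark: mean}(b) exactly as you compute.

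You are actually more careful than the paper in one respect: you check that the recovery sequence can be chosen admissible for the constrained functional, i.e.\ that $\int_{\partial\Gamma_{h_\delta}}\kappa_{h_\delta,g}\le\lambda_\delta\gamma_\delta^{-1}$ holds, which the paper does not discuss. Your idea that the recovery profiles can be taken flat near $\partial\omega$ (whence the boundary integral equals $2\pi$ by planar Gauss--Bonnet, using that $\omega$ is star-shaped hence simply connected) is the right one, but you should verify this against the actual construction in \cite[Lemma~6.4]{Crismale} rather than \cite[Lemma~5.7]{Crismale}, which pertains to the void case. Note also that your final move of replacing $\lambda_\delta$ by $\max\{\lambda_\delta,2\pi\gamma_\delta\}$ changes the hypothesis rather than proving the statement for an arbitrary given $\lambda_\delta\to 0$; if $\lambda_\delta/\gamma_\delta\to 0$ the admissible class may be too small for the standard recovery sequence, a point the paper leaves implicit.
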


The next subsections are devoted to the proofs announced in this section. As the proofs for both applications are similar, we proceed simultaneously. We  \EEE first address the compactness statements in Subsection \ref{sec: compre}, and afterwards the $\Gamma$-convergence results in Subsection  \ref{sec: compre2}.

\subsection{Compactness results}\label{sec: compre}

We start   with the proof of Proposition \ref{prop: compi1}. Afterwards, we present the small adaptions necessary for the proof of Proposition \ref{prop: compi2}.

\begin{proof}[Proof of Proposition \ref{prop: compi1}]
Consider a sequence $(u_\delta, E_\delta)_{\delta>0}$ with $ \mathcal{F}_\delta(u_\delta, E_\delta) \le \QQQ M<+\infty \EEE $ for all $\delta>0$, \QQQ where $M>0$ is defined in the statement of the proposition. \EEE%for some $C'>0$.
 \ Hence, $\partial E_\delta\cap \partial_D\Omega=\emptyset$ and, as \EEE  $\min_{\mathbb{S}^{d-1}}\varphi>0$, it holds that   $\sup_{\delta>0} \mathcal{H}^{d-1}(\partial E_\delta) < +\infty$. Thus,   a compactness result for sets of finite perimeter (see \cite[Theorem 3.39]{Ambrosio-Fusco-Pallara:2000}) implies that   there exists a set of finite perimeter $E \subset \Omega$ with $\mathcal{H}^{d-1}(\partial^* E)<+\infty$  such that $\chi_{E_\delta} \to \chi_E$ in $L^1(\Omega)$, up to a subsequence (not relabeled).\QQQ This shows the last part of \eqref{eq:lsc0}(v). \EEE

We now proceed with the compactness for the deformations.  We start by introducing sets for a suitable formulation of the Dirichlet boundary conditions: choose an open set $V \supset \Omega$ such that $V$ and $V \setminus \overline{\Omega}$ are Lipschitz sets and $V \cap \partial \Omega = \partial_D \Omega$. Our goal is to apply Theorem \ref{prop:rigidity} in the version of Corollary \ref{cor: rig-cor} for $U := \Omega$ and \BBB $U_D := V \setminus {\Omega}$. \EEE To this end, we introduce the functions $\hat{y}_\delta$ by
\begin{align}\label{eq: haty}
\hat{y}_\delta = \begin{cases} {\rm id} + \delta (u_\delta - u_0)  & \text{ on $U= \Omega$},\\
{\rm id}& \text{ on $U_D =V\setminus \Omega$}\,.
\end{cases} 
\end{align} 
Note that $\hat{y}_\delta$ are  Sobolev functions \OOO when \EEE restricted to $V \setminus \QQQ\overline{E_\delta}$ since $V \cap \partial \Omega = \partial_D\Omega$ and  $\mathrm{tr}(\hat{y}_\delta)  =\mathrm{tr}(y^\delta_0 - \delta u_0) = {\rm id}$ on $\QQQ\partial_D\Omega\EEE% \setminus \overline{E}
$,  by the fact that \EEE  $\mathcal{F}_\delta(u_\delta,E_\delta)<+\infty$.  Then, by the triangle inequality, \eqref{eq: nonlinear energy}, \EEE and \OOO the fact that  \EEE $ \mathcal{F}_\delta(u_\delta, E_\delta) \le\QQQ M\EEE$, we get that
\begin{align}\label{eq: endibound}
\int_{V \setminus \overline{E}} \dist^2\big(\nabla \hat{y}_{\delta\EEE} (x), SO(d)\big) \, {\rm d}x \le C'\delta^2
\end{align}
for a constant $C'>0$ \QQQ depending on $M$ and \EEE also on $u_0$. We want to \EEE apply Theorem \ref{prop:rigidity} on $(\hat{y}_\delta,E_\delta)$. To this end,  \EEE  in view of \OOO the fact that \EEE  $\gamma_\delta \to 0$ as $\delta \to 0$, \BBB see \eqref{eq:CdeltaRate-appli}, \EEE and  the definition of $\kappa_\delta$ in \eqref{eq: kappa-pro}, by a suitable diagonal argument we can find \OOO a sequence \EEE $\OOO(\EEE\eta_\delta\OOO)\EEE_{\OOO \delta>0\EEE}$ with $\eta_\delta \to 0$ and  \BBB smooth \EEE sets $\BBB \tilde{\Omega}_\delta \EEE \subset \subset V$ such that,  \OOO as $\delta\to 0$, \EEE
\begin{align}\label{eq: dia-prep}
{\rm (i)} & \ \  C_{\eta_\delta}    \kappa_\delta^{-2}  \gamma_\delta^{-2d/q}  \to 0, \quad \quad  {\rm (ii)} \ \  \BBB \sup\nolimits_{\delta >0} \EEE C_{\eta_\delta}    \delta^{1/3} <+\infty \,,
\end{align}
\begin{align}\label{eq: dia-prep2}
 {\rm (i)}&  \ \  \mathcal{L}^d(V \setminus \tilde{\Omega}_\delta) \to 0, \quad \quad  {\rm (ii)} \ \  \sup\nolimits_{\delta\OOO>0\EEE} \mathcal{H}^{d-1}(\partial \tilde{\Omega}_\delta)<+\infty\,,
\end{align}
\BBB where  $C_{\eta_\delta}$ is the constant in \eqref{eq: main rigitity}. \EEE 
We then apply  \EEE Theorem \ref{prop:rigidity} for  $\eta_\delta$ \BBB and $\gamma_\delta$,  for $V$ in place of $\Omega$, and for \EEE $\tilde{\Omega}_\delta$ \QQQ in place of $\tilde \Omega$\EEE. We use the notation $\mathcal{F}_{\rm surf}^{\varphi,\gamma_\delta,q}$ introduced in \eqref{eq: surface energy}. \QQQ Since $E_\delta\subset \Omega$, $V\cap \partial \Omega=\partial_D\Omega$ and  $\overline{E_\delta}\cap \partial_D\Omega=\emptyset$, we have $E_\delta\in \mathcal{A}_{\rm{reg}}(V)$ and $\mathcal{F}_{\rm surf}^{\varphi,\gamma_\delta,q}(E_\delta, V)= \mathcal{F}_{\rm surf}^{\varphi,\gamma_\delta,q}(E_\delta) \le C'$ \EEE for every $\delta>0$. \EEE Now, by applying \eqref{eq: partition}--\eqref{eq: main rigitity} and using that \EEE $\gamma_\delta \to 0$, $\eta_\delta \to 0$ as $\delta\to 0$, we get that  there  exist sets $\OOO( \EEE E^*_{\delta}\OOO)\EEE_{\OOO \delta>0\EEE}$ with \EEE $E_\delta  \subset E^*_{\delta} \subset \BBB V\EEE$, $\partial E^*_{\delta}\cap \AAA V\EEE$ is a union of finitely many regular submanifolds \EEE for every $\delta>0$,  \EEE and 
\begin{align}\label{eq: partition-appli}
{\rm (i)}  \ \  \lim_{\delta \to 0} \mathcal{L}^d(E^*_{\delta}\setminus E_\delta) = 0\,,  \quad \quad \quad   {\rm (ii)}  \ \liminf_{\delta \to 0 }   \int_{\BBB \partial E^*_{\delta} \cap V} \varphi(\nu_{E^*_{\delta}}) \, {\rm d}\mathcal{H}^{d-1}  \leq  \liminf_{\delta \to 0 } \mathcal{F}_{\rm surf}^{\varphi,\gamma_\delta,q}(E_\delta)\,,
\end{align}
such that for the finitely many connected components of $\tilde{\Omega}_\delta  \setminus E^*_{\delta}$, denoted by  $(\OOO \tilde{\Omega}\EEE^{\eta_{\OOO \delta\EEE},\gamma_\delta}_j)_j$, there exist  corresponding rotations $(R^{\eta_{\OOO \delta\EEE},\gamma_\delta}_j)_j \subset SO(d)$  such that  by \eqref{eq: endibound} \EEE
\begin{align}\label{eq: main rigitity-appli}
\begin{split}
{\rm (i)} & \ \  \sum\nolimits_j \int_{\OOO \tilde{\Omega}\EEE^{\eta_{\OOO \delta\EEE},\gamma_\delta}_j}\big|{\rm sym}\big((R^{\eta_{\OOO \delta\EEE},\gamma_\delta}_j)^T \nabla \hat{y}_\delta-\mathrm{Id}\big)\big|^2\,\mathrm{d}x \leq C_0C'\delta^2,%\notag
\\
{\rm (ii)} & \ \   \sum\nolimits_{j}\int_{\OOO \tilde{\Omega}\EEE^{\eta_{\OOO\delta\EEE},\gamma_\delta}_j} \big|(R^{\eta_{\OOO\delta\EEE},\gamma_\delta}_j)^T \nabla \hat{y}_\delta-\mathrm{Id}\big|^2\,\mathrm{d}x \leq C'\EEE C_{\eta_\delta}  \gamma_\delta^{-2d/q} \delta^2\,.
\end{split}
\end{align}
In fact, for \eqref{eq: main rigitity-appli}(i) we used that $C_{\eta_\delta} \AAA\gamma_\delta^{-5d/q}\delta^2\EEE = (\delta^{-q/3d} \gamma_\delta)^{-5d/q}\OOO\cdot \EEE C_{\eta_\delta}\delta^{1/3}  \to 0$ by \eqref{eq:CdeltaRate-appli} and \eqref{eq: dia-prep}(ii).  \EEE   
In view of Corollary \ref{cor: rig-cor} \BBB and \eqref{eq: haty}, \OOO we can choose  \EEE $R^{\eta_{\OOO\delta\EEE},\gamma_\delta}_j = {\rm Id}$ whenever we have $\mathcal{L}^d(U_D \cap \OOO \tilde{\Omega}\EEE^{\eta_{\OOO\delta\EEE},\gamma_\delta}_j)>0$. We denote the union of the components with this property by $\Omega_\delta^{\rm good}$.  Note that 
\begin{align}\label{eq: for bdy}
\Omega_\delta^{\rm good} \supset (U_D \cap \tilde{\Omega}_\delta  )\setminus E^*_\delta\,.
\end{align} 
We introduce the   \OOO  mappings  \EEE $\OOO( \EEE v_\delta\OOO)\EEE_{\OOO \delta>0\EEE} \in GSBD^2(V)$ by  
\begin{align}\label{eq: v}
v_\delta = \begin{cases} u_\delta & \text{ on $\Omega_\delta^{\rm good} \cap  \OOO \Omega \EEE$},\\
 u_0 & \text{ on $\Omega_\delta^{\rm good} \cap   \BBB (V\setminus {\Omega})\EEE$},\\
  0 & \text{ on $E^*_\delta \cup (V \setminus \tilde{\Omega}_\delta)$},\\
 \frac{1}{\delta} e_1 & \text{ on $\tilde{\Omega}_\delta \setminus (\Omega_\delta^{\rm good} \cup E^*_\delta)$}\,,
 \end{cases}
 \end{align}
 where $e_1$ denotes the first coordinate vector, see Figure~\ref{fig:defvdelta} for the different regions in the definition of $v_\delta$\EEE. By \eqref{eq: haty}, \eqref{eq: main rigitity-appli}, \eqref{eq: v}, the definition of $\Omega_\delta^{\rm good}$ and the triangle inequality, we find \OOO  for all $\delta>0$ that 
 \begin{align}\label{eq: main rigitity-appli2}
{\rm (i)}  \ \  \Vert e(v_\delta) \Vert^2_{L^2(V)}   \le C',\quad \quad \quad {\rm (ii)}  \ \   \Vert \nabla v_\delta \Vert^2_{L^2(V)} \le \OOO C' \EEE C_{\eta_\delta}  \gamma_\delta^{-2d/q}\,, 
\end{align}  
where $C'$ depends \OOO additionally  \EEE on $u_0$.
\begin{figure}
\begin{tikzpicture}

\draw[densely dotted] plot [smooth cycle] coordinates {(2,.5)(5,.5)(6,2.25)(2.75,2.5)};

\draw(6.5,2.3) node {$V\setminus \Omega$};

\draw[fill=white] plot [smooth cycle] coordinates {(0,0)(2,.25)(4,0)(3.5,2)(2,2.3)(1,1.8)(1,3.6)(0,3.2)(.25,1.5)};

\begin{scope}

\clip plot [smooth cycle] coordinates {(.5,.5)(2,.75)(3.5,.5)(5,1)(5.5,1.9)(4,2.3)(3,1.9)(2,1.8)(.9,1.3)(.8,3.1)(.3,2.9)(.5,1.5)};
\clip plot [smooth cycle] coordinates {(4.5,0)(2.8,.5)(2,1)(1.8,2.8)(3,5)(10,10)(10,0)};

\draw[fill=gray!30!white](-5,0) rectangle (10,5);

\end{scope}

\draw[ultra thick] plot [smooth] coordinates {(4.06,.32)(3.93,1)(3.5,2)(3,2.25)(2.5,2.305)};

%\draw[dashed] plot [smooth] coordinates {(4.06,.32)(3.93,1)(3.5,2)(3,2.25)(2.5,2.305)};

\begin{scope}

\draw[clip] plot [smooth cycle] coordinates {(0,0)(2,.25)(4,0)(3.5,2)(2,2.3)(1,1.8)(1,3.6)(0,3.2)(.25,1.5)};

\draw[line width=1mm,gray] plot [smooth] coordinates {(4.5,0)(2.8,.5)(2,1)(1.8,2.8)};

\draw[line width=1mm,gray] plot [smooth] coordinates {(0,1)(1.5,.75)(2,1)};
\draw[line width=1mm,gray] plot [smooth] coordinates {(1.4,-.2)(1.5,.75)};

\draw[line width=1mm,gray] plot [smooth] coordinates {(0,2.2)(.5,2.8)(2,2.6)};

\end{scope}

\draw[dashed] plot [smooth cycle] coordinates {(.5,.5)(2,.75)(3.5,.5)(5,1)(5.5,1.9)(4,2.3)(3,1.9)(2,1.8)(.9,1.3)(.8,3.1)(.3,2.9)(.5,1.5)};

\draw plot [smooth cycle] coordinates {(0,0)(2,.25)(4,0)(3.5,2)(2,2.3)(1,1.8)(1,3.6)(0,3.2)(.25,1.5)};

\draw(3,1.3) node {$\tilde{\Omega}_\delta^\mathrm{good}$};

\draw(-.125,3.5) node {$\Omega$};
\draw(4.3,1.3) node {$\partial_D \Omega$};

\draw(1.45,1.15) node {$\tilde{\Omega}_\delta$};
\draw(1.4,-.125) node {$E^*_\delta$};

\end{tikzpicture}
\caption{The sets relevant for the definition of $v_\delta$: the thick curve indicates the set $\partial_D\Omega$ and  $E_\delta^*$ is depicted in gray. The region delimited by the dashed curve is $\tilde{\Omega}_\delta$. The region enclosed by the dotted curve is $V\setminus \Omega$. The set $\tilde{\Omega}_\delta^\mathrm{good}$ is depicted in light gray.}
\label{fig:defvdelta}
\end{figure}
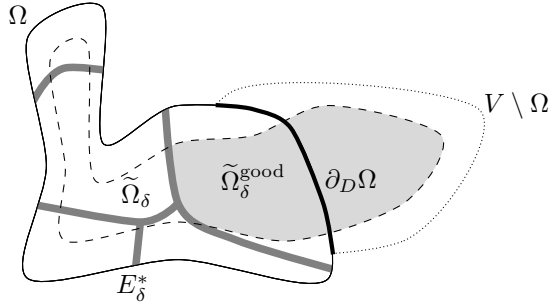
As $J_{v_\delta} \subset \EEE (\partial E^*_\delta \cap V) \cup \partial \tilde{\Omega}_\delta$, \eqref{eq: dia-prep2}(ii), \eqref{eq: partition-appli}, \eqref{eq: main rigitity-appli2}, and the fact that $\min_{\mathbb{S}^{d-1}}\varphi>0$ imply that
\begin{equation*}
\sup\nolimits_{\delta>0} \big(   \Vert e(v_\delta) \Vert^2_{L^2(V)}   + \mathcal{H}^{d-1}(J_{v_\delta}) \big) <+\infty\,.
\end{equation*}
By a compactness result in $GSBD^2$, see Theorem \ref{th: GSDBcompactness}, letting
\begin{align}\label{eq: omega+}
\omega_u := \lbrace x\in V\colon |v_\delta(x)| \to \infty  \text{ as } \delta\to 0\rbrace\,, 
\end{align}
we get that $\omega_u$ is a set of finite perimeter, and we find $v \in GSBD^2(V)$ with $v = 0$ on $\omega_u$ such that \OOO(again up to a subsequence, not relabeled) \EEE $v_\delta$ converges in measure  to $v$ on $V \setminus \omega_u$. (In the language of \cite[Subsection 3.4]{Crismale} we say that $v_\delta \to v$ weakly in $GSBD^2_\infty(V)$.)   Moreover, we note that $v = 0$ \BBB a.e.\   \EEE on $E$ which follows from the convergence in measure, \OOO the fact that  \EEE $\chi_{E_\delta} \to \chi_E$, \eqref{eq: partition-appli}(i), and \eqref{eq: v}.  Thus, $v=0$ \BBB a.e.\ \EEE     on $E\cup\omega_u$.  \BBB We also find that \EEE 
\begin{align}\label{eq: v out}
v = u_0 \quad \text{\AAA a.e.\ \EEE on $U_D =V\setminus \Omega$}
\end{align}
by \eqref{eq: dia-prep2}(i), \eqref{eq: partition-appli}(i), \eqref{eq: for bdy}, \eqref{eq: v}, and the fact that $E\subset \Omega$. \AAA(Here and in the following, set inclusions will be intended in the measure-theoretical sense, i.e., up to sets of $\mathcal{L}^d$-measure zero.) \EEE Therefore, \EEE  we get    $\omega_u \subset U=\Omega$.  We denote the restriction of $v$ to $\Omega$ by $u$, and note that \OOO then \EEE $u = 0$ on $E\cup\omega_u$.  We \OOO also  \EEE observe \OOO that  \EEE 
\begin{align}\label{eq: good cover}
\mathcal{L}^d\big( \Omega \setminus (\omega_u \cup \Omega_\delta^{\rm good} \cup E_\delta ) \big) \to 0 \quad \text{ as $\delta\to 0$}\,.
\end{align}
\BBB In fact,  $\mathcal{L}^d(\Omega \setminus \tilde{\Omega}_\delta) \to 0$  by \eqref{eq: dia-prep2}(i), $ \mathcal{L}^d(E^*_{\delta}\setminus E_\delta) \to 0$ by \eqref{eq: partition-appli}(i), and $\mathcal{L}^d((\tilde{\Omega}_\delta \setminus (\Omega_\delta^{\rm good} \cup E^*_\delta)) \setminus \omega_u) \to 0$ by  \eqref{eq: v} and  \eqref{eq: omega+}.  \EEE
 
 We \OOO now  \EEE show  properties  \eqref{eq:lsc0}. First \OOO of all, \EEE \eqref{eq:lsc0}(iv) follows directly from \eqref{eq: partition-appli}(ii). Since $\mathcal{F}_\delta(u_\delta,E_\delta)<+\infty$ \OOO for all $\delta>0$, \EEE  we have $u_\delta = \OOO\chi_{\Omega \setminus E_\delta} \EEE u_\delta$. Then, using \eqref{eq: v} as well as \eqref{eq: good cover}, we get \OOO that  \EEE  $\mathcal{L}^d( \OOO(\Omega\setminus \omega_u)\cap\EEE\lbrace v_\delta \neq u_\delta  \rbrace ) \to 0$ as $\delta \to 0$ and thus $u_\delta \to v=u$ in measure on  $\Omega \setminus \omega_u$. This shows   \eqref{eq:lsc0}(i). To see \eqref{eq:lsc0}(iii), we again use \OOO that  \EEE $\mathcal{L}^d(\OOO(\Omega\setminus \omega_u)\cap\EEE\lbrace v_\delta \neq u_\delta  \rbrace) \to 0$ as $\delta \to 0$,  as well as \eqref{eq: dia-prep}(i) and \eqref{eq: main rigitity-appli2}(ii)   to calculate
\begin{align*}
\limsup_{\delta \to 0 }
\mathcal{L}^d\big(\lbrace |\nabla u_\delta|>\kappa_\delta \rbrace \setminus \omega_u \big) 
& \le \limsup_{\delta \to 0 }
\mathcal{L}^d\big(\lbrace |\nabla v_\delta|>\kappa_\delta \rbrace \setminus \omega_u \big)\OOO\le  \limsup_{\delta \to 0 } \kappa_\delta^{-2} \int_{V}|\nabla v_\delta|^2\, dx \EEE \\
& \le \OOO C'\limsup_{\delta \to 0} \, \EEE C_{\eta_\delta}\kappa_\delta^{-2}  \gamma_\delta^{-2d/q} =0\,.
\end{align*} 
It therefore remains  \BBB to define the sets $(\omega_u^\delta)_{\delta>0} \subset \mathfrak{M}(\Omega)$ and \QQQ to prove  \eqref{eq:lsc0}(ii),(v)\EEE. Let $V' \subset \subset V$. Since $\chi_{E_\delta} \to \chi_{E}$ in $L^1(\Omega)$, \eqref{eq: partition-appli}\OOO(i) \EEE also implies \OOO that \EEE $\chi_{E^*_\delta} \to \chi_{E}$ in $L^1(V)$. Moreover, for $\delta>0$ small depending  \AAA also \EEE on $V'$\EEE, we have $v_\delta =0$ on $E^*_\delta$ and $v_\delta|_{V'\setminus \overline{E^*_\delta}} \in H^1(V'\setminus \overline{E^*_\delta};\R^d)$, see \eqref{eq: dia-prep2}(i) and  \eqref{eq: v}. This along with the fact that  $\OOO( \EEE v_\delta\OOO)_{\delta>0}\EEE$ converges weakly to  \OOO $v$ \EEE in $GSBD^2_\infty(V)$, \OOO means that  \EEE we can apply \cite[Theorem 5.1]{Crismale} on the set $V'$ for $\OOO( \EEE v_\delta\OOO)_{\delta>0}\EEE$ and $\OOO( \EEE E^{*}_\delta\OOO)_{\delta>0}\EEE$ to find
\begin{equation}\label{1405191303}
\OOO \chi_{V' \setminus (E^*_\delta \cup \omega_u)} \EEE e(v_\delta) \rightharpoonup \OOO \chi_{V' \setminus (E \cup \omega_u)} \EEE e(v)  \quad\text{  weakly  in }L^2(V';\R^{d\times d}_{\rm sym})\,,
\end{equation}
as well as 
\begin{equation}\label{1405191304}
\int_{J_v \cap E^0 \cap V'} 2\varphi(\nu_v)\,{\rm d} \mathcal{H}^{d-1}+\int_{\OOO\partial^* E \EEE \cap V'} \varphi(\nu_E) \, {\rm d}\mathcal{H}^{d-1} \leq \liminf_{\delta \to 0} \int_{\partial E^*_\delta\EEE\cap V'} \varphi(\nu_{E^*_\delta}) \, {\rm d}\mathcal{H}^{d-1}\,,
\end{equation}
where  $E^0$ denotes the set of points with  density zero for $E$ \OOO and $\nu_v$ is \BBB a \OOO measure-theoretical unit normal to  $J_v$. \EEE Define $\omega_u^\delta := \omega_u \cup (\OOO (\Omega\cap \EEE\tilde{\Omega}_\delta) \setminus (\Omega_\delta^{\rm good} \cup E_\delta^*))$. Note that $\mathcal{L}^d(\OOO\omega^{\delta}_{u}\BBB \triangle \EEE  \omega_u\EEE) \to 0$ by \eqref{eq: partition-appli}(i) and \EEE \eqref{eq: good cover}, \QQQ which finishes the verification of \eqref{eq:lsc0}(v), and that $$\mathcal{H}^{d-1}(\partial^*\omega_u) + \sup_{\delta>0}\mathcal{H}^{d-1}(\partial^*\omega_u^\delta) \le C_M$$ by \eqref{eq: dia-prep2}(ii),  \eqref{eq: partition-appli}(ii), and Theorem~\ref{th: GSDBcompactness}, for a $C_M>0$  depending on $M:=\sup\nolimits_{\delta>0} \mathcal{F}_\delta(u_\delta, E_\delta)$. \EEE %This along with \eqref{eq: partition-appli}(i) concludes the proof of \eqref{eq:lsc0}(v). \EEE  
\EEE As  $e(v_\delta) = 0$ on  $\Omega \setminus \Omega_\delta^{\rm good}$ and $u_\delta = v_\delta$ on $\Omega\cap\Omega_\delta^{\rm good}$, see \eqref{eq: v}, by  recalling \eqref{eq: dia-prep2}(i), for $\delta>0$ small enough  we get \OOO  a.e.\ on $V$ that \EEE
$${\OOO\chi_{(V' \cap\Omega) \setminus (E^*_\delta \cup \omega_u)} \EEE e(v_\delta) = \OOO \chi_{(V' \cap \Omega_\delta^{\rm good}\cap \Omega) \setminus (E^*_\delta \cup \omega_u)} \EEE e(u_\delta) = \OOO\chi_{(V' \cap \Omega) \setminus (E^*_\delta \cup \omega^\delta_u)}\, \EEE e(u_\delta).}$$
By using \eqref{1405191303} and recalling \OOO that by definition \EEE $u=v$ on $\Omega$, we obtain \eqref{eq:lsc0}(ii) as $V' \subset \subset V$ was arbitrary. This concludes the proof \BBB of  \eqref{eq:lsc0}(ii).  \EEE  For later purposes, we also directly discuss the implications of the estimate \eqref{1405191304} in the subsequent remark. 
\end{proof}

\begin{remark}\label{remark: lower bound}
{\normalfont
In the setting of the previous result, we also have 
\begin{align}\label{eq:lsc}
 \int_{J_u \setminus \partial^*E} 2\varphi(\nu_u)\,{\rm d}\mathcal{H}^{d-1} + \int_{\OOO \partial^* E\EEE\cap\Omega} \varphi(\nu_E) \,{\rm d}\mathcal{H}^{d-1} + F^{\rm bd\OOO r \EEE y}(u,E) \leq \liminf_{\delta \to 0} F_{\rm surf}^{\varphi,\gamma_\delta,q}(E_\delta)\,,
\end{align}
where $F^{\rm bd\OOO r \EEE y}$ is defined in \eqref{eq: bdaypart} and  $F_{\rm surf}^{\varphi,\gamma_\delta,q}$ in \eqref{eq: surface energy}. Indeed, note that $J_u \cap E^0 = J_u \setminus \partial^*E$ since $u = 0$ on $E$. Then, by the fact that \EEE $u=v$ on $\Omega$, $E \subset \Omega$, \BBB $V \cap \partial \Omega =\partial_D\Omega$, and \eqref{eq: v out}, \EEE  we observe 
\begin{align*}
\int\limits_{J_u \setminus \partial^*E} 2\varphi(\nu_u)\,{\rm d}\mathcal{H}^{d-1} + \hspace{-0.15cm}\int\limits_{\OOO\partial^* E\EEE\cap\Omega} \varphi(\nu_E) \,{\rm d}\mathcal{H}^{d-1} + F^{\rm bd\OOO r \EEE y}(u,E)  = \hspace{-0.15cm}\int\limits_{J_v \cap E^0}   \hspace{-0.15cm} 2\varphi(\nu_v)\, {\rm d}\EEE \mathcal{H}^{d-1} + \hspace{-0.15cm} \int\limits_{\OOO \partial^* E\cap \EEE V} \hspace{-0.15cm} \varphi(\nu_E) \, {\rm d}\EEE \mathcal{H}^{d-1}\,.
\end{align*}
Then, in view of \eqref{eq: partition-appli}(ii) and  \eqref{1405191304} for a sequence $\OOO( \EEE V_n\OOO)\EEE_{\OOO n\in \mathbb{N}\EEE} \subset \subset V$ with $\mathcal{L}^d(V \setminus V_n) \to 0$ \OOO as $n\to \infty$ \EEE  we get \eqref{eq:lsc}. }
\end{remark}

\QQQ
\begin{remark}\label{general_boundary_conditions}
{\normalfont
A closer inspection of the previous proof reveals that the compactness result in Proposition \ref{prop: compi1} remains valid, even if we impose \textit{thickened boundary conditions} for the sequence $(u_\delta)_{\delta>0}$, for instance in the following way. 

As before, let $u_0 \in W^{1,\infty}(\R^d;\R^d)$, $d=2,3$. Similarly to the argument in the previous proof, we introduce an open set $V\supset \Omega$ such that $V$ and $V \setminus \overline{\Omega}$ are Lipschitz sets and $V \cap \partial \Omega = \partial_D \Omega$. For $\delta >0$ define $y_0^\delta := {\rm id} + \delta u_{0,\delta}$, where $(u_{0,\delta})_{\delta>0}\subset W^{1,\infty}(\R^d;\R^d)$ is such that $u_{0,\delta}\to u_0$ locally uniformly in $\R^d$ as $\delta\to 0$. Consider also a sequence of open Lipschitz sets $(V_\delta)_{\delta>0}$, with $V_\delta\subset\subset V\setminus\overline{\Omega}$ such that $\chi_{V_\delta}\to \chi_{V\setminus \Omega}$ locally uniformly. We let again $F_\delta(y,E)$ be defined by \eqref{eq: F functional} if $E \in\mathcal{A}_{\rm reg}(\Omega)$, $\overline{E}\cap\partial_D\Omega=\emptyset$, $y|_{V\setminus \overline E} \in H^1( V \setminus \overline E;\R^d)$, $y|_E={\rm id}$, and now $y|_{V_\delta} =y^\delta_0|_{V_\delta}$, and $F_\delta(y,E) = +\infty$, otherwise. Then the conclusion of Proposition \ref{prop: compi1} still holds. 
}
\end{remark}
\EEE

\begin{proof}[Proof of Proposition \ref{prop: compi2}]
Consider  $(u_\delta, h_\delta)_{\delta\OOO>0\EEE}$ with $\QQQ K:=\EEE\sup_{\delta\OOO>0\EEE} \mathcal{G}_\delta(u_\delta, h_\delta) < +\infty$. First, by \BBB this energy bound, \EEE \eqref{eq: Gfunctional}, and a standard compactness argument, we find $h \in BV(\omega;  [0,M]  )$  such that $h_\delta \to h$ in $L^1(\omega)$, up to a subsequence (not relabeled). For the compactness of $(u_\delta)_{\delta\OOO>0\EEE}$, we proceed as in the proof of Proposition~\ref{prop: compi1}, applied for  $V:= \omega \times (-2,M+1)$, i.e., $U_D:= \omega \times (-2,-1\BBB ] \EEE$, and $E_\delta := \Omega\setminus \overline{\Omega_{h_\delta}}$. The only point  to prove is that $\omega_u$ given in \eqref{eq: omega+}   satisfies  $\mathcal{L}^d(\omega_u) = 0$. In fact, then \eqref{eq:lsc0XXXX} for a limit $u \in GSBD^2(\Omega)$ follows from \eqref{eq:lsc0}. Eventually, since    $u_\delta = \OOO\chi_{\Omega_{h_\delta}} \EEE u_\delta $  \EEE and $u_\delta = u_0$ on $\omega \times (-1,0\OOO]\EEE$, by \OOO the fact that \EEE $\mathcal{G}_\delta(u_\delta, h_\delta) < +\infty$ (see \eqref{eq: Gfunctional}),  \eqref{eq:lsc0XXXX}(i) shows  $u=\OOO \chi_{\Omega_h} \EEE u$ and  $u = u_0$ on $\omega\times (-1,0\OOO ]\EEE$.

Let us now check that $\mathcal{L}^d(\omega_u) =  0$. To this end, we apply Corollary \ref{cor: rig-cor} once again, now in the version for graphs, see Corollary~\ref{cor: graphi}. We denote the corresponding set \BBB $E'_{\eta_\delta,\gamma_\delta}$ by $E'_\delta$ for simplicity and we  let $h'_\delta \colon \omega \to \R$  be \EEE such that $\Omega_{h'_\delta} = \Omega \setminus \BBB \overline{E'_\delta}$. We note that \BBB $E'_\delta \supset E_\delta^*$ and thus \EEE $h'_\delta \leq \EEE h_\delta$. This \BBB along with \EEE \eqref{eq: graphiii}(i) implies $h'_\delta \to h$ in $L^1(\omega)$ \OOO since $\eta_\delta, \EEE\gamma_\delta\to 0$. \EEE In view of \eqref{eq: main rigitity-appli2},  \EEE \eqref{eq: graphiii}(ii) applied for $\varphi\equiv 1$\EEE, and \OOO the fact that \EEE $E'_\delta \supset E_\delta^*$, we get $\NNN v\EEE_\delta|_{\Omega_{h'_\delta}} \in H^1(\Omega_{h'_\delta};\R^d)$ and \EEE
$$\sup_{\delta\OOO>0\EEE} \AAA \Big( \EEE \int_{\Omega_{h'_\delta}^+} |e(\NNN v\EEE_\delta)|^2 \, {\rm d}x + \int_{\omega} \sqrt{1+|\nabla h'_{\delta}|^2}  \, {\rm d}x'\AAA \Big)\EEE < +  \infty\,.$$   
Therefore, by \cite[Theorem 2.5]{Crismale} we find \OOO that \EEE $u = \OOO \chi_{\Omega_h} \EEE u \in GSBD^2(\Omega)$ \OOO is \EEE such that $\OOO \chi_{\Omega_{h'_\delta}} \EEE \NNN v\EEE_\delta \to u$ in measure. (Indeed, $u$ coincides with the limiting function identified above.) As $\NNN v\EEE_\delta = 0$ on $E_\delta$ and $\mathcal{L}^d(E'_{\delta} \setminus E_\delta) \to 0$ by \eqref{eq: graphiii}(i),  we conclude that  $\omega_u$ defined in \eqref{eq: omega+} satisfies  $\mathcal{L}^d(\omega_u) = 0$. 
\end{proof}

\subsection{Derivation of effective linearized limits by $\Gamma$-convergence}\label{sec: compre2}

We start \BBB with \EEE two results on the linearization of nonlinear elastic energies which are by now classical, see e.g.~\cite{alicandro.dalmaso.lazzaroni.palombaro, Braides-Solci-Vitali:07, DalMasoNegriPercivale:02, MFMK, Friedrich-Schmidt:15, NegriToader:2013, Schmidt:08, Schmidt:2009}. For completeness, however, we include short proofs, in particular due to the fact that our setting, involving \OOO varying \EEE sets $(E_\delta)_{\delta\OOO>0\EEE}$, is slightly different \BBB compared to \EEE the above mentioned works\EEE.  Recall the quadratic form  $\mathcal{Q}$ defined in \eqref{eq: quadratic form}.  \EEE 
\begin{lemma}\label{prop:liminfel}
Let $(u_\delta)_{\delta\OOO>0\EEE} \subset GSBD^2(\Omega)$,  $u\in GSBD^2(\Omega)$, and let $(\Theta_\delta)_{\delta\OOO>0\EEE}, \Theta \in \M(\Omega)$ \OOO be  \EEE such that $\OOO \chi_{\Theta_\delta} \EEE e(u_\delta) \rightharpoonup \OOO \chi_{\Theta} \EEE e(u)$ weakly in $L^2(\Omega,\R^{d\times d}_{\rm sym})$, $
\mathcal{L}^d(\lbrace |\nabla u_\delta|>\kappa_\delta \rbrace  \cap \Theta)  \to 0$, and $u = 0$ on $\Omega \setminus \Theta$, where     $\kappa_\delta$ is defined  in \eqref{eq: kappa-pro}. Then, 
 \begin{align*}
\liminf_{\delta \to 0} \frac{1}{\delta^2} \int_\Omega W( {\rm Id} + \delta \nabla u_\delta) \, {\rm d}x  \geq \frac{1}{2}\int_{\Omega} \mathcal{Q}(e(u)) \,\mathrm{d}x\,.
\end{align*}
\end{lemma}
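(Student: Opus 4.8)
The strategy is the standard lower-semicontinuity argument for linearization by $\Gamma$-convergence, adapted to the present situation in which the functions $u_\delta$ are only controlled on varying sets $\Theta_\delta$ and only in $GSBD^2$. First I would introduce the truncated strains: let $\chi_\delta := \chi_{\Theta_\delta \cap \{|\nabla u_\delta| \le \kappa_\delta\}}$ and set $g_\delta := \chi_\delta\, \delta^{-1}\big(\sqrt{({\rm Id}+\delta \nabla u_\delta)^T({\rm Id}+\delta\nabla u_\delta)} - {\rm Id}\big)$, the rescaled (nonlinear) strain restricted to the ``good set''. On this set $\delta|\nabla u_\delta| \le \delta\kappa_\delta \to 0$ uniformly by \eqref{eq: kappa-pro}, so Taylor expansion of the square root near ${\rm Id}$ gives $g_\delta = \chi_\delta\, e(u_\delta) + \chi_\delta\, {\rm O}(\delta |\nabla u_\delta|^2)$ with a uniform constant in the error, hence $\chi_\delta\,e(u_\delta) - g_\delta \to 0$ in $L^2$. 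Since $\mathcal{L}^d(\{|\nabla u_\delta| > \kappa_\delta\} \cap \Theta) \to 0$ and $u = 0$ on $\Omega \setminus \Theta$, together with the weak convergence $\chi_{\Theta_\delta} e(u_\delta) \rightharpoonup \chi_\Theta e(u)$, one checks that $g_\delta \rightharpoonup \chi_\Theta e(u) = e(u)$ weakly in $L^2(\Omega;\R^{d\times d}_{\rm sym})$.

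Next I would exploit frame indifference and the energy-well structure of $W$. By \eqref{eq: nonlinear energy}(i),(ii) and polar decomposition, $W({\rm Id}+\delta\nabla u_\delta) = W\big(\sqrt{({\rm Id}+\delta\nabla u_\delta)^T({\rm Id}+\delta\nabla u_\delta)}\big)$, so on the good set $\delta^{-2} W({\rm Id}+\delta\nabla u_\delta) = \delta^{-2} W({\rm Id} + \delta g_\delta)$ while on the rest the integrand is nonnegative and can be discarded for the lower bound. By \eqref{eq: nonlinear energy}(iii), $W$ is $C^3$ near $SO(d)$ with $W({\rm Id})=0$, $DW({\rm Id})=0$, $D^2W({\rm Id})=\mathcal{Q}$, so a second-order Taylor expansion yields, for $|\delta g_\delta|$ small, $\delta^{-2}W({\rm Id}+\delta g_\delta) = \tfrac12 \mathcal{Q}(g_\delta) + \omega(\delta g_\delta)|g_\delta|^2$ with $\omega(F) \to 0$ as $F \to 0$; since $\delta|g_\delta| \le C\delta\kappa_\delta \to 0$ uniformly on the good set, one can pass to an auxiliary density $W_\delta(F) := \min\{\delta^{-2}W({\rm Id}+\delta F),\, \tfrac12\mathcal Q(F) + \text{(controlled)}\}$ or, more cleanly, use the standard trick: $\delta^{-2}W({\rm Id}+\delta g_\delta) \ge \tfrac12\mathcal{Q}(g_\delta) - \sigma(\delta)(1+|g_\delta|^2)$ on $\{|g_\delta|\le \kappa_\delta\}$ where $\sigma(\delta)\to 0$; integrating and using $\|g_\delta\|_{L^2}$ bounded (from the weak convergence) one gets $\liminf_\delta \delta^{-2}\int_\Omega W({\rm Id}+\delta\nabla u_\delta) \ge \liminf_\delta \tfrac12\int_\Omega \mathcal{Q}(g_\delta)$.

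Finally, since $F \mapsto \tfrac12\int_\Omega \mathcal{Q}(F)$ is a nonnegative quadratic form on $L^2(\Omega;\R^{d\times d}_{\rm sym})$ (convex because $\mathcal{Q}$ is positive semidefinite, being positive definite on $\R^{d\times d}_{\rm sym}$ by \eqref{eq: nonlinear energy}), it is weakly lower semicontinuous, so from $g_\delta \rightharpoonup e(u)$ we conclude $\liminf_\delta \tfrac12\int_\Omega \mathcal{Q}(g_\delta) \ge \tfrac12\int_\Omega \mathcal{Q}(e(u))$, which is the assertion. The main obstacle, and the only place where the nonstandard hypotheses really matter, is the bookkeeping in the first step: one must verify carefully that the contributions coming from $\Theta_\delta \triangle \Theta$, from the ``bad set'' $\{|\nabla u_\delta|>\kappa_\delta\}$, and from the region where $u$ vanishes all wash out in the limit, so that the truncated strains $g_\delta$ indeed converge weakly to $e(u)$ and not to something smaller; this uses precisely $\mathcal{L}^d(\{|\nabla u_\delta|>\kappa_\delta\}\cap\Theta)\to 0$, $u=0$ on $\Omega\setminus\Theta$, and the hypothesis $\chi_{\Theta_\delta}e(u_\delta)\rightharpoonup\chi_\Theta e(u)$ together with the uniform $L^2$ bound that these imply. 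Once the weak convergence $g_\delta \rightharpoonup e(u)$ is established, the rest is the routine Taylor-expansion-plus-convexity argument.
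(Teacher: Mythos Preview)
Your proposal is correct and follows essentially the same strategy as the paper's proof: truncate to the good set $\{|\nabla u_\delta|\le\kappa_\delta\}$, Taylor expand $W$ near ${\rm Id}$ with the remainder controlled via $\delta\kappa_\delta^3\to 0$ from \eqref{eq: kappa-pro}, and conclude by weak lower semicontinuity of the convex quadratic form $\mathcal{Q}$. The paper carries this out slightly more directly---writing $W({\rm Id}+\delta\nabla u_\delta)=\tfrac12\mathcal{Q}(e(u_\delta))+\Phi(\delta\nabla u_\delta)$ with $|\Phi(F)|\le C|F|^3$ and using $\vartheta_\delta:=\chi_{\{|\nabla u_\delta|\le\kappa_\delta\}}\to 1$ boundedly in measure on $\Theta$---thereby avoiding your polar-decomposition detour through the nonlinear strain $g_\delta$, but the content is the same.
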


\begin{proof} 
We \BBB define \EEE $\vartheta_\delta \in L^\infty(\Omega)$ by $\vartheta_\delta(x) = \chi_{[0,\kappa_\delta]}(|\nabla u_\delta(x)|)$, and note that $
\mathcal{L}^d(\lbrace |\nabla u_\delta|>\kappa_\delta \rbrace  \cap \Theta)  \to 0$ implies       $\vartheta_\delta \to 1$ boundedly in measure on $\Theta$, \OOO as $\delta\to 0$. \EEE By the regularity \OOO and the structural hypotheses \EEE of $W$  we get   $W({\rm Id} + F) = \frac{1}{2}\mathcal{Q}( {\rm sym}(F)) + \Phi(F)$, \BBB where \EEE $\Phi\colon\R^{d \times d}\to  \R $ is a function  satisfying $|\Phi(F)|  \le C|F|^3$  for all $F \in \R^{d\times d}$ with $|F| \le 1$. Then, \EEE \OOO the fact that \eqref{eq: kappa-pro} implies  $\delta\kappa_\delta\to 0$ and hence $0<\delta\kappa_\delta\leq 1$ for $\delta>0$ sufficiently small, together with  the fact that \EEE $W \ge 0$, imply that \EEE
\begin{align*}
\liminf_{\delta \to 0} \frac{1}{\delta^2}\int_{\Omega} W({\rm Id} + \delta \nabla u_\delta)\OOO  \, {\rm d}x & \ge \liminf_{\delta\to 0}  \frac{1}{\delta^2}\int_{\Theta} \vartheta_\delta W({\rm Id} + \delta \nabla u_\delta) \, {\rm d}x
\\
&=  \liminf_{\delta\to 0} \int_{\Theta} \vartheta_\delta \Big( \frac{1}{2}\mathcal{Q}(e(u_\delta)) + \frac{1}{\delta^2} \Phi(\delta \nabla u_\delta)  \Big) \, {\rm d}x 
\\
&\ge  \liminf_{\delta\to 0} \Big(\OOO\frac{1}{2}\EEE\int_{\Theta}  \vartheta_\delta   Q(\OOO \chi_{\Theta_\delta} \EEE e(u_\delta)) \,{\rm d}x \EEE - C\int_{\Theta}   \vartheta_\delta \delta|\nabla u_\delta|^3\Big)\,. \EEE
\end{align*}
The second term converges to zero \BBB since \EEE  $\vartheta_\delta\OOO\delta \EEE |\nabla u_\delta|^3$ is uniformly controlled \AAA from above \EEE by $\OOO\delta\EEE\kappa_\delta^3$, where  $\OOO\delta\EEE\kappa_\delta^3\to 0$ by \eqref{eq: kappa-pro}.   As $\OOO\chi_{\Theta_\delta} \EEE e(u_\delta) \rightharpoonup \OOO \chi_{\Theta} \EEE e(u)$ weakly in $L^2(\Omega,\R^{d\times d}_{\rm sym})$, by the convexity of $\mathcal{Q}$, and the fact that $\vartheta_\delta$ converges to $1$ boundedly in measure on $\Theta$, we conclude that 
\begin{align*}
\liminf_{\delta \to 0} \frac{1}{\delta^2}\int_{\Omega} W(\OOO \mathrm{Id}+\delta\nabla u_\delta\EEE)\, {\rm d}x \ge  \OOO\frac{1}{2}\EEE\int_{\Theta} \mathcal{Q}(e(u))\, {\rm d}x =  \int_{\Omega} \frac{1}{2}\mathcal{Q}(e(u))\, {\rm d}x\,,
\end{align*}
where the last step follows from the fact that $u = 0$ on $\Omega \setminus \Theta$. This concludes the proof. 
\end{proof}

\begin{lemma}\label{lemma: Taylor}
Let $(\Theta_\delta)_{\delta\OOO>0\EEE}$  be a sequence of  open subsets of $\Omega$  and let $\OOO( \EEE u_\delta\OOO)\EEE_{\OOO\delta>0\EEE} \in H^1\EEE (\Theta_\delta;\R^d)$ be  such that 
\begin{align}\label{eq: dense-in-appli}
\Vert \nabla u_\delta \Vert_{L^\infty(\Theta_\delta)}   \le \delta^{-1/4}\,.
 \end{align}
 Then, as $\delta \to 0$, we have for $y_\delta := {\rm id} + \delta u_\delta$ that 
$$ \lim_{\delta \to 0} \Big|\frac{1}{\delta^2} \int_{\Theta_\delta} W(\nabla y_\delta) \, {\rm d}x - \OOO\frac{1}{2} \EEE \int_{\Theta_\delta} \mathcal{Q}( e(u_\delta) ) \, {\rm d}x \Big| = 0\,. $$\\[-20pt]  
 \end{lemma}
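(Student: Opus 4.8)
\textbf{Proof plan for Lemma \ref{lemma: Taylor}.}
The plan is to exploit the smallness assumption \eqref{eq: dense-in-appli} to linearize $W$ pointwise via a Taylor expansion around the identity matrix, and then to control the error terms by the $L^\infty$-bound on $\nabla u_\delta$ combined with the $L^2$-control coming from the coercivity of $W$. First I would recall, exactly as in the proof of Lemma \ref{prop:liminfel}, that by \eqref{eq: nonlinear energy}(i)--(iii) and $DW(\mathrm{Id})=0$ there is a function $\Phi\colon \R^{d\times d}\to\R$ with
\begin{align}\label{eq: taylorexp-plan}
W(\mathrm{Id}+F) = \tfrac12 \mathcal{Q}(\mathrm{sym}(F)) + \Phi(F), \qquad |\Phi(F)| \le C|F|^3 \ \text{ for all } |F|\le 1,
\end{align}
for a constant $C>0$ depending only on $W$; note also that by frame indifference $\mathcal{Q}(F)=\mathcal{Q}(\mathrm{sym}(F))$, so $\tfrac12\mathcal{Q}(\mathrm{sym}(\delta\nabla u_\delta))=\tfrac{\delta^2}{2}\mathcal{Q}(e(u_\delta))$. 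Since by \eqref{eq: dense-in-appli} we have $|\delta\nabla u_\delta|\le \delta^{3/4}\le 1$ for $\delta$ small enough, \eqref{eq: taylorexp-plan} applies pointwise on $\Theta_\delta$ with $F=\delta\nabla u_\delta$.

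Next I would estimate the difference directly: for $\delta$ small,
\begin{align*}
\Big| \frac{1}{\delta^2}\int_{\Theta_\delta} W(\nabla y_\delta)\,\d x - \frac12\int_{\Theta_\delta}\mathcal{Q}(e(u_\delta))\,\d x \Big|
= \frac{1}{\delta^2}\Big| \int_{\Theta_\delta}\Phi(\delta\nabla u_\delta)\,\d x\Big|
\le \frac{C}{\delta^2}\int_{\Theta_\delta} |\delta\nabla u_\delta|^3\,\d x = C\delta\int_{\Theta_\delta}|\nabla u_\delta|^3\,\d x.
\end{align*}
To show the right-hand side vanishes I would split $|\nabla u_\delta|^3 = |\nabla u_\delta|^2\,|\nabla u_\delta|$ and bound the last factor by $\delta^{-1/4}$ using \eqref{eq: dense-in-appli}, giving $C\delta\cdot\delta^{-1/4}\int_{\Theta_\delta}|\nabla u_\delta|^2\,\d x = C\delta^{3/4}\int_{\Theta_\delta}|\nabla u_\delta|^2\,\d x$. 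Thus it remains to argue that $\int_{\Theta_\delta}|\nabla u_\delta|^2\,\d x$ does not blow up too fast; the crudest available bound, again from \eqref{eq: dense-in-appli}, is $\int_{\Theta_\delta}|\nabla u_\delta|^2\,\d x \le \delta^{-1/2}\mathcal{L}^d(\Theta_\delta)\le \delta^{-1/2}\mathcal{L}^d(\Omega)$, which yields an overall bound $C\delta^{1/4}\mathcal{L}^d(\Omega)\to 0$ as $\delta\to0$. This closes the estimate.

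The main (mild) obstacle is purely bookkeeping: one must make sure that the pointwise Taylor estimate \eqref{eq: taylorexp-plan} is legitimately applicable, i.e.\ that $|\delta\nabla u_\delta|\le 1$ holds on all of $\Theta_\delta$ — this is guaranteed by \eqref{eq: dense-in-appli} since $\delta\cdot\delta^{-1/4}=\delta^{3/4}\to0$ — and that the cubic remainder is genuinely controlled on $\Theta_\delta$ rather than on a fixed domain, which is handled by the uniform measure bound $\mathcal{L}^d(\Theta_\delta)\le\mathcal{L}^d(\Omega)$. No compactness or rigidity input is needed here; the lemma is a self-contained consequence of the regularity of $W$ and the strong $L^\infty$-smallness of the rescaled gradients, and in applications \eqref{eq: dense-in-appli} will be arranged via a density/truncation argument on the recovery sequence. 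One should also note that no sign or coercivity of $W$ is actually required for this one-sided-free statement, although it does not hurt to record $W\ge0$; the argument is a two-sided estimate of the difference of the two energies.
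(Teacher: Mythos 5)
Your proposal is correct and follows essentially the same route as the paper: identical Taylor expansion $W(\mathrm{Id}+F)=\tfrac12\mathcal{Q}(\mathrm{sym}(F))+\Phi(F)$ with cubic remainder, applied pointwise to $F=\delta\nabla u_\delta$, and a direct bound on the remainder using \eqref{eq: dense-in-appli}. The only difference is cosmetic — you split $|\nabla u_\delta|^3=|\nabla u_\delta|^2|\nabla u_\delta|$ before applying the $L^\infty$ bound twice, whereas the paper simply notes that $\delta|\nabla u_\delta|^3\le\delta^{1/4}$ pointwise on $\Theta_\delta$ and that $\mathcal{L}^d(\Theta_\delta)\le\mathcal{L}^d(\Omega)$; the conclusion is the same.
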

\begin{proof}
As in the previous proof, we use that $W({\rm Id} + F) =  \frac{1}{2}\mathcal{Q}( {\rm sym}(F)) + \Phi(F)$ with $|\Phi(F)|\le C|F|^3$ for $|F| \le 1$. Then, for $y_\delta = {\rm id} + \delta u_\delta$, we compute
\begin{align*} 
\frac{1}{\delta^2}\int_{\Theta_\delta} W(\nabla y_\delta) \, {\rm d}x & = \frac{1}{\delta^2} \int_{\Theta_\delta} W({\rm Id} +  \delta  \nabla u_\delta)\, {\rm d}x  =   \int_{\Theta_\delta}  \Big( \frac{1}{2} \mathcal{Q}(e(u_\delta)) + \frac{1}{\delta^2} \Phi(\delta \nabla u_\delta)  \Big)\,{\rm d}x  \\ & = \frac{1}{2}  \int_{\Theta_\delta}   \mathcal{Q}(e(u_\delta)) \, {\rm d}x  +    \int_{\Theta_\delta}    {\rm O}\big( \delta|\nabla u_\delta|^3 \big)\,.
\end{align*}
The result now follows by taking limits  and using \eqref{eq: dense-in-appli}. 
\end{proof}

We now proceed with the $\Gamma$-convergence results. The proofs essentially rely on the above preparations, the estimates in Subsection \ref{sec: compre}, and  the results in the linearized setting obtained in \cite[Section 2]{Crismale}.  We start with Theorem \ref{theorem:convergence}.

\begin{proof}[Proof of Theorem \ref{theorem:convergence}]
We first address the lower bound and afterwards the upper bound.\\
\begin{step}{1}(Lower bound) 
Suppose that $(u_\delta,E_\delta) \overset{\tau}{\to} (u,E)$, i.e., there exist a set of finite perimeter  $\omega_u \in \M(\Omega)$ such that  $\chi_{E_\delta} \to \chi_E \text{ in }L^1(\Omega)$, $u_\delta \to u$ in measure on $\Omega \setminus \omega_u$, and $u\equiv 0$ on $E \cup \omega_u$.  Without restriction, we can assume that $\sup_{\delta\OOO>0\EEE} \mathcal{F}_\delta(u_\delta, \EEE E_\delta)< +\infty$. In view of Proposition \ref{prop: compi1}, this yields $u = \OOO \chi_{\Omega \setminus E} \EEE u \in GSBD^2(\Omega)$, $\mathcal{H}^{d-1}(\partial^* E)<+\infty$, and that \eqref{eq:lsc0} holds. Therefore, \OOO we obtain \EEE $\mathcal{F}_0(u,E)<+\infty$.  Now, the lower bound for the surface energy follows directly from  Remark~\ref{remark: lower bound}. For the elastic part, we use \eqref{eq:lsc0}(ii),(iii) and apply Lemma \ref{prop:liminfel} \AAA for an  arbitrary $\Omega' \subset \subset \Omega$, \EEE for  $\Theta_\delta = \Omega' \EEE \setminus (E_\delta^* \cup \omega^\delta_u) $ and $\Theta =\Omega' \EEE \setminus (E \cup \omega_u)$.  
\end{step}\\[2pt]
\begin{step}{2}(Recovery sequence) 
By \cite[Theorem 2.2]{Crismale}, for each $E\in\M(\Omega)$ with $\mathcal{H}^{d-1}(\partial^* E)<+\infty$ and each  $u=\OOO\chi_{\Omega \setminus E} \EEE u \in GSBD^2(\Omega)$, there exists a sequence of \AAA sets  $\OOO( \EEE E_\delta\OOO)\EEE_{\OOO\delta>0\EEE}$ with $\QQQ E_\delta\subset\subset\Omega$, $\partial E_\delta%\cap\Omega
\in C^{\infty}$, \EEE $\chi_{E_\delta} \to \chi_E$ in $L^1(\Omega)$ and a sequence  $\AAA(u_\delta)_{\delta>0}\EEE$ with $u_\delta|_{\Omega\setminus \overline{E_\delta}} \in H^1(\Omega \setminus \overline{E_\delta};\R^d)$, $u_\delta|_{E_\delta} = 0$, and ${\rm tr}(u_\delta) = {\rm tr}(u_0)$ on $\QQQ \partial_D \Omega\EEE% \setminus \overline{E_\delta}
$ such that $u_\delta \to u$ in $L^0(\Omega;\R^d)$ and
\begin{align}\label{eq: limsupVito1}
\lim_{\delta \to 0} \Big(\OOO\frac{1}{2} \EEE \int_{\Omega \setminus \OOO\overline {E_\delta}\EEE}  \mathcal{Q}(e(u_\delta))\, {\rm d}x + \int_{\QQQ \partial E_\delta\EEE %\cap\Omega \cup\partial_D \Omega)
}  \varphi(\nu_{E_\delta}) \, {\rm d}\mathcal{H}^{d-1}  \Big)  = \mathcal{F}_0(u,E)\,.
\end{align}
Strictly speaking, \cite[Theorem 2.2]{Crismale} only ensures that $\partial E_\delta$ is Lipschitz, \BBB see \cite[(2.2)]{Crismale}, \EEE but in the proof it is shown that \QQQ $E_\delta$ can be chosen compactly contained in $\Omega$ and $\partial E_\delta$ \EEE of class $C^\infty$, see \cite[Proposition 5.4]{Crismale}. By a density argument \NNN and the fact that $\Omega$ is Lipschitz, \EEE without relabeling of functions and sets, \EEE it is not restrictive to further assume that each $u_\delta$ is Lipschitz on $\Omega \setminus \overline{E_\delta}$. By a diagonal argument we may further suppose without restriction that 
$${ \Vert \nabla u_\delta \Vert_{L^\infty(\Omega \setminus \OOO\overline{E_\delta}\EEE)}   \le \delta^{-1/4} ,\quad \quad  \quad \int_{\QQQ\partial E_\delta\EEE% \cap \Omega
} |\bm{A}_\delta|^q \, {\rm d}\mathcal{H}^{d-1} \le \BBB \gamma^{-1/2}_\delta \,, \EEE}
$$
where $\bm{A}_\delta$ denotes the second fundamental form associated to $\QQQ \partial E_\delta\AAA%\cap\Omega\EEE
$. \BBB Here, we use that $\gamma_\delta^{-1/2} \to \infty$, see \eqref{eq:CdeltaRate-appli}. \EEE Then, in view of \eqref{eq: F functional} and  \eqref{eq: limsupVito1},  by applying Lemma \ref{lemma: Taylor} for $\Theta_\delta = \Omega \setminus \OOO \overline{E_\delta}\EEE$ \BBB and by  using again that $\gamma_\delta \to 0$ as $\delta \to 0$, \EEE we conclude that $\BBB \lim_{\delta \to 0} \mathcal{F}_\delta(u_\delta,E_\delta) = \lim_{\delta \to 0} {F}_\delta(y_\delta,E_\delta) \EEE = \mathcal{F}_0(u,E)$, \BBB where $y_\delta = {\rm id} + \delta u_\delta$. \EEE  This concludes the construction of recovery sequences. Eventually, \cite[Theorem 2.2]{Crismale} also shows that a volume constraint can be incorporated, which yields Remark~\ref{rem:voids}(i). 
\end{step}
\end{proof}

We now proceed with the proof of Theorem \ref{theorem:convergence2}. To this end, we recall the notion of $\sigma_{\rm sym}^2$-convergence introduced in \cite[Section 4]{Crismale}, in a slightly simplified version.    In the following, we use the notation $A \tilde{\subset} B$ if $\mathcal{H}^{d-1}(A \setminus B) = 0$ and $A \tilde{=} B$ if  $A \tilde{\subset} B$ and  $B \tilde{\subset} A$.

\begin{definition}[$\sigma^2_{\rm sym}$-convergence]\label{def:spsconv}
Let $ U  \subset \R^d$  be  open,   $U' \supset U$  be  open  with $\mathcal{L}^d(U' \setminus U)>0$. Consider a sequence $(\Gamma_n)_{n\in \mathbb{N}}  \subset  \overline{U}\cap U'$  with $\sup_{n\in \N} \mathcal{H}^{d-1}(\Gamma_n) <+\infty$. We suppose that for each $C>0$ the sets   
\begin{align}\label{eq: XCn}
{\mathcal{X}_{C,n} := \Big\{  v \in GSBD^2(U')\colon \text{\BBB $v = 0$ \EEE in $U' \setminus U$,\, $\Vert e(v) \Vert_{L^2(U')} \le C$,\,  $\BBB J_{v}  \EEE \tilde{\subset} \Gamma_n$} \Big\}}    
\end{align}
are equi-precompact  in $L^0(U';\R^d)$, in the sense that every sequence $(v_n)_{n\in \mathbb{N}\EEE}$ with $v_n  \in \mathcal{X}_{C,n}$ \EEE admits a convergent subsequence in $L^0(U';\R^d)$. Then, we say that $(\Gamma_n)_{n\in \mathbb{N}\EEE}$ $\sigma_{\rm sym}^2$-converges to $\Gamma$ satisfying  $\Gamma \subset \overline{U} \cap U'$ and  $\mathcal{H}^{d-1}(\Gamma) < +\infty$, if there holds:  

(i)  for any $C>0$ and any sequence $\OOO( \EEE v_n\OOO)\EEE_{n\in \mathbb{N}\EEE}$ with $v_n \in  \EEE \mathcal{X}_{C,n}$, \EEE if a subsequence $(v_{n_k})_{k\in \mathbb{N}\EEE}$ converges in measure to $v \in GSBD^2(U')$, then $J_v  \tilde{\subset} \Gamma$. 

(ii) there exists a function $v \in GSBD^2(U')$  and a sequence  $\BBB (v_n)_{n\in \mathbb{N}\EEE}$ with $v_n \in  \mathcal{X}_{C,n}$ \EEE for some $C>0$ such that $v_n \to v$ in measure on $U'$ and  $J_v  \tilde{=} \Gamma$.
\end{definition}

Our definition is simplified compared to  \cite[Section 4]{Crismale} as we assume a compactness property for the sets in \eqref{eq: XCn}. Indeed, all involved sequences converge in measure on $U'$, and therefore we can neglect the set $G_\infty$ appearing in \cite[Definition 4.1]{Crismale}, which is related to the set where a sequence $(v_n)_{n\in \mathbb{N}\EEE}$ as in (i) may converge to infinity. \BBB In a similar fashion, the space $GSBD^2_\infty$ introduced in  \cite[Subsection 3.4]{Crismale} is not needed. \EEE Note that imposing boundary conditions in \eqref{eq: XCn} is fundamental for compactness, by  excluding nonzero constant functions. We refer to  \cite[Section~4]{Crismale} for a more general discussion on this notion and mention here only the fundamental compactness result, see \cite[Theorem 4.2]{Crismale}.

\begin{theorem}[Compactness of $\sigma^2_{\rm sym}$-convergence]\label{thm:compSps}
Let $U \subset \R^d$  be  open,  $U' \supset U$  be open  with $\mathcal{L}^d(U' \setminus U)>0$.  Then, every sequence $(\Gamma_n)_{n\in \mathbb{N}\EEE} \subset \overline{U}\cap U'$ satisfying the assumptions in Definition~\ref{def:spsconv} has a $\sigma_{\rm sym}^2$-convergent subsequence (not relabeled) with limit  $\Gamma$  satisfying the inequality $\mathcal{H}^{d-1}(\Gamma) \leq \liminf_{n\to\infty} \mathcal{H}^{d-1}(\Gamma_n)$.  
\end{theorem}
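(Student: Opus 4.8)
\textbf{Proof plan for Theorem \ref{thm:compSps}.}

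The plan is to follow the standard scheme for proving compactness of $\sigma^2_{\rm sym}$-convergence of sets via the associated sequence of function classes, as developed in \cite[Section 4]{Crismale}, and to exploit the simplification we have built into Definition~\ref{def:spsconv}, namely that the classes $\mathcal{X}_{C,n}$ are \emph{equi-precompact in $L^0(U';\R^d)$}. First I would fix a countable dense set of ``test parameters'': enumerate a countable dense family $(C_m)_{m\in\N}\subset(0,+\infty)$, and for each fixed $C$ choose a countable collection of sequences $(v^{(k)}_n)_{n}$ with $v^{(k)}_n\in\mathcal{X}_{C,n}$ that is ``dense'' in the relevant sense (all jump sets that can asymptotically appear are captured by some such sequence). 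By the equi-precompactness hypothesis and a diagonal argument over $m$ and $k$, I extract a single subsequence (not relabeled) along which every one of these countably many sequences converges in $L^0(U';\R^d)$ to some limit in $GSBD^2(U')$, using also the lower semicontinuity of $v\mapsto\|e(v)\|_{L^2}$ and of $\mathcal{H}^{d-1}(J_v)$ under $L^0$-convergence (the $GSBD$ closure theorem, Theorem \ref{th: GSDBcompactness} in the appendix) to stay inside the appropriate class.

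The candidate limit set $\Gamma$ is then built as (a representative of) the ``$\mathcal{H}^{d-1}$-essential union'' of the jump sets of all these limit functions, intersected with $\overline{U}\cap U'$; concretely one takes a maximizing sequence in $\mathcal{H}^{d-1}$ among countably many such unions and passes to a measure-theoretic supremum, so that $\mathcal{H}^{d-1}(\Gamma)<+\infty$ by the uniform bound $\sup_n\mathcal{H}^{d-1}(\Gamma_n)<+\infty$. To verify property (i) of Definition~\ref{def:spsconv}, I would argue by a density/approximation argument: given any $C>0$ and any $(v_n)_n$ with $v_n\in\mathcal{X}_{C,n}$ converging in measure to $v\in GSBD^2(U')$, approximate the pair $(C,(v_n))$ by one of the chosen countable sequences, use stability of the inclusion $J_{(\cdot)}\tilde\subset\Gamma_n$ under the relevant perturbations together with $GSBD$-lower semicontinuity of the jump set to conclude $J_v\tilde\subset\Gamma$; here one must be careful that $\mathcal{H}^{d-1}(\Gamma_n\setminus\Gamma)$ need not go to zero, so the inclusion is only asymptotic and must be phrased through the limit functions, exactly as in \cite[Proof of Theorem 4.2]{Crismale}. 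For property (ii), $\Gamma$ is by construction $\tilde=$ the jump set of some admissible limit function $v$, obtained as the $L^0$-limit of a sequence $v_n\in\mathcal{X}_{C,n}$ along the extracted subsequence; the only subtlety is to realize the full $\Gamma$ (not just a subset) as a single jump set, which is done by taking $v$ to be a suitable superposition/linear combination of the countably many limit functions with geometrically decreasing coefficients so that $e(v)\in L^2$, $v=0$ on $U'\setminus U$, and $J_v\tilde=\Gamma$.

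Finally, the inequality $\mathcal{H}^{d-1}(\Gamma)\le\liminf_{n\to\infty}\mathcal{H}^{d-1}(\Gamma_n)$ follows from property (ii) together with $GSBD$-lower semicontinuity of the jump set: picking the sequence $v_n\to v$ with $J_v\tilde=\Gamma$ and $J_{v_n}\tilde\subset\Gamma_n$, we get
\begin{align*}
\mathcal{H}^{d-1}(\Gamma)=\mathcal{H}^{d-1}(J_v)\le\liminf_{n\to\infty}\mathcal{H}^{d-1}(J_{v_n})\le\liminf_{n\to\infty}\mathcal{H}^{d-1}(\Gamma_n)\,.
\end{align*}
The main obstacle I anticipate is the diagonalization step that produces a \emph{single} subsequence along which \emph{all} relevant approximating sequences converge while simultaneously guaranteeing that the limit set $\Gamma$ is realized exactly (not just up to a subset) by one admissible function — i.e., reconciling property (i), which forces $\Gamma$ to be large enough, with property (ii), which forces it to be attained; this is precisely the delicate bookkeeping carried out in \cite[Section 4]{Crismale}, and in our simplified setting it is streamlined by the equi-precompactness assumption, which removes the need to track the ``escape to infinity'' set $G_\infty$ and the space $GSBD^2_\infty$.
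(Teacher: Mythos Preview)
The paper does not give its own proof of this theorem; it simply quotes the result from \cite[Theorem 4.2]{Crismale} and remarks that the present setting is a simplification (the equi-precompactness assumption removes the need for the set $G_\infty$ and the space $GSBD^2_\infty$). Your sketch follows exactly the scheme of \cite[Section 4]{Crismale} with precisely this simplification, so it is aligned with what the paper cites rather than diverging from it.
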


Moreover, the following lower semicontinuity result can be shown.

\begin{lemma}[Lower semicontinuity  of surfaces]\label{lemma: vertical sets}
Let $\Omega = \omega \times (-1,M+1)$. Let $(D_\delta)_{\delta\OOO>0\EEE}$ be a sequence of Lipschitz sets such that $\Gamma_\delta := \partial D_\delta\cap \Omega$  are $\sigma_{\rm sym}^2$-converging to $\Gamma$ in the sense of Definition \ref{def:spsconv} with respect to the sets    $U = \omega \times (-\frac{1}{2},M) $  and $U' = \Omega$. \BBB Suppose that \EEE there exists \OOO a function  \EEE $h \in BV(\omega;[0,M])$ such that $\mathcal{L}^d((\Omega \setminus D_\delta) \triangle \Omega_h) \to 0$ as $\delta\to 0$. \EEE   Then,  we have 
\begin{align*}
\mathcal{H}^{d-1}(\partial^* \Omega_h \cap \Omega) + 2 \mathcal{H}^{d-1}(\Gamma \cap \Omega_h^1) \leq \liminf_{\delta\to 0}   \mathcal{H}^{d-1}(\Gamma_\delta)\,.
\end{align*}
\end{lemma}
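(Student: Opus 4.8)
\textbf{Proof plan for Lemma \ref{lemma: vertical sets}.}
The plan is to derive the two terms on the left-hand side from two different pieces of information about the sequence $(\Gamma_\delta)_{\delta>0}$: the first, $\mathcal{H}^{d-1}(\partial^*\Omega_h\cap\Omega)$, from the $L^1$-convergence of the sets $\Omega\setminus D_\delta$ to $\Omega_h$ together with lower semicontinuity of the perimeter, and the second, $2\mathcal{H}^{d-1}(\Gamma\cap\Omega_h^1)$, from the $\sigma^2_{\mathrm{sym}}$-convergence of $\Gamma_\delta$ to $\Gamma$. The key point is that $\Gamma_\delta = \partial D_\delta\cap\Omega$ carries \emph{both} the essential boundary of $\Omega\setminus D_\delta$ (which converges to $\partial^*\Omega_h$) \emph{and} any ``internal'' parts of $\partial D_\delta$ that collapse onto the subgraph $\Omega_h^1$; the latter contribute a factor $2$ in the limit because they are approached from both sides. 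The union of these two contributions is $\mathcal{H}^{d-1}$-essentially disjoint, so the estimates can be added.

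First I would set up the decomposition. Since each $D_\delta$ is Lipschitz, $\mathcal{H}^{d-1}(\partial D_\delta\triangle\partial^*D_\delta)=0$, so $\mathcal{H}^{d-1}(\Gamma_\delta)=\mathrm{Per}(D_\delta;\Omega)=\mathrm{Per}(\Omega\setminus D_\delta;\Omega)$. By the $L^1$-convergence $\chi_{\Omega\setminus D_\delta}\to\chi_{\Omega_h}$ and lower semicontinuity of the perimeter, for every open $A\subset\subset\Omega$ one has $\mathrm{Per}(\Omega_h;A)\le\liminf_{\delta\to0}\mathrm{Per}(\Omega\setminus D_\delta;A)\le\liminf_{\delta\to0}\mathcal{H}^{d-1}(\Gamma_\delta)$, and taking $A\uparrow\Omega$ gives $\mathcal{H}^{d-1}(\partial^*\Omega_h\cap\Omega)\le\liminf_{\delta\to0}\mathcal{H}^{d-1}(\Gamma_\delta)$. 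To capture the extra factor $2$ on the part of $\Gamma$ lying in the interior $\Omega_h^1$, I would localize away from $\partial^*\Omega_h$: fix $\varepsilon>0$, choose an open set $A_\varepsilon$ with $\partial^*\Omega_h\cap\Omega\subset A_\varepsilon$ and $\mathcal{H}^{d-1}(\partial^*\Omega_h\cap\Omega)\le\mathcal{H}^{d-1}(\Gamma_\delta\cap A_\varepsilon)+\varepsilon$ for $\delta$ small (possible since the perimeter measures converge weakly-$*$ and $\partial^*\Omega_h\cap\Omega$ has finite measure), and then work with $\Gamma_\delta\setminus A_\varepsilon$, which by construction lives (up to small error) inside $\Omega_h^1\cup(\Omega_h)^0$.

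Next I would invoke the $\sigma^2_{\mathrm{sym}}$-structure. The hypothesis gives a limit set $\Gamma\subset\overline{U}\cap\Omega$ with $\mathcal{H}^{d-1}(\Gamma)<+\infty$, and by Definition \ref{def:spsconv}(ii) a function $v\in GSBD^2(\Omega)$ with $J_v\,\tilde{=}\,\Gamma$ arising as an $L^0$-limit of $v_\delta\in\mathcal{X}_{C,\delta}$, in particular $J_{v_\delta}\,\tilde\subset\,\Gamma_\delta$. The classical lower-semicontinuity argument for ``vertical'' or ``collapsed'' interfaces in the graph setting (as used in \cite{ChaSol07, Crismale}, exploiting the subgraph constraint and the uniform star-shapedness of $\omega$) shows that on the part of $\Gamma$ sitting strictly below the graph, i.e.\ in $\Omega_h^1$, the approximating surface $\Gamma_\delta$ must asymptotically wrap around it on both sides, so $2\mathcal{H}^{d-1}(\Gamma\cap\Omega_h^1)\le\liminf_{\delta\to0}\mathcal{H}^{d-1}(\Gamma_\delta\setminus A_\varepsilon)$. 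Combining this with the perimeter bound on $\Gamma_\delta\cap A_\varepsilon$, using that $\Gamma_\delta=(\Gamma_\delta\cap A_\varepsilon)\cup(\Gamma_\delta\setminus A_\varepsilon)$ is a disjoint decomposition, and letting $\varepsilon\to0$ yields the claim.

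The main obstacle is the second estimate, i.e.\ rigorously producing the factor $2$ for $\Gamma\cap\Omega_h^1$: one must show that any portion of the boundaries $\partial D_\delta$ that does not survive as essential boundary of the limiting subgraph but instead collapses into the bulk $\Omega_h^1$ is counted with multiplicity (at least) two in the $\liminf$ of $\mathcal{H}^{d-1}(\Gamma_\delta)$. This is precisely where the interplay of the graph constraint, the $\sigma^2_{\mathrm{sym}}$-convergence (which pins the jump set of the limiting displacement onto $\Gamma$), and a careful slicing/lower-semicontinuity argument in the vertical direction — as developed in \cite{ChaSol07} and adapted in \cite{Crismale} — enters; the remaining steps (perimeter lower semicontinuity, the localization trick, the disjointness of the two pieces) are routine.
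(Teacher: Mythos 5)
The paper's own proof of this lemma is a citation: it defers entirely to \cite[Subsection 6.1, eqs.\ (6.4), (6.6)]{Crismale}, together with the observation that the argument there, although formally stated for graphs $\Gamma_\delta$, only uses the abstract $GSBD$ lower-semicontinuity result \cite[Theorem~5.1]{Crismale} and hence applies verbatim to general Lipschitz sets $D_\delta$. Your proposal is an attempt to reconstruct that argument, and while it captures the right heuristic picture (perimeter of the subgraph, plus collapsed boundary counted twice), the decomposition you propose has a genuine gap.

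The problem is with the ``factor-2 part.'' The $\sigma^2_{\rm sym}$-convergence of $\Gamma_\delta$ to $\Gamma$ by itself only delivers the factor-1 bound $\mathcal{H}^{d-1}(\Gamma)\le\liminf_\delta\mathcal{H}^{d-1}(\Gamma_\delta)$ -- this is exactly Theorem~\ref{thm:compSps} -- and it says nothing about doubling on $\Omega_h^1$. The factor $2$ is not an independent consequence of the abstract notion of $\sigma^2_{\rm sym}$-convergence; it comes out of the \emph{joint} lower-semicontinuity estimate for the pair (perimeter, collapsed jump set) that is the content of \cite[Theorem~5.1]{Crismale}, which is recalled in this paper as \eqref{1405191304}. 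That theorem treats $\int_{\partial^*E}\varphi(\nu_E)\,{\rm d}\mathcal{H}^{d-1}$ and $\int_{J_v\cap E^0}2\varphi(\nu_v)\,{\rm d}\mathcal{H}^{d-1}$ \emph{together} against $\liminf\mathcal{H}^{d-1}(\partial E_\delta)$, because along the approximation both terms draw from the \emph{same} pieces of $\Gamma_\delta$. Your plan is therefore circular: to justify the bound $2\mathcal{H}^{d-1}(\Gamma\cap\Omega_h^1)\le\liminf\mathcal{H}^{d-1}(\Gamma_\delta\setminus A_\varepsilon)$ you would have to invoke the same combined theorem (localized in $\Omega\setminus\overline{A_\varepsilon}$), at which point the localization and the separate perimeter bound become superfluous -- you might as well apply the theorem once on all of $\Omega$, which is what the paper does.

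Two secondary points. First, the uniform star-shapedness of $\omega$ is not a hypothesis of this lemma, and it is not used in the liminf direction; it enters only in the recovery-sequence construction for the $\Gamma$-limsup in Theorem~\ref{theorem:convergence2}, so invoking it for the collapsed-interface lower bound is a misattribution. Second, the whole point of the paper's remark -- that the graph hypothesis used in \cite{ChaSol07, Crismale} can be dropped because the argument rests only on \cite[Theorem~5.1]{Crismale} -- is exactly the structural observation that makes this lemma hold for Lipschitz $D_\delta$; your proposal reintroduces graph-specific reasoning (``the subgraph constraint,'' ``wrapping from both sides'') where the point is precisely that one does not need it.
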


\begin{proof}
For the proof we refer to   \cite[Subsection 6.1]{Crismale}, in particular \BBB to \EEE \cite[(6.4), (6.6)]{Crismale}. Note that there the proof was only performed  in the case that $\partial D_\delta \cap \Omega$ are graphs, but this assumption is not needed since the argument relies on the lower semicontinuity result in \cite[Theorem 5.1]{Crismale}.   
\end{proof}

We are now in \OOO the \EEE position to give the proof of Theorem \ref{theorem:convergence2}.

\begin{proof}[Proof of Theorem \ref{theorem:convergence2}]
We first address the lower bound and afterwards the upper bound.\\
\begin{step}{1}(Lower bound) 
Suppose that $u_\delta \to u$ in $L^0(\Omega;\R^d)$ and that $h_\delta \to h$ in $L^1(\omega)$. Without restriction, we can assume that $\sup_{\delta\OOO>0\EEE} \mathcal{G}_\delta(u_\delta, \EEE h_\delta)< +\infty$. By Proposition~\ref{prop: compi2} this implies \OOO that  \EEE $h \in BV(\omega;[0,M])$, $u = \OOO\chi_{\Omega_h} \EEE u \in GSBD^2(\Omega)$, as well as $u = u_0$ on $\omega \times (-1,0\OOO]\EEE$.  Therefore, $\mathcal{G}_0(u,h)<+\infty$. Moreover, \eqref{eq:lsc0XXXX} holds.  The lower bound for the elastic energy follows by \eqref{eq:lsc0XXXX}(ii),(iii) and by Lemma \ref{prop:liminfel} applied for  $\Theta_\delta :=\Omega' \EEE \setminus (E_\delta^* \cup \omega_u^\delta)$ and $\Theta :=\Omega'$ for arbitrary $\Omega' \subset \subset \Omega$. \EEE   Therefore, it remains to prove that  
\begin{equation*}
%\label{2304191220-2}
\mathcal{H}^{d-1}(\partial^* \Omega_h \cap \Omega ) + 2 \mathcal{H}^{d-1}(J_u'  \cap \Omega^1_h)  \leq \liminf_{\delta\to 0} \Big( \mathcal{H}^{d-1}\big( \partial \Omega_{h_\delta} \EEE \cap \Omega\big)     + \gamma_\delta\int_{\partial \Omega_{h_\delta} \EEE \cap \Omega} |\bm{A}_\delta|^q \, {\rm d}\mathcal{H}^{d-1}\Big)\,,
\end{equation*}
where  $\bm{A}_\delta$ denotes the second fundamental form corresponding to $\partial \Omega_{h_\delta} \BBB \cap \Omega\EEE$, and   $J_u'$ is defined in \eqref{eq: Ju'}. To this end, we define 
\begin{equation*}
%\label{eq: Gamma-n}
\Gamma_\delta:=  \partial E_{\delta}^*\cap\Omega\,,
\end{equation*}
where \BBB $(E_\delta^*)_{\delta>0}$ \EEE are given in \eqref{eq:lsc0XXXX}, and note that $\sup_{\delta\OOO>0\EEE} \mathcal{H}^{d-1}(\Gamma_\delta)<+\infty$ by \eqref{eq:lsc0XXXX}(iv) \OOO(up to a subsequence, not relabeled). \EEE   We let $U = \omega \times (-\frac{1}{2},M) $  and $U' = \Omega = \omega \times (-1,M+1)$. By \cite[Theorem~2.5]{Crismale} and Corollary \ref{cor: graphi} for $\varphi \equiv 1$, we now  observe that the sets given in \eqref{eq: XCn} are equi-precompact in $L^0(U';\R^d)$. In fact, given $v_\delta \in \mathcal{X}_{C,\delta}$, we define  $w_\delta := \BBB \chi_{\AAA \Omega\setminus \overline{E_\delta'\EEE}} \EEE v_\delta$, where $\partial E_\delta' \cap \Omega$ is  \OOO the  \EEE graph of a function, see Corollary \ref{cor: graphi}. By \eqref{eq: graphiii}(ii) and  the fact that  $\OOO\sup_{\delta>0}\EEE\Vert e(w_\delta) \Vert_{L^2(\Omega)}<+\infty\EEE$, we can apply \cite[Theorem 2.5]{Crismale} to find that $\OOO( \EEE w_\delta\OOO)\EEE_{\OOO \delta>0\EEE}$ converges in measure on $\Omega$ to some $w \in L^0(\Omega;\R^d)$. By \eqref{eq: graphiii}(i) we conclude $v_\delta \to w$ in measure, as well. Therefore, as $(\mathcal{X}_{C,\delta})_{\delta\OOO>0\EEE}$ are equi-precompact, we can   apply Theorem~\ref{thm:compSps}   to  deduce that $(\Gamma_\delta)_{\delta\OOO>0\EEE}$ $\sigma_{\rm sym}^2$-converges  (up to a subsequence)  to some $\Gamma\subset \overline{U} \cap U'$. By combining \eqref{eq:lsc0XXXX}(iv) and Lemma \ref{lemma: vertical sets} for $D_\delta = E_\delta^*$ (note that indeed  $\OOO\mathcal{L}^{d}\EEE((\Omega   \setminus E^*_\delta)  \triangle \EEE \Omega_h) \to 0$ as $\delta \to 0$ by Proposition \ref{prop: compi2}) we get 
\begin{equation}\label{2304191211XXX}
\mathcal{H}^{d-1}(\partial^* \Omega_h \cap \Omega) + 2 \mathcal{H}^{d-1}( \Gamma \cap \Omega_h^1) \leq  \liminf_{\delta\to 0} \Big( \mathcal{H}^{d-1}\big(\partial \Omega_{h_\delta} \EEE  \cap \Omega\big)     + \gamma_\delta\int_{\partial \Omega_{h_\delta} \EEE  \cap \Omega } |\bm{A}_\delta|^q \, {\rm d}\mathcal{H}^{d-1}\Big)\,.
\end{equation}
Thus, to conclude the proof, it remains to check that $J_u'\OOO \cap \Omega_h^1\EEE\subset \Gamma \cap \Omega_h^1$ up to an $\mathcal{H}^{d-1}$-negligible set. To this end, we follow  \cite[Subsection 6.1]{Crismale}: consider the sequence of  \OOO mappings  \EEE  $v_\delta := \psi\OOO\chi_{\Omega \setminus E^*_\delta} \EEE u_\delta$, where $\psi \in C^\infty(\Omega)$ with $\psi = 1$ in a neighborhood of $\Omega^+  = \Omega\cap\lbrace x_d >0\rbrace  $ and $\psi = 0$ on $\omega  \times (-1,-\frac{1}{2})$. Moreover, for $t>0$, we let $v'_\delta(x):= \OOO\chi_{\Omega \setminus E^*_\delta}(x) \EEE v_\delta(x', x_d -t)$,  extended   by zero  in $\omega{\times}(-1, -1+t)$. Defining $v'(x) = \psi\OOO\chi_{\Omega_h}(x) \EEE u(x',x_d-t)$, we observe that $v'_\delta$ converge  to $v'$ in measure on $U'$ since $u_\delta \to u$ in measure on $U'$, see \eqref{eq:lsc0XXXX}(i). \EEE We also observe that  $v'_\delta = 0$ on $U'\setminus U  = \omega \times ((-1,-\frac{1}{2}]\cup[M,M+1)) $. Thus, applying Definition~\ref{def:spsconv}(i) on the sequence $(v'_\delta)_{\delta\OOO>0\EEE}$,  which clearly satisfies $J_{v'_\delta} \BBB \tilde{\subset} \EEE \Gamma_\delta$,   we obtain $J_{v'} \BBB \tilde{\subset} \EEE \Gamma$. This shows 
\begin{equation*}
%\label{eq: Sigma}
(J_u+t e_d) \cap \Omega_h^1 = J_{v'} \cap \Omega_h^1  \subset \Gamma \cap \Omega_h^1  \,.
\end{equation*}
Since $t \ge 0$ was arbitrary, recalling   the definition of $J_u' = \lbrace (x',x_d + t)\colon \, x \in J_u, \, t \ge 0 \rbrace$, see \eqref{eq: Ju'},   we indeed find $J_u'  \cap \Omega^1_h\subset \Gamma \cap \Omega_h^1$. In view of \eqref{2304191211XXX}, this concludes the proof of the lower bound. 
\end{step}\\[3pt]
\begin{step}{2}(Recovery sequence) 
By applying \cite[Theorem 2.4]{Crismale}, for each $h \in BV(\omega;[0,M])$ and for each $u = \OOO\chi_{\Omega_h} \EEE u \in GSBD^2(\Omega)$ with $u=u_0$ on $\omega \times (-1,0\OOO]\EEE$  there exists a sequence $(h_\delta)_{\delta\OOO>0\EEE} \subset C^\infty \EEE (\omega;[0,M])$ and \OOO mappings  \EEE $\OOO(u_\delta)\EEE_{\OOO \delta>0\EEE}$ with $u_\delta|_{\Omega_{h_\delta}} \in \BBB H^1\EEE  (\Omega_{h_\delta};\R^d)$, $u_\delta= 0$ on $\Omega \setminus \AAA\overline{\Omega_{h_\delta}}\EEE$, and $u_\delta = u_0$ on $\omega \times (-1,0\OOO]\EEE$     such that $h_\delta \to h$ in $L^1(\omega)$,  $u_\delta \to u$ in $L^0(\Omega;\R^d)$, and
\begin{align}\label{eq: limsupVito2}
\lim_{\delta \to 0} \Big(\OOO\frac{1}{2}\EEE\int_{\BBB \Omega_{h_\delta}^+ \EEE } \mathcal{Q}(e(u_\delta))\, {\rm d}x +  \mathcal{H}^{d-1}(\partial \Omega_{h_\delta} \cap \Omega)  \Big)  = \mathcal{G}_0(u,h)\,.
\end{align}
Strictly speaking, \cite[Theorem 2.4]{Crismale} only ensures that $h_{\OOO\delta\EEE}$ is a $C^1$-function, but in the proof recovery sequences are constructed for profiles of regularity $C^\infty$, see \cite[Lemma 6.4]{Crismale}. Moreover, by a density argument we can assume that each $u_\delta$ is Lipschitz on $\Omega_{h_\delta}^+$. By a diagonal argument we may further suppose without restriction that 
$${ \Vert \nabla u_\delta \Vert_{L^\infty(\Omega_{h_\delta}^+)}   \le \delta^{-1/4} ,\quad \quad  \quad \int_{\partial \Omega_{h_\delta} \cap \Omega} |\bm{A}_\delta|^q \, {\rm d}\mathcal{H}^{d-1} \le \BBB \gamma_\delta^{-1/2} \,, \EEE}
$$
where we use that $\gamma_\delta^{-1/2} \to + \infty$ as $\delta \to 0$. \EEE By \eqref{eq: Gfunctional}, \eqref{eq: limsupVito2}, the fact that \EEE $\gamma_\delta \to 0$, and by applying Lemma \ref{lemma: Taylor} for $\Theta_\delta := \Omega^+_{h_\delta}$, we conclude that $\lim_{\delta \to 0} \mathcal{G}_\delta(\OOO u\EEE_\delta,h_\delta) = \mathcal{G}_0(u,h)$.  Eventually, \cite[Remark 6.8]{Crismale} also shows that a volume constraint  on the film can be taken into account, as mentioned in Remark~\ref{rem: vol2}. 
\end{step}
\end{proof}

We close with the short proofs of Corollaries \ref{cor: mean1} and \ref{cor: mean2}. 

\begin{proof}
In view of the above proofs, we observe that replacing $|\bm{A}_\delta|^2$ by $|\bm{H}_\delta|^2$ does  not affect the $\Gamma$-limit, but is only relevant for the compactness results in Propositions \ref{prop: compi1} and \ref{prop: compi2}, \OOO  respectively. \EEE To proceed as above, in particular in order to obtain \eqref{eq:lsc0}(iv) and \eqref{eq:lsc0XXXX}(iv),  it suffices to check that, under the assumptions given in Corollaries \ref{cor: mean1} and \ref{cor: mean2}, it holds $$\liminf_{\delta \to 0} \gamma_\delta\int_{\partial E_\delta \cap \Omega} |\bm{A}_\delta|^2 \, {\rm d}\mathcal{H}^{d-1} \le \liminf_{\delta \to 0}  \gamma_\delta\int_{\partial E_\delta \cap \Omega} |\bm{H}_\delta|^2 \, {\rm d}\mathcal{H}^{d-1}\,. $$
We refer to the cases (a) and (b) discussed  in Remark~\ref{remark: mean}. 
\end{proof}

\section*{Acknowledgements} 
This work was supported by the DFG project FR 4083/1-1 and by the Deutsche Forschungsgemeinschaft (DFG, German Research Foundation) under Germany's Excellence Strategy EXC 2044 -390685587, Mathematics M\"unster: Dynamics--Geometry--Structure.

\appendix

\EEE

 \section{Some auxiliary lemmata}\label{sec: appi}

% \subsection{Simon's approximate graphical decomposition lemma}\label{sec: Simon}  
%  \RRR 
%add proof  
  
%  \EEE

 \subsection{Two elementary lemmata on \AAA planar \EEE curves}\label{appendix_a}
\begin{lemma}\label{1_dim_curv_est} 
Let $q\geq 1$. For every  closed, planar $C^2$-curve \EEE $\ggamma$ it holds that 
\begin{equation*}
\int_{\ggamma}|\kappa_{\ggamma}|^q\, \mathrm{d}\mathcal{H}^1\geq (\mathrm{diam}\ggamma)^{1-q}\,,
\end{equation*}
 where  $\kappa_{\ggamma}$ denotes the curvature of the curve. 
\end{lemma}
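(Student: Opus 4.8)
The plan is to derive the $L^q$ bound from the corresponding $L^1$ bound via H\"older's inequality, and to establish the $L^1$ bound by an integration by parts on the closed curve. Throughout, I exclude the trivial degenerate case in which $\ggamma$ reduces to a point, so that $\diam(\ggamma)>0$.

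First I would parametrize $\ggamma$ by arc length as a map $\ggamma\colon \R/L\Z\to\R^2$, where $L:=\mathcal{H}^1(\ggamma)$, and write $T:=\ggamma'$, so that $|T|\equiv 1$ and, by the arc-length parametrization, $|T'(s)|=|\kappa_{\ggamma}(s)|$ for all $s$. Fixing a base point $\ggamma(s_0)$ and setting $F(s):=\ggamma(s)-\ggamma(s_0)$, the diameter bound yields $|F(s)|\le \diam(\ggamma)$ for every $s$, while $F'=T$. The key observation is the elementary identity
\[
\frac{\mathrm{d}}{\mathrm{d}s}\big(F(s)\cdot T(s)\big)=|T(s)|^2+F(s)\cdot T'(s)=1+F(s)\cdot T'(s);
\]
since $s\mapsto F(s)\cdot T(s)$ is $L$-periodic, integrating over one period gives $\int_0^L F(s)\cdot T'(s)\,\mathrm{d}s=-L$. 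Hence
\[
L=\Big|\int_0^L F(s)\cdot T'(s)\,\mathrm{d}s\Big|\le \diam(\ggamma)\int_0^L|T'(s)|\,\mathrm{d}s=\diam(\ggamma)\int_{\ggamma}|\kappa_{\ggamma}|\,\mathrm{d}\mathcal{H}^1,
\]
i.e.\ $\int_{\ggamma}|\kappa_{\ggamma}|\,\mathrm{d}\mathcal{H}^1\ge L/\diam(\ggamma)$, which already settles the case $q=1$ (recall $L=\mathcal{H}^1(\ggamma)\ge \diam(\ggamma)$).

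For general $q\ge 1$ I would then apply H\"older's inequality on $\ggamma$ with exponents $q$ and $q/(q-1)$ (the case $q=1$ being trivial):
\[
\int_{\ggamma}|\kappa_{\ggamma}|\,\mathrm{d}\mathcal{H}^1\le \Big(\int_{\ggamma}|\kappa_{\ggamma}|^q\,\mathrm{d}\mathcal{H}^1\Big)^{1/q}L^{(q-1)/q}.
\]
Combining this with the lower bound just obtained gives $\big(\int_{\ggamma}|\kappa_{\ggamma}|^q\,\mathrm{d}\mathcal{H}^1\big)^{1/q}\ge L^{1/q}/\diam(\ggamma)$, hence $\int_{\ggamma}|\kappa_{\ggamma}|^q\,\mathrm{d}\mathcal{H}^1\ge L/\diam(\ggamma)^q$, and since $L\ge\diam(\ggamma)$ this is at least $(\mathrm{diam}\,\ggamma)^{1-q}$, as claimed.

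I do not expect any genuine obstacle here: the only points requiring care are the standard fact $|T'|=|\kappa_{\ggamma}|$ for an arc-length parametrization, the periodicity used to discard the boundary term in the integration by parts, and the harmless degenerate case $\diam(\ggamma)=0$. The inequality $L\ge\diam(\ggamma)$ invoked at the end holds simply because the length of a curve dominates the distance between any two of its points (in fact $L\ge 2\diam(\ggamma)$, since the two subarcs joining two diametral points each have length at least $\diam(\ggamma)$, but the weaker bound already suffices).
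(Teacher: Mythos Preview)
Your proof is correct and follows essentially the same approach as the paper: an arc-length parametrization combined with the integration-by-parts identity $L=-\int_0^L(\ggamma-\ggamma(s_0))\cdot\ddot\ggamma\,\mathrm{d}s$ (the paper phrases this as $\int|\dot\ggamma|^2=-\int\ggamma\cdot\ddot\ggamma$ after translating so that $\ggamma(0)=0$), followed by the bound $|\ggamma(s)-\ggamma(s_0)|\le\diam\ggamma$, H\"older's inequality, and finally $L\ge\diam\ggamma$. The only cosmetic difference is that you write the integration by parts via the derivative of $F\cdot T$, which is exactly the same computation.
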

\begin{proof}
Let $\ggamma=(\ggamma_1,\ggamma_2)\colon[0,L_{\ggamma}]\mapsto\mathbb{R}^2$ be an arc-length parametrization of $\ggamma$, where $L_{\ggamma}$ denotes the length of the curve. Without restriction, after a possible translation, we assume  that $\ggamma(0)=\ggamma(L_{\ggamma})=0$. Let also $s_0\in [0,L_{\ggamma}]$ be such that $|\ggamma(s_0)|=\|\BBB \ggamma \EEE \|_{L^{\infty}}$. Since $|\dot {\ggamma}|\equiv 1$ in this parametrization, by integration by parts and H\"older's inequality, we get
\begin{align*}
L_{\ggamma}&=\int_{0}^{L_{\ggamma}}|\dot{\ggamma}|^2\,\mathrm{d}s=-\int_{0}^{L_{\ggamma}}\ggamma\cdot\ddot{\ggamma}\, \mathrm{d}s\leq \|\ggamma\|_{L^\infty}\int_{0}^{L_{\ggamma}}|\ddot{\ggamma}|\,\mathrm{d}s\\
&=|\ggamma(s_0)-\ggamma(0)|\int_{0}^{L_{\ggamma}}|\kappa_{\ggamma}|\, \mathrm{d}s
\leq \mathrm{diam}\ggamma\cdot L_{\ggamma}^{1-1/q}\left(\int_{0}^{L_{\ggamma}}|\kappa_{\ggamma}|^q\, \mathrm{d}s\right)^{1/q}\,.
\end{align*}
This, along with the obvious fact that $L_{\ggamma}\geq \mathrm{diam}\ggamma$ for every closed curve $\ggamma$\AAA, \EEE concludes the proof. 
\end{proof}

We proceed with the proof of Lemma   \ref{lemma: curve graph}.

\begin{proof}[Proof of Lemma \ref{lemma: curve graph}]
Clearly, $\partial E \cap Q_{8\rho}$ can be written as a finite union of pairwise disjoint curves $(\ggamma_i)_{i=1}^N$. We denote by $(\ggamma_i)_{i=1}^M$ the subset of those curves intersecting $Q_{3\rho}$. It suffices to establish the desired properties for one curve only, denoted by $\ggamma$ for simplicity. Additionally, we show that
\begin{align}\label{eq: length curve}
\mathcal{H}^1(\ggamma \cap Q_{8\rho}) \ge \rho\,.
\end{align}
\BBB The \EEE latter, along with \OOO the assumption that \EEE  $\mathcal{H}^1(\partial E\cap Q_{8\rho})\leq \Lambda\rho$, shows that $M \le \Lambda$.

Without restriction, we let ${\ggamma\OOO=(\ggamma_1,\ggamma_2)\EEE}\colon [0,L_{\ggamma}]\mapsto\mathbb{R}^2$ be an arc-length parametrization of $\ggamma$, where $L_{\ggamma}$ denotes the length of the curve. Let $L := {\ggamma}(0) + \R \dot{\ggamma}(0)$. Without restriction, up to an isometry we suppose that $L = \R (1,0)$, \OOO  i.e., $\ggamma(0)=0$, $\dot{\ggamma}(0)=(1,0)$,  \EEE for notational convenience. As $\int_{\partial E\cap Q_{8\rho}}|\bm{A}|\, {\rm d}\mathcal{H}^1\leq \varepsilon $, we get   \EEE that  
\begin{align*}
\text{$\OOO|\EEE\dot{\ggamma}(s) -\dot{\ggamma}(0)\OOO| \EEE = \OOO\Big|\EEE\int_0^s \ddot{\ggamma}(t) \, {\rm d}t\OOO\Big|\leq \int_{0}^{L_{\ggamma}}|\ddot\gamma(t)|\, {\rm d}t\EEE=\int_{\ggamma}|\kappa_{\ggamma}|\, {\rm d}\mathcal{H}^1\EEE\le \eps \le \eps_0$  \quad for all $s \in [0,L_{\ggamma}]$.}
\end{align*}
Thus,  provided that $\eps_0$ is chosen sufficiently small, we get $\dot{\ggamma}_1 \ge 1/2$ and $|\dot{\ggamma}_2| \le \eps$.  Consequently, $\ggamma$ is the graph of a \BBB regular \EEE function $u \colon \AAA \overline{U}\EEE \to L^\perp$ for \AAA an open \EEE segment $ U \subset L$ containing $\ggamma(0) \BBB = 0 \EEE$ satisfying $u(\ggamma(0)) = u'(\ggamma(0)) = 0$, more precisely $u(x) = \ggamma_2( \ggamma_1^{-1}(x))e_2$. This implies \EEE  $u' = \dot{\ggamma_2}/\dot{\ggamma}_1e_2$ and thus \EEE $\Vert u' \Vert_\infty \le 2\eps$. \EEE 

Then, \AAA switching back to a general line $L$ in $\R^2$, \EEE the fundamental theorem of calculus along with the fact that $u(\ggamma(0)) = 0$ and that \OOO  $\mathcal{H}^1(U)=\mathrm{diam}(U)\le 8\sqrt{2}\rho$ \EEE yields $\Vert u \Vert_\infty \le \OOO\Vert u' \Vert_\infty\EEE\mathcal{H}^1(U)\le C_1\eps\rho$. It remains to show \eqref{eq: length curve}. In fact, by $\gamma\cap Q_{3\rho}\neq \emptyset$ and $\Vert u \Vert_\infty \le C_1\eps\rho$, provided that $\eps_0\OOO>0\EEE$ is small enough, we \AAA have \EEE that $L \cap Q_{4\rho} \neq \emptyset$. \EEE Therefore, $\mathcal{H}^1(L \cap Q_{6\rho}) \ge \rho$,  \EEE which along with $\Vert u \Vert_\infty \le C_1\eps\rho$  \OOO implies \EEE the estimate. \end{proof}
%and  \RRR $u'' = \ddot{\ggamma_2} - \ddot{\ggamma}_1/\dot{\ggamma}_1^3$. {\tt not needed} \EEE

\subsection{Lemma\OOO ta \EEE on good and bad planes}\label{appendix_b}

In this subsection, we give  the proofs of Lemmata~\ref{lemma:thetagood}--\ref{lemma: bad plane}. Let $\OOO 0<\EEE\theta < 1/\sqrt{3}$.  \EEE Without restriction, let $Q_\rho$ be the cube centered at $0$, and let $L$ be a plane with normal $\AAA\nu_L:\EEE=\nu=(\nu_1,\nu_2,\nu_3) \in \mathbb{S}^2\EEE$ such that   $(L)_{3\eta\rho} \cap Q_\rho \neq \emptyset$, see \eqref{eq: thick-def}. Before we start with the proofs, we observe the following elementary property: suppose that there exists $k \in \lbrace 1,2,3 \rbrace$  such that  $|\nu_j| \le \theta$ for both $j \neq k$. Then,  we get
\begin{align}\label{eq: goodprep}
|(x-y) \cdot e_k| \le 18\theta\rho \quad \quad \text{for all $x,y \in L\cap Q_{3\rho}$.} 
\end{align} 
Indeed, suppose without restriction \OOO(up to an appropriate reflection if necessary) \EEE that  $\nu_1> \theta$ and $|\nu_2|,|\nu_3|\leq \theta$. Thus, we have $\nu_1 \ge \sqrt{1-2\theta^2}$, and an elementary computation yields
\begin{equation*}
|e_1 - \nu|^2 \le 2\theta^2 + (1- \sqrt{1-2\theta^2})^2 \le 4\theta^2
\end{equation*}
\BBB as \EEE $0<\theta \le 1/\sqrt{2}$. Then, there exists $R_\nu \in SO(3)$ with $R_\nu \nu = e_1$ such that   $|R_\nu - {\rm Id}|^2 = 3|e_1 - \nu|^2 \le 12\theta^2$, i.e., $|R_\nu - {\rm Id}| \le 2\sqrt{3}\theta$. 
% \OOO(Indeed, this can be achieved by considering $R_\nu\in SO(3)$ which sends the positively-oriented orthonormal triple of vectors $\{\nu,\tau_1,\tau_2\}$ (where $\{\tau_1,\tau_2\}$ is an orthonormal basis for $L$ with respect to $\nu$) to the standard Euclidean basis $\{e_1,e_2,e_3\}$, preserving this order, so that by mutual orthogonality of the corresponding vectors, $\nu\cdot e_1=\tau_1\cdot e_2=\tau_2\cdot e_3$)\EEE.
 We fix two  arbitrary points $x,y \in L\cap Q_{3\rho}$, and observe that $(x-y) \cdot \nu = 0$. Therefore,   we  compute
\begin{align*}
|(x-y) \cdot e_1| = |(x-y) \cdot R_\nu \nu| \le |(x-y) \cdot \nu| + |x-y||R_\nu - {\rm Id}| \le 2\sqrt{3}\theta |x-y| \le 18\theta \rho\,,
\end{align*}
where in the last step we used \OOO that \EEE $x,y \in Q_{3\rho}$, and therefore $|x-y| \le 3\sqrt{3}\rho$.

\begin{proof}[Proof of Lemma \ref{lemma:thetagood}]
The main step of the proof consists in showing the following statement:
There exists $\theta \in (0,1/\sqrt{3})$ small enough and a constant $C_\theta>0$ such that for any $Q_\rho$  and any $\theta$-good plane $L$ \OOO  for $Q_\rho$  \EEE the following holds: given a function  $v\NNN\in L^\infty(\AAA V \EEE;L^\perp)\EEE$   for some bounded domain $\MMM L\cap Q_\rho \subset \AAA V \EEE \subset L$ \AAA and \EEE $\Vert v \Vert_{L^\infty(\AAA V \EEE)} \le 3\eta\rho$, for all $\rho \le r \le \OOO(1+ 6\OOO\eta)\EEE\rho$ we get that
\begin{align}\label{eq: achivi}
\mathcal{H}^2\big( \omega_v^r  \triangle (L \cap Q_\rho)   \big) \le  \MMM  C_\theta \EEE \eta\rho^2  \,,
\end{align}
where $\omega^r_v := \Pi_L\AAA\big(\EEE\mathrm{graph}(v)\cap {Q_r}\EEE \AAA\big)\EEE$ and $\Pi_L$ denotes the orthogonal projection onto the plane $L$.\\[4pt]
\begin{step}{1}(Reduction to \eqref{eq: achivi})
In fact, once \eqref{eq: achivi} has been  \OOO shown, \EEE the statement can be derived as follows: \OOO \eqref{ineq:area_of_domains_of_parametrization} \EEE is immediate from \eqref{eq: achivi}. For \eqref{eq: goodtheta1}, observe that ${(\partial^{-}S_L)}_{\rm int} := \partial^- S_L \cap {\rm int}(Q_{(1+ 6\eta)\rho})\EEE$ can be expressed  as the graph of the \OOO  constant \EEE function $z \colon L\to L^\perp$ given by $z \equiv 3\eta\rho\nu$, i.e., ${(\partial^{-}S_L)}_{\rm int}  = {\rm graph}(z) \cap {\rm int}(Q_{(1+ 6\eta)\rho})=\omega^{(1+6\eta)\rho}_z+3\eta\rho\nu$\EEE. Then, \AAA by \eqref{eq: achivi} \EEE  we obtain
\begin{align*}
\mathcal{H}^2({(\partial^{-}S_L)}_{\rm int}) & = \mathcal{H}^2(\omega_z^{(1+ 6\eta)\rho}) \le \mathcal{H}^2\big( \omega_z^{(1+ 6\EEE\eta)\rho}  \triangle (L \cap Q_\rho)   \big)  + \mathcal{H}^2(L \cap Q_\rho)\le  \OOO\mathcal{H}^2(L \cap Q_\rho)+C_\theta \eta\rho^2\,.
\end{align*}  
As the normal vector is \OOO constant equal to  \EEE $\pm \nu$ \OOO  both on  \EEE ${(\partial^{-}S_L)\EEE}_{\rm int}$ and $L \cap Q_\rho$, we also get 
\OOO
\begin{equation*}
\mathcal{H}^2_\varphi({(\partial^{-}S_L)\EEE}_{\rm int}) \le  \mathcal{H}^2_\varphi(L \cap Q_\rho)+C_\theta\OOO\varphi_{\mathrm{max}} \EEE \eta\rho^2 ,
\end{equation*}
\EEE where we recall the notation in \eqref{eq: ani-per}. Consequently, to conclude the argument, we need to check that 
\begin{align}\label{eq: goodtheta1XXX}
\mathcal{H}^2\big( \partial^- S_L \setminus {(\partial^{-}S_L)}_{\rm int} \big) \le C_\theta \eta\rho^2\,.  
\end{align}
Then, \eqref{eq: goodtheta1} indeed follows. % \RRR (Recall that we  assumed without restriction that $\varphi_{\mathrm{min}} =1$, hence $\varphi_{\mathrm{max}}\geq 1$.)  M.: I do not understand why the sentence in brackets is needed.\EEE  
To this end, note that \OOO the set  \EEE $ \partial^- S_L \setminus {(\partial^{-}S_L)}_{\rm int}$ consists of the six (possibly empty) sets  $(L)_{3\eta\rho} \AAA\cap \partial Q_{(1+ 6\eta)\rho}\EEE \cap \lbrace \pm x_k = \frac{1}{2}(1+ 6\EEE\eta)\rho \rbrace$, for $k \in \lbrace 1,2,3\rbrace$. We derive the estimate only for one of these sets. Without restriction let \EEE $W:= (L)_{3\eta\rho}\cap \AAA\partial Q_{(1+ 6\eta)\rho}\EEE \cap \lbrace  x_3 = \frac{1}{2}(1+ 6\EEE\eta)\rho \rbrace$ and suppose \OOO  that  \EEE $W \neq \emptyset$. First, provided that $\eta_0$ is chosen small with respect to $\theta$, we note that this set is nonempty only if  \eqref{eq: assu-bad plane} for $k=3$ does not hold. Therefore, as $L$ is a $\theta$-good plane, we   necessarily  \OOO have  \EEE  $|\nu_1| \ge \theta$ or $|\nu_2| \ge \theta$ \OOO  and thus \EEE $|\nu_3| \le \sqrt{1-\theta^2}$.\\[-7pt]

Let \BBB $W_t:= W \cap \lbrace x\colon (x-x_0) \cdot \nu = t \rbrace$ for some arbitrary $x_0 \in L$. \EEE Note that $\mathcal{H}^1(W_t) = 0$ for $|t|>3\eta\rho$ and $\mathcal{H}^1(W_t) \le \sqrt{2}(1+ 6\EEE\eta)\rho$.    Then, by the coarea formula (see \cite{maggi2012sets}, formula (18.25), applied \EEE with slicing direction $\nu$ in place of $e_n$ and $e_3$ as unit normal to the surface $W$) \EEE we get
\begin{align*}
\sqrt{1-(\nu\cdot e_3)^2}\ \mathcal{H}^2(W)&=\int_{W}\sqrt{1-(\nu\cdot e_3)^2}\, \mathrm{d}\mathcal{H}^2=\int_{\mathbb{R}}\mathcal{H}^1(W_t)\, {\rm d}t\leq 6\eta\rho \cdot \sqrt{2}(1+ 6\EEE\eta)\rho \le C\eta\rho^2\,.
\end{align*}
By using \OOO the  fact that  \EEE $\sqrt{1-(\nu\cdot e_3)^2} \ge \theta$ and by repeating the estimate for all six sets, we indeed get \eqref{eq: goodtheta1XXX}. A similar argument shows that
\begin{align}\label{eq: simila}
\mathcal{H}^2\big( L  \cap  (\overline{Q_{(1+12\eta)\rho}} \setminus Q_\rho) \big) \le C_\theta\eta\rho^2\,. 
\end{align}
We omit the details.
\end{step}\\
\begin{step}{2}(Proof of \eqref{eq: achivi}, preparations)
Let us now show \eqref{eq: achivi}. For convenience, we extend $v$ to a function $w$ defined on $L \cap \overline{Q_{(1+12\eta)\rho}}$ satisfying $\Vert w \Vert_\infty \le 3\eta\rho$. It suffices to show \OOO  that  \EEE for all $\rho \le r \le (1+ 6\EEE\eta)\rho$
\begin{align}\label{eq: achivi22}
\mathcal{H}^2\big( \omega_w^r  \triangle (L \cap Q_\rho)   \big) \le C_\theta \eta\rho^2, 
\end{align}
as then the statement readily follows from the fact that 
\begin{equation*}
\omega_v^r\triangle (L \cap Q_\rho)\subset \big(\omega_w^r\triangle (L \cap Q_\rho)\big)\cup \big((L \cap \overline{Q_{(1+12\eta)\rho}})\setminus {V}\EEE\big)
\end{equation*}
and that  \EEE by \eqref{eq: simila} \MMM  and $L\cap Q_\rho \subset {V}\EEE$ \EEE we have 
$$\mathcal{H}^2\big((L \cap \overline{Q_{(1+12\eta)\rho}}) \setminus {V}\EEE \big) \le  \MMM C_\theta \EEE \eta\rho^2 \,.$$ 
We start with the observation that, in view of $\Vert  w \Vert_{\infty} \le 3\eta\rho$,   for all $\rho \le r \le (1+6\EEE\eta)\rho$    it holds that 
\begin{equation}\label{set_inclusions}
L\cap \overline{Q_{(1-6\eta)\rho}}\subset \overline{\omega^r_w} \EEE \subset L\cap \overline{Q_{(1+12\eta)\rho}}\,.
\end{equation}
%\RRR(K: overline in $\omega_w^r$ causes some disturbances later, see A.14, I would remove it all over) \EEE
\NNN Indeed, to see the left inclusion, for each $x \in L\cap \BBB \overline{Q_{(1-6\eta)\rho}}\EEE$ and every $i \in \lbrace 1,2,3\rbrace$ we  estimate
\begin{equation*}
\left|(x+w(x))\cdot e_i\right|\leq |x\cdot e_i|+\|w\|_{\infty}\leq  \frac{(1-6\eta)\rho}{2}+3\eta\rho = \frac{\rho}{2}\AAA\leq \frac{r}{2}\EEE\,.
\end{equation*}
To see  the right inclusion, for every $\rho\leq r\leq (1+6\eta)\rho$ and  $x\in \overline{\omega^r_w}$,  we estimate  for $i \in \lbrace 1,2,3\rbrace$ 
$$|x_i|\leq |(x+\AAA w\EEE(x)) \cdot e_i|+\|\AAA w\EEE\|_{L^\infty}\leq \frac{r}{2}+3\eta\rho\leq \frac{(1+12\eta)\rho}{2}\,.$$
\EEE
%Indeed, given $x \in L\cap \BBB \overline{Q_{\lambda}}\EEE$ for $0<\lambda \le (1+ 6\EEE\eta)\rho$, then for every $i \in \lbrace 1,2,3\rbrace$ we  estimate
%\begin{equation*}
%\left|(x+w(x))\cdot e_i\right|\leq |x\cdot e_i|+\|w\|_{\infty}\leq  \frac{\lambda}{2}+3\eta\rho\,.
%\end{equation*}
%Consequently, $x+w(x)\in \mathrm{graph}(w)\cap \overline{Q_{\lambda + 6\eta\rho}}$. Choosing $\lambda = (1-6\eta)\rho$ \AAA we obtain the left inequality. The one on the right follows similarly, since for every $x\in \overline{\omega^r_w}$, $\rho\leq r\leq (1+6\eta)\rho$ we can trivially estimate that for every $m=1,2,3$
%$$|x_m|\leq |(x+u(x))_m|+\|u\|_{L^\infty}\leq \frac{r}{2}+\frac{3}{2}\eta\rho\leq \frac{(1+12\eta)\rho}{2}.$$ 
For notational convenience, we let   $\Sigma^r_w:=\omega^r_w \triangle  (L\cap Q_\rho)$. We treat the two possible cases in the definition of $\theta$-good planes separately. 
\end{step}\\
\begin{step}{3}(Proof of \eqref{eq: achivi22}, Case (1))  Let $L$ be a $\theta$-good plane belonging to \BBB Case~(1) in \EEE   \OOO Definition~\ref{theta_good_definition}. \EEE Without restriction we suppose that  ${\rm argmin}_{i=1,2,3}|\nu_i| = 3$. This implies that $|\nu_3|\leq 1/\sqrt 3$ and    $|\nu_1|,|\nu_2|\geq \theta$. For $t \in \R$ and $\rho \le r \le (1+ 6\EEE\eta)\rho$, we introduce the sets %\RRR(K: check what is happening with the overlines in the next up to A.8)\EEE
\begin{align*}
%\label{def:sections}
Q_{r}^t:={Q_{r}}\cap \lbrace x_3 =t\rbrace,\ \ \ \omega^{\OOO r, \EEE t}:=\BBB {\overline{\omega^r_w}} \EEE \cap \lbrace x_3 =t\rbrace\ \ \mathrm{and}\ \ L^{t}:=(L\cap \overline{Q_\rho})\cap \lbrace x_3 =t\rbrace\,.
\end{align*}
By \OOO the fact that  \EEE $|\nu_3|\leq 1/\sqrt 3$, the second inclusion in \eqref{set_inclusions},  and by the coarea formula      we have for  $\BBB \rho_\eta\EEE := (1+12\eta)\rho$ that % \RRR M.: Here I changed some overlines and I wrote blue inequality, just to be on the safe side\EEE
\begin{align}\label{eq: firstiX}
\hspace{-1em}\sqrt{\frac{2}{3}} \mathcal{H}^2(\Sigma^r_w)& \le \int_{\Sigma^r_w}\hspace{-0.2em}\chi_{{Q_{\rho_\eta}}}\sqrt{1-(\nu\cdot e_3)^2}\, \mathrm{d}\mathcal{H}^2=\int_{\mathbb{R}}\hspace{-0.2em}\mathcal{H}^1(\Sigma^r_w\cap Q_{\rho_\eta}^t)\, {\rm d}t \le \EEE \int_{-\rho_\eta/2}^{\rho_\eta/2}\hspace{-0.2em}\mathcal{H}^1(\omega^{\OOO r, \EEE t}\triangle L^t)\,{\rm d}t\,,
\end{align}
\BBB where we use that $\nu$ is a unit normal to $\Sigma^r_w$. \EEE 
We now proceed with estimating $\mathcal{H}^1(\omega^{\OOO r, \EEE t}\triangle L^t)$ for $|t| \le \rho_\eta/2$. To this end, fixing some $z \in L$, we first introduce a parametrization of the \OOO one dimensional \EEE sets $\omega^{\OOO r, \EEE t}$ and $L^t$. First, for $\OOO s \EEE \in \R$   we introduce 
\begin{equation}\label{eq bbb} 
X^t(\OOO s\EEE) := \left(\OOO s\EEE, -\frac{\nu_1}{\nu_2}\OOO s\EEE+b_{t,\nu},t\right),\ \mathrm{where}\ \  b_{t,\nu}:=\frac{z\cdot\nu-t\nu_3}{\nu_2},\,
\end{equation}
and we observe that $L \cap \lbrace x_3=t\rbrace = \lbrace X^t(\BBB s \EEE )\colon \BBB s \EEE \in \R\rbrace$ since $X^t(\OOO s \EEE )\cdot \nu = z\cdot \nu$.  Thus, for \EEE  $|t| \le \rho/2$  it holds that 
\begin{equation*}
L^{t}=\left\{X^t( \BBB s \EEE )\colon \BBB s \EEE \in I^t_L\right\}\,,\ \text{where } I^{t}_L:=\left[ -\frac{\rho}{2},\frac{\rho}{2}\right]\cap\left[ -\frac{|\nu_2|}{|\nu_1|}\frac{\rho}{2}+\frac{\nu_2}{\nu_1}b_{t,\nu},\frac{|\nu_2|}{|\nu_1|}\frac{\rho}{2}+\frac{\nu_2}{\nu_1}b_{t,\nu}\right]
\end{equation*}
and $L^t = \emptyset$ for $|t|>\rho/2$. In a similar fashion, we obtain $\omega^{\OOO r, \EEE t} = \emptyset$ for $|t|>\rho_\eta/2$ and for $|t| \le \rho_\eta/2$ we get
$\omega^{\OOO r, \EEE t} =\{X^t(\OOO s\EEE) \colon \OOO s \EEE \in {I}^{r,t}_\omega \EEE \}$, where 
\begin{equation*}
I^{r,t}_\omega := \left\{ \OOO s  \EEE \colon \, |\OOO s\EEE+w_1(X^{\AAA t \EEE}(\OOO s\EEE))|\leq \frac{r}{2},\ \left|\left(-\frac{\nu_1}{\nu_2}{\OOO s \EEE }+b_{\AAA t \EEE,\nu}\right)+w_2(X^{\AAA t \EEE}(\OOO s\EEE))\right|\leq\frac{r}{2},\  \AAA\left|t+w_3(X^{\AAA t \EEE}(\AAA s))\right|\leq\frac{r}{2}\EEE\right\}
\end{equation*}
where $w_k$ denotes the $k$-th component of $w$.  Here, we have again used the second inclusion in \EEE \eqref{set_inclusions}. By the area formula we get \EEE 
\begin{equation*}
\mathcal{H}^1(\omega^{r,t}\triangle L^{t}) \le \sqrt{1 +\Big(\frac{\nu_1}{\nu_2}\Big)^2}\,\mathcal{H}^1(I^{r,t}_\omega \EEE \triangle I^{t}_L) \text{\ \ for \ all\ } |t|\le \rho_\eta/2\,.
\end{equation*}
Then, \EEE from \eqref{eq: firstiX} and the fact that   $|\nu_1|\leq 1, |\nu_2|\geq \theta$ we derive 
\begin{align}\label{eq: firstiXX}
 \mathcal{H}^2(\Sigma^r_w)&\le \sqrt{\frac{3}{2}} \int_{-\frac{\rho_\eta}{2}}^{\frac{\rho_\eta}{2}}\mathcal{H}^1(\BBB \omega^{r,t} \EEE \triangle L^t)\,{\rm d}t \le \frac{\sqrt{{3}} }{\theta} \int_{-\frac{\rho_\eta}{2}}^{\frac{\rho_\eta}{2}}\mathcal{H}^1(I^{r,t}_\omega \EEE \triangle I^{t}_L)\,{\rm d}t\,.
\end{align}
A careful inspection of the definition of $I^{r,t}_\omega$ and $I^t_L$ implies that \EEE
\begin{align}\label{ineq:ItildawithoutI}
\mathcal{H}^1( I^{r,t}_\omega\setminus I^t_L)\leq \begin{cases} (r - \rho) + 2\|w\|_\infty + \frac{|\nu_2|}{|\nu_1|}\big( (r-\rho) +  2\|w\|_\infty   \big),&\text{if } |t|\leq \frac{\rho}{2}\,;\\
r + 2\Vert w \Vert_\infty,&\text{if } \text{$\frac{\rho}{2}<|t| \le \frac{\rho_\eta}{2}$\,,}
\end{cases}
\end{align}
\MMM as well as \EEE
\begin{align}\label{ineq:IwithoutItilda}
\mathcal{H}^1( I^t_L\setminus I^{r,t}_\omega) \leq \begin{cases} 2\|w\|_\infty + \frac{|\nu_2|}{|\nu_1|}   2\|w\|_\infty  ,  & \text{if } |t|\leq \frac{\rho}{2}-\|w\|_\infty\,;\\
\rho,  & %\text{if } 
\text{\OOO if } \ \OOO\text{$\frac{\rho}{2}-\|w\|_{\infty}<|t|\leq \frac{\BBB \rho}{2}$.\EEE}
\end{cases}
\end{align}
Combining \eqref{ineq:ItildawithoutI}--\eqref{ineq:IwithoutItilda} and \MMM using \EEE the fact that \EEE  $|\nu_1|\geq \theta$, \MMM $|\nu_2|\leq1$,  \EEE  and $r\leq \rho_\eta$ \EEE   we get 
\begin{align*}
\int_{-\frac{\rho_\eta}{2}}^{\frac{\rho_\eta}{2}}\mathcal{H}^1(\OOO I^{r,t}_\omega\triangle I^t_L\EEE)\,{\rm d}t  & \le  (2\rho - 2\Vert w\Vert_\infty)\Big( (\rho_\eta - \rho) + 2\|w\|_\infty + \frac{1}{\theta}\big( (\rho_\eta-\rho) +  2\|w\|_\infty  \big)\Big)    \\ & \ \ \  + (\rho_\eta-\rho) (\rho_\eta + 2\Vert w \Vert_\infty) + \BBB 2\Vert w \Vert_\infty \EEE \rho\,,
\end{align*}
and, since  $\|w\|_\infty \leq \OOO 3\EEE\eta\rho$ and $\rho_\eta =  (1+12\eta)\rho$, we conclude by recalling  \eqref{eq: firstiXX} that
$$
\mathcal{H}^2\big(\omega^r_w \triangle (L\cap Q_\rho)\big) =  \mathcal{H}^2(\Sigma^r_w)  \leq C_\theta \eta\rho^2 $$
for a constant $C_\theta>0$. This concludes the proof of \eqref{eq: achivi22} in \BBB Case~(1). \EEE
\end{step}\\[3pt]
\begin{step}{4}(Proof of \eqref{eq: achivi22}, Case (2)) 
 Let now $L$ be a $\theta$-good plane \OOO for $Q_{\rho}$ \EEE belonging to Case~(2) in  \OOO Definition \ref{theta_good_definition}, \EEE  i.e., \OOO there exists $k\in \{1,2,3\}$ such that \EEE  $|\nu_k| \geq \theta$ and  
\begin{align}\label{eq: wdh}
\mathrm{dist}\big(L \cap Q_{3\rho}, \lbrace x_k = - \rho/2 \rbrace \cup \lbrace x_k = \rho/2 \rbrace \big) \geq 20\theta\rho\,.
\end{align}
Without restriction, we suppose that $k=3$ and that $|\nu_1|,|\nu_2|< \theta$ as otherwise Case~(1) of the definition applies. 
We start by observing that  \eqref{set_inclusions} yields the estimate
\begin{align}\label{ineq:set_incl}
\begin{split}
\mathcal{H}^2\big(\omega^r_w\triangle (L\cap Q_\rho)\big)&\leq \mathcal{H}^2\big((L\cap \OOO\overline{Q_{(1+12\eta)\rho}}\EEE)\setminus (L\cap Q_\rho)\big)+ \mathcal{H}^2\big((L\cap Q_\rho)\setminus (L\cap \OOO\overline{Q_{(1-6\eta)\rho}})\big)%\notag
\\
&= \mathcal{H}^2\big(L\cap \OOO{\overline{Q_{(1+12\eta)\rho}}}\EEE\big)-\mathcal{H}^2\big(L\cap \OOO\overline{Q_{(1-6\eta)\rho}}\EEE\big)\,.
\end{split}
\end{align}
for $\rho \le r \le (1+ 6\EEE\eta)\rho$.  In view of \eqref{eq: wdh}  \BBB and the fact that  $\dist(L,Q_\rho) \le 3\eta \rho$  by definition, \EEE \eqref{eq: goodprep} implies for $\eta_{\AAA 0\EEE}$ small with respect to $\theta$ that 
\begin{align}\label{eq: graph-loc}
L \cap \big( [-r/2,r/2]^2  \times \R \big) \subset \overline{Q_r} \quad \text{for all $(1- 6\eta)\rho \le \EEE r \le 3\rho$}\,.  
\end{align}
Let us denote by $h_L\colon \R^2 \to \R$ the affine function with ${\rm graph}(h_L) = L$. Observe that $\nabla h_L \equiv (- \nu_1/\nu_3,-\nu_2/\nu_3)$ and therefore
\begin{align}\label{eq: constdervi}
\sqrt{1 + |\nabla h_L|^2} =  1/|\nu_3| \,.
\end{align} 
Now,  by the area formula, \BBB \eqref{eq: graph-loc}, \EEE \OOO and \eqref{eq: constdervi}  \EEE we find %\RRR(K: should remove also the bars here?) \EEE
\begin{align*}
\mathcal{H}^2(L\cap \OOO\overline{Q_{(1+12\eta)\rho}}\EEE) =  \int_{ (-\rho/2-6\eta\OOO\rho\EEE,\ \rho/2+6\eta\OOO\rho\EEE)^2  }\sqrt{1+|\nabla h_L|^2}\, {\rm d}\mathcal{H}^2= \OOO\frac{(1 + 12\eta)^2}{|\nu_3|}\EEE\rho^2\,,
\end{align*}
and in a similar fashion 
\begin{align*}
\mathcal{H}^2(L\cap \OOO\overline{Q_{(1-6\eta)\rho}})&= \OOO\frac{(1 -6\eta)^2}{|\nu_3|}\EEE\rho^2\,.
\end{align*}
Combining the previous two  \OOO equalities  with \eqref{ineq:set_incl}, \EEE we conclude
$$\mathcal{H}^2\big(\omega^r_w\triangle (L\cap Q_\rho)\big) \le \big( (1 + 12\eta)^2\rho^2 - (1-6\eta)^2\rho^2 \big) /|\nu_3| \le C_\theta\eta\rho^2 $$
for a constant $C_\theta >0$ depending \OOO only  \EEE on $\theta$, where in the last step we used that $|\nu_3|\geq \sqrt{1-2\theta^2}$. This concludes the proof of \eqref{eq: achivi22}.  
\end{step}
\end{proof}

\begin{proof}[Proof of Lemma \ref{lemma: bad plane}]
Let $L$ be $\theta$-bad plane \BBB for $Q_\rho$. \EEE Let $k \in \lbrace 1,2,3\rbrace$ be  such that \OOO $|\nu_k|\ge \theta$ and  \EEE $|\nu_j| \le \theta$ for $j \neq k$. Since  \eqref{eq: assu-bad plane} does not hold, we get $\mathrm{dist}\big(L \cap Q_{3\rho},  \lbrace x_k = \pm\rho/2\rbrace    \big) < 20\theta\rho$,  where $\pm$ is a placeholder for $+$ or $-$.  Thus, we find $x_0 \in L \cap Q_{3\rho}$ such that $ |(x_0 \pm \rho/2) \cdot e_k|<20\theta\rho$. This along with \eqref{eq: goodprep} shows that $ |(x \pm \rho/2) \cdot e_k| < 38\theta\rho$ for all $x \in L \cap Q_{3\rho}$. For $\theta \BBB < \EEE 1/152$, we obtain the statement. 
\end{proof}

\subsection{\EEE Rigidity estimate on cubic sets}\label{appendix_c}

\AAA Here, we \EEE give the proof of Proposition  \ref{lemma:chain}. Recall the notation introduced in \eqref{eq: r-cubic set}.

\begin{proof}[Proof of Proposition  \ref{lemma:chain}] 
\QQQ We give the argument in detail for \eqref{eq: RRR}$\rm(i)$, and only sketch the proof for \eqref{eq: RRR}$\rm(ii)$, which can be derived along similar lines. \EEE For convenience, we drop the index $r$ and simply write $Q$ for cubes $Q \in \mathcal{Q}_r$. Let us fix $Q,Q' \in \mathcal{Q}_r(\OOO U\EEE)$ with $\mathcal{H}^{d-1}(\partial Q \cap \partial Q') >0$. By applying  \cite[Theorem 3.1] {friesecke2002theorem} for $y$ and $\mathrm{int}(Q)$ or ${\rm int}({Q \cup Q'})$\AAA, \EEE respectively,  there exist $R_Q, R_{Q,Q'} \in SO(d)\EEE$  such that
\begin{align}\label{ineq:Qi}
\int_{Q} |\nabla y-R_Q|^2\,\mathrm{d}x &\leq C\int_{Q} \mathrm{dist}^2(\nabla y, SO(d))\,\mathrm{d}x\,,\\
\int_{Q\cup Q'} |\nabla y-{R}_{Q,Q'}|^2\,\mathrm{d}x &\leq C\int_{Q\cup Q'} \mathrm{dist}^2(\nabla y, SO(d))\,\mathrm{d}x \label{ineq:QiQip1}
\end{align}
for  an absolute  constant $C>0$. Then,  \EEE due to \eqref{ineq:Qi} and \eqref{ineq:QiQip1}, we have
\begin{align*}
r^d|R_Q -{R}_{Q,Q'}|^2 = \int_{Q} |R_Q-{R}_{Q,Q'}|^2\,\mathrm{d}x &\leq 2\left( \int_{Q} |R_Q-\nabla y|^2\,\mathrm{d}x + \int_{Q\cup Q'} |{R}_{Q,Q'}-\nabla y|^2\,\mathrm{d}x\right)\\&\leq C\int_{Q\cup Q'} \mathrm{dist}^2(\nabla y,SO(d))\,\mathrm{d}x\,.
\end{align*} 
The same argument can be repeated with $Q'$ in place of $Q$ for a corresponding $R_{Q'} \in SO(d)$  \EEE to obtain an estimate on $|R_{Q'} -{R}_{Q,Q'}|^2$. Then, we obtain
\begin{align}\label{ineq:RiRip1}
r^d|R_Q-R_{Q'}|^2 \leq C\int_{Q\cup Q'} \mathrm{dist}^2(\nabla y,SO(d))\,\mathrm{d}x\,.
\end{align}
Based on this, we  \EEE compare $R_Q$ and $R_{Q'}$ for arbitrary $Q,Q' \in \mathcal{Q}_r(\OOO U\EEE)$, $Q \neq Q'$.  \EEE  We show that
\begin{align}\label{ineq:RiRj}
r^d\max_{Q,Q'}|R_Q-R_{Q'}|^2 \leq  C N  \EEE \int_{(\OOO U\EEE)^r} \mathrm{dist}^2(\nabla y,SO(d))\,\mathrm{d}x\,,
\end{align}
where for notational convenience we have set $N:=  \#  \mathcal{Q}_r(\OOO U\EEE) $.  \EEE 
To this end, we consider  $Q,Q'\in \mathcal{Q}_r(\OOO U\EEE)$, \OOO $Q\neq Q',$ \EEE and let $\{Q_0,\ldots,Q_M\} \subset \mathcal{Q}_r(\OOO U\EEE)$ be a \BBB simple \EEE path, i.e., $Q_0 =Q$, $Q_M=Q'$, $Q_i \neq Q_j$ for all $i\neq j$\AAA, \EEE and  \EEE  $\mathcal{H}^{d-1}(\partial Q_i \cap \partial Q_{i+1}) >0$ \OOO for all $i=0,\dots,M-1$. \EEE Here, we \BBB use \EEE that $(\OOO U\EEE)^r$ is connected. Clearly, we have $M\leq N$.  \EEE Then, due to  \eqref{ineq:RiRip1} \BBB and \OOO the Cauchy-Schwarz  \EEE  inequality,  \EEE  we obtain
\begin{align*}
r^d |R_Q-R_{Q'}|^2 &= r^d\big|\sum\nolimits_{i=0}^{M-1}(R_{Q_{i+1}}-R_{Q_i})\big|^2 \leq r^d M \sum\nolimits_{i=0}^{M-1} |R_{Q_{i+1}}-R_{Q_i}|^2\\& \leq  C N \sum\nolimits_{i=0}^{M-1} \int_{Q_i \cup Q_{i+1}}\mathrm{dist}^2(\nabla y,SO(d))\,\mathrm{d}x \leq C \BBB N  \EEE \int_{(\OOO U\EEE)^r}\mathrm{dist}^2(\nabla y,SO(d))\,\mathrm{d}x\,.
\end{align*}
As  \OOO the \EEE  choice of  the cubes $Q, Q'\in \mathcal{Q}_r(\OOO U)$ \EEE was  arbitrary, we indeed  \EEE get \eqref{ineq:RiRj}. We are now in the position to prove the statement  for $R=R_{Q^*}\in SO(d)$ for some arbitrary $Q^* \in \mathcal{Q}_r(\OOO U\EEE)$\EEE. Indeed, by using \eqref{ineq:Qi} and \eqref{ineq:RiRj} we have \EEE
\begin{align*}
\int_{(\OOO U\EEE)^r} |\nabla y -R|^2\,\mathrm{d}x &= \hspace{-0.2cm} \sum_{Q \in \mathcal{Q}_r(\OOO U\EEE)}\int_{Q} |\nabla y-R|^2\,\mathrm{d}x  \leq 2\hspace{-0.15cm}\sum_{Q \in \mathcal{Q}_r(\OOO U\EEE)}\left(\int_{Q} |\nabla y-R_Q|^2 \,\mathrm{d}x + r^d\max_{Q,Q'} |R_Q-R_{Q'}|^2\right)\\ 
&\leq 2C\sum\nolimits_{Q \in \mathcal{Q}_r(\OOO U\EEE)}\int_{Q} \mathrm{dist}^2(\nabla y,SO(d)) \,\mathrm{d}x +2N r^d\max_{Q,Q'} |R_Q-R_{Q'}|^2 \\
&\leq C\int_{(\OOO U\EEE)^r} \mathrm{dist}^2(\nabla y, SO(d)) \,\mathrm{d}x  + C N^2  \EEE \int_{(\OOO U\EEE)^r}\mathrm{dist}^2(\nabla y,SO(d)) \,\mathrm{d}x  \\
&\leq C N^2  \EEE   \int_{(\OOO U\EEE)^r}\mathrm{dist}^2(\nabla y,SO(d)) \,\mathrm{d}x \,.
\end{align*}
In view of $N = \# \mathcal{Q}_r(\OOO U\EEE)$, this concludes the proof of \eqref{eq: RRR}. It remains to observe that one can choose $R = {\rm Id}$ if there exists $Q \in \mathcal{Q}_r(\OOO U\EEE)$ with   $\mathcal{L}^d(Q \cap\lbrace \nabla y = {\rm Id}\rbrace) \ge cr^d$. Indeed, by \eqref{ineq:Qi} one gets $\mathcal{L}^d(Q \cap\lbrace \nabla y = {\rm Id}\rbrace) |R_Q-{\rm Id}|^2 \le C\int_{Q} \mathrm{dist}^2(\nabla y, SO(d))\,\mathrm{d}x$ and therefore \eqref{ineq:Qi} holds for ${\rm Id}$ in place of $R_Q$, for $C$ also depending on $c$. This, along with the fact that $R=R_{Q^*}\in SO(d)$ can be chosen for an arbitrary $Q^* \in \mathcal{Q}_r(\OOO U\EEE)$, concludes the proof \QQQ of \eqref{eq: RRR}$\rm{(i)}$. 

The proof of \eqref{eq: RRR}$\rm{(ii)}$ follows analogously, as a direct consequence of the following version of the Poincar{\' e} inequality on the cubic set $(U)^r$: 

\textit{In the setting of Proposition \ref{lemma:chain}, there exists an absolute constant $C>0$ (independent of $U$ and $r$) such that for every $v\in H^1((U)^r;\R^d)$ there exists a vector $b_v\in \R^d$ such that}
\begin{equation}\label{eq: Poincare_on_cubic_sets}
\textit{$r^{-2}\int_{(U)^r}|v(x)-b_v|^2\leq C(\#\mathcal{Q}_r(U))^2\int_{(U)^r}|\nabla v|^2\,\mathrm{d}x\,.$}
\end{equation}
Once \eqref{eq: Poincare_on_cubic_sets} is established, its application for  $v(x):=y(x)-Rx$ along with \eqref{eq: RRR}$\rm{(i)}$ implies \eqref{eq: RRR}$\rm{(ii)}$. For the proof of \eqref{eq: Poincare_on_cubic_sets}, note that for every $Q\in \mathcal{Q}_r$, Poincar\'e's  inequality in $Q$  gives a vector $b_Q\in \R^d$ for which
\begin{equation*}
r^{-2}\int_{Q}|v-b_Q|^2\,\mathrm{d}x\leq C\int_{Q}|\nabla v|^2\,\mathrm{d}x\,,
\end{equation*}
where $C>0$ is an absolute constant. The proof can then be performed following the same steps as the proof of \eqref{eq: RRR}$\rm{(i)}$ above, with the obvious adaptations.
\end{proof}

\subsection{Generalized special functions of bounded deformation}\label{sec: GSBD}

Let $U\subset \R^d$ be open. A function $v\in L^1(U;\R^d)$ belongs to the space of \emph{functions of bounded deformation},  denoted by $BD(U)$,   if 
 the distribution  $\mathrm{E}v := \frac{1}{2}( \mathrm{D}v + (\mathrm{D}v)^{\mathrm{T}} )$ is a bounded $\R^{d\times d}_{\rm sym}$-valued Radon measure on $U$,  
where $\mathrm{D}v=(\mathrm{D}_1 v,\dots, \mathrm{D}_d v)$ is the distributional differential.
For $v\in BD(U)$,  the jump set  $J_v$ is countably $\mathcal{H}^{d-1}$-rectifiable (in the sense of \cite[Definition~2.57]{Ambrosio-Fusco-Pallara:2000}) and  it holds  that $\mathrm{E}v=\mathrm{E}^a v+ \mathrm{E}^c v + \mathrm{E}^j v$, 
where $\mathrm{E}^a v$ is absolutely continuous with respect to $\mathcal{L}^d$, $\mathrm{E}^c v$ is singular with respect to $\mathcal{L}^d$ and such that $|\mathrm{E}^c v|(B)=0$ if $\mathcal{H}^{d-1}(B)<\infty$, while $\mathrm{E}^j v$ is concentrated on $J_v$. The density of $\mathrm{E}^a v$ with respect to $\mathcal{L}^d$ is denoted by $e(v)$. The space $SBD(U)$ is the subspace of all functions $v\in BD(U)$ such that $\mathrm{E}^c v=0$.

We now come to the definition of the space of \emph{generalized functions of bounded deformation}  $GBD(U)$ and of  \emph{generalized special functions of bounded deformation}  $GSBD(U)\subset GBD(U)$. These spaces have been introduced and investigated in \cite{DalMaso:13}. We first state the definition, see  \cite[Definitions~4.1 and 4.2]{DalMaso:13}.

\begin{definition}
Let $U\subset \R^d$ be a  bounded open set,  and let  $v\colon U\to \R^d$ be measurable. We introduce the notation  
$${
\Pi^\xi:=\{y\in \R^d\colon y\cdot \xi=0\},\ \ B^\xi_y:=\{t\in \R\colon y+t\xi \in B\} \ \ \ \text{ for any \AAA $B\subset \R^d$, \KKK $\xi \in \mathbb{S}^{d-1}$, $y\in \Pi^\xi$}\EEE\,,}
$$
and for every  $t\in B^\xi_y$ we let
\begin{equation*}
v^\xi_y(t):=v(y+t\xi),\qquad \widehat{v}^\xi_y(t):=v^\xi_y(t)\cdot \xi\,.
\end{equation*}
Then, $v\in GBD(U)$ \AAA iff \EEE there exists a nonnegative bounded Radon measure $\lambda_v$ on $U$ such that $\widehat{v}^\xi_y \in BV_{\mathrm{loc}}(U^\xi_y)$ for $\mathcal{H}^{d-1}$-a.e.\ $y\in \Pi^\xi$, and for every Borel set $B\subset U$ 
\begin{equation*}
\int_{\Pi^\xi} \Big(\big|\mathrm{D} {\widehat{v}}_y^\xi\big|\big(B^\xi_y\setminus J^1_{{\widehat{v}}^\xi_y}\big)+ \mathcal{H}^0\big(B^\xi_y\cap J^1_{{\widehat{v}}^\xi_y}\big)\Big) \, {\rm d}\mathcal{H}^{d-1}(y)\leq \lambda_v(B)\,,
\end{equation*}
where
$J^1_{{\widehat{\AAA v\EEE}}^\xi_y}:=\left\{t\in J_{{\widehat{\AAA v\EEE}}^\xi_y} : |[{\KKK\widehat{v}}\EEE_y^\xi]|(t) \geq 1\right\}$.
Moreover, \AAA $v$ \EEE belongs to $GSBD(U)$ \AAA iff \EEE $v\in GBD(U)$ and $\widehat{v}^\xi_y \in SBV_{\mathrm{loc}}(U^\xi_y)$ for 
every
%\BBB $\mathcal{H}^{d-1}$-a.e.\ 
$\xi \in \mathbb{S}^{d-1}$ and for $\mathcal{H}^{d-1}$-a.e.\ $y\in \Pi^\xi$.
\end{definition}
\EEE

Every $v\in GBD(U)$ has an \emph{approximate symmetric gradient} $e(v)\in L^1(U;\R^{d\times d}_{\rm sym})$ and an \emph{approximate jump set} $J_v$ which is still 
% countably $(\mathcal{H}^{d-1},d{-}1)$ rectifiable  
\BBB countably $\mathcal{H}^{d-1}$-rectifiable \EEE
(cf.~\cite[Theorem~9.1,  Theorem~6.2]{DalMaso:13}).   The notation for $e(v)$ and $J_v$, which is the same as that one in the $SBD$ case, is consistent: in fact, if $v$  lies in  $SBD(U)$, the objects coincide, up to  negligible sets of points with respect to $\mathcal{L}^d$ and $\mathcal{H}^{d-1}$, respectively. The subspace $GSBD^2(U)$ is  given by  
\begin{equation*}
GSBD^2(U):=\{v\in GSBD(U)\colon e(v)\in L^2(U;\R^{d\times d}_{\rm sym}),\, \mathcal{H}^{d-1}(J_v)<\infty\}.
\end{equation*}
If $U$ has Lipschitz boundary, for each $v\in GBD(U)$ the traces on $\partial U$ are well defined, see~\cite[Theorem~5.5]{DalMaso:13},   in the sense that for $\mathcal{H}^{d-1}$-a.e.\ $x \in  \partial U  $ there exists ${\rm tr}(v)(x) \in \R^d$ such that 
\begin{align*}
\lim_{\eps \to 0}\eps^{-d} \mathcal{L}^d\big(U \cap B_\eps(x)\cap \{|v-{\rm tr}(v)(x)|>\varrho\}\big)=0 \quad \text{ for all $\varrho >0$}.
\end{align*}
 We close this short subsection with a compactness result in $GSBD^2(U)$, see \cite[Theorem 1.1]{Crismale-Cham}.
 \begin{theorem}[$GSBD^2$ compactness]\label{th: GSDBcompactness}
 Let  $U \subset \R^d$  be an open, bounded set,  and let $(u_n)_{\AAA n\in \mathbb{N}\EEE} \subset  GSBD^2(U)$ be a sequence satisfying
$$ \sup\nolimits_{n\in \N} \big( \Vert e(u_n) \Vert_{L^2(U)} + \mathcal{H}^{d-1}(J_{u_n})\big) < + \infty.$$
Then, there exists a subsequence, still denoted by $(u_n)_{\AAA n\in \mathbb{N}\EEE}$, such that the set  $\omega_u \EEE := \lbrace x\in U\colon \, |u_n(x)| \to \infty \rbrace$ has finite perimeter, and  there exists  $u \in GSBD^2(U)$  with $u= 0$ on $\omega_u$ such that 
\begin{align*}
{\rm (i)} & \ \ u_n \to u \  \ \ \ \text{ in } L^0(U \setminus \omega_u; \R^d), \notag \\ 
{\rm (ii)} & \ \ e(u_n) \rightharpoonup e(u) \ \ \ \text{ weakly  in } L^2(U \setminus \omega_u; \R^{d \times d}_{\rm sym}),\notag \\
{\rm (iii)} & \ \ \liminf_{n \to \infty} \mathcal{H}^{d-1}(J_{u_n}) \ge \mathcal{H}^{d-1}(J_u \cup  (\partial^* \omega_u \cap U)  ).
\end{align*}
In the language of \cite[Subsection 3.4]{Crismale}, we say that $u_n \to u$ weakly in $GSBD^2_\infty(U)$.
\end{theorem}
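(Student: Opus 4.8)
The statement to be proved is the $GSBD^2$ compactness result, Theorem~\ref{th: GSDBcompactness}, which is quoted here from Crismale–Chambolle \cite{Crismale-Cham}. Since this is an appendix recalling a known result, I would present it as a consequence of the cited reference together with a short reduction, rather than reproving the full machinery of $GSBD$ compactness from scratch. The plan is first to recall the precise statement of \cite[Theorem 1.1]{Crismale-Cham}, which asserts exactly the three conclusions (i)--(iii) under the uniform bound $\sup_n(\Vert e(u_n)\Vert_{L^2(U)}+\mathcal{H}^{d-1}(J_{u_n}))<+\infty$, and to verify that our hypotheses match theirs verbatim. The only point requiring a word is the identification of the exceptional set $\omega_u=\{x\in U:\, |u_n(x)|\to\infty\}$ as a set of finite perimeter, which is part of their statement.

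The reduction I would carry out explicitly is the following. By the uniform bound on $\Vert e(u_n)\Vert_{L^2(U)}$ and $\mathcal{H}^{d-1}(J_{u_n})$, one is exactly in the setting of \cite[Theorem 1.1]{Crismale-Cham}; applying it yields a subsequence (not relabeled), a set of finite perimeter $\omega_u\subset U$, and a limit $u\in GSBD^2(U)$ with $u\equiv 0$ on $\omega_u$, satisfying (i) $u_n\to u$ in $L^0(U\setminus\omega_u;\R^d)$, (ii) $e(u_n)\rightharpoonup e(u)$ weakly in $L^2(U\setminus\omega_u;\R^{d\times d}_{\mathrm{sym}})$, and (iii) $\liminf_{n\to\infty}\mathcal{H}^{d-1}(J_{u_n})\ge \mathcal{H}^{d-1}(J_u\cup(\partial^*\omega_u\cap U))$. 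Here the lower semicontinuity in (iii) is the key technical output of \cite{Crismale-Cham}: the jump set of the limit together with the reduced boundary of the blow-up set $\omega_u$ (which records where mass escapes to infinity) is controlled by the liminf of the jump measures. The final sentence, relating this to the terminology ``$u_n\to u$ weakly in $GSBD^2_\infty(U)$'' of \cite[Subsection 3.4]{Crismale}, is purely a definitional remark: that notion is by definition the conjunction of (i) and (ii) together with finiteness of the relevant energies, so it holds with no further argument.

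The ``main obstacle'' is really only bookkeeping: one must check that the notion of approximate jump set $J_v$ and approximate symmetric gradient $e(v)$ used in \cite{Crismale-Cham} coincides with the one recalled in Appendix~\ref{sec: GSBD} (via the slicing definition of \cite{DalMaso:13}), and that the convergence in $L^0$ is the convergence in measure on compact subsets of $U\setminus\omega_u$. Both are standard and are exactly the conventions of \cite{DalMaso:13, Crismale}, which we have already adopted. I would therefore write the proof as: ``\emph{This is \cite[Theorem 1.1]{Crismale-Cham}, applied to the sequence $(u_n)_{n\in\mathbb N}$, whose hypotheses are satisfied by assumption; the last assertion is then just the definition of weak $GSBD^2_\infty$-convergence given in \cite[Subsection 3.4]{Crismale}.}'' If a slightly more self-contained treatment were desired, one could alternatively sketch the Fréchet–Rellich-type argument: slice in each direction $\xi\in\mathbb S^{d-1}$, apply one-dimensional $SBV$ compactness to $\widehat{(u_n)}^\xi_y$ for $\mathcal H^{d-1}$-a.e.\ $y\in\Pi^\xi$, use a diagonal/Fubini argument over a countable dense set of directions to reconstruct the limit, and invoke the structure theorem of \cite{DalMaso:13} to identify $e(u)$ and $J_u$ — but since \cite{Crismale-Cham} provides the statement in precisely the form needed, the short citation-based proof is the appropriate one for an appendix.
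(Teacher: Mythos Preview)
Your proposal is correct and matches the paper's treatment exactly: the paper does not prove this theorem but simply cites it as \cite[Theorem~1.1]{Crismale-Cham}, so your plan to invoke that reference directly (with the observation that the final sentence is just the definition from \cite[Subsection~3.4]{Crismale}) is precisely what is done.
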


\typeout{References}


\begin{thebibliography}{50}
%--------------------------------------------------------------------------

    
\RRR
%
%\bibitem{Agostiniani}
%{\sc 
%V.~Agostiniani, G.~Dal Maso, A.~DeSimone}.
%\newblock {\em Linearized elasticity obtained from finite elasticity by $\Gamma$-convergence under weak coerciveness conditions}. 
%\newblock Ann.\ Inst.\ H.\ Poincar\'e\ Anal.\ Non Lin\'eaire
%\newblock {\bf 29} (2012), 715--735.


\EEE
    
    \bibitem{alicandro.dalmaso.lazzaroni.palombaro}
{\sc R.~Alicandro, G.~Dal Maso, G.~Lazzaroni, M.~Palombaro}.
{\em Derivation of a linearised elasticity model from singularly perturbed multiwell energy functionals}.
Arch.\ Ration.\ Mech.\ Anal.\  \textbf{230} (2018), 1--45.  

    
\bibitem{Ambrosio-Coscia-Dal Maso:1997} 
{\sc L~ Ambrosio, A.~Coscia, G.~Dal Maso}.
\newblock {\em Fine properties of functions with bounded deformation}. 
\newblock Arch.\ Ration.\ Mech.\ Anal.\
\newblock {\bf 139} (1997), 201--238.


\OOO \bibitem{allard1986geometric}
{\sc W.K.~Allard, F.J.~Almgren}.
\newblock {\em Geometric measure theory and the calculus of variations}.	
\newblock Proceedings of Symposia in Pure Mathematics
\newblock {American Mathematical Society, Vol. {\bf 44}}, (1986).\EEE




\bibitem{Ambrosio-Fusco-Pallara:2000} 
{\sc L.~Ambrosio, N.~Fusco, D.~Pallara}.
\newblock {\em Functions of bounded variation and free discontinuity problems}. 
\newblock Oxford University Press, Oxford 2000. 




 \bibitem{AnGurt}
{\sc S.~Angenent, M.E.~Gurtin}.
\newblock {\em Multiphase thermomechanics with interfacial structure. II. Evolution of anisothermal interface}. 
\newblock  Arch.\ Ration.\ Mech.\ Anal.\
\newblock {\bf 108} (1989), 323--391.



\bibitem{Bon12}
\newblock {\sc M.~Bonacini}.
\newblock {\em  Epitaxially strained elastic films: the case of anisotropic
              surface energies}.
\newblock ESAIM Control Optim.\ Calc.\ Var.\
\newblock {\bf 19} (2013), 167--189.





\bibitem{BonCha02}
\newblock {\sc E.~Bonnetier, A.~Chambolle}.
\newblock {\em Computing the equilibrium configuration of epitaxially
              strained crystalline films}.
\newblock SIAM J.\ Appl.\ Math.\
\newblock {\bf 62} (2002), 1093--1121.






\bibitem{Bourdin-Francfort-Marigo:2008}
{\sc B.~Bourdin, G.A.~Francfort, J.J.~Marigo}. 
\newblock {\em The variational approach to fracture}.
\newblock J.\ Elasticity\ 
\newblock {\bf 91} (2008), 5--148. 




    \bibitem{Braides:02}
{\sc A.~Braides}.
\newblock {\em $\Gamma$-convergence for Beginners}.
\newblock Oxford University Press, Oxford 2002.

 


\bibitem{BraChaSol07}
\newblock {A.~Braides, A.~Chambolle, M.~Solci}.
\newblock {\em A relaxation result for energies defined on pairs set-function
              and applications}.
\newblock ESAIM Control Optim.\ Calc.\ Var.\
\newblock {\bf 13} (2007), 717--734.
 

 
\bibitem{Braides-Solci-Vitali:07}
{\sc A.~Braides, M.~Solci, E.~Vitali}.
\newblock {\em A derivation of linear elastic energies from pair-interaction atomistic systems}.
\newblock Netw.\ Heterog.\ Media 
\newblock {\bf 2} (2007), 551--567.





 \bibitem{burger}
{\sc M.~Burger, H.~Hausser, C.~St\"ocker, A.~Voigt}.
\newblock {\em  A level set approach to anisotropic flows with curvature regularization}. 
\newblock  J.\ Comput.\ Phys.\
\newblock {\bf 225} (2007), 183--205.
    
    
    
    
    \bibitem{FinalKorn}
{\sc F.~Cagnetti, A.~Chambolle,  L.~Scardia}.
\newblock {\em Korn and Poincar\'e -Korn inequalities for functions with small jump set}. 
\newblock  Math.\ Ann., to appear. Available at:~{\tt https://cvgmt.sns.it/paper/4636/}.


 

 
\bibitem{Chambolle-Conti-Francfort:2014}
{\sc A.~Chambolle, S.~Conti, G.~Francfort}.
\newblock {\em Korn-Poincar\'e inequalities for functions
with a small jump set.} 
\newblock Indiana Univ.\ Math.\ J.\
\newblock {\bf 65} (2016), 1373--1399. 


 

  
 \bibitem{Iu3}
{\sc A.~Chambolle, S.~Conti, F.~Iurlano}.
\newblock {\em Approximation of functions with small jump sets and existence of strong minimizers of Griffith's energy}. 
\newblock  J.\ Math.\ Pures\ Appl.\
\newblock {\bf 128} (2019), 119--139. 
 	



 \bibitem{Crismale2}
{\sc A.~Chambolle, V.~Crismale}.
\newblock {\em A density result in $GSBD^p$ with applications to the approximation of brittle fracture energies}. 
\newblock Arch.\ Ration.\ Mech.\ Anal.\
\newblock {\bf 232} (2019), 1329--1378.




\bibitem{Crismale-Cham}
{\sc A.~Chambolle, V.~Crismale}.
\newblock {\em Compactness and lower semicontinuity in $GSBD$}. 
\newblock     J.\ Eur.\ Math.\ Soc.\ (JEMS)
\newblock {\bf 23} (2021), 701--719.



\bibitem{Chambolle-Giacomini-Ponsiglione:2007}
{\sc A.~Chambolle, A.~Giacomini, M.~Ponsiglione}. 
\newblock {\em Piecewise rigidity}.
\newblock J.\ Funct.\ Anal.\  
\newblock {\bf 244} (2007), 134--153. 


\bibitem{ChaSol07}
\newblock {\KKK \sc A.~Chambolle, M.~Solci\EEE}.
\newblock {\em Interaction of a bulk and a surface energy with a geometrical
              constraint}.
\newblock SIAM J.\ Math.\ Anal.\
\newblock {\bf 39} (2007), 77--102.


  \bibitem{Chaudhuri} 
{\sc N.~Chaudhuri, S.~M\"uller}.
\newblock {\em Rigidity estimate for two incompatible wells}. 
\newblock   Calc.\ Var.\ Partial Differential
Equations
 \newblock {\bf 19} (2004), 379--390.
 





\bibitem{Chermisi-Conti}
{\sc M.~Chermisi, S.~Conti}.
\newblock {\em Multiwell rigidity in nonlinear elasticity}. 
\newblock {SIAM J.\ Math.\ Anal.\ }
\newblock {\bf 42} (2010), 1986--2012. 
 


\bibitem{Conti-Dolzmann-Muller:14}
{\sc S.~Conti, G.~Dolzmann, and S.~M\"uller}.
\newblock {\em Korn's second inequality and geometric rigidity with mixed growth conditions}. 
\newblock Calc.\ Var.\
Partial\ Differential\ Equations\
\newblock {\bf 50} (2014), 437--454.


\bibitem{Conti-Focardi-Iurlano:15}
{\sc S.~Conti, M.~Focardi, F.~Iurlano}.
\newblock {\em Integral representation for functionals defined on $SBD^p$ in dimension two}. 
\newblock Arch.\ Ration.\ Mech.\ Anal.\  
\newblock {\bf 223} (2017), 1337--1374.


  \bibitem{conti.garroni}
{\sc S.~Conti, A.~Garroni}.
\newblock {\em Sharp rigidity estimates for incompatible fields as a consequence of the Bourgain Brezis div-curl result}. 
\newblock C.\ R.\ Math.\ Acad.\ Sci.\ Paris
\newblock {\bf 359} (2021), 155--160. 



\bibitem{conti.schweizer}
{\sc S.~Conti, B.~Schweizer}.
{\em Rigidity and gamma convergence for solid-solid phase transitions with $SO(2)$ invariance}.
Comm.\ Pure Appl.\ Math.\ \textbf{59} (2006), 830--868.




\bibitem{Cortesani-Toader:1999}
{\sc G.~Cortesani, R.~Toader}. 
\newblock {\em A density result in SBV with respect to non-isotropic energies}.
\newblock Nonlinear Analysis
\newblock {\bf 38} (1999), 585--604.



 
 
 \bibitem{Crismale}
{\sc V.~Crismale, M.~Friedrich}.
\newblock {\em Equilibrium configurations for epitaxially strained films and  
    material voids in three-dimensional linear elasticity}. 
\newblock  Arch. Ration. Mech. Anal.\
\newblock {\bf 237} (2020), 1041--1098. 
 
 
 
\bibitem{DalMaso:93}
{\sc G.~Dal Maso}.
\newblock {\em An introduction to $\Gamma$-convergence}.
\newblock Birkh{\"a}user, Boston $\cdot$ Basel $\cdot$ Berlin 1993. 






\bibitem{DalMaso:13}
{\sc G.~Dal Maso}.
\newblock {\em Generalized functions of bounded deformation}.
\newblock J.\ Eur.\ Math.\ Soc.\ (JEMS)\
\newblock {\bf 15} (2013), 1943--1997.


\bibitem{DalMasoNegriPercivale:02}
{\sc G.~Dal Maso, M.~Negri, D.~Percivale}.
\newblock {\em  Linearized elasticity as $\Gamma$-limit
of finite elasticity}.
\newblock  Set-valued\ Anal.\
\newblock {\bf 10} (2002), 165--183.



 



\bibitem{davoli.friedrich}
{\sc E.~Davoli, M.~Friedrich}.
\newblock {\em Two-well rigidity and multidimensional sharp-interface limits for solid-solid phase transitions}.
\newblock { Calc.\ Var.\ Partial Differential Equations} {\bf 59} (2020), Art. 44.


 
 \bibitem{DavPio17}
\newblock {\sc E.~Davoli, P.~Piovano}.
\newblock {\em  Analytical validation of the Young-Dupré law for epitaxially-strained thin films}.
\newblock Math.\ Models Methods Appl.\ Sci.\
\newblock {\bf 29} (2019), 2183--2223.
 




%
%\bibitem{DalMaso-Lazzaroni:2010} 
%{\sc G.~Dal Maso, G.~Lazzaroni}.
%\newblock {\em Quasistatic  crack  growth  in  finite  elasticity  with  non-
%interpenetration}. 
%\newblock Ann.\ Inst.\ H.\ Poincar\'e\ Anal.\ Non Lin\'eaire\
%\newblock {\bf 27} (2010), 257--290. 




\bibitem{DeGiorgi-Ambrosio:1988}
{\sc E.~De Giorgi, L.~Ambrosio}. 
\newblock {\em Un nuovo funzionale del calcolo delle variazioni}. 
\newblock Acc.\ Naz.\ Lincei, Rend.\ Cl.\ Sci.\ Fis.\ Mat.\ Natur.\ 
\newblock {\bf 82} (1988), 199--210.

     \bibitem{De Lellis} 
{\sc C.~De Lellis, L.J.~Szekelyhidi}.
\newblock {\em Simple proof of two well rigidity}. 
\newblock  C.\ R.\ Math.\ Acad.\ Sci.\ Paris
 \newblock {\bf 343} (2006), 367--370.
 

     \bibitem{Carlo}
{\sc A.~Di Carlo, M.E.~Gurtin, P.~Podio-Guidugli}.
\newblock {\em A regularized equation for anisotropic motion-by-curvature}. 
\newblock  SIAM J.\ Appl.\ Math.\ 
\newblock {\bf 52} (1992), 1111--1119.







\bibitem{EvansGariepy92}
{\sc L.C.~Evans, R.F.~Gariepy}. 
\newblock {\em Measure theory and fine properties of
functions (Revised Version)}.
\newblock CRC Press, Boca Raton 2015.


 \bibitem{FonFusLeoMil11}
\newblock {\sc I.~Fonseca, N.~Fusco, G.~Leoni, V.~Millot}.
\newblock {\em  Material voids in elastic solids with anisotropic surface
              energies}.
\newblock J.\ Math.\ Pures Appl.\
\newblock {\bf 96} (2011), 591--639.



\bibitem{FonFusLeoMor07}
\newblock {\sc I.~Fonseca, N.~Fusco, G.~Leoni, M.~Morini}.
\newblock {\em Equilibrium configurations of epitaxially strained crystalline
              films: existence and regularity results}.
\newblock Arch.\ Ration.\ Mech.\ Anal.\
\newblock {\bf 186} (2007), 477--537.
 





 \bibitem{FonFusLeoMor14}
{\sc I.~Fonseca, N.~Fusco, G.~Leoni, M.~Morini}.
\newblock {\em Motion of elastic thin films by anisotropic surface diffusion with curvature regularization}. 
\newblock  Arch.\ Ration.\ Mech.\ Anal.\
\newblock {\bf 205} (2012), 425--466.





 \bibitem{FonFusLeoMor15}
{\sc I.~Fonseca, N.~Fusco, G.~Leoni, M.~Morini}.
\newblock {\em Motion of three-dimensional elastic films by anisotropic  surface diffusion with curvature regularization}. 
\newblock  Anal. PDE
\newblock {\bf 8} (2015), 373--423.





\RRR 

%\bibitem{Francfort-Marigo:1998}
%{\sc G.~A.~Francfort, J.~J.~Marigo}. 
%\newblock {\em Revisiting brittle fracture as an energy minimization problem}.
%\newblock J.\ Mech.\ Phys.\ Solids 
%\newblock {\bf 46} (1998), 1319--1342. 

 \EEE 

%\bibitem{Francfort-Larsen:2003}
%{\sc G.~A.~Francfort, C.~J.~Larsen}. 
%\newblock {\em Existence and convergence for quasi-static evolution in brittle fracture}.
%\newblock Comm.\ Pure Appl.\ Math.\ 
%\newblock {\bf 56} (2003), 1465--1500. 



\bibitem{Friedrich:15-3} 
{\sc M.~Friedrich}.
\newblock {\em A Korn-type inequality in  SBD for functions with small jump sets}. 
\newblock   Math.\ Models Methods Appl.\ Sci.\
 \newblock {\bf 27} (2017), 2461--2484.




\bibitem{Friedrich:15-4}
{\sc M.~Friedrich}.
\newblock {\em A piecewise Korn inequality in SBD and applications to embedding and density results}. 
\newblock  SIAM J.\ Math.\ Anal.\
\newblock {\bf 50} (2018), 3842--3918.








\bibitem{Friedrich:15-2}
{\sc M.~Friedrich}.
\newblock {\em A derivation of linearized Griffith energies from nonlinear models}. 
\newblock Arch.\ Ration.\ Mech.\ Anal.\  
\newblock {\bf 225} (2017), 425--467.



\bibitem{higherordergriffith}
{\sc M.~Friedrich}.
\newblock {\em Griffith energies as small strain limit of nonlinear models for nonsimple brittle materials}. 
\newblock  Mathematics in Engineering
\newblock {\bf 2} (2020), 75--100.
 



 
\bibitem{MFMK}
\newblock {\sc M.~Friedrich, M.~Kru\v{z}\'{\i}k}.
\newblock {\em On the passage from nonlinear to linearized viscoelasticity.}
\newblock SIAM J.\ Math.\ Anal.\
\newblock {\bf 50} (2018), 4426--4456.





\bibitem{Friedrich-Schmidt:15}
{\sc M.~Friedrich, B.~Schmidt}.
\newblock {\em A quantitative geometric rigidity result in SBD}. 
\newblock Preprint, 2015. {\it ArXiv: 1503.06821}.


 
  \bibitem{Mora3}
{\sc G.~Friesecke, R.D.~James, M.G.~Mora, S.~M{\"u}ller}.
\newblock {\em Derivation of nonlinear bending theory for shells from three-dimensional nonlinear elasticity by Gamma-convergence}. 
\newblock C.\ R.\ Math.\ Acad.\ Sci.\ Paris
\newblock {\bf 336} (2003), 697--702. 


\bibitem{friesecke2002theorem}
{\sc G.~Friesecke, R.D.~James, S.~M{\"u}ller}.
\newblock {\em A theorem on geometric rigidity and the derivation of nonlinear plate
  theory from three-dimensional elasticity}.
\newblock Comm.\ Pure Appl.\ Math.\
\newblock {\bf 15} (2002), 1461--1506.






 
 \bibitem{hierarchy}
{\sc G.~Friesecke, R.D.~James, S.~M{\"u}ller}.
\newblock {\em A hierarchy of plate models derived from
nonlinear elasticity by Gamma-Convergence}. 
\newblock Arch.\ Ration.\ Mech.\ Anal.\
\newblock {\bf 180} (2006), 183--236.
 

 
\bibitem{GaoNix99}
{\sc H.~Gao, W.D.~Nix}.
\newblock {\em Surface roughening of heteroepitaxial thin films}. 
\newblock Ann.\ Rev.\ Mater.\ Sci.\
\newblock {\bf 29} (1999), 173--209.



\bibitem{Grin86}
{\sc M.A.~Grinfeld}.
\newblock {\em Instability of the separation boundary between a non-hydrostatically stressed
elastic body and a melt}. 
\newblock Soviet Physics Doklady
\newblock {\bf 31} (1986), 831--834.

          
\bibitem{Grin93}
{\sc M.A.~Grinfeld}.
\newblock {\em The stress driven instability in elastic crystals:
              mathematical models and physical manifestations}. 
\newblock J.\ Nonlinear Sci.\
\newblock {\bf 3} (1993), 35--83.


 
 



 \bibitem{GurtJabb}
{\sc M.E.~Gurtin, M.E.~Jabbour}.
\newblock {\em Interface evolution in three dimensions with curvature-dependent energy and surface diffusion: interface-controlled evolution, phase transitions, epitaxial growth of elastic films}. 
\newblock  Arch.\ Ration.\ Mech.\ Anal.\
\newblock {\bf 163} (2002), 171--208.





 \bibitem{Herr}
{\sc C.~Herring}.
\newblock {\em Some theorems on the free energies of crystal surfaces}. 
\newblock  Phys.\ Rev.\
\newblock {\bf 82} (1951), 87--93.


%\OOO \RRR M.: Watch out: Hutchinson is not cited!!! \EEE
%\GGG
 \bibitem{Hutchinson}
{\sc J.E.~Hutchinson}.
\newblock {\em Some regularity theory for curvature varifolds}.
\newblock Miniconference on geometry and partial differential equation,
\newblock  Proc. Centre Math. Analysis,
Austral. Nat. Univ.\
\newblock {\bf 12} (1987), 60--66.
\OOO



%
% \RRR 
% 
% 
%
%\bibitem{Iurlano:13}
%{\sc F.~Iurlano}.
%\newblock {\em A density result for GSBD and its application
%to the approximation of brittle fracture energies}. 
%\newblock Calc.\ Var.\ Partial Differential
%Equations
%\newblock {\bf 51} (2014), 315--342. 
%
%\EEE


\bibitem{Jerrard-Lorent}
{\sc R.L.~Jerrard,  A.~Lorent}.
\newblock {\em On multiwell Liouville theorems
in higher dimension}. 
\newblock {Adv.\ Calc.\ Var.}
\newblock {\bf 6} (2013), 247--298. 

\AAA\bibitem{Jesenko-Schmidt:20}
{\sc M.~Jesenko, B.~Schmidt}.
\newblock {\em Geometric linearization of theories for incompressible elastic materials and applications}. 
\newblock Math.\ Models Methods Appl.\ Sci.\
\newblock {\bf 31} (2021), 829--860.\EEE


\bibitem{John:1961} 
{\sc F.~John}.
\newblock {\em Rotation and strain}. 
\newblock Comm.\ Pure  Appl.\ Math.\
\newblock {\bf 14} (1961), 391--413. 



\bibitem{KhoPio19}
\newblock {\sc S.~Kholmatov, P.~Piovano}.
\newblock {\em  A unified model for stress-driven rearrangement instabilities}
\newblock Arch.\ Ration.\ Mech.\ Anal.\
\newblock {\bf 238} (2020), 415--488.
 
 




 \bibitem{kohn-rig}
{\sc R.V.~Kohn}.
\newblock {\em New integral estimates for deformations in terms of their nonlinear strains}. 
\newblock Arch.\ Ration.\ Mech.\ Anal.\
\newblock {\bf 78} (1982),  131--172.









\bibitem{KrePio19}
{\sc L.~Kreutz, P.~Piovano}.
\newblock {\em Microscopic validation of a variational model of epitaxially strained crystalline films}. 
\newblock  SIAM J.\ Math.\ Anal.\ 
\newblock {\bf 53} (2021), 453--490.
 

 



\bibitem{lauteri.luckhaus}
{\sc G.~Lauteri, S.~Luckhaus}.
{\em Geometric rigidity estimates for incompatible fields in dimension $\geq 3$}.
\newblock Preprint,  2017. {\it ArXiv: 1703.03288}.


  


\bibitem{Lazzaroni}
{\sc G.~Lazzaroni, R.~Toader}.
\newblock {\em A  model  for  crack  propagation  based  on  viscous  approximation}. 
\newblock Math.\ Models Methods Appl.\ Sci.\
\newblock {\bf 21} (2011), 2019--2047.


  
  
  \bibitem{Mora4}
{\sc M.~Lewicka, M.G.~Mora, M.R.~Pakzad}.
\newblock {\em Shell theories arising as low energy $\Gamma$-limit of 3d nonlinear elasticity}. 
\newblock Ann.\ Sc.\ Norm.\ Super.\ Pisa, Cl.\ Sci.\
\newblock {\bf IX} (2010), 1--43.
 
 





 \bibitem{Lorent}
{\sc A.~Lorent}.
\newblock {\em A two well Liouville Theorem}. 
\newblock  {ESAIM Control Optim.\ Calc.\ Var.}
\newblock {\bf 11} (2005),  310--356. 




 


\bibitem{maggi2012sets}
\newblock {\sc F.~Maggi}.
\newblock Sets of finite perimeter and geometric variational problems: an introduction to Geometric Measure Theory
\newblock 135, Cambridge University Press, 2012.





 \bibitem{edo}
{\sc E.~Mainini, D.~Percivale}.
\newblock {\em Variational linearization of pure traction problems in incompressible elasticity}. 
\newblock Z. Angew. Math. Phys.
\newblock {\bf 71} (2020),  Art.~146.
 



\bibitem{Ulisse}
{\sc 
A.~Mielke, U.~Stefanelli}.
\newblock {\em Linearized plasticity is the evolutionary $\Gamma$-limit of finite plasticity}. 
\newblock J.\ Eur.\ Math.\ Soc. (JEMS)
\newblock {\bf 15} (2013), 923--948.


%\bibitem{Negri:2003}
%{\sc M.~Negri}. 
%\newblock {\em Finite element approximation of the {G}riffith's model in fracture mechanics}.
%\newblock Numer.\ Math.\ 
%\newblock {\bf 95} (2003), 653--687. 
%
%\bibitem{Negri:06}
%{\sc M.~Negri}.
%\newblock {\em A non-local approximation of free discontinuity problems in SBV and SBD}. 
%\newblock Calc.\ Var.\ PDE
%\newblock {\bf 25} (2005), 33--62. 



\bibitem{Menne2017}
{\sc U.~Menne, C.~Scharrer}.
\newblock{\em A novel type of Sobolev-Poincar\'e inequality for submanifolds of Euclidean space}. 
\newblock Preprint, 2017. {\it ArXiv: 1709.05504}.

\EEE

 
 \bibitem{Mora}
 {\sc  M.G.~Mora, S.~M\"uller}.
\newblock {\em Derivation of the nonlinear bending-torsion theory for inextensible rods by $\Gamma$-convergence}.
\newblock Calc.\ Var.\ Partial Differential
Equations
\newblock {\bf 18} (2003), 287--305. 
 
 
 \bibitem{Mora2}
 {\sc  M.G.~Mora, S.~M\"uller}.
\newblock {\em A nonlinear model for inextensible rods as a low energy $\Gamma$-limit of three-dimensional nonlinear elasticity}.
\newblock Ann.\ Inst.\ H.\ Poincar\'e Anal.\ Non Lin\'eaire 
\newblock {\bf 21} (2004), 271--293. 



\bibitem{Muller-Scardia-Zeppieri:14}
{\sc S.~M\"uller, L.~Scardia, C.I.~Zeppieri}.
\newblock {\em Geometric rigidity for incompatible fields and an application to strain-gradient plasticity}. 
\newblock Indiana\ Univ.\ Math.\ J.\ 
\newblock {\bf 63} (2014), 1365--1396.


\bibitem{NegriToader:2013}
{\sc M.~Negri, R.~Toader}.
\newblock {\em Scaling in fracture mechanics by Ba$\check z$ant's law: from finite to linearized elasticity}. 
\newblock Math.\ Models Methods Appl.\ Sci.\
\newblock {\bf 25} (2015), 1389--1420. 


\bibitem{Nitsche}
{\sc J.A.~Nitsche}.
\newblock {\em On Korn's second inequality}.
\newblock  RAIRO\ Anal.\ Num\'er.\
\newblock {\bf 15} (1981), 237--248. 


 \bibitem{Pio14}
{\sc P.~Piovano}.
\newblock {\em Evolution of elastic thin films with curvature regularization via minimizing movements}. 
\newblock  Calc.\ Var.\ Partial Differential Equations
\newblock {\bf 49} (2014), 337--367.


 \bibitem{voigt}
{\sc A.~R\"atz, A.~Ribalta, A.~Voigt}.
\newblock {\em  Surface evolution of elastically stressed films under deposition by \AAA a \EEE diffuse interface model}. 
\newblock  J.\ Comput.\ Phys.\
\newblock {\bf 214} (2006), 187--208.

          
\bibitem{Reshetnyak:1961} 
{\sc Y.G.~Reshetnyak}.
\newblock {\em  Liouville's theory on conformal mappings under minimial regularity assumptions}. 
\newblock Sibirskii\ Math.\ J.\
\newblock {\bf 8} (1967), 69--85. 



\bibitem{Rondi} 
{\sc L.~Rondi}.
\newblock {\em A variational approach to the reconstruction of cracks by boundary measurements}. 
\newblock J.\ Math.\ Pures\ Appl.\
\newblock {\bf 87} (2007),  324--342.


\BBB

\bibitem{santilli}
{\sc M.~Santilli, B.~Schmidt}.
\newblock {\em Two phase models for elastic membranes with soft inclusions}. 
\newblock Preprint, 2021. {\it ArXiv: 2106.01120}.


\EEE


\bibitem{Schmidt:08}
{\sc B.~Schmidt}. 
\newblock {\em Linear $\Gamma$-limits of multiwell energies in nonlinear elasticity theory}. 
\newblock Continuum Mech.\ Thermodyn.\  
\newblock {\bf 20} (2008), 375--396. 



\bibitem{Schmidt:2009} 
{\sc B.~Schmidt}. 
\newblock {\em On the derivation of linear elasticity from atomistic models}. 
\newblock Netw.\ Heterog.\ Media 
\newblock {\bf 4} (2009), 789--812. 



\bibitem{Schmidt17}
{\sc B.~Schmidt}.
\newblock {\em  A Griffith-Euler-Bernoulli theory for thin brittle beams derived from nonlinear models in variational fracture mechanics}. 
\newblock Math.\ Models Methods Appl.\ Sci.\
\newblock {\bf 27} (2017), 1685--1726.




\OOO\bibitem{topping2008relating}
{\sc P.~Topping}.
\newblock {\em Relating diameter and mean curvature for submanifolds of Euclidean space},,
\newblock Commentarii\ Mathematici\ Helvetici\
\newblock {\bf 83}  (2008), 539--546. \EEE






\bibitem{Toupin:62}
\newblock {\sc R.A.~Toupin}.
\newblock {\em Elastic materials with couple stresses}. 
\newblock  Arch.\ Ration.\ Mech.\ Anal.\
\newblock {\bf 11} (1962), 385--414.

%\bibitem{Toupin:64}
%\newblock {\sc R.A.~Toupin}. 
%\newblock {\em Theory of elasticity with couple stress}.
%\newblock Arch.\ Ration.\ Mech.\ Anal.\
%\newblock {\bf 17} (1964), 85--112.



\bibitem{SieMikVoo04}
{\sc M.~Siegel, M.J.~Miksis, P.W.~Voorhees}.
\newblock {\em Evolution of material voids for highly anisotropic surface
              energy}. 
\newblock J.\ Mech.\ Phys.\ Solids
\newblock {\bf 52} (2004), 1319--1353.


          

\bibitem{Simon1993Willmore}
{\sc L.~Simon}. 
\newblock {\em Existence of surfaces minimizing the Willmore functional}.
\newblock Comm.\ in Anal.\ and Geom.\
\newblock {\bf 1} (1993), 281--326. 




     
\bibitem{Spe99}
{\sc B.J.~Spencer}.
\newblock {\em Asymptotic derivation of the glued-wetting-layer model and contact-angle condition for
Stranski-Krastanow islands}. 
\newblock Physical Review B
\newblock {\bf 59} (1999), 2011--2017.





%\bibitem{Ambrosio:90}
%{\sc L.~Ambrosio}. 
%\newblock {\em Existence theory for a new class of variational problems}.
%\newblock Arch.\ Ration.\ Mech.\ Anal.\
%\newblock {\bf 111} (1990), 291--322. 
%
%
%
% 
%
%
%\bibitem{Ambrosio:90-2}
%{\sc L.~Ambrosio}. 
%\newblock {\em On the lower semicontinuity of quasi-convex integrals in $SBV(\Omega; \R^k)$}.
%\newblock Nonlinear\ Anal.\
%\newblock {\bf 23} (1994), 405--425.
%
%
%
%
%
%\bibitem{ACD} 
%{\sc L~ Ambrosio, A.~Coscia, G.~Dal Maso}.
%\newblock {\em Fine properties of functions with bounded deformation}. 
%\newblock Arch.\ Ration.\ Mech.\ Anal.\
%\newblock {\bf 139} (1997), 201--238.






%
%
% \bibitem{BJ}
%{\sc  J.~F.~Babadjian, A.~Giacomini}.
%\newblock {\em Existence of strong solutions for quasi-static evolution in brittle fracture.} 
%\newblock Ann.\ Sc.\ Norm.\ Super.\ Pisa\ Cl.\ Sci.\ 
%\newblock {\bf 13} (2014), 925--974. 
% 
%
%
%\bibitem{BCO}
%\newblock {\sc J.M.~Ball, J.C.~Currie, P.L.~Olver}.
%\newblock Null Lagrangians, weak continuity, and
%variational problems of arbitrary order. 
%\newblock \emph{J.\ Funct.\ Anal.} 
%\newblock {\bf 41} (1981), 135--174.
%
%
%
%\bibitem{Batra}
%\newblock {\sc R.C.~Batra}.
%\newblock  Thermodynamics of non-simple elastic materials. 
%\newblock \emph{J.\ Elasticity}
%\newblock {\bf 6} (1976), 451--456.
%
%
%
%
%
%
%
%\bibitem{BZ} 
%{\sc A.~Blake,  A.~Zisserman}.
%\newblock {\em  Visual Reconstruction}. 
%\newblock The MIT Press, Cambridge, 1987. 






 
 
%
%\bibitem{Bourdin:07}
%{\sc  B.~Bourdin}.
%\newblock {\em  Numerical implementation of the variational formulation for quasi-static brittle
%fracture}.
%\newblock  Interfaces\ Free\ Bound.\
%\newblock {\bf 9} (2007), 411--430.
%
%
%
%\bibitem{Bourdin-Francfort-Marigo:2000}
%{\sc B.~Bourdin, G.~A.~Francfort, J.~J.~Marigo}. 
%\newblock {\em Numerical experiments in revisited brittle fracture}. 
%\newblock J.\ Mech.\ Phys.\ Solids
%\newblock {\bf 48} (2000), 797--826. 
%





 



 




 
%
%\bibitem{Chambolle:2003}
%{\sc A.~Chambolle}. 
%\newblock {\em A density result in two-dimensional linearized elasticity, and applications}.
%\newblock Arch.\ Ration.\ Mech.\ Anal.\
%\newblock {\bf 167} (2003), 167--211.

 










% \bibitem{Crismale4}
%{\sc A.~Chambolle, V.~Crismale}.
%\newblock {\em Existence of strong solutions to the Dirichlet problem for Griffith energy}. 
%\newblock Preprint,\ 2018.    Available at: http://arxiv.org/abs/1811.07147.
%
%
%
% 
%
%
% \bibitem{Crismale3}
%{\sc V.~Crismale}.
%\newblock {\em On the approximation of $SBD$ functions and some applications}. 
%\newblock Preprint,\ 2018.    Available at: http://arxiv.org/abs/1806.03076.






%\bibitem{DalMaso-Francfort-Toader:2005} 
%{\sc G.~Dal Maso, G.~A.~Francfort, R.~Toader}.
%\newblock {\em Quasistatic crack growth in nonlinear elasticity}. 
%\newblock Arch.\ Ration.\ Mech.\ Anal.\
%\newblock {\bf 176} (2005), 165--225. 
% 
 

 


%\bibitem{Focardi-Iurlano:13}
%{\sc M.~Focardi, F.~Iurlano}.
%\newblock {\em Asymptotic analysis of Ambrosio-
%Tortorelli energies in linearized elasticity}. 
%\newblock SIAM\ J.\ Math.\ Anal.\,
%\newblock {\bf 46} (2014), 2936--2955.

%




 
 


\end{thebibliography}
\end{document}